\documentclass[11pt,reqno]{amsart}
\usepackage{indentfirst}
\usepackage{graphics,amsmath,amssymb,amsthm,mathrsfs,bm}
\usepackage{enumitem}
\usepackage{amsfonts}
\usepackage[leqno]{amsmath}
\usepackage{amsmath}
\usepackage{amssymb}
\usepackage{multicol}
\usepackage{stmaryrd}
\usepackage{setspace}
\usepackage{bbm}
\usepackage{cite}
\usepackage{epsfig}
\usepackage{color}
\usepackage{graphics}
\usepackage{graphicx}
\usepackage{epstopdf}
\usepackage{multicol,graphics}
\usepackage{mathtools}
\usepackage{indentfirst}
\usepackage{float}
\usepackage{hyperref}
\hypersetup{pdfstartview=Fit, pdfborder={0 0 0},breaklinks=true}

\newtheorem{theorem}{Theorem}[section]
\newtheorem{lemma}[theorem]{Lemma}
\newtheorem{corollary}[theorem]{Corollary}

\newtheorem{definition}[theorem]{Definition}
\newtheorem{remark}[theorem]{Remark}
\newtheorem{proposition}[theorem]{Proposition}
\numberwithin{equation}{section}

\allowdisplaybreaks[4]
\makeatletter
\def\@textbottom{\vskip \z@ \@plus 30pt}
\let\@texttop\relax
\makeatother
\begin{document}
\title[Principal spectral theory and asymptotic analysis]{ Principal spectral theory and asymptotic analysis for time-periodic cooperative systems with temporally nonlocal dispersal}

\author[Wu, Li, Sun and Vo]{Hao Wu$^{1}$, Wan-Tong Li$^{1,*}$, Jian-Wen Sun$^{1}$ and Hoang-Hung Vo$^{2}$}
\thanks{
$^1$School of Mathematics and Statistics, Lanzhou University, Lanzhou, Gansu 730000, China.\\
$^2$Faculty of Mathematics and Applications, Saigon University, Ho Chi Minh City, Vietnam.\\
$^*${\sf Corresponding author} (wtli@lzu.edu.cn)}
\date{\today}

\begin{abstract}
This paper investigates the principal spectral theory and the asymptotic behavior of the principal spectrum point for a class of time-periodic cooperative systems with nonlocal dispersal operators, incorporating both coupled and uncoupled nonlocal terms. By applying the theory of resolvent positive operators and their perturbations, we first establish criteria for the existence of the principal eigenvalue. We then construct sequences of smooth upper and lower approximating matrix-valued functions, each of whose corresponding operators satisfies the principal eigenvalue existence condition. This approximation framework allows the principal spectrum point to effectively substitute for the principal eigenvalue in characterizing the global dynamics of the nonlinear system. Moreover, it facilitates the study of the asymptotic behavior of the principal spectrum point with respect to parameters under fairly general assumptions. Subsequently, for systems with both coupled and uncoupled nonlocal terms, we analyze the asymptotic behavior of the principal spectrum point in terms of the dispersal rate, dispersal range, and frequency. Finally, we illustrate the applicability of our theoretical results through a Zika virus model and a stem cell model.

\textbf{Keywords}: Nonlocal dispersal operator; Cooperative systems; Principal spectrum; Principal eigenvalue
		
\textbf{AMS Subject Classification}: 35K57, 45C05, 45P05

\end{abstract}

\maketitle

\tableofcontents

\section{ Introduction }
This paper is concerned with the principal spectral theory of the following time-periodic nonlocal operator 
\begin{equation*}
\mathcal{L}[\varphi](x,t) = -\tau\varphi_{t}(x,t)+D(x,t)\mathcal{P}[\varphi](x,t)+ A(x,t)\varphi(x,t), \enspace (x,t)\in\bar{\Omega}\times\mathbb{R}.
\end{equation*}
Here, $\Omega \subset \mathbb{R}^{N}$ is a smooth bounded domain, 
$\tau>0$ represents the frequency,
$$\varphi = (\varphi_1, \varphi_2, \cdots, \varphi_l)^{T}, \quad D(x,t) = (d_{ij}(x,t))_{l \times l}$$
and $A(x,t) = (a_{ij}(x,t))_{l \times l}$ are matrix-valued functions,
$\mathcal{P} = \text{diag} (\mathcal{P}_{1}, \mathcal{P}_{2},\cdots,\mathcal{P}_{l}  )$ with $\mathcal{P}_{i}$ being defined by
\begin{equation*}
\mathcal{P}_{i}[\varphi_{i}](x,t) = \int_{\Omega} k_{i}(x,y,t)\varphi_{i}(y,t)dy, \quad 1\leq i \leq l.
\end{equation*} 
The coefficients $d_{ij},a_{ij}$ and $k_{i}$ for $(1\leq i,j\leq l)$ are assumed throughout to satisfy the following conditions.
\begin{itemize}[leftmargin= 0.9cm]
\item[(H1)] $d_{ij}\in C(\bar{\Omega} \times \mathbb{R})$, $d_{ij}\geq 0$, $d_{ii}>0$ and $d_{ij}(x,t) = d_{ij}(x,t+1)$ for all $(x,t) \in \bar{\Omega}\times \mathbb{R}$.

\item[(H2)] $a_{ij}\in C(\bar{\Omega} \times \mathbb{R})$,   $a_{ij} \geq 0 \text{ for all } i\neq j$
and $a_{ij}(x,t) = a_{ij}(x,t+1)$ for all $(x,t) \in \bar{\Omega}\times \mathbb{R}$.

\item[(H3)] $k_{i}\in C\left(\mathbb{R}^{N} \!\times\! \mathbb{R}^{N}\! \times\! \mathbb{R}\right)$ is nonnegative,
$k_{i}(x,x,t)>0$ and
$$k_{i}(x,y,t) = k_{i}(x,y,t+1)
\enspace \text{for all} \enspace
(x,y,t) \in \mathbb{R}^{N} \! \times \! \mathbb{R}^{N} \!\times\! \mathbb{R}.$$
\end{itemize}

We say that the operator $\mathcal{L}$ is strongly coupled if the matrix-valued function $D$ is not diagonal. Otherwise, it is called weakly coupled.
In the weakly coupled case, if
$
\int_{\mathbb{R}^{N}} k_{i}(x,y,t)dy = 1
\enspace \text{for all} \enspace
(x,t)\in\bar{\Omega}\times \mathbb{R},
$
and the diagonal elements satisfy $a_{ii} = - d_{ii}\int_{\Omega}k_{i}(y,x,t)dy + \tilde{a}_{ii}$ or $a_{ii} = -d_{ii} + \tilde{a}_{ii}$ with $\tilde{a}$ independent of $d_{ii}$, then we call $\mathcal{L}$ a nonlocal dispersal operator with Neumann or Dirichlet boundary condition, respectively.
We refer the reader to \cite{AMRM} for the interpretation of Dirichlet/Neumann boundary conditions in nonlocal dispersal equation.
\subsection{Background and main contributions}
\indent

The spectral theory of linear operators constitutes a fundamental tool for analyzing the dynamics of evolution equations. For classical local dispersal equations, the principal eigenvalue of the linearized operator characterizes the stability of the system's equilibrium point and its long-time dynamical behavior.
Substantial progress has been made in this area, see the books \cite{CC2003,LL2022}.
Now it is well understood that nonlocal dispersal operators can better capture long-range
dispersal of species including humans (Andreu-Vaillo et al. \cite{AMRM} and Fife \cite{Fife}).
However, due to the lack of compactness, the principal
eigenvalue of nonlocal dispersal operators may not exist, see Coville \cite{Co10}, Hutson et al. \cite{HMMV03},
and Shen and Zhang \cite{SZ}.
Therefore, it is necessary to
establish conditions for its existence and to find alternative quantities that can substitute for the principal eigenvalue when it does not exist.

When $l=1$, i.e., in the scalar case, the operator $\mathcal{L}$ has been well studied.
For the one-dimensional case, Huston et al.\cite{HMMV03} were the first to obtain a result on the existence of a principal eigenvalue.
Subsequently, by introducing the concepts of the principal spectrum point or the generalized principal eigenvalue, Coville \cite{Co10}, Shen and Zhang\cite{SZ}, and Li et al. \cite{LCW} employed different methods to establish criteria for the existence of the principal eigenvalue in the general time-independent setting.
The existence theory for the principal eigenvalue of time-periodic nonlocal operators was established by Hutson, Shen and Vickers \cite{HSV08} and Rawal and Shen \cite{RS}.
Building on these existence results, an approximation method was developed in \cite{Co10, RS, SX1}, which constructs a sequence of operators that each possess a principal eigenvalue and converge to the original operator.
This method guarantees that, even if the principal eigenvalue may not exist, the principal spectrum point or the generalized principal eigenvalue, which coincide in many standard settings, can still serve as a threshold parameter for determining the dynamics of the nonlinear system
(see, e.g., \cite{BCV2,Co10,RS,ShenVo,SLLY}),

To overcome the difficulty in obtaining explicit expressions for the principal spectrum point or the generalized principal eigenvalue, many studies have focused on their asymptotic behavior with respect to parameters, such as dispersal rate, dispersal range, and
frequency.
For dispersal rate and dispersal range, Shen and Xie \cite{SX} investigated the time-independent case under  Dirichlet, Neumann, and periodic boundary conditions.
In the time-periodic setting, Shen and Vo \cite{ShenVo} studied the Dirichlet boundary condition, while Vo \cite{Vo1} investigated the Neumann condition. Su et al. \cite{SLLY} considered the influence of frequency.

A central problem in the study of the dispersal range of nonlocal operators is whether the solutions and the principal eigenvalue of nonlocal problems converge to those of the corresponding local dispersal problems.
This convergence was first established for the solution of linear scalar equations by the seminal works of
\cite{CER} and \cite{CERW}.
Following this, Berestycki et al. \cite{BCV1} proved that the generalized principal eigenvalue of the nonlocal dispersal operator with Dirichlet boundary condition converges to the principal eigenvalue of local dispersal operators.
Su et al. \cite{SLY} established a parallel result for Neumann boundary conditions.
For the time-periodic case, Shen and Xie \cite{SX} obtained comprehensive results for Dirichlet, Neumann, and periodic boundary conditions, showing that the solutions to the initial value problems, positive periodic solutions, and the principal spectrum point of the nonlocal dispersal equations all converge to those of the corresponding local equations.
For more studies on the convergence of nonlocal to local dispersal equations, we refer the reader to \cite{KR2,DN,SS,SunVo}.

It is natural to ask whether similar existence results and asymptotic behavior hold for cooperative systems.
In the weakly coupled case (i.e., $D$ is a diagonal matrix), Bao and Shen \cite{BSS17}, Liang et al. \cite{LZZ}, and Feng et al. \cite{Feng}, using various methods, extended parts of the existence theory from scalar operators to irreducible weakly coupled cooperative systems in the time-periodic setting.
Motivated by stem cell regeneration models proposed by Lei \cite{Lei1, Lei2}, Su et al. \cite{SLLW, SWZ} established the existence theory for time-independent cooperative systems that are strongly coupled (i.e., $D$ is non-diagonal).
However, to the best of our knowledge, the time-periodic case and the asymptotic behavior of such strongly coupled systems with nonlocal dispersal remain open.

On the other hand, the asymptotic behavior of the principal spectrum point for weakly coupled systems still presents several open problems.
Lin and Wang \cite{LinWang}  established the asymptotic behavior with respect to the dispersal rate for weakly coupled operators under both Dirichlet and Neumann boundary conditions.
For symmetric weakly coupled operators, Ninh and Vo \cite{NV2} investigated the asymptotic behavior with respect to the dispersal range with Dirichlet boundary condition.
Subsequently, Feng et al.\cite{Feng} studied the asymptotic behavior with respect to the dispersal rate, dispersal range, and frequency for weakly coupled operators in the time-periodic setting with Neumann boundary conditions.
However, their studies \cite{NV2,Feng} rely on the strong assumption that the principal eigenvalue exists.
Moreover, their analysis of the asymptotic behavior with respect to the dispersal range does not include the case of convergence to the principal eigenvalue of the corresponding local dispersal operator.
Furthermore, the asymptotic behavior of the principal spectrum point with Dirichlet boundary condition in a time-periodic setting has not been considered.

This work aims to address the open problems outlined above. The main contributions of this paper are as follows.

\textbf{(i)} Inspired by the work of \cite{Feng} and \cite{KR}, and utilizing the theory of resolvent positive operators and its perturbation theory developed by Thieme \cite{Th1,Th2}, we studied the principal spectrum point of the operator $\mathcal{L}$.
We obtained sufficient conditions that guarantees that the spectrum point is the principal eigenvalue. Furthermore, under the assumption that either $D$ or $A$ is irreducible, we can demonstrate the algebraic simplicity of the principal eigenvalue and the positivity of every component of its corresponding eigenfunction, which extends the conclusions in \cite{SWZ,SLLW} for strongly coupled systems to time-periodic operators.

\textbf{(ii)} We developed a new approximation method for the operator $\mathcal{L}$.
Recently, Wu et al. \cite{WFLS} and Wang and Zhang \cite{WZ} established the approximation methods for the operator $\mathcal{L}$ in the time-independent case. Subsequently, Wang and Zhang \cite{ZL1} extended these results to the weakly coupled time-periodic setting. However, the elements of the matrix-valued sequence obtained by this approximation method lack smoothness, which makes it difficult to use when discussing the asymptotic behavior of the principal spectrum point. On the other hand, compared to the scalar equation, the principal spectrum point for cooperative systems no longer possesses Lipschitz continuity with respect to the coupling matrix $A$, so the smoothing strategy used in \cite{WFLS} cannot be applied either.
In this paper, for both weak and strong coupling, we resolve these problems by constructing smooth upper and lower approximating sequences for the operator $\mathcal{L}$.
Using this result, we are able to remove the strong assumptions on the existence of the principal eigenvalue in \cite{NV2,Feng}, as well as the smoothness assumption on the coefficients in \cite{Feng}, and it facilitates the analysis of asymptotic behavior in the present paper.
Moreover, this result ensures that the principal spectrum point coincides with the generalized principal eigenvalue, and it can serve as a threshold parameter for determining the behavior of the nonlinear system, regardless of whether the principal eigenvalue exists.

\textbf{(iii)} We investigated the asymptotic behavior of the principal spectrum point of operator $\mathcal{L}$ with respect to the dispersal rate, dispersal range, and frequency in two settings: the weakly coupled case with Dirichlet boundary conditions and the strongly coupled case. In particular, for the case with Dirichlet boundary conditions, we show that the principal spectrum point associated with the nonlocal dispersal system converges to the principal eigenvalue of the corresponding local dispersal system as the dispersal range small enough. Furthermore, we also prove that both the solutions to the initial value problem and the positive time-periodic solution of the nonlinear nonlocal dispersal system also converge to those of the corresponding local dispersal system.
Moreover, in previous works on the asymptotic behavior \cite{Feng,SX,ShenVo,LinWang,NV2}, it is commonly assumed that the dispersal rate is constant and the kernel function depends only on a single spatial variable and time-independent. However, in applications it is more realistic to allow these quantities to vary in time. Therefore, we seek to derive, as far as possible, analogous results under more general, time-dependent assumptions.

\textbf{(iv)} We apply our theory to investigate the dynamic behavior of the Zika virus model and the stem cell regeneration model, which formally correspond to weakly coupled and strongly coupled systems, respectively.

\subsection{Notations and basic definitions}
\indent

Before stating our main results, we first introduce some notations and basic definitions.
Throughout this paper, we shall use the following notations.

\begin{itemize}[leftmargin= 0.8cm]
\item For $x=(x_1, x_2, \dots, x_l)^T \in \mathbb{R}^l$, we denote $|x| = \sum_{i=1}^l |x_i|$.

\item The nonnegative and strongly positive vector set of $\mathbb{R}^l$ are defined, respectively, as:
\begin{align*}
(\mathbb{R}^l)^+ &= \{x=(x_1, x_2, \dots, x_l)^T \mid x_i \in \mathbb{R}, x_i \geq 0, i=1, 2, \dots, l\},
\\
(\mathbb{R}^l)^{++} &= \{x=(x_1, x_2, \dots, x_l)^T \mid x_i \in \mathbb{R}, x_i > 0, i=1, 2, \dots, l\}.
\end{align*}
\item Let $X = C(\bar{\Omega}, \mathbb{R}^l)$ be the space of continuous functions. We define its subsets as:
\begin{align*}
X^+ &= \{u \in X \mid u(x) \in (\mathbb{R}^l)^+, x \in \bar{\Omega}\},
\\
X^{++} &= \{u \in X \mid u(x) \in (\mathbb{R}^l)^{++}, x \in \bar{\Omega}\}.
\end{align*}
\item Let $\mathcal{X} = \{u \in C(\bar{\Omega} \times \mathbb{R}, \mathbb{R}^l) \mid u(x, t+1) = u(x, t), (x, t) \in \bar{\Omega} \times \mathbb{R}\}$ be the space of 1-periodic functions in $t$. We define its subsets as:
\begin{align*}
\mathcal{X}^+ &= \{u \in \mathcal{X} \mid u(x, t) \in (\mathbb{R}^l)^+, (x, t) \in \bar{\Omega} \times \mathbb{R}\},
\\
\mathcal{X}^{++} &= \{u \in \mathcal{X} \mid u(x, t) \in (\mathbb{R}^l)^{++}, (x, t) \in \bar{\Omega} \times \mathbb{R}\}.
\end{align*}

\item Let $\mathcal{X}_1 = \{u \in C^{0,1}(\bar{\Omega} \times \mathbb{R}, \mathbb{R}^l) \mid u(x, t+1) = u(x, t), (x, t) \in \bar{\Omega} \times \mathbb{R}\}$, where $C^{0,1}(\bar{\Omega} \times \mathbb{R}, \mathbb{R}^l)$ denotes the class of functions that are continuous in $x$ and $C^1$ in $t$. We define its subsets as:
\begin{align*}
\mathcal{X}_1^+ &= \{u \in \mathcal{X}_1 \mid u(x, t) \in (\mathbb{R}^l)^+, (x, t) \in \bar{\Omega} \times \mathbb{R}\},
\\
\mathcal{X}_1^{++} &= \{u \in \mathcal{X}_1 \mid u(x, t) \in (\mathbb{R}^l)^{++}, (x, t) \in \bar{\Omega} \times \mathbb{R}\}.
\end{align*}

\item For $\varphi$, $\phi \in \mathbb{X}$, we write
$$
\begin{array}{r c l}
\varphi \geq \phi & \text{if} & \varphi - \phi \in
\mathbb{X}^{+},
\\
\varphi > \phi & \text{if} & \varphi - \phi \in \mathbb{X}^{+} \backslash \{ 0 \},
\\
\varphi \gg \phi & \text{if} & \varphi - \phi \in \mathbb{X}^{++} .
\end{array}
$$
where $\mathbb{X} = \mathbb{R}^{l},X,\mathcal{X},\mathcal{X}_{1}$.

\item Let $\mathbb{S}=\{1,2,\cdots,l\}$.

\item For $x\in\mathbb{R}^{N}$, $B_{r}(x)$ denotes the open ball of radius $r$ centered at the point $x$.

\item We denote by $I$ the identity operator.

\end{itemize}

Clearly, $ \mathbb{X}^{+} $ is the positive cone of $\mathbb{X}$ and $\mathbb{X}^{++}$ is the interior of $\mathbb{X}^{+}$, where $\mathbb{X}=X,\mathcal{X},\mathcal{X}_{1}$.
With these notations, the operator $\mathcal{L}$ is then considered as an unbounded linear operator on the space $\mathcal{X}$ with domain $\mathcal{X}_{1}$, namely,
\begin{equation*}
\mathcal{L} : \mathcal{X}_{1}\left( \subset \mathcal{X} \right) \to \mathcal{X}.
\end{equation*}

\begin{definition}{\rm
Let
\begin{equation*}
s(\mathcal{L}) = \sup\{\text{Re} \lambda \mid \lambda \in \sigma(\mathcal{L})\}.
\end{equation*}
We call $s(\mathcal{L})$ the principal spectrum point of  $\mathcal{L}$ .
If  $s(\mathcal{L})$  is an isolated eigenvalue of  $\mathcal{L}$  with a positive eigenfunction  $\phi$  (i.e.  $\phi \in \mathcal{X}^+ \backslash \{ 0 \}$), then  $s(\mathcal{L})$  is called the principal eigenvalue of $ \mathcal{L} $  or it is said that  $\mathcal{L}$  has a principal eigenvalue.
}
\end{definition}

It follows from \cite[Proposition 2.4]{LZZ1} and \cite[Theorem 1.4]{baihe} that the following holds.
\begin{lemma}\label{LEM1.2}
For any given $x\in\bar{\Omega}$, the eigenvalue problem
\begin{equation}\label{1-1}
\left\{
\begin{aligned}
&\!-\!\tau\frac{d\varphi(t)}{dt}+A(x,t)\varphi(t)=\lambda\varphi(t), &t\in\mathbb{R},\\
&\varphi(t)=\varphi(t+1), &t\in\mathbb{R},
\end{aligned}
\right.
\end{equation}
admits a principal eigenvalue $\lambda_{A}(x)$ with a positive eigenfunction $\varphi_{A}(x,t)$. Moreover, $\lambda_{A}(x)$ and $\varphi_{A}(x,t)$ are as smooth in $x$ as $A(x,t)$ in $x$, and when $A(x,t)\equiv A(x)$, $\lambda_{A}(x)$ is the largest real part of the eigenvalues of the matrix $A(x)$.
\end{lemma}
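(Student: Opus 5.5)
For fixed $x\in\bar{\Omega}$ I would reduce the problem to Floquet theory for the linear periodic ODE system $\tau\varphi'(t)=A(x,t)\varphi(t)$. Let $\Phi_x(t)$ be its fundamental matrix with $\Phi_x(0)=I$, and let $M_x=\Phi_x(1)$ be the monodromy matrix. Because $a_{ij}\geq 0$ for $i\neq j$, the system is cooperative. Writing $\Phi_x(t)=e^{-ct/\tau}\Psi(t)$ with $c$ large, the matrix $A+cI$ is nonnegative, so $\Psi$ is nonnegative. Hence $M_x\geq 0$ entrywise, and by the Perron--Frobenius theorem the spectral radius $r(M_x)$ is an eigenvalue with an eigenvector $v_x>0$.

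Set $\mu=r(M_x)>0$ (positive, since $M_x$ is invertible) and define
\[
\lambda_A(x)=-\tau\ln r(M_x),\qquad \varphi_A(x,t)=e^{\lambda_A(x)t/\tau}\Phi_x(t)v_x .
\]
This function is $1$-periodic because $\Phi_x(t+1)=\Phi_x(t)M_x$, it is nonnegative, and a direct differentiation shows it solves \eqref{1-1} with $\lambda=\lambda_A(x)$.

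The sign convention needs care: $-\tau\varphi'+A\varphi=\lambda\varphi$ gives $\tau\varphi'=(A-\lambda)\varphi$. The correct exponent is therefore the growth exponent $\tau\ln r(M_x)$, so I will take
\[
\lambda_A(x)=\tau\ln r(M_x),\qquad \varphi_A(x,t)=e^{-\lambda_A(x)t/\tau}\Phi_x(t)v_x .
\]
Principality follows from the same relation. Any periodic eigenpair $(\lambda,\varphi)$ gives $M_x\varphi(0)=e^{\lambda/\tau}\varphi(0)$, so $\operatorname{Re}\lambda\leq \tau\ln r(M_x)$. Thus $\lambda_A(x)$ is the eigenvalue with largest real part, and it has a positive eigenfunction.

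When $A(x,t)\equiv A(x)$, we have $M_x=e^{A(x)/\tau}$. Its eigenvalues are $e^{\mu/\tau}$ for $\mu\in\sigma(A(x))$, so $r(M_x)=e^{s(A(x))/\tau}$ and $\lambda_A(x)=s(A(x))$, which is the largest real part of the eigenvalues of $A(x)$.

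The main obstacle is regularity in $x$. Continuity, or $C^k$ dependence, of $M_x$ in $x$ is standard ODE parameter dependence. The difficulty is that $r(M_x)$ and a normalized $v_x$ depend smoothly on $x$ only when $r(M_x)$ is an algebraically simple eigenvalue, which holds if $M_x$ is primitive (for instance when $A$ is irreducible). For reducible $A$, $r(M_x)$ is still continuous in $x$, since the spectral radius is continuous in the matrix entries. However, the eigenvector need not vary continuously, so one must choose it carefully.

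For this step I would first try to invoke the cited results. \cite[Proposition 2.4]{LZZ1} gives the existence of the principal periodic eigenpair for cooperative periodic systems. \cite[Theorem 1.4]{baihe} gives the regularity: in the irreducible case, simplicity combined with the implicit function theorem, applied to $(\lambda,v)\mapsto (M_xv-e^{\lambda/\tau}v,\ \sum_i v_i-1)$, yields that $\lambda_A$ and $\varphi_A$ are as smooth in $x$ as $A$.
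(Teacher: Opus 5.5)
Your route differs from the paper's, which gives no argument for Lemma \ref{LEM1.2} and simply cites \cite[Proposition 2.4]{LZZ1} and \cite[Theorem 1.4]{baihe}. Your Floquet reduction is a correct, self-contained proof of three things:
\begin{itemize}
\item existence of the principal eigenpair, via the monodromy matrix $M_x=\Phi_x(1)\ge 0$, Perron--Frobenius, $\lambda_A(x)=\tau\ln r(M_x)$ and $\varphi_A=e^{-\lambda_A t/\tau}\Phi_x(t)v_x$;
\item the bound $\operatorname{Re}\lambda\le\tau\ln r(M_x)$ for every periodic eigenvalue;
\item the autonomous case, via $M_x=e^{A(x)/\tau}$.
\end{itemize}
It also shows that $x\mapsto\lambda_A(x)$ is continuous under (H2) alone, since $M_x$ and the spectral radius depend continuously on $x$. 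This is what is needed for $\max_{\bar\Omega}\lambda_A$ to be attained. Two small repairs are needed. First, delete the initial wrong-sign definition $\lambda_A=-\tau\ln r(M_x)$. Second, the paper's notion of principal eigenvalue requires isolatedness. Add that the variation-of-constants formula, as in the proof of Proposition \ref{PP2.10}, shows $\lambda$ is in the resolvent set iff $e^{\lambda/\tau}\notin\sigma(M_x)$, so the spectrum is discrete.

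On regularity your caution is justified, but your fallback of choosing $v_x$ carefully in the reducible case cannot work. Under (H2) alone the smoothness assertion is false. Take $l=2$, $\Omega$ the unit ball, and $A(x,t)=\operatorname{diag}(x_1,-x_1)$.
\begin{itemize}
\item Then $\lambda_A(x)=|x_1|$, which is not $C^1$ although $A$ is linear in $x$.
\item The nonnegative periodic solutions of \eqref{1-1} with $\lambda=|x_1|$ are the constant multiples of $e_1$ when $x_1>0$, and of $e_2$ when $x_1<0$.
\item So no choice of $\varphi_A$ is even continuous in $x$.
\end{itemize}
No citation can repair this, so the smoothness part must be read under irreducibility of $A$. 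In that case $\Phi_x(t)\gg 0$ for $t>0$, and $r(M_x)$ is algebraically simple with eigenvector $v_x\gg0$. Your implicit function argument for $(\lambda,v)\mapsto(M_xv-e^{\lambda/\tau}v,\sum_i v_i-1)$ then goes through. The derivative at the Perron pair is invertible by algebraic simplicity. The resulting branch remains the Perron branch because the eigenvector stays $\gg 0$. Together with $C^k$ dependence of $\Phi_x$ on $x$, this gives the claimed regularity of $\lambda_A$ and $\varphi_A$.

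This restriction matters downstream. Step 3 of the proof of Theorem \ref{THM1.5} uses smoothness of $\lambda_{A^k_-}$. However, the off-diagonal entries $w^k_{ij}$ of $A^k_-$ can vanish identically, for instance when $a_{ij}\equiv 0$, so $A^k_-$ need not be irreducible.
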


\subsection{Main results}
\indent
\newline
\newline
\indent
Recall that a matrix $C=(c_{ij})_{l\times l}$ is said to be irreducible, if for any nonempty and proper subset $\mathbb{I}$ of $\mathbb{S}=\{1,2,\cdots,l\}$, there exist $i\in \mathbb{I}$ and $j\in \mathbb{S}\setminus\mathbb{I}$ such that $c_{ij}\neq 0$.
For matrix-valued function $C(x,t)$ defined on $\bar\Omega\times\mathbb{R}$, We say that matrix-valued function $C$ is irreducible, if for any $(x,t)\in\bar{\Omega}\times \mathbb{R}$ the matrix is irreducible.

To ensure the principal eigenfunction is strictly positive, we introduce the following assumption.
\begin{itemize}[leftmargin=0.9cm]
\item[$ {\rm (\tilde{H}2) }$] (H2) holds and either $D$ is irreducible or $A$ is irreducible.
\end{itemize}
Define operator
$\mathcal{M}: \mathcal{X}\to \mathcal{X}$ and $\mathcal{N} : \mathcal{X}_{1} \to \mathcal{X}$ by
\begin{align*}
\mathcal{M}[\varphi](x,t)= D(x,t)\mathcal{P}[\varphi](x,t),
\enspace
\mathcal{N}[\varphi](x,t)= -\tau\varphi_{t}(x,t)+A(x,t)\varphi(x,t).
\end{align*}

\begin{theorem}[\textbf{Existence of the principal eigenvalue}]\label{THM1.3}
Assume that (H1)-(H3) hold. Then
$s(\mathcal{L})$ is the principal eigenvalue of $\mathcal{L}$ if one of the following holds:
\begin{itemize}[leftmargin=0.9cm]
\item[(i)] $s(\mathcal{L})>s(\mathcal{N})$.
\item[(ii)] There exists $\alpha > \max_{x\in\bar{\Omega}} \lambda_{A}(x)$ and $\phi\in\mathcal{X}^{+}\backslash \{0\}$ such that
$$
\mathcal{M}(\alpha I - \mathcal{N})^{-1}\phi \geq \phi.
$$
\end{itemize}
Moreover, if $(\tilde{H}2)$ holds, then $s(\mathcal{L})$ is algebraically simple, with an eigenfunction in $\mathcal{X}^{++}$.
Conversely, if $\lambda$ is an eigenvalue of $\mathcal{L}$ with an eigenfunction $u \in \mathcal{X}^{++}$, then $\lambda=s(\mathcal{L})$, and both $(i)$ and $(ii)$ hold.
\end{theorem}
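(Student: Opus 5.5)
The plan is to treat $\mathcal{L}=\mathcal{N}+\mathcal{M}$ as a bounded positive perturbation of a resolvent positive operator and apply Thieme's perturbation theory \cite{Th1,Th2}.

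\emph{Step 1: the unperturbed operator $\mathcal{N}$.} The operator $\mathcal{N}$ acts pointwise in $x$, so it is resolvent positive. For $\alpha>\max_{x}\lambda_A(x)$, the resolvent $(\alpha I-\mathcal{N})^{-1}$ is given explicitly, for each fixed $x$, by the periodic Green's formula built from the fundamental matrix of $\tau\varphi'=(A(x,t)-\alpha)\varphi$. This matrix is nonnegative because $A$ is cooperative. Using Lemma \ref{LEM1.2}, I will show that $s(\mathcal{N})=\max_{x\in\bar\Omega}\lambda_A(x)$.

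\emph{Step 2: the factorization and compactness.} For $\lambda>s(\mathcal{N})$ we have
$$\lambda I-\mathcal{L}=\bigl(I-\mathcal{M}(\lambda I-\mathcal{N})^{-1}\bigr)(\lambda I-\mathcal{N}).$$
Thieme's results then give the following.
\begin{itemize}
\item $r(\lambda):=r\bigl(\mathcal{M}(\lambda I-\mathcal{N})^{-1}\bigr)$ is nonincreasing in $\lambda$.
\item $s(\mathcal{L})>s(\mathcal{N})$ if and only if $r(\lambda)>1$ for some $\lambda>s(\mathcal{N})$.
\item In that case $s(\mathcal{L})$ is characterized by $r(s(\mathcal{L}))=1$, and $s(\mathcal{L})\ge\alpha$ whenever $r(\alpha)\ge 1$.
\end{itemize}
The key analytic ingredient is that $K_\lambda:=\mathcal{M}(\lambda I-\mathcal{N})^{-1}$ is compact on $\mathcal{X}$. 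The resolvent produces functions that are $C^1$ in $t$ with bounds controlled by the sup norm, which gives equicontinuity in $t$. The integral $\int_\Omega k_i(x,y,t)\,\cdot\,dy$ together with uniform continuity of $k_i$ and $d_{ij}$ gives equicontinuity in $x$. Arzel\`a--Ascoli then yields compactness.

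\emph{Step 3: existence under (i) or (ii).} Assume (i) and set $\lambda_0=s(\mathcal{L})$. Krein--Rutman applied to $K_{\lambda_0}$ gives $\psi>0$ with $K_{\lambda_0}\psi=\psi$. Then $\varphi=(\lambda_0 I-\mathcal{N})^{-1}\psi>0$ satisfies $\mathcal{L}\varphi=\lambda_0\varphi$. Isolatedness follows from the analytic Fredholm theorem applied to $I-K_\lambda$, so $\lambda_0$ is a pole of the resolvent. For (ii), the inequality $K_\alpha\phi\ge\phi$ with $\phi>0$ gives $r(\alpha)\ge 1$ by a Collatz--Wielandt type argument for positive operators. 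Hence $s(\mathcal{L})\ge\alpha>s(\mathcal{N})$, so we are back in case (i).

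\emph{Step 4: simplicity under $(\tilde{H}2)$.} Here I would show that some power $K_{\lambda_0}^m$ maps $\mathcal{X}^+\setminus\{0\}$ into $\mathcal{X}^{++}$. This uses $d_{ii}>0$ and $k_i(x,x,t)>0$ to spread positivity in $x$, and the irreducibility of $D$, or of $A$, to spread positivity across components. When $A$ is irreducible, the periodic resolvent matrix is entrywise positive. The strong Krein--Rutman theorem then gives that $1$ is a simple eigenvalue of $K_{\lambda_0}$ with eigenvector $\psi\gg0$, and hence $\varphi\gg0$.

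\emph{Step 5: the converse.} Suppose $\mathcal{L}u=\lambda u$ with $u\gg0$. Since $d_{ii}>0$, $k_i(x,x,t)>0$ and $u\gg0$, there is $\delta>0$ with $\mathcal{M}u\ge\delta u$. Then $\mathcal{N}u\le(\lambda-\delta)u$, which gives $s(\mathcal{N})\le\lambda-\delta<\lambda$. Put $\phi=(\lambda I-\mathcal{N})u\gg0$. Then $K_\lambda\phi=\mathcal{M}u=\phi$, so (ii) holds with $\alpha=\lambda$. Hence $r(\lambda)\ge1$, and Step 2 gives $s(\mathcal{L})\ge\lambda$. On the other hand, a strictly positive eigenvector of the positive operator $K_\lambda$ forces $r(\lambda)=1$, so $\lambda=s(\mathcal{L})$. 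Alternatively, this follows from the positivity of the evolution semigroup, since $e^{t\mathcal{L}}u=e^{\lambda t}u$ with $u$ interior. Condition (i) then follows as well.

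\emph{Main obstacle.} I expect the hardest steps to be the compactness of $K_\lambda$ on the periodic space and, above all, algebraic simplicity. For algebraic simplicity I would use a positive eigenfunctional $\psi^*$ of the adjoint $K_{\lambda_0}^*$. Suppose $(\lambda_0-\mathcal{L})w=\varphi$. Factoring through $(\lambda_0 I-\mathcal{N})$, we get $(I-K_{\lambda_0})v=K_{\lambda_0}(\lambda_0 I-\mathcal{N})^{-1}\varphi$ with $v=(\lambda_0 I-\mathcal{N})w$; the right-hand side is $K_{\lambda_0}(\lambda_0 I-\mathcal{N})^{-1}\psi+\psi$. Pairing with $\psi^*$ kills the left-hand side, while the right-hand side pairs to a strictly positive number, which is a contradiction.
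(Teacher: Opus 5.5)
Your proposal is correct, but it follows a different route from the paper except in case (i). There, your Krein--Rutman plus analytic Fredholm argument for $K_\lambda=\mathcal{M}(\lambda I-\mathcal{N})^{-1}$ is Thieme's Theorem \ref{THM2.5} unpacked, which the paper simply cites.

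\textbf{The paper's route.} It first proves Proposition \ref{PP2.10}: $\mathcal{L}$ is resolvent positive and $s(\mathcal{L})=\tau\ln r(V_0(1,0))$. It then works through the period map.
\begin{itemize}
\item Condition (ii) is reduced to (i) by showing $r(V_\alpha(1,0))\ge 1$.
\item Strict positivity and simplicity come from strong positivity of the evolution operator (Lemma \ref{LEM2.8}, Corollary \ref{COR2.9}) and a ``largest $\delta$ with $\varphi-\delta\phi\ge 0$'' comparison.
\item In the converse, $\lambda\ge s(\mathcal{L})$ comes from Gelfand's formula for $V_0(t,0)$. Condition (i) comes from pairing the equation at a maximizer of $\lambda_A$ with the adjoint ODE eigenvector. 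Condition (ii) uses the same choice $\phi=\mathcal{M}\varphi$ that you make.
\end{itemize}

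\textbf{Your route.} You never leave $K_\lambda$.
\begin{itemize}
\item For (ii) you use Collatz--Wielandt.
\item For simplicity you use strong positivity of a power $K_{\lambda_0}^m$, strong Krein--Rutman, and a positive adjoint eigenfunctional.
\item For the converse you use $\mathcal{M}u\ge\delta u$ together with $r(K_\lambda)=1$.
\end{itemize}

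\textbf{What each route buys.} Yours avoids evolution operators entirely and makes algebraic simplicity a transparent Fredholm-alternative statement. It also forces you to check equicontinuity in $t$ via the $C^1$ bound on the resolvent, a point the paper's compactness lemma passes over. The cost is twofold. First, you need a chain argument for a uniform $m$: a fixed radius on which $k_i>0$, the compact connected set $\bar\Omega$, and irreducibility patterns that may vary with $(x,t)$. Second, your step $\mathcal{N}u\le(\lambda-\delta)u\Rightarrow s(\mathcal{N})\le\lambda-\delta$ still needs the same pointwise pairing with the adjoint ODE eigenvector that the paper uses. The paper's route, in return, yields the period-map formula and Corollary \ref{COR2.9}, and both are reused later (Theorem \ref{THM1.8}(ii) and the applications).

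\textbf{One slip in the algebraic-simplicity step.} The factorization $\lambda_0 I-\mathcal{L}=(I-K_{\lambda_0})(\lambda_0 I-\mathcal{N})$ gives simply $(I-K_{\lambda_0})v=\varphi$. It does not give the right-hand side you wrote. Pairing with $\psi^*$ then yields $0=\langle\varphi,\psi^*\rangle$. This is impossible, because $\varphi\gg 0$ and $\psi^*$ is a nonzero positive functional, so the conclusion you wanted still holds.
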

We then obtain a non-locally-integrable condition to guarantee the existence of the principal eigenvalue.
\begin{theorem}\label{THM1.4}
Assume that (H1)-(H3) hold. Then
$s(\mathcal{L})$ is the principal eigenvalue of $\mathcal{L}$ if
there exists $\Omega_{0}\subset\Omega$ such that
\begin{equation*}
\frac{1}{ \max_{\bar{\Omega}}\lambda_{A}(x)- \lambda_{A}(x)}\not\in L^{1}(\Omega_{0}).
\end{equation*}
\end{theorem}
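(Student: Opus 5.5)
Throughout, write $\lambda^* = \max_{\bar\Omega}\lambda_A(x)$. The plan is to verify condition (ii) of Theorem \ref{THM1.3} directly, for some $\alpha>\lambda^*$ close to $\lambda^*$. The basic tool is the resolvent $(\alpha I-\mathcal{N})^{-1}$ applied to the principal eigenfunction $\varphi_A$ of Lemma \ref{LEM1.2}. For each $x$ we have $(\alpha I - \mathcal{N})\varphi_A(x,\cdot) = (\alpha-\lambda_A(x))\varphi_A(x,\cdot)$. Since $\mathcal{N}$ acts pointwise in $x$, this gives
$$(\alpha I-\mathcal{N})^{-1}\big[\psi(x)\varphi_A(x,t)\big]=\frac{\psi(x)}{\alpha-\lambda_A(x)}\varphi_A(x,t)$$
for any nonnegative continuous $\psi$. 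The weight $1/(\alpha-\lambda_A(x))$ blows up near the points where $\lambda_A$ attains its maximum, and the non-integrability hypothesis is what will make this blow-up win after integration against the kernel.

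\textbf{Step 1: localize.} Since $1/(\lambda^*-\lambda_A)\notin L^1(\Omega_0)$, and this function is locally bounded away from the maximum set, I can shrink to a small ball $B=B_r(x_0)\cap\Omega$ with $\lambda_A(x_0)=\lambda^*$ on which it is still non-integrable. Care is needed if $\Omega_0$ touches the boundary; a finite covering argument handles this. By (H3), $k_1(x,x,t)>0$, so by continuity and compactness in $t$ there are $\delta>0$ and a radius $r$ with $k_1(x,y,t)\ge\delta$ for $x,y\in B_{2r}(x_0)$. Also $d_{11}\ge\delta$ and $\varphi_A\ge c>0$ on the relevant compact set; here I would use the component-wise positivity of $\varphi_A$ from Lemma \ref{LEM1.2}. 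If only some components are positive, I would pick a component $i$ where $\varphi_{A,i}$ is positive near $x_0$, and use $k_i$ and $d_{ii}$ in place of $k_1$ and $d_{11}$.

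\textbf{Step 2: test function and the key inequality.} Take $\phi(x,t)=\psi(x)\varphi_A(x,t)$, where $\psi\ge0$ is a continuous cutoff supported in $B$ and equal to $1$ on a smaller ball. Since $D\ge0$ entrywise, $\mathcal{P}$ is positive and $d_{ii}>0$, we obtain
$$\mathcal{M}(\alpha I-\mathcal{N})^{-1}\phi\,(x,t)\ \ge\ d_{ii}(x,t)\int_\Omega k_i(x,y,t)\frac{\psi(y)\varphi_{A,i}(y,t)}{\alpha-\lambda_A(y)}\,dy\; e_i .$$
For $x\in B$ the right-hand side is at least $\delta^2 c\int_B \frac{\psi(y)}{\alpha-\lambda_A(y)}dy$ in the $i$-th component. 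By monotone convergence this integral tends to $+\infty$ as $\alpha\downarrow\lambda^*$, using the non-integrability on the region where $\psi=1$. The remaining issue is that we need the inequality $\ge\phi$ in all components, while the estimate only controls component $i$.

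\textbf{Main obstacle.} Matching components is where I expect the difficulty, since $\mathcal{M}$ only reaches component $j$ through $d_{ji}$, which may vanish. My first attempt is to choose $\phi=\psi(x)\varphi_{A,i}(x,t)e_i$ as a single-component function. Then $(\alpha I-\mathcal{N})^{-1}\phi\ge0$ holds by cooperativity of $\mathcal{N}$, since the resolvent is positive for $\alpha>\lambda^*$. I would bound its $i$-th component from below by comparison with the scalar problem. Concretely, $-\tau u_t+a_{ii}u$ compared against $\varphi_A$ suggests instead using $\phi=\psi\varphi_A$ and projecting. The cleaner route is $\phi=\psi(x)\varphi_{A,i}(x,t)e_i$, where it suffices that component $i$ of $\mathcal{M}(\alpha-\mathcal{N})^{-1}\phi$ dominates $\psi\varphi_{A,i}$. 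That component is at least $d_{ii}\mathcal{P}_i$ applied to the $i$-th component of the resolvent. I then need the lower bound $[(\alpha-\mathcal{N})^{-1}(\psi\varphi_{A,i}e_i)]_i\gtrsim \psi\varphi_{A,i}/(\alpha-\lambda_A)$, which I expect to obtain from $\psi\varphi_A\le C\psi\varphi_{A,i}e_i+\ldots$ when $A$ is irreducible in time. If that fails in general, I would fall back on using all of $\varphi_A$, noting that $\phi\le C\,\psi\varphi_{A}$ and bounding the resolvent image from below componentwise. Once the lower bound exceeds $\sup\psi\varphi_{A,i}$ for $\alpha$ near $\lambda^*$, condition (ii) holds, and Theorem \ref{THM1.3} gives the conclusion.
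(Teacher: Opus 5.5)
Your strategy matches the paper's. The paper also verifies condition (ii) of Theorem~\ref{THM1.3} with a localized test function, and defers the construction to \cite[Proposition 3.4]{BSS17}. Your Steps 1--2 are correct: the identity $(\alpha I-\mathcal{N})^{-1}[\psi\varphi_A]=\frac{\psi}{\alpha-\lambda_A}\varphi_A$, the localization near a maximum point, and the monotone-convergence blow-up.

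The gap is that you stop before proving $\mathcal{M}(\alpha I-\mathcal{N})^{-1}\phi\ge\phi$ in every component, and the routes you sketch do not close it.
\begin{itemize}
\item The single-component choice $\phi=\psi\varphi_{A,i}e_i$ needs the lower bound $[(\alpha I-\mathcal{N})^{-1}(\psi\varphi_{A,i}e_i)]_i\gtrsim\psi\varphi_{A,i}/(\alpha-\lambda_A)$. Your justification, an inequality of the form $\psi\varphi_A\le C\psi\varphi_{A,i}e_i+\cdots$, is false in every component $j\neq i$, because the right-hand side vanishes there.
\item Proving that bound honestly would require projecting onto the principal Floquet mode via the adjoint eigenvector, with a spectral gap uniform in $x$.
\item It also invokes irreducibility of $A$, which is not a hypothesis here; only (H2) is assumed.
\item The fallback of bounding the resolvent image componentwise is named but never carried out.
\end{itemize}

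What you are missing is that the component-matching obstacle does not exist. Component $j$ need not be reached through $d_{ji}$ with $i\neq j$; it is reached through itself, and $d_{jj}>0$ by (H1). Since $D\ge0$ entrywise and $\mathcal{P}$ is positive, for $\phi=\psi\varphi_A$ and every $j\in\mathbb{S}$ you have
\[
[\mathcal{M}(\alpha I-\mathcal{N})^{-1}\phi]_j(x,t)\ \ge\ d_{jj}(x,t)\int_\Omega k_j(x,y,t)\,\frac{\psi(y)\,\varphi_{A,j}(y,t)}{\alpha-\lambda_A(y)}\,dy .
\]
Equivalently, $\mathcal{M}\ge\operatorname{diag}(d_{11},\dots,d_{ll})\mathcal{P}$ on $\mathcal{X}^+$, which reduces the problem to the weakly coupled setting of \cite{BSS17}.

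To finish:
\begin{itemize}
\item Since there are finitely many $j$, (H1) and (H3) give $\delta,r_0>0$ with $d_{jj}\ge\delta$ and $k_j(x,y,t)\ge\delta$ whenever $|x-y|\le r_0$, uniformly in $j$.
\item Take $\psi$ supported in a ball of radius $r_0/2$ and equal to $1$ on the concentric ball $B'$ of radius $r_0/4$. Choose it, by your covering argument, so that $1/(\lambda^*-\lambda_A)\notin L^1(B'\cap\Omega)$, where $\lambda^*=\max_{\bar\Omega}\lambda_A$ as in your notation.
\item Let $c=\min_j\min_{\bar\Omega\times[0,1]}\varphi_{A,j}>0$; this is the positivity you already use in Step 1.
\item For $x\in\operatorname{supp}\psi$, every component of $\mathcal{M}(\alpha I-\mathcal{N})^{-1}\phi$ is at least $\delta^2c\int_{B'\cap\Omega}\frac{dy}{\alpha-\lambda_A(y)}$. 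This tends to $+\infty$ as $\alpha\downarrow\lambda^*$, so it eventually exceeds $\max_j\max_{\bar\Omega\times[0,1]}\varphi_{A,j}\ge\phi_j$.
\item Off $\operatorname{supp}\psi$ we have $\phi=0$, so the inequality holds trivially.
\end{itemize}
With this observation, your Step 2 applied to all components simultaneously is already the whole proof. The off-diagonal $d_{ji}$ play no role beyond being nonnegative.
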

Next, we establish the approximation method for time-periodic cooperative systems.
To highlight the dependence on $A$, we write $s(\mathcal{L})$ as $s(\mathcal{L}(A))$.
\begin{theorem}[\textbf{Approximating the principal spectrum point}]\label{THM1.5}
Assume that (H1)-(H3) hold. Then there exist two sequences of smooth matrix-valued functions $\{A^{k}_{{\pm}}\}\in C^{\infty}(\bar{\Omega}\times\mathbb{R},\mathbb{R}^{l\times l})$  satisfying (H2) such that
\begin{equation*}
s(\mathcal{L}(A^{k}_{-}))
\leq
s(\mathcal{L}(A))
\leq s(\mathcal{L}(A^{k}_{+}))
\end{equation*}
and
\begin{equation*}
\lim_{k\to\infty} \left| s(\mathcal{L}(A_{\pm}^{k})) - s(\mathcal{L}(A)) \right|=0.
\end{equation*}
In particular, $s(\mathcal{L}(A_{\pm}^{k}))$ is the principal eigenvalue of the following eigenvalue problem
\begin{equation*}
\left\{
\begin{aligned}
& -\tau\varphi_{t}(x,t)+D(x,t)\mathcal{P}[\varphi](x,t)+ A^{k}_{\pm}(x,t)\varphi(x,t)  =  \lambda \varphi(x,t), & (x,t) \in \bar{\Omega} \times \mathbb{R}, \\
&\varphi(x,t+1) = \varphi(x,t), & (x,t) \in \bar{\Omega} \times \mathbb{R}.
\end{aligned}
	\right.
\end{equation*}
\end{theorem}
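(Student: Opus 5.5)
Our approach is to build $A^{k}_{\pm}$ as $A^{k}_{\pm}=\tilde A^{k}\pm\varepsilon_k E$ plus a diagonal perturbation that makes Theorem \ref{THM1.4} applicable, and to control $s(\mathcal{L})$ through monotonicity and a uniform perturbation estimate. Here $\tilde A^{k}$ is a smooth mollification of $A$ (in $(x,t)$, periodic in $t$, with off-diagonal entries kept nonnegative), $E=I$, and $\varepsilon_k\to 0$.

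\textbf{Step 1 (monotonicity and shift).} Using the resolvent positivity of $\mathcal{L}$ (Thieme's theory, as in the proof of Theorem \ref{THM1.3}), we show that $B\le A$ entrywise, with both satisfying (H2), implies $s(\mathcal{L}(B))\le s(\mathcal{L}(A))$. Indeed, $\mathcal{L}(A)-\mathcal{L}(B)$ is a positive bounded multiplication operator, so $(\lambda-\mathcal{L}(A))^{-1}\ge(\lambda-\mathcal{L}(B))^{-1}\ge0$ for large $\lambda$, and the spectral bound is characterized by where the positive resolvent exists. We also record that $s(\mathcal{L}(A+cI))=s(\mathcal{L}(A))+c$.

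\textbf{Step 2 (smooth sandwich).} Mollify in $(x,t)$, extending $A$ to a neighborhood of $\bar\Omega$ by continuous extension, and choose $\delta_k\to0$ with $\|\tilde A^{k}-A\|_\infty\le\delta_k$. Then $\tilde A^{k}-\delta_k I\le A$ fails only for the off-diagonal entries. To fix this, we set the smooth off-diagonal entries of $A^{k}_-$ to $\max(\tilde a^k_{ij}-\delta_k,0)$, smoothed from below; concretely we take $\rho_k*(a_{ij}-\delta_k)^+$ with a tiny further shift, or mollify $\min$-type lower envelopes. We choose $A^{k}_{-}\le A\le A^{k}_{+}$ entrywise, with $A^k_\pm$ smooth, satisfying (H2), and with $\|A^k_\pm-A\|_\infty\to0$. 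Step 1 then gives the sandwich inequality.

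\textbf{Step 3 (convergence).} This is the main obstacle, since $s$ is not Lipschitz in off-diagonal entries for systems. We handle it by comparison with scalar shifts: for $\eta>0$ we want $A^{k}_{+}\le A+\eta I$ eventually. This fails on the off-diagonal entries, so instead we use a positive eigen-like test. We pick a strictly positive $\psi\in\mathcal{X}_1^{++}$ with $\mathcal{L}(A)\psi\le(s(\mathcal{L}(A))+\eta)\psi$ on a suitable approximation; this is obtained from the resolvent $(\lambda-\mathcal{L}(A))^{-1}\mathbf 1\gg0$ with $\lambda=s+\eta$. Setting $w=(\lambda-\mathcal{L}(A))^{-1}\mathbf{1}$, we have $w\gg0$ and $\mathcal{L}(A)w=\lambda w-\mathbf{1}$. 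Then
\[
\mathcal{L}(A^k_+)w\le \lambda w-\mathbf 1+\|A^k_+-A\|_\infty |w|\mathbf 1\le\lambda w
\]
for large $k$, and a positive supersolution yields $s(\mathcal{L}(A^k_+))\le\lambda$ by a standard resolvent-positivity argument (the Collatz–Wielandt type bound from Thieme's theory). Hence $\limsup_k s(\mathcal{L}(A^k_+))\le s(\mathcal{L}(A))+\eta$. A symmetric argument, with $\mathcal{L}(A)$ replaced by $\mathcal{L}(A^k_-)$ and a uniform bound on $w_k=(\lambda-\mathcal{L}(A^k_-))^{-1}\mathbf 1\le(\lambda-\mathcal{L}(A))^{-1}\mathbf1$, gives the lower bound. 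The point of this step is to avoid Lipschitz continuity by using a uniform positive supersolution.

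\textbf{Step 4 (principal eigenvalue).} Finally, we modify $A^k_\pm$ by subtracting from the diagonal a small smooth function $\varepsilon_k h(x)\,I$. Here $h\ge0$ is smooth, vanishes at a point $x_0$ where $\lambda_{A^k}$ attains its maximum, and behaves like $|x-x_0|^{N}$ near $x_0$. With this choice $\max\lambda_{A^k}-\lambda_{A^k}\sim\varepsilon_k|x-x_0|^N$ is not integrable, so Theorem \ref{THM1.4} applies. The subtraction is harmless: for the $+$ sequence we add $\varepsilon_k(c-h)I$ instead, to stay above $A$. Making $x_0$ a strict maximizer requires $\lambda_{A^k}$ to be smooth, which follows from Lemma \ref{LEM1.2}; adding the diagonal term $-\varepsilon_k h$ shifts $\lambda$ exactly by $-\varepsilon_k h(x)$ pointwise. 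Since $\varepsilon_k\to0$, Steps 1–3 are unaffected.
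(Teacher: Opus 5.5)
Your Steps 1 and 2 are fine. Your $\rho_k*(a_{ij}-\delta_k)^+$ stays below $a_{ij}$ once the mollification radius is smaller than the scale on which $a_{ij}$ varies by $\delta_k$, which is simpler than the paper's double mollification. Your Step 3 argument for $A^k_+$ is also correct and self-contained. The function $w=(\lambda I-\mathcal{L}(A))^{-1}\mathbf{1}$ with $\lambda=s(\mathcal{L}(A))+\eta$ lies in $\mathcal{X}_1^{++}$: at a zero of $w_i$, which is a minimum in $t$, the equation forces $\tau\partial_t w_i\ge1$. It is a supersolution for $\mathcal{L}(A^k_+)$ once $\|A^k_+-A\|_\infty$ is small relative to $1/\|w\|_{\mathcal{X}}$, and $V_\lambda(1,0)w(\cdot,0)\le w(\cdot,0)$ together with Proposition~\ref{PP2.10} gives $s(\mathcal{L}(A^k_+))\le\lambda$. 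On the upper side this replaces the continuity of $s$ in $A$ that the paper imports from [ZL2, Theorem A.1].

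\textbf{Gap in Step 3 (the lower sequence).} Supersolutions only bound $s$ from above, so your ``symmetric argument'' cannot give $\liminf_k s(\mathcal{L}(A^k_-))\ge s(\mathcal{L}(A))$. To reach a contradiction with $w_k=(\lambda I-\mathcal{L}(A^k_-))^{-1}\mathbf{1}$ as a supersolution of $\mathcal{L}(A)$, you need $\lambda<s(\mathcal{L}(A))$ together with a bound on $\|w_k\|$ uniform in $k$. But the domination $w_k\le(\lambda I-\mathcal{L}(A))^{-1}\mathbf{1}$ holds only for $\lambda>s(\mathcal{L}(A))$, where the conclusion $s(\mathcal{L}(A))\le\lambda$ says nothing. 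What is missing is lower semicontinuity of $s$ along $A^k_-\uparrow A$, and this is not a soft consequence of positivity. Kakutani's weighted shift $T$ is a norm limit of nilpotent truncations $0\le T_k\le T$, so $r(T_k)=0<r(T)$. You must use the structure of $\mathcal{L}$:
\begin{itemize}
\item $s(\mathcal{L}(B))\ge s(\mathcal{N}(B))=\max_{\bar\Omega}\lambda_B$, and this quantity is continuous in $B$, which settles the case $s(\mathcal{L}(A))=s(\mathcal{N}(A))$;
\item when $s(\mathcal{L}(A))>s(\mathcal{N}(A))$, compactness of $\mathcal{M}(\lambda I-\mathcal{N})^{-1}$ together with the characterization $r(F_{s})=1$ in Theorem~\ref{THM2.4} gives stability under small perturbations of $A$.
\end{itemize}
Alternatively, cite the continuity result as the paper does.

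\textbf{Gap in Step 4.} Subtracting $\varepsilon_k h(x)I$ gives $\lambda_{\tilde A}=\lambda_{A^k}-\varepsilon_k h$, hence $\max_{\bar\Omega}\lambda_{\tilde A}-\lambda_{\tilde A}(x)=\bigl(\lambda_{A^k}(x_0)-\lambda_{A^k}(x)\bigr)+\varepsilon_k h(x)\ge\varepsilon_k h(x)$. This bounds the reciprocal from above, so it can never produce non-integrability. The untouched term $\lambda_{A^k}(x_0)-\lambda_{A^k}(x)$ is generically of order $|x-x_0|^2$ at an interior maximum and of order $|x-x_0|$ at a boundary one. So, for example, for $N\ge3$ the reciprocal is in $L^1$ and Theorem~\ref{THM1.4} does not apply. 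You are sharpening the peak of the Floquet exponent, whereas you need to flatten it.

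The paper adds $\beta_k(x)I$ with $\beta_k=\lambda^k-\lambda_{A^k_-}$. Here $\lambda^k$ equals $\max\lambda_{A^k_-}-\varepsilon_k$ on $B_{r_k/2}(x_0)$ and $\lambda_{A^k_-}-2\varepsilon_k$ outside $B_{r_k}(x_0)$, glued by a smooth cutoff, with $r_k$ chosen so that $\lambda_{A^k_-}\ge\max\lambda_{A^k_-}-\varepsilon_k$ on $B_{r_k}(x_0)$. By Lemma~\ref{LEM1.2}, $\beta_k$ is smooth, and $-2\varepsilon_k\le\beta_k\le0$. The new exponent attains its maximum on a whole ball, so $1/(\max\lambda-\lambda)\notin L^1$. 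For the upper sequence, build $\beta_k$ from $\lambda_{A^k_+}$ and add $(\beta_k+2\varepsilon_k)I\ge0$ instead.
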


Using the approximation scheme established above, we can show that, regardless of whether the principal spectrum point is the principal eigenvalue or not, it is equivalent to the two generalized principal eigenvalues introduced below.
\begin{equation*}
\lambda_{p}(\mathcal{L}) = \sup \{\lambda\in\mathbb{R} \,|\, \exists \varphi\in\mathcal{X}^{++}_{1}\text{ such that }
-\mathcal{L}[\varphi] + \lambda\varphi   \leq 0 \text{ in } \bar{\Omega}\times\mathbb{R}  \},
\end{equation*}
and
\begin{equation*}
\lambda_{p}^{\prime}(\mathcal{L}) =\inf \{\lambda\in\mathbb{R} \,|\, \exists \varphi\in\mathcal{X}^{++}_{1}\text{ such that }
-\mathcal{L}[\varphi] + \lambda\varphi   \geq 0 \text{ in } \bar{\Omega}\times\mathbb{R}  \}.
\end{equation*}
Moreover, $\lambda_{p}(\mathcal{L})$ and $\lambda_{p}^{\prime}(\mathcal{L})$ can be expressed equivalently by the following variational form:
\begin{equation*}
\lambda_{p}(\mathcal{L}) =
\sup_{\varphi \in \mathcal{X}^{++}_{1}}
\inf_{  (x,t)\in\bar{\Omega} \times \mathbb{R},i\in\mathbb{S} }
\frac{\mathcal{L}_{i}[\varphi](x,t)}{\varphi_{i}(x,t)},
\quad
\lambda_{p}^{\prime}(\mathcal{L}) =
\inf_{\varphi \in \mathcal{X}^{++}_{1}}
\sup_{  (x,t)\in\bar{\Omega} \times \mathbb{R},i\in\mathbb{S} }
\frac{\mathcal{L}_{i}[\varphi](x,t)}{\varphi_{i}(x,t)},
\end{equation*}
where $\mathcal{L}_{i}[\varphi]$ is the $i-$th component of $\mathcal{L}[\varphi]$.

\begin{theorem}\label{THM1.6}
Suppose $(H1)$, $(\tilde{H}2)$ and $(H3)$ hold. Then we have
\begin{equation*}
s(\mathcal{L})=\lambda_{p}(\mathcal{L})=\lambda_{p}^{\prime}(\mathcal{L}).
\end{equation*}
\end{theorem}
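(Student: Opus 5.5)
We will prove the chain $\lambda_p(\mathcal{L})\le s(\mathcal{L})\le\lambda_p'(\mathcal{L})$, then squeeze with Theorem \ref{THM1.5} to get $\lambda_p'(\mathcal{L})\le s(\mathcal{L})\le\lambda_p(\mathcal{L})$.

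\emph{Step 1: the inequalities $\lambda_p(\mathcal{L})\le s(\mathcal{L})\le \lambda_p'(\mathcal{L})$.}
Let $\lambda$ be admissible for $\lambda_p$, i.e. $\varphi\in\mathcal{X}_1^{++}$ with $\mathcal{L}[\varphi]\ge\lambda\varphi$, and suppose $\lambda>s(\mathcal{L})$. The resolvent positivity of $\mathcal{L}$ (the Thieme framework used for Theorem \ref{THM1.3}) gives $(\lambda I-\mathcal{L})^{-1}\ge0$. Since $(\lambda I-\mathcal{L})\varphi\le 0$, applying this resolvent yields $\varphi\le 0$, contradicting $\varphi\gg0$. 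Hence $\lambda\le s(\mathcal{L})$, and so $\lambda_p(\mathcal{L})\le s(\mathcal{L})$.

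For the other side, let $\lambda$ be admissible for $\lambda_p'$, i.e. $\mathcal{L}[\varphi]\le\lambda\varphi$ with $\varphi\gg0$. Pick any $\mu>\lambda$. Then $(\mu I-\mathcal{L})\varphi\ge(\mu-\lambda)\varphi\gg0$. A standard fact for resolvent positive operators (cf. Thieme) states that if $(\mu I-\mathcal{L})\varphi\gg0$ for some $\varphi\ge0$, then $\mu>s(\mathcal{L})$. Equivalently, one can argue through the periodic evolution family: the supersolution $e^{-\lambda t/\tau}\varphi$ bounds the growth rate of the evolution, so $s(\mathcal{L})\le\lambda$. Either way we obtain $s(\mathcal{L})\le\lambda_p'(\mathcal{L})$.

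\emph{Step 2: reverse inequalities via approximation.}
Take $A_\pm^k$ from Theorem \ref{THM1.5}. Each $s(\mathcal{L}(A_\pm^k))$ is a principal eigenvalue. We need a \emph{strongly positive} eigenfunction $\varphi_\pm^k\in\mathcal{X}_1^{++}$, which Theorem \ref{THM1.3} supplies under $(\tilde H2)$ for $A^k_\pm$. If irreducibility comes from $D$, this is automatic. If it comes from $A$, we must check that the construction in Theorem \ref{THM1.5} preserves irreducibility. Should that fail, we would add $\epsilon_k\mathbf{1}$ to the off-diagonal entries; this keeps the monotone ordering for $A_+^k$ and changes $s$ by $o(1)$. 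For $A_-^k$ the perturbation breaks the ordering, but $D$ or $A$ irreducible suffices by continuity of $s$ in $A$.

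With $\varphi_+^k$ in hand, componentwise $A\le A_+^k$ together with $\varphi_+^k\gg0$ gives
\[
\mathcal{L}(A)[\varphi_+^k]\le\mathcal{L}(A_+^k)[\varphi_+^k]=s(\mathcal{L}(A_+^k))\varphi_+^k .
\]
Hence $\lambda_p'(\mathcal{L})\le s(\mathcal{L}(A^k_+))$. Similarly $\mathcal{L}(A)[\varphi_-^k]\ge s(\mathcal{L}(A_-^k))\varphi_-^k$, so $\lambda_p(\mathcal{L})\ge s(\mathcal{L}(A_-^k))$. Letting $k\to\infty$ gives $\lambda_p'\le s\le\lambda_p$, and combined with Step 1 this proves equality.

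\emph{Main obstacles and the variational forms.}
The first obstacle is verifying that $A_\pm^k$ satisfy $A_-^k\le A\le A_+^k$ entrywise. This is needed for the comparison in Step 2; the statement of Theorem \ref{THM1.5} only records the ordering of $s$, so we would inspect that construction to confirm the entrywise ordering. The second obstacle is ensuring $\varphi^k_\pm\gg0$ rather than merely $\varphi^k_\pm>0$, as discussed in Step 2.

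The variational expressions follow directly from the definitions. For $\varphi\gg0$, the infimum over $(x,t)$ and $i$ of $\mathcal{L}_i[\varphi]/\varphi_i$ is exactly the largest $\lambda$ with $\mathcal{L}\varphi\ge\lambda\varphi$, and the supremum is exactly the smallest $\lambda$ with $\mathcal{L}\varphi\le\lambda\varphi$.
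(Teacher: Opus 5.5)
Your proof is correct, but it is organized differently from the paper's. The paper splits into two cases. If $s(\mathcal{L})$ is a principal eigenvalue, it cites \cite[Proposition 3.5]{Feng}. Otherwise it applies that equality to $\mathcal{L}(A^k_-)$ (resp.\ $\mathcal{L}(A^k_+)$) and passes to the limit using three facts: monotonicity of $\lambda_p,\lambda_p'$ in $A$ (Proposition \ref{PP2.11}(iii)), their one-sided semicontinuity (Lemma \ref{SEMI}), and continuity of $s$ in $A$.

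You need none of these. You prove $\lambda_p\le s\le\lambda_p'$ once, for any operator. The first inequality comes from resolvent positivity (Proposition \ref{PP2.10}). For the second, the clean route is the comparison/Gelfand argument of Step 3 in the proof of Theorem \ref{THM1.3}, with equality replaced by inequality; the ``standard fact'' you quote for merely $\varphi\ge0$ is more than you need. You then use the eigenfunctions $\varphi^k_\pm$ directly as test functions for $\mathcal{L}(A)$, so only the convergence $s(\mathcal{L}(A^k_\pm))\to s(\mathcal{L}(A))$ from Theorem \ref{THM1.5} enters the limit.

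What each route buys: yours is more self-contained and handles the principal-eigenvalue case without a separate citation (take $A^k_\pm=A$). The paper's route in exchange yields the semicontinuity of $\lambda_p,\lambda_p'$, which it reuses later.

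Both proofs rely on $A^k_-\le A\le A^k_+$ entrywise and on $(\tilde{H}2)$ holding for $A^k_\pm$; the paper uses the latter tacitly. Your fallback for $A^k_-$ is muddled, but no perturbation is needed. Entrywise domination makes $A^k_+$ irreducible whenever $A$ is. For $A^k_-$, compactness gives $\mu>0$ with $\max_{i\in\mathbb{I},\,j\notin\mathbb{I}}a_{ij}(x,t)\ge\mu$ for all $(x,t)$ and all nonempty proper $\mathbb{I}\subset\mathbb{S}$. Uniform convergence of the off-diagonal entries then makes $A^k_-$ irreducible for large $k$.
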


In what follows, we investigate the asymptotic properties of the principal spectrum point/ generalized principal eigenvalue with respect to the dispersal rate, the dispersal range, and the frequency.

To more clearly separate the ``inflow"  and ``outflow"  effects in the dispersal process and to introduce the concepts of the dispersal rate and dispersal range.
We consider the following operator:
\begin{equation*}
L_{\tau,D,\sigma,m}[\varphi](x,t) = -\tau\varphi_{t}(x,t)+D(x,t)\mathcal{P}[\varphi](x,t) - D_{0}(x,t)\varphi(x,t) + A(x,t)\varphi(x,t),
\end{equation*}
where
$D_{0}(x,t) = \sigma^{-m} \text{diag}\left( d_{1}(x,t), d_{2}(x,t), \cdots, d_{l}(x,t) \right) $
with $\sigma>0$ and $m\geq 0$, and (H1)-(H3) still hold.
We first introduce two structural conditions relating  $D$ and $D_{0}$, and $k_i$.

\begin{itemize}[leftmargin=0.9cm]

\item[$\mathrm{(\tilde{H}1)}$]
(H1) holds and there exists a nonnegative constant matrix $C=(c_{ij})_{l\times l}$
such that
$D(x,t)= C D_{0}(x,t)$.

\item[$\mathrm{(\tilde{H}3)}$]
(H3) holds,
$k_{i}(x,y,t) = k_{i,\sigma}(x-y,t) =\frac{1}{\sigma^{N}} k_{i}\left(\frac{x-y}{\sigma} ,t \right)$,
and $\int_{\mathbb{R}^{N}} k_{i}(x,y,t) dy =1$ for $(x,t)\in \bar{\Omega}\times \mathbb{R}$.
\end{itemize}
Here, $d_{i}$ $(1\leq i \leq l)$ represent the dispersal rate, $\sigma$ characterizes the dispersal range, and $m\geq 0$ is the cost parameter.
Clearly, ${\rm (\tilde{H}1) }$ ensures that functions $d_{i}(x,t)$ $(1\leq i \leq l)$ are positive, continuous, and $1-$periodic in $t$.

For simplicity of notation, we sometimes write $L_{\tau,D,\sigma,m}$ simply as $L$.
When $L$ satisfies the following condition, $L$ is a nonlocal dispersal operator with Dirichlet boundary condition.
\begin{itemize} [leftmargin=0.8cm]
\item[(D)] $\mathrm{(\tilde{H}1)}$ and $\mathrm{(\tilde{H}3)}$ hold, and matrix $C$ is the identity matrix.
\end{itemize}

\begin{remark}\label{REMARK1}{\rm
Observe that, by replacing $A(x,t)$ with
\begin{equation*}
B(x,t) = -D_{0}(x,t) + A(x,t)
\end{equation*}
in Theorems \ref{THM1.3}, \ref{THM1.4}, \ref{THM1.5} and \ref{THM1.6},
the corresponding results for the operator $L$ follow immediately.
}
\end{remark}

Let $\bar{D} = \max_{1\leq i \leq l, (x,t)\in\bar{\Omega}\times\mathbb{R}}d_{i}(x,t)$
and
$\underline{D} = \min_{1\leq i \leq l, (x,t)\in\bar{\Omega}\times\mathbb{R}}d_{i}(x,t)$. The following result then holds.

\begin{theorem}[\textbf{Asymptotic behavior with respect to the dispersal rate}]\label{THM1.7}
Assume that $(\tilde{H}1)$, $(H2)$ and $(H3) $ hold. Then we have
\begin{equation*}
\lim_{ \bar{D} \to 0^{+}} s(L_{\tau,D,\sigma,m}) = \max_{x \in \bar{\Omega}} \lambda_{A}(x).
\end{equation*}
In addition, if $(\tilde{H}2)$, $(D)$ hold and $k_{i}(\cdot,t) \equiv k_{i}(\cdot)$ for $1 \leq i \leq l$, then
\begin{equation*}
\lim_{ \underline{D} \to +\infty} s(L_{\tau,D,\sigma,m}) = -\infty.
\end{equation*}
\end{theorem}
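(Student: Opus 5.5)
We prove the two limits separately, working throughout with the generalized principal eigenvalues of Theorem \ref{THM1.6} and using Remark \ref{REMARK1} with $B=-D_0+A$. When $(\tilde H2)$ fails, we use the monotone smooth approximations of Theorem \ref{THM1.5} in its place; since the approximations bracket $s(L)$, lower and upper bounds transfer to the limit.

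\textbf{First limit, $\bar D\to0^+$.} The plan is to squeeze $s(L)$ between two quantities that both tend to $\max_{\bar\Omega}\lambda_A$.

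For the lower bound we use $\mathcal{M}\ge 0$. Then $s(L)\ge s(-\tau\partial_t - D_0 + A)$, which follows from the resolvent-positive perturbation theory underlying Theorem \ref{THM1.3} (monotonicity of the spectral bound under positive perturbation). The operator $-\tau\partial_t-D_0+A$ acts pointwise in $x$. Its spectral bound is therefore $\max_x\lambda_{A-D_0}(x)$, the maximum of the principal eigenvalues of the ODE problem \eqref{1-1} with $A$ replaced by $A-D_0$. Since $\|D_0\|_\infty\le\sigma^{-m}\bar D\to0$, and $\lambda$ of \eqref{1-1} is Lipschitz in the coefficient with respect to the sup norm (with constant $1$), we get $\max_x\lambda_{A-D_0}(x)\ge \max_x\lambda_A(x)-\sigma^{-m}\bar D$.

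For the upper bound we test with $\varphi=\varphi_A(x,t)$, the positive eigenfunction of Lemma \ref{LEM1.2}. If $A$ is not smooth in $x$, we first replace $A$ by a continuous approximation $A^\epsilon\ge A$ with $\lambda_{A^\epsilon}\le\lambda_A+\epsilon$, so that $\varphi$ is continuous and lies in $\mathcal X_1^{++}$. With this choice,
\[
L_i[\varphi]\le \lambda_A(x)\varphi_i+\sum_j c_{ij}\sigma^{-m}d_j\,\mathcal P_j[\varphi_j]
\le\Big(\max\lambda_A+\epsilon+C\sigma^{-m}\bar D\,\tfrac{\max\varphi}{\min\varphi}\Big)\varphi_i .
\]
By the variational formula for $\lambda'_p$ this gives $s(L)\le\max\lambda_A+\epsilon+O(\bar D)$. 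Letting $\bar D\to0$ and then $\epsilon\to0$ finishes the first limit.

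\textbf{Second limit, $\underline D\to\infty$, under (D).} Here $D=D_0$, $\int_{\mathbb R^N}k_i=1$, and the kernels are time-independent. It suffices to exhibit a positive test function $\varphi$ with $L_i[\varphi]\le\mu\varphi_i$ and $\mu\to-\infty$, since then $\lambda'_p(L)\le\mu$ and Theorem \ref{THM1.6} applies.

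The key fact is that the scalar Dirichlet operator $\mathcal P_i-I$ on $\bar\Omega$ has spectral bound $-\delta_i<0$, where $\delta_i>0$ because $\Omega$ is bounded and $\int_\Omega k_i<1$ near the boundary. We choose $\psi_i\in C(\bar\Omega)$, $\psi_i\gg0$, with $\mathcal P_i[\psi_i]-\psi_i\le-\tfrac{\delta_i}{2}\psi_i$. This is available via a perturbed (approximate) principal eigenfunction: take $\psi_i=\big((1+\delta)I-\mathcal P_i\big)^{-1}1$ for suitable small $\delta$. Time-independence of $k_i$ is exactly what lets $\psi_i$ be independent of $t$, so no $\partial_t$ term is produced.

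Then set $\varphi_i(x,t)=\psi_i(x)\,\eta_i(t)$ with a fixed positive periodic $\eta_i$; for instance $\eta\equiv1$, absorbing $A$ as a bounded term. This gives
\[
L_i[\varphi]\le-\sigma^{-m}d_i\tfrac{\delta}{2}\psi_i+\|A\|_\infty\tfrac{\max\psi}{\min\psi}\,\psi_i
\le\Big(-\sigma^{-m}\underline D\tfrac{\delta}{2}+C\Big)\varphi_i \to-\infty .
\]

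\textbf{Main obstacle.} I expect the hardest step to be constructing $\psi_i$ with a uniform strictly negative ratio on all of $\bar\Omega$, including near the boundary where $\mathcal P_i-I$ may lack a principal eigenfunction. The resolvent choice $\psi=((1+\delta)I-\mathcal P_i)^{-1}1$ handles this. It requires $r(\mathcal P_i)<1$ on $C(\bar\Omega)$, which holds by the Dirichlet structure: $\int_\Omega k_i(x,y)\,dy<1$ on a boundary layer, and $k_i(x,x)>0$ spreads this deficit to the whole domain.
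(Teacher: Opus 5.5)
Your architecture matches the paper's. You get the lower bound $s(L)\ge\max_x\lambda_{A-D_0}(x)\ge\max_x\lambda_A(x)-\sigma^{-m}\bar D$ from positive-perturbation theory, and an upper bound by testing $\lambda_p'$ with the temporal eigenfunction of Lemma~\ref{LEM1.2}. For $\underline D\to+\infty$ you use a time-independent supersolution built from the scalar Dirichlet operators $\mathcal P_i-I$.

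\textbf{The gap.} It is in the upper bound for $\bar D\to0^+$. You claim that $\varphi_A$ lies in $\mathcal X_1^{++}$ after a ``continuous approximation'' of $A$ when $A$ is not smooth. But the obstruction is reducibility, not regularity in $x$; $A$ is already continuous by (H2).
\begin{itemize}
\item Under (H2) alone $A$ may be reducible. For example, $A\equiv\operatorname{diag}(1,0)$ is smooth, yet its principal eigenfunction is $(1,0)^T$. So it is not admissible in $\lambda_p'$, and your ratio $\max\varphi/\min\varphi$ is infinite.
\item If the multiplicity of $\lambda_A(x)$ changes with $x$ (e.g.\ $A(x)=\operatorname{diag}(x_1,-x_1)$), there is no continuous choice of $\varphi_A$ at all.
\end{itemize}
The missing step is to make the coefficient irreducible, exactly as the paper does:
\begin{itemize}
\item Test with the eigenfunction $\varphi$ of $A_k=(a_{ij}+\frac1k)_{l\times l}\ge A$, which lies in $\mathcal X_1^{++}$ and is continuous in $x$.
\item Use $A\le A_k$ and $\varphi\ge0$ to get $L(A)[\varphi]\le L(A_k)[\varphi]$.
\item Let $k\to\infty$ after $\bar D\to0^+$.
\end{itemize}
Here $\max_x\lambda_{A_k}(x)\to\max_x\lambda_A(x)$ follows from monotone pointwise convergence and Dini's theorem, not from a Lipschitz bound. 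Your ``Lipschitz with constant $1$'' is valid only for diagonal perturbations, which is enough for $D_0$. Off-diagonal perturbations of size $\epsilon$ can move the principal eigenvalue by order $\sqrt{\epsilon}$, as for $\bigl(\begin{smallmatrix}0&1\\ \epsilon&0\end{smallmatrix}\bigr)$.

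\textbf{Smaller points.} First, your fallback to Theorem~\ref{THM1.5} when $(\tilde H2)$ fails does not work as stated: its approximants only satisfy (H2), so Theorem~\ref{THM1.6} need not apply to them. It is also unnecessary. The only inequality you use, $s(L)\le\lambda_p'(L)$, holds under (H2) alone:
\begin{itemize}
\item Suppose $L[\varphi]\le\mu\varphi$ with $\varphi\in\mathcal X_1^{++}$.
\item Positivity of the evolution operator gives $V_\mu(1,0)\varphi(\cdot,0)\le\varphi(\cdot,0)\gg0$.
\item Hence $r(V_\mu(1,0))\le1$, and $s(L)\le\mu$ by Proposition~\ref{PP2.10}.
\end{itemize}
This also covers the paper's own appeal to Theorem~\ref{THM1.6} in the first part, where only (H2) is assumed.

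Second, in the $\underline D\to+\infty$ part your resolvent construction works, but not with the constant you state. With $\psi_i=((1+\delta)I-\mathcal P_i)^{-1}1$ you get $\mathcal P_i[\psi_i]-\psi_i=\delta\psi_i-1\le(\delta-\|\psi_i\|_\infty^{-1})\psi_i$. This is uniformly negative once $\delta<\|(I-\mathcal P_i)^{-1}1\|_\infty^{-1}$, or simply take $\delta=0$. Any uniform negative constant suffices.

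Third, the obstacle you anticipated is not real here. $\mathcal P_i$ is compact and there is no variable potential, so $\mathcal P_i-I$ always has a principal eigenpair $(r(\mathcal P_i)-1,\varphi_i)$ with $\varphi_i\in X^{++}$ and $r(\mathcal P_i)<1$. That eigenpair is what the paper uses.
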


\begin{remark}{\rm
In Theorem \ref{THM1.7}, condition ${ (\tilde{H}2) }$ ensures the validity of Theorem \ref{THM1.6}. In addition, note that if ${  (\tilde{H}2) }$ and (D) hold simultaneously, then the matrix-valued function $A$ is irreducible.
}
\end{remark}

Let $\lambda^{local}$ be the principal eigenvalue of the following local dispersal eigenvalue problem with Dirichlet boundary condition.
\begin{equation}\label{1-2}
\left\{
\begin{aligned}
& -\tau\varphi_{t}(x,t)+D_{r}(x,t)R[\varphi](x,t)+A(x,t)\varphi(x,t) = \lambda\varphi(x,t) &\text{in }& \Omega\times \mathbb{R},\\
&\varphi(x,t)=0  &\text{on }& \partial\Omega\times \mathbb{R},\\
&\varphi(x,t)=\varphi(x,t+1)  &\text{in }& \Omega,
\end{aligned}
\right.
\end{equation}
where $R[\varphi]=(\Delta \varphi_{1}, \Delta \varphi_{2}, \cdots , \Delta \varphi_{l})$
and
$D_{r}(x,t)=diag\{ d_{r,1}(x,t), d_{r,2}(x,t),\cdots,d_{r,l}(x,t) \}$ with
\begin{equation*}
d_{r,i}(x,t)=\frac{d_{i}(x,t)}{2N}\int_{\mathbb{R}^{N}}k_{i}(z,t)|z|^{2}dz.
\end{equation*}
Subsequently, we demonstrate the dependence of $s(L_{\tau,D,\sigma,m})$ on the dispersal range. 
\begin{theorem}[\textbf{Asymptotic behavior with respect to the dispersal range}]\label{THM1.8}
Assume that $\mathrm{ (\tilde{H}1)},\mathrm{ ({H}2)}$, $\mathrm{ (\tilde{H}3)}$ hold and $k_{i}(\cdot,t) (1\leq i \leq l)$ are compactly supported. Then we have
\begin{equation*}
\lim_{\sigma\to +\infty} s(L_{\tau,D,\sigma,m}) =
\left\{
\begin{aligned}
& \max_{x\in\bar{\Omega}} \lambda_{A_{0}}(x), &\text{  if  }\;m=0,\\
& \max_{x\in\bar{\Omega}} \lambda_{A}(x), &\text{  if  }\; m>0,
\end{aligned}
\right.
\end{equation*}
where $A_{0} = -D_{0} + A$.
Assume further that ${ \rm (\tilde{H}2), (D)}$ hold and $k_{i}(\cdot,t) (1\leq i \leq l)$ are symmetric (in the sense that $k_{i}(z,t)=k_{i}(z^{\prime},t)$ whenever $|z|= |z^{\prime}|$).  Then the following hold.
\begin{itemize}
\item[(i)]  If $m\in [0,2)$ and each $d_{i}(x,t)\equiv d_{i}$ is a constant, then 
\begin{equation*}
\lim_{\sigma\to 0^{+}} s(L_{\tau,D,\sigma,m}) =
\max_{x\in\bar{\Omega}} \lambda_{A}(x).
\end{equation*} 

\item[(ii)] If $m=2$ and $d_{i}, a_{ij} \in C^{\alpha,\frac{\alpha}{2}}(\bar{\Omega} \times \mathbb{R})$, then \begin{equation*}
\lim_{\sigma\to 0^{+}} s(L_{\tau,D,\sigma,m}) = \lambda^{local}.
\end{equation*} 
\end{itemize}  
\end{theorem}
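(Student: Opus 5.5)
The plan is to handle the three regimes separately. Throughout I rely on Theorem~\ref{THM1.6} (with $A$ replaced by $B=-D_0+A$, as in Remark~\ref{REMARK1}), which gives the sup--inf and inf--sup characterizations $s(L)=\lambda_p(L)=\lambda_p'(L)$. I also rely on the smooth approximations $A^k_\pm$ of Theorem~\ref{THM1.5}. For each $A^k_\pm$, Lemma~\ref{LEM1.2} provides a positive time-periodic eigenvector $\varphi_{A^k_\pm}(x,t)$ that is smooth in $x$, and $\max_{\bar\Omega}\lambda_{A^k_\pm}\to\max_{\bar\Omega}\lambda_A$. I will also use that $C\mapsto \lambda_C(x)$ is monotone under entrywise order of the off-diagonal/diagonal entries and continuous in the sup norm.

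\emph{The limit $\sigma\to+\infty$.} Since $k_{i,\sigma}(z,t)=\sigma^{-N}k_i(z/\sigma,t)$, we have $\|\mathcal P\|_{\mathcal X\to\mathcal X}\le |\Omega|\,\sigma^{-N}\max|k_i|\to0$.

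For the lower bound I use $\mathcal M\ge0$. Resolvent positivity then gives $s(L)\ge s(-\tau\partial_t+B)=\max_{\bar\Omega}\lambda_B(x)$, where the last identity is the one used in Theorem~\ref{THM1.7}.

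For the upper bound I take $\varphi=\varphi_{B^k_+}\in\mathcal X_1^{++}$ with $B^k_+\ge B$ smooth. Then
\[
L\varphi\le \Bigl(\max\lambda_{B^k_+}+C\sigma^{-N}\tfrac{\max\varphi}{\min\varphi}\Bigr)\varphi,
\]
so $\lambda_p'(L)\le\max\lambda_{B^k_+}+o(1)$. Letting $\sigma\to\infty$ and then $k\to\infty$ gives the limit $\max\lambda_{B}$. If $m=0$, then $B=A_0$ does not depend on $\sigma$. If $m>0$, then $B=A-\sigma^{-m}\mathrm{diag}(d_i)\to A$ uniformly, and continuity of $\lambda_\cdot$ yields $\max\lambda_A$.

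\emph{The limit $\sigma\to0^+$ with $m\in[0,2)$ under (D).} Here $L\varphi=-\tau\varphi_t+A\varphi+\sigma^{-m}d_i(\mathcal P_i\varphi_i-\varphi_i)$.

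For the upper bound, take $\varphi=\tilde\varphi_{A^k_+}$, a $C^2$ positive extension of the eigenvector to $\mathbb R^N$. Since the extension is nonnegative,
\[
\int_\Omega k_\sigma(x-y)\varphi(y)\,dy\le\int_{\mathbb R^N}k_\sigma(x-y)\tilde\varphi(y)\,dy .
\]
Taylor expansion and the radial symmetry of $k_i$ kill the first-order term, so $\sigma^{-m}(\mathcal P\varphi-\varphi)\le C\sigma^{2-m}\to0$. Hence $\lambda_p'(L)\le\max\lambda_{A^k_+}+o(1)$.

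For the lower bound, fix $x_0\in\Omega$ with $\lambda_A(x_0)\ge\max\lambda_A-\varepsilon$ and a ball $B_\delta(x_0)\subset\Omega$ on which $\lambda_A\ge\max\lambda_A-2\varepsilon$. By domain monotonicity (restricting the integral to $B_\delta$ decreases $\mathcal P$), it suffices to bound $s$ on the ball from below. There I would compare with the local Dirichlet problem with vanishing diffusion $\sigma^{2-m}d_{r,i}$. Its principal eigenvalue tends to $\min_{B_\delta}\lambda_A$-type bounds from below by the classical small-diffusion theory, and part (ii)'s comparison argument transfers this bound to the nonlocal operator up to $o(1)$. This lower bound is the main obstacle, because positive sub-test functions degenerate near $\partial B_\delta$. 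The first approach I would try is the test function $\psi_\sigma(x)\varphi_A(x,t)$ with a rescaled bump $\psi$, accepting an error of order $\sigma^{2-m}/\delta^2$.

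\emph{The case $m=2$.} I would follow Shen--Xie. Let $\Phi^\sigma$ and $\Phi^0$ be the period-$1$ evolution maps, i.e.\ the time-$\tau$ solution maps, of the nonlocal and local Dirichlet problems. By Theorem~\ref{THM1.5} and Hölder regularity of the coefficients, $s(L)=\tau^{-1}\ln r(\Phi^\sigma)$ and $\lambda^{local}=\tau^{-1}\ln r(\Phi^0)$. Using the consistency estimate $\sigma^{-2}(\mathcal P-I)u\to D_r\Delta u$ on $C^{2+\alpha}$ functions, together with a boundary-layer barrier for Dirichlet data, I would show that $\Phi^\sigma u_0\to\Phi^0u_0$ uniformly for smooth $u_0$ vanishing on $\partial\Omega$. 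Testing with the local principal eigenfunction (smoothly cut off) then gives $\liminf s(L)\ge\lambda^{local}$. For the reverse inequality, I would use the local eigenfunction on a slightly larger domain $\Omega_\eta$ as a supersolution, which yields $\limsup s(L)\le\lambda^{local}(\Omega_\eta)$. Finally, continuity of $\lambda^{local}$ in the domain as $\eta\to0$ closes the argument.
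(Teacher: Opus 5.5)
Your $\sigma\to+\infty$ argument is the paper's: it reduces to Step~1 of the proof of Theorem~\ref{THM1.7}. Your upper bounds as $\sigma\to0^+$ are also sound.

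In (ii), you use the $C^{2+\alpha}$ eigenfunction of $\Omega_\eta$ directly as a supersolution. Once $\sigma R<\eta$, where $\mathrm{supp}\,k_i(\cdot,t)\subset B_R(0)$, the integral over $\Omega$ is dominated by a boundary-free one, and Taylor's formula gives $L_{\sigma,2}\varphi\le(\lambda^{r}_{\Omega_\eta}+C\sigma^{\alpha})\varphi$ on $\bar\Omega$. Combined with Lemma~\ref{LEM3.3}, this is more direct than the paper's route through Theorem~\ref{THM3.4} and an $n_0$-period iteration.

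Two small repairs are needed there. First, an eigenvector of a reducible $B^k_+$ or $A^k_+$ need not lie in $\mathcal{X}_1^{++}$, so add $\frac1k$ to every entry, as the paper does. Second, you only need the elementary inequality $s\le\lambda_p'$, not Theorem~\ref{THM1.6}, whose hypothesis $(\tilde H2)$ is not assumed in the first part.

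\textbf{Gap in (i).} The lower bound $\liminf_{\sigma\to0^+}s(L)\ge\max_{\bar\Omega}\lambda_A$ is left open. Your first suggestion, comparing with a local problem with diffusion $\sigma^{2-m}d_{r,i}$, is not justified. The estimate of Theorem~\ref{THM3.4} is for a fixed local operator, and its constant depends on $C^{2+\alpha,1+\frac{\alpha}{2}}$ norms of the local solution. These norms are not uniform as the diffusion degenerates, since boundary layers of width $\sigma^{1-m/2}$ form; for $m=0$ that width equals the kernel's range. The bump approach does work, but it needs three things you have not supplied.

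(a) Since $\psi\varphi\notin\mathcal{X}_1^{++}$, do not use $\lambda_p$. Instead, $L\phi\ge\lambda\phi$ with $\phi\in\mathcal{X}_1^{+}\setminus\{0\}$ gives $V_\lambda(1,0)\phi(\cdot,0)\ge\phi(\cdot,0)$. Hence $r(V_\lambda(1,0))\ge1$ and $s(L)\ge\lambda$ by Proposition~\ref{PP2.10}, exactly as in Step~1 of the proof of Theorem~\ref{THM1.3}. Where $\phi=0$ the inequality is automatic, since there $(L\phi)_i=d_i\sigma^{-m}\mathcal{P}_i\phi_i\ge0$.

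(b) $\varphi_A(x,t)$ is only continuous in $x$, so Taylor's formula is unavailable for $\psi\varphi_A$. Freeze the point and take $\phi(x,t)=\psi(x)\varphi_A(x_0,t)$. Its second factor is strictly positive because $A$ is irreducible under $(\tilde H2)$ and (D). The difference $A(x,t)-A(x_0,t)$ costs only $o(1)$ as $\delta\to0$.

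(c) The crux is that you need $\mathcal{P}_i\psi\ge(1-C\sigma^2)\psi$ on $\{\psi>0\}$, with $C=C(\delta)$ independent of $\sigma$ and $t$. Taylor's formula alone gives only an additive $O(\sigma^2)$ error, which is not small relative to $\psi$ near the edge of its support. There, a priori, one only has $\sigma^{-m}(\mathcal{P}_i\psi-\psi)\ge-\sigma^{-m}\psi$. You need a bump that is a convex power of the distance near its edge, for example $\psi=(\delta^2-|x-x_0|^2)_+^{p}$ with $p$ large. In the layer of width $O(\sigma)$ at the edge, the symmetry of $k_i(\cdot,t)$ and Jensen's inequality make the kernel average dominate $\psi$. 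Elsewhere, an expansion of $\mathcal{P}_i\psi/\psi$ gives the bound with $C\sim\delta^{-2}$. You then get $L\phi\ge(\lambda_A(x_0)-o_\delta(1)-C\sigma^{2-m}\delta^{-2})\phi$, which yields the claim.

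\textbf{Gap in (ii).} The lower bound has a smaller gap of the same kind. Uniform convergence of solutions (Theorem~\ref{THM3.4}) gives an additive error. A lower bound on $r(V^{\sigma}_{\Omega}(1,0))$, however, needs a pointwise inequality $V^\sigma_\Omega(n,0)u_0\ge\rho^nu_0$. A one-period test with a cut-off of $\varphi^r_0$ gives no margin where $u_0$ is small.

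The missing ingredient is the exponential separation of Lemma~\ref{LEM3.2}. For nontrivial $0\le u_0\in C_c^\infty(\Omega)$ one has $V^r(n_0,0)u_0=c\,e^{n_0\lambda^{local}/\tau}\varphi^r_0+O(e^{n_0(\lambda^{local}/\tau-\gamma)})$ with $c>0$. So for $n_0$ large, $V^r(n_0,0)u_0\ge\frac c2e^{n_0\lambda^{local}/\tau}\min_{\mathrm{supp}\,u_0}\varphi^r_0$ on $\mathrm{supp}\,u_0$. This margin absorbs the error of Theorem~\ref{THM3.4} for small $\sigma$. It yields $V^\sigma_\Omega(n_0,0)u_0\ge\theta e^{n_0\lambda^{local}/\tau}u_0$ with $\theta>0$ independent of $n_0$, that is, $s(L_{\sigma,2})\ge\lambda^{local}+\frac{\tau}{n_0}\ln\theta$. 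Letting $n_0\to\infty$ closes the argument. This is what the paper's reference to Shen--Xie and Lemma~\ref{LEM3.2} encodes.
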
 

\begin{remark}{\rm
In Theorem \ref{THM1.8}(ii)(a), the condition that $d_{i}$  is constant serves to guarantee that the choice of approximating sequence in Theorem \ref{THM1.5} is independent of  $d_{i}$.
If we assume that $s(L)$ is the principal eigenvalue of $L$, and that $d_{i}(x,t),a_{ij}(x,t)\in C^{4,0}(\bar{\Omega}\times \mathbb{R})$, then the parallel conclusions in Theorem \ref{THM1.8}(ii)(a) can be derived without using Theorem \ref{THM1.5}.
The requirement $d_{i},a_{ij}\in C^{\alpha,\frac{\alpha}{2}}(\bar{\Omega} \times \mathbb{R})$ in Theorem \ref{THM1.8}(ii)(b)  ensures the well-posedness of the eigenvalue problem for the local dispersal system.
}
\end{remark}

Next, we study the dependence of the principal spectrum point/generalized principal eigenvalue on the frequency.
We first establish a monotonicity result. In what follows, we allow the dispersal rate to depend on the frequency.
In addition, we assume that $D$ is independent of $(x,t)$ and impose symmetry conditions on $L$ so that it admits a well-defined adjoint operator $L^{*}$ with the same structure.
To this end, we introduce the following conditions:
\begin{itemize}[leftmargin=0.9cm]

\item[(F1)]
${\rm (H1) }$ holds.
Moreover, $D$ and $D_{0}$ are independent of $(x,t)$ and may depend on $\tau$, i.e., $D(x,t)=D(\tau)$ and $D_{0}(x,t)=D_{0}(\tau)$. The matrix $D(\tau)$ is symmetric, its off-diagonal entries are constants and its diagonal coincides with the outflow matrix $D_{0}(\tau)$, namely,
\begin{equation*}
\operatorname{diag} D(\tau)
 =  D_{0}(\tau).
\end{equation*}
In addition, $d_{ii}(\tau) \in C^{1}(0,\infty)$ for $ 1\le i\le l$.

\item[(F2)]
For all $1\leq i \leq l$,
$k_{i}(x,y,t) = k(x,y,t)$ and $k(x,y,t) = k(y,x,t)$ for $(x,y,t)\in \bar{\Omega}\times\bar{\Omega}\times\mathbb{R}$, and
$A(x,t)$ is symmetric for all $(x,t)\in\bar{\Omega}\times\mathbb{R}$.
\end{itemize}

Define the temporal averages
$\hat{\mathcal{P}} = \text{diag} (\hat{\mathcal{P}}_{1},  \cdots, \hat{\mathcal{P}}_{l}) : X \to X$  with
$$\hat{\mathcal{P}}_{i}[u_{i}](x) = \int_{\Omega} \int_{0}^{1} k_{i}(x,y,t)dt u_{i}(y) dy $$
and
$\hat{A}(x) = (\hat{a}_{ij}(x))_{l\times l}$ with $ \hat{a}_{ij}(x) = \int_{0}^{1} a_{ij}(x,t)dt$.
For simplicity, we use $'$ to denote differentiation with respect to $\tau$ in what follows.

\begin{theorem}[\textbf{Monotonicity with respect to frequency}]\label{THM1.9}
Assume that ${\rm (H2),(\tilde{H}3), (F1), (F2)}$ hold and $d_{ii}^{\prime}(\tau)\geq 0$, $ \left(\frac{d_{ii}(\tau)}{\tau} \right)^{\prime} \leq 0$ for  $1\leq i \leq l$.
Then $s(L_{\tau,D,\sigma,m})$ is nonincreasing in $\tau$.
Assume further that $s(L_{\tau,D,\sigma,m})$ is the principal eigenvalue, then the following conclusions hold.
\begin{itemize}[leftmargin=0.8cm]

\item[$(i)$] If there exists $1 \leq k\leq l$ such that $d_{kk}^{\prime}(\tau)> 0$, then $s^{\prime}(L_{\tau,D,\sigma,m})<0$.

\item[$(ii)$]
If $d_{ii}^{\prime}(\tau)=0$, $\left( \frac{d_{ii}(\tau)}{\tau} \right)^{\prime}<0$ for $1\leq i \leq l$ and the eigenvalue problem
\begin{equation*}
D \hat{\mathcal{P}}[\phi](x) - D_{0}\phi(x) + \hat{A}(x)\phi(x) = \lambda \phi(x)
\enspace \text{in} \enspace \bar{\Omega}
\end{equation*}
admits a principal eigenvalue with principal eigenfunction $\phi$, then $s^{\prime}(L_{\tau,D,\sigma,m}) = 0$ if and only if there exists some $1-$periodic function $r(t)\in C(\mathbb{R})$ with $\int_{0}^{1}r(t)dt=0$ satisfying
\begin{equation*}
r(t)\phi(x) = \left( D \mathcal{P}[\phi](x,t) - D\hat{\mathcal{P}}[\phi](x) \right) +   \left(A(x,t) - \hat{A}(x) \right)\phi(x)
\enspace  \text{in} \enspace \bar{\Omega}\times[0,1].
\end{equation*}
\end{itemize}
\end{theorem}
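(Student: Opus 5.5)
Throughout we write $\lambda(\tau)$ for $s(L_{\tau,D,\sigma,m})$. The plan is to prove monotonicity first in the case where a principal eigenvalue exists, and then remove that assumption by approximation. For the approximation, Theorem \ref{THM1.5} (via Remark \ref{REMARK1}) gives smooth $A^{k}_{\pm}$ with $s(L(A^{k}_{\pm}))\to s(L(A))$, and each $s(L(A^k_\pm))$ is a principal eigenvalue. We must check that the construction preserves the symmetry of $A$ in (F2) and is independent of $\tau$; if it does not, we symmetrize with $\frac12(A^{k}_{\pm}+(A^{k}_{\pm})^{T})$, which keeps the order relations. Given this, monotonicity of the approximants passes to the limit. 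So we may assume $\lambda(\tau)$ is an algebraically simple principal eigenvalue. Simplicity should hold after adding a small irreducible perturbation to $A$, which is then removed by continuity.

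\textbf{Setup.} Let $\varphi>0$ be the eigenfunction of $L$. By (F1)--(F2) the adjoint is
\[
L^{*}[\psi]=\tau\psi_t+D\mathcal{P}[\psi]-D_0\psi+A\psi,
\]
since $D$, $A$ and $k$ are symmetric. Its principal eigenfunction is $\psi(x,t)=\varphi(x,-t)$ after time reversal, with the same eigenvalue. Divide the eigen-equation by $\tau$:
\[
-\varphi_t+\tfrac{1}{\tau}\bigl(D\mathcal{P}-D_0\bigr)[\varphi]+\tfrac1\tau A\varphi=\tfrac{\lambda}{\tau}\varphi .
\]

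\textbf{Derivative.} Simplicity makes $\tau\mapsto\lambda(\tau)$ differentiable by analytic perturbation theory. Differentiate in $\tau$ and pair with $\psi$ over $\Omega\times[0,1]$. The standard Hellmann--Feynman formula gives
\[
\Bigl(\tfrac{\lambda}{\tau}\Bigr)'\langle\varphi,\psi\rangle
=\Bigl\langle \Bigl(\tfrac{D}{\tau}\Bigr)'\mathcal P\varphi-\Bigl(\tfrac{D_0}{\tau}\Bigr)'\varphi-\tfrac{1}{\tau^2}A\varphi,\ \psi\Bigr\rangle .
\]
Write $\lambda'$ using $(\lambda/\tau)'=\lambda'/\tau-\lambda/\tau^2$, and substitute $\lambda\langle\varphi,\psi\rangle=\langle L\varphi,\psi\rangle$. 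The terms with $A$ cancel, and so do the terms with $\varphi_t$, since $\langle\varphi_t,\psi\rangle$ appears with the combination coming from $-\tau\varphi_t$. What remains is
\[
\lambda'\langle\varphi,\psi\rangle=\langle D'\mathcal{P}\varphi-D_0'\varphi,\psi\rangle-\tau\langle\varphi_t,\psi\rangle\cdot(\text{something}).
\]

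\textbf{Main obstacle.} This direct route does not obviously give a sign. I therefore follow the Liu--Lou--Peng--Zhou / Feng et al. approach instead. Write $\varphi=e^{\cdots}$ and use the symmetric structure to derive
\[
\lambda'(\tau)\int\!\!\int\varphi\cdot\psi
= -\tfrac{1}{\tau}\int\!\!\int(\text{Dirichlet-type form of }\log(\varphi/\psi))+\bigl(\text{terms with }d_{ii}',(d_{ii}/\tau)'\bigr).
\]
The key identity uses the nonlocal energy
\[
\tfrac12\sum_{i,j}\iint \kappa_{ij}\,\bigl(\sqrt{\varphi_i\psi_j}-\sqrt{\varphi_j\psi_i}\bigr)^2\ge0
\]
together with the off-diagonal symmetric couplings from $D$ and $A$. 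Concretely, test the equation for $\varphi$ against $\psi\,\partial_\tau\log$-type quantities. Equivalently, use the characterization of $\lambda$ through $\varphi\psi$ and the function $w=\sqrt{\varphi\psi}$ componentwise. This yields an inequality of the form
\[
\lambda(\tau)\le\ \text{average of }\bigl(D\mathcal{P}-D_0+A\bigr)\text{ in the Rayleigh quotient of }w,
\]
with the time-derivative contribution vanishing after integration over a period. Computing with $d_{ii}'\ge0$ and $(d_{ii}/\tau)'\le0$ then gives $\lambda'\le0$. Every remaining term is a nonpositive quantity times a coefficient of fixed sign; this is where the hypotheses enter and where I expect most of the effort to go.

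\textbf{Strict cases.} For (i), the term with $d_{kk}'>0$ multiplies a strictly positive integral coming from the nonlocal energy, because $k(x,x,t)>0$ and $\varphi\gg0$. This gives $\lambda'<0$. For (ii), $\lambda'=0$ forces all energy terms to vanish. That means $\varphi_i/\psi_i$ is constant along kernel-connected pairs, and then $\varphi$ separates as $\varphi(x,t)=\theta(t)\phi(x)$. Substituting this into the equation and averaging in $t$ shows that $\phi$ is the principal eigenfunction of the averaged problem. The remainder gives exactly the identity with $r(t)=\tau\theta'/\theta+\lambda-\hat\lambda$, which has zero mean by periodicity. Conversely, given such an $r$, set $\varphi=e^{\int_0^t r/\tau}\phi$. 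This is an explicit eigenfunction, and its eigenvalue equals the averaged principal eigenvalue for every $\tau$, so $\lambda'=0$.
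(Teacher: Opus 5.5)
Your architecture is the same as the paper's: reduce to a simple principal eigenvalue by approximation, work with $\varphi$, $\psi$ and $\alpha_i=\sqrt{\varphi_i\psi_i}$, and do an equality analysis for (ii). The gap is that the step which actually proves the theorem, the sign of $\lambda'$, is only announced, and three things you say about it are wrong.

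First, the $\varphi_t$-term does not cancel in your Hellmann--Feynman computation. Differentiating $L_\tau\varphi=\lambda\varphi$ in $\tau$ and pairing with $\psi$ gives exactly (integrals over $\Omega\times\Omega\times(0,1)$ or $\Omega\times(0,1)$)
\[
\lambda'\langle\varphi,\psi\rangle=-\langle\varphi_t,\psi\rangle+\sum_{i=1}^{l}d_{ii}'\Bigl(\iiint k_\sigma(x-y,t)\varphi_i(y,t)\psi_i(x,t)-\iint\varphi_i\psi_i\Bigr).
\]

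Second, the time-derivative contribution does not vanish over a period; it is the decisive negative term. Write $\partial_t\varphi_i\,\psi_i=\frac12\partial_t(\varphi_i\psi_i)+\frac12\varphi_i\psi_i\,\partial_t\ln(\varphi_i/\psi_i)$ and integrate by parts in $t$. Compute $\tau\partial_t(\varphi_i\psi_i)$ from the two eigen-equations; the local terms $d_{ii}\varphi_i\psi_i$ and $\lambda\varphi_i\psi_i$ cancel. Then symmetrize using $d_{ij}=d_{ji}$, $a_{ij}=a_{ji}$ and $k_\sigma$ even. With $h(z_1,z_2)=(z_1-z_2)(\ln z_1-\ln z_2)\ge 0$ this gives
\[
-\langle\varphi_t,\psi\rangle=-\frac{1}{4\tau}\Bigl[\sum_{i,j}d_{ij}\iiint k_\sigma\,h\bigl(\varphi_j(y,t)\psi_i(x,t),\psi_j(y,t)\varphi_i(x,t)\bigr)+\sum_{i\neq j}\iint a_{ij}\,h(\varphi_j\psi_i,\psi_j\varphi_i)\Bigr]\le 0
\]
(compare \eqref{3-21}).

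Third, it is not true that every remaining term has a fixed sign. Set $\varphi_i=\alpha_ie^{\beta_i}$, $\psi_i=\alpha_ie^{-\beta_i}$ and $u=\beta_i(y,t)-\beta_i(x,t)$. Symmetry of $k_\sigma$ gives $\iiint k_\sigma\varphi_i(y,t)\psi_i(x,t)=\iiint k_\sigma\alpha_i(x,t)\alpha_i(y,t)\cosh u$, so the $d_{ii}'$-term can be positive.

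The missing idea is this. The hypothesis $(d_{ii}/\tau)'\le0$, i.e.\ $0\le d_{ii}'\le d_{ii}/\tau$, lets the $j=i$ part of the bracket, $-\frac{d_{ii}}{4\tau}\iiint k_\sigma h=-\frac{d_{ii}}{\tau}\iiint k_\sigma\alpha_i(x,t)\alpha_i(y,t)\,u\sinh u$, absorb it. Since $\cosh u-u\sinh u\le 1$,
\[
\lambda'\langle\varphi,\psi\rangle\le\sum_{i=1}^{l}d_{ii}'\Bigl[-\frac12\iiint k_\sigma\bigl(\alpha_i(x,t)-\alpha_i(y,t)\bigr)^2+\iint\alpha_i^2\Bigl(\int_\Omega k_\sigma(x-y,t)\,dy-1\Bigr)\Bigr]\le 0 .
\]
This absorption is exactly where both hypotheses on $d_{ii}$ enter; the paper does the same bookkeeping in \eqref{3-17}--\eqref{3-23}. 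The Rayleigh-quotient inequality for $w=\sqrt{\varphi\psi}$ that you describe compares $\lambda(\tau)$ with a $\tau$-dependent quantity and says nothing about $\lambda'$.

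Two smaller corrections, and a note on (ii).
\begin{itemize}
\item In (i), strictness does not come from $k(x,x,t)>0$ and $\varphi\gg0$. Those facts hold equally for the Neumann operator, where $s'$ may vanish. It comes from the Dirichlet loss term in the last display: $\int_\Omega k_\sigma(x-y,t)\,dy<1$ for $x$ near $\partial\Omega$, because $\int_{\mathbb{R}^N}k_\sigma(z,t)\,dz=1$ and $k_\sigma(0,t)>0$, while $\alpha_k>0$.
\item The claim $\psi(x,t)=\varphi(x,-t)$ is false in general. The time-reversed function solves the adjoint equation with coefficients $A(x,-t)$ and $k_\sigma(\cdot,-t)$, so it is the adjoint eigenfunction only when these are even in $t$. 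You never use it, but it should go.
\item Your argument for (ii) is fine once the identity above is available. If $d_{ii}'=0$, then $\lambda'\langle\varphi,\psi\rangle=-\langle\varphi_t,\psi\rangle$, so $\lambda'=0$ forces every $h$-term to vanish. Using connectedness of $\Omega$ and of the coupling, this gives $\varphi_i=\rho(t)\psi_i$ and $\partial_t(\varphi_i\psi_i)=0$, which is the separation $\varphi=\theta(t)\phi(x)$ you use.
\end{itemize}
Corrected this way, your direct Hellmann--Feynman route is shorter than the paper's. The paper differentiates the $\alpha$-equation \eqref{3-15} in $\tau$ and has to track $c_i'$, including terms that cancel between \eqref{3-19}, \eqref{3-20} and \eqref{3-22}.
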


\begin{theorem}[\textbf{Asymptotic behavior with respect to frequency}]\label{THM1.10}
Suppose ${\rm (H1),(\tilde{H}2),(\tilde{H}3)}$ hold. Then the following conclusions hold:
\begin{itemize}[leftmargin=0.8cm]
\item[(i)]
Assume that $k_{i}$ and $a_{ij}$ belong to $C^{0,1}(\bar{\Omega} \times \mathbb{R})$ and $d_{ii}(\tau) \to b_{ii} $ as $\tau\to 0^{+}$ for some positive constant $b_{ii}$. If $s(N_{1}(t))$ is the principal eigenvalue of $N_{1}(t)$, then
\begin{equation*}
\lim_{\tau \to 0^{+}} s(L_{\tau,D,\sigma,m}) = \int_{0}^{1} s(N_{1}(t)) dt.
\end{equation*}
Here, $N_{1}(t)$ is defined by
\begin{equation*}
N_{1}(t)[u](x)= B\mathcal{P}[u](x)-B_{0}u(x)+A(x,t)u(x),\quad u\in X,
\end{equation*}
where $B= (b_{ij})_{l\times l}$ with $b_{ij}=d_{ij}$ for $i\neq j$ and $B_{0}= diag\{b_{11},b_{22},\cdots,b_{ll}\}$.
\item[(ii)] Assume that the conditions of Theorem \ref{THM1.9} hold and $d_{ii}(\tau) \to h_{ii} $ as $\tau\to \infty$ for some positive constant $h_{ii}$, then
\begin{equation*}
\lim_{\tau \to +\infty} s(L_{\tau,D,\sigma,m}) = s^{*},
\end{equation*}
where
\begin{equation*}
s^{*} \geq \min_{1\leq i \leq l} \left\{ h_{ii} \left( \frac{1}{|\Omega|} \int_{0}^{1} \int_{\Omega} \int_{\Omega} k(x-y,t)dydxdt - 1 \right) + \frac{1}{|\Omega|} \int_{0}^{1} \int_{\Omega} a_{ii}(x,t) dx dt \right\} .
\end{equation*}

%
%
\item[(iii)]
Assume that (D) holds, $k_{i} (1\leq i \leq l)$ independent of $t$ and $\min_{1\leq i \leq l}d_{ii}(\tau)\to +\infty$, then
\begin{equation*}
\lim_{\tau \to +\infty} s(L_{\tau,D,\sigma,m}) = -\infty.
\end{equation*}
\end{itemize}
\end{theorem}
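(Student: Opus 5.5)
We treat the three parts separately. Throughout we use the characterization $s(L)=\lambda_p(L)=\lambda_p'(L)$ from Theorem \ref{THM1.6} (valid by Remark \ref{REMARK1} and $(\tilde{H}2)$) and the smooth approximations of Theorem \ref{THM1.5}. This lets us bound $s(L)$ from above and below by exhibiting suitable strictly positive test functions in $\mathcal{X}_1^{++}$.

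For (i), the limit $\tau\to0^+$ is a slow-time adiabatic limit. For each frozen $t$, $N_1(t)$ has a principal eigenvalue $\mu(t):=s(N_1(t))$ by hypothesis. Under $(\tilde{H}2)$ it is algebraically simple with an eigenfunction $\psi(\cdot,t)\in X^{++}$ (the time-independent version of Theorem \ref{THM1.3}), which we normalize by $\max_i\max_x\psi_i(x,t)=1$. Since $k_i,a_{ij}\in C^{0,1}$ and the eigenvalue is simple, perturbation theory for isolated simple eigenvalues should give $\mu\in C^1$ and $\psi\in C^1$ in $t$, $1$-periodically; this regularity step is where the $C^{0,1}$ assumption is needed. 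Set $\bar\mu=\int_0^1\mu$ and take
\[
\varphi(x,t)=\exp\Big(\tfrac1\tau\int_0^t(\mu(s)-\bar\mu)\,ds\Big)\psi(x,t)\in\mathcal{X}_1^{++}.
\]
A direct computation gives $L[\varphi]=\big(\bar\mu+O(\tau)\big)\varphi+E_\tau\varphi$. The error $E_\tau$ comes from $d_{ij}(\tau)-b_{ij}\to0$ and is small relative to $\varphi$ because $\psi$ is uniformly bounded below. Inserting $\varphi$ into the definitions of $\lambda_p$ and $\lambda_p'$ yields $\bar\mu-o(1)\le s(L)\le\bar\mu+o(1)$.

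For (ii), Theorem \ref{THM1.9} shows that $s(L_{\tau})$ is nonincreasing in $\tau$, so the limit $s^*\in[-\infty,\infty)$ exists. It therefore suffices to prove that the displayed lower bound holds for every large $\tau$ up to $o(1)$.
\begin{itemize}
\item Fix $\tau$ and use Theorem \ref{THM1.5} to take $A^k_-$ with $s(L(A^k_-))\le s(L)$ and $s(L(A^k_-))\to s(L)$, where $s(L(A^k_-))$ is a principal eigenvalue. If $(\tilde{H}2)$ is not inherited by $A^k_-$, we add $\varepsilon$ times the all-ones off-diagonal matrix, which costs only $O(\varepsilon)$. This gives an eigenfunction $\varphi\in\mathcal{X}^{++}$, which is $C^1$ in $t$.
\item Divide the $i$-th equation by $\varphi_i$ and integrate over $\Omega\times[0,1]$. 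The term $-\tau\int(\log\varphi_i)_t$ vanishes by periodicity, and the off-diagonal terms $d_{ij}\mathcal{P}_j[\varphi_j]/\varphi_i$ and $a_{ij}\varphi_j/\varphi_i$ are nonnegative.
\item For the diagonal nonlocal term, the symmetry $k(x,y,t)=k(y,x,t)$ from (F2) lets us write
\[
\int_\Omega\int_\Omega k\,\frac{\varphi_i(y)}{\varphi_i(x)}=\tfrac12\int_\Omega\int_\Omega k\Big(\frac{\varphi_i(y)}{\varphi_i(x)}+\frac{\varphi_i(x)}{\varphi_i(y)}\Big)\ge\int_\Omega\int_\Omega k .
\]
\item Combining these gives $|\Omega|\,s(L(A^k_-))\ge d_{ii}(\tau)\big(\int_0^1\!\int\!\int k-|\Omega|\big)+\int_0^1\!\int a^k_{-,ii}$. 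Letting $k\to\infty$ and then $\tau\to\infty$ with $d_{ii}(\tau)\to h_{ii}$ gives the bound for each $i$, hence for the minimum over $i$.
\end{itemize}
Note that for each index $i$ the integrated identity involves only the single fixed value $\lambda=s(L(A^k_-))$, which is what makes the bound uniform in $\tau$.

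For (iii), we argue by an upper bound via $\lambda_p'$. Under (D), $D$ is diagonal and $k_i=k_i(x-y)$ is independent of $t$. For the scalar Dirichlet operator $\mathcal{K}_i[u]=\int_\Omega k_i(x-y)u(y)\,dy-u$, we use that its principal spectrum point satisfies $\mu_i<0$ on bounded $\Omega$; this holds since $\int_\Omega k_i(x-y)\,dy\le1$ with strict inequality near $\partial\Omega$. By Theorems \ref{THM1.5}--\ref{THM1.6} in the scalar, time-independent case, for every $\varepsilon>0$ there is $\psi_i\in C(\bar\Omega)$ with $\psi_i\gg0$ and $\mathcal{K}_i[\psi_i]\le(\mu_i+\varepsilon)\psi_i$. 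Let $\varphi=(\psi_1,\dots,\psi_l)^T$. Then
\[
L_i[\varphi]\le d_{ii}(\tau)(\mu_i+\varepsilon)\psi_i+\sum_j a_{ij}\psi_j\le\big(d_{ii}(\tau)(\mu_i+\varepsilon)+C\big)\psi_i ,
\]
with $C$ depending only on $A$ and on the ratios of the $\psi_j$, and not on $\tau$. Choosing $\varepsilon<\min_i|\mu_i|/2$ gives $s(L)\le\max_i\big(d_{ii}(\tau)\mu_i/2\big)+C\to-\infty$.

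The main obstacle is expected in (i): establishing $C^1$ dependence in $t$ and the uniform positivity of the frozen principal eigenfunctions $\psi(\cdot,t)$ without compactness of the nonlocal operator. The plan is to rely on isolatedness plus algebraic simplicity (the analogue of Theorem \ref{THM1.3}) together with Kato-type perturbation theory for the Riesz projection, which is differentiable because $k_i$ and $a_{ij}$ are $C^1$ in $t$. Positivity then follows by continuity and compactness of $[0,1]$.
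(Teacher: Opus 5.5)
Your proposal is correct and follows the paper's proof essentially step for step. In (i) the paper uses exactly your adiabatic test function $c(t)v(x,t)$, with $c(t)=\exp\bigl\{\frac{1}{\tau}\int_0^t\bigl(s(N_1(z))-\int_0^1 s(N_1(r))\,dr\bigr)dz\bigr\}$, together with Theorem~\ref{THM1.6}; in (ii) it divides the eigen-equation by $\varphi_{i}$, integrates over $\Omega\times(0,1)$, symmetrizes the kernel term and invokes the monotonicity of Theorem~\ref{THM1.9}; and (iii) is the Dirichlet test-function argument of Theorem~\ref{THM1.7}.

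Two small points. First, in (i) normalize $\psi(\cdot,t)$ smoothly (e.g.\ $\int_\Omega\sum_i\psi_i\,dx=1$) rather than by its maximum. The maximum is only Lipschitz in $t$, so that choice would not place $\varphi$ in $\mathcal{X}_1^{++}$. Second, in (ii) your $\varepsilon$-correction is unnecessary. Irreducibility of $A$ passes to $A^k_-$ for large $k$ by uniform convergence, continuity and compactness of $\bar\Omega\times[0,1]$. Adding a constant off-diagonal matrix after the construction could also spoil the flatness of $\lambda_{\tilde{A}^{k}_{-}}$ near its maximum, which Theorem~\ref{THM1.5} relies on to guarantee a principal eigenvalue.
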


\begin{remark}{\rm
We now summarize the asymptotic behaviors obtained under Dirichlet boundary conditions that differ from the corresponding results for Neumann boundary conditions in \cite{Feng}.
\begin{itemize}[leftmargin=0.78cm]
\item[(i)] For large dispersal rates, under Dirichlet boundary conditions, the principal spectrum point converges to $-\infty$ (see Theorem \ref{THM1.7}), whereas under Neumann boundary conditions it converges to the principal eigenvalue of the ordinary differential system corresponding to the spatially averaged matrix valued function $A$ (see \cite[Theorem 4.1 (i)]{Feng}).

\item[(ii)] For large dispersal ranges, under Neumann boundary conditions the principal spectrum point converges to $\sup_{x\in\bar{\Omega}} \lambda_{A}(x)$ (see \cite[Theorem 4.2 (i)]{Feng}); under Dirichlet conditions its limit depends on the value of
$m$ (see Theorem \ref{THM1.8}).

\item[(iii)]  Regarding the monotonicity with respect to the frequency, under Dirichlet boundary conditions, if there exists $1 \le k \le l$ such that $d_{kk}'(\tau) > 0$, then $\frac{\partial s(L)}{\partial \tau} < 0$ (see Theorem \ref{THM1.9}). In contrast, in the Neumann case it may happen that $\frac{\partial s(L)}{\partial \tau} = 0$ (see \cite[Theorem 3.2 (i)]{Feng}).

\item[(iv)]  As for the asymptotic behavior with respect to the frequency, under Dirichlet boundary conditions we have that the principal spectrum point converges to $-\infty$ as $\tau\to+\infty$ whenever $d_{ii}(\tau)\to +\infty$ (see Theorem \ref{THM1.10}); whereas, in the Neumann case, if in addition $\frac{d_{ii}(\tau)}{\tau} \to q_{i}$ as $\tau\to+\infty$  for some nonnegative constant $q_{i}$, then the principal spectrum point converges to the largest eigenvalue of the space--time average of the matrix-valued function $A$ (see \cite[Theorem 3.6 (ii)]{Feng}).
\end{itemize}
}
\end{remark}

\section{Principal spectral theory}
\subsection{The existence of the principal eigenvalue}
\indent

In this section, we establish the existence, uniqueness, and variational characterization of the principal eigenvalue.

Inspired by the works of \cite{KR} and \cite{Feng}, we employ the theory of resolvent positive operators to derive a necessary and sufficient condition for the existence of the principal eigenvalue. To this end, we first recall some results on the theory of resolvent positive operators, the readers can refer to Thieme \cite{Th1, Th2} for details.
Let $Z$ denote a Banach space and $Z^{+}$ be a closed convex cone that is normal and generating.
Denote the interior of $Z^{+}$ by $Z^{++}$.
A bounded linear operator $A : Z \to Z$ is called positive if $Ax \in Z^{+}$ for all $x \in Z^{+}$ and $A$ is not the $0$ operator, and strongly positive if $Ax \in Z^{++} $ for all $x\in Z^{+} \backslash \{ 0 \}$.

For any closed operator $A$, we define the spectral bound of $A$ by
\begin{equation*}
s(A) = \sup \{ Re \lambda \in \mathbb{R} : \lambda \in \sigma(A) \},
\end{equation*}
the real spectral bound of $A$ by
\begin{equation*}
s_{\mathbb{R}}(A) = \sup \{ \lambda \in \mathbb{R} : \lambda \in \sigma(A) \},
\end{equation*}
and the spectral radius of $A$ by
\begin{equation*}
r(A) = \sup \{  |\lambda|  : \lambda \in \sigma(A) \}.
\end{equation*}
Moreover, recalling the definition of the principal spectrum point, we know that for operator $\mathcal{L}$, it is precisely the spectral bound.

\begin{definition}{\rm
A closed operator $A$ in $Z$ is said to be resolvent positive if the resolvent set of $A$, $\rho(A)$, contains a ray $(\omega , \infty)$ and $(\lambda I - A)^{-1}$ is a positive operator for all $\lambda > \omega$.
}
\end{definition}

\begin{definition}{\rm
The operator $A = B + C$ is called a positive perturbation of $B$ if $B$ is a resolvent positive operator and $C : \text{dom}(B) \to Z$ is a positive linear operator.
}
\end{definition}

Define
$$
F_{\lambda} = C(\lambda I - B)^{-1}, \, \lambda > s(B).
$$
\begin{definition}{\rm
The operator $C :$ dom$(B) \to Z$ is called a compact perturbator of $B$ and $A = B+C$ a compact perturbation of $B$ if
\begin{equation*}
(\lambda I - B)^{-1} F_{\lambda} :
\overline{\text{dom}(B)} \to \overline{\text{dom}(B)}
\enspace\text{is compact for some}\enspace \lambda>s(B)
\end{equation*}
and
\begin{equation*}
(\lambda I - B)^{-1} (F_{\lambda})^{2} :
Z \to Z
\enspace\text{is compact for some}\enspace \lambda>s(B).
\end{equation*}
C is called an essentially compact perturbator of $B$ and $A = B + C$ an essentially compact perturbation of $B$ if there is some $n \in \mathbb{N}$ such that $(\lambda I - B)^{-1}(F_{\lambda})^{n} $ is compact for all $\lambda>s(B)$.
}
\end{definition}

\begin{theorem}[{\cite[Theorem 3.6]{Th2}}]\label{THM2.4}
Let $A=B+C$ be a positive perturbation of $B$.
Then $r(F_{\lambda})$ is a decreasing convex function of $\lambda>s(B)$, and exactly one of the following three cases holds:
\begin{itemize}[leftmargin=1cm]
\item[$(i)$] If $r(F_{\lambda})\geq 1$ for all $\lambda>s(B)$, then $A$ is not resolvent positive.

\item[$(ii)$] If $r(F_{\lambda})< 1$ for all $\lambda>s(B)$, then $A$ is resolvent positive and $s(A) = s(B)$.

\item[$(iii)$] If there exists $v>\lambda>s(B)$ such that $r(F_{v})<1\leq r(F_{\lambda})$, then $A$ is resolvent positive and $s(B)<s(A)<\infty$; further $s=s(A)$ is characterized by $r(F_{s})=1$.
\end{itemize}
\end{theorem}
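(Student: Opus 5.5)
Since this is a result quoted from Thieme \cite[Theorem 3.6]{Th2}, I would reconstruct it from three ingredients: the resolvent identity, the Neumann series, and the fact that on a normal generating cone the spectral radius of a positive operator belongs to its spectrum. The starting point is the factorization, valid for $\lambda>s(B)$,
\begin{equation*}
\lambda I - A = (I - F_{\lambda})(\lambda I - B) \quad\text{on } \operatorname{dom}(B),
\end{equation*}
where the identity is checked as $(I-F_\lambda)(\lambda I-B)=\lambda I-B-C$. From it, $\lambda\in\rho(A)$ if and only if $1\in\rho(F_{\lambda})$, and then $(\lambda I-A)^{-1}=(\lambda I-B)^{-1}(I-F_{\lambda})^{-1}$. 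Whenever $r(F_\lambda)<1$, the Neumann series $(I-F_\lambda)^{-1}=\sum_{n\ge 0}F_\lambda^n$ shows that this resolvent is positive and dominates $(\lambda I-B)^{-1}$.

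\textbf{Step 1 (monotonicity).} For $s(B)<\lambda<\mu$, the resolvent identity gives
\begin{equation*}
F_{\lambda}-F_{\mu}=(\mu-\lambda)\,F_{\lambda}(\mu I-B)^{-1} \geq 0,
\end{equation*}
so $0\le F_\mu\le F_\lambda$. Because the cone is normal, $\|F_\mu^n\|\le c\|F_\lambda^n\|$, and the Gelfand formula then gives $r(F_\mu)\le r(F_\lambda)$.

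\textbf{Step 2 (convexity).} I would use that $\lambda\mapsto(\lambda I-B)^{-1}$ is completely monotone in the positive ordering. Its derivatives are $(-1)^k k!(\lambda I-B)^{-k-1}$, hence $F_\lambda$ and every power $F_\lambda^n$ are completely monotone, positive-operator-valued functions. Testing against $x\in Z^+$ and $x^*\in (Z^*)^+$ turns each $\langle x^*,F_\lambda^n x\rangle$ into a scalar completely monotone function. Such functions are log-convex, so $\lambda\mapsto \langle x^*,F^n_\lambda x\rangle^{1/n}$ is convex.

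Normality lets me write $\|F_\lambda^n\|$ up to a constant as a supremum of these pairings over normalized positive $x$ and $x^*$. Taking $\limsup_n$, suprema and limits of convex functions stay convex, so $r(F_\lambda)$ is convex. Convexity together with finiteness also yields continuity on $(s(B),\infty)$, which is needed below.

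I expect this step to be the main obstacle. The delicate points are passing log-convexity through the $n$-th root and the supremum, and making sure the normal-cone constant does not spoil the limit. If this route breaks, I would fall back on Thieme's representation of $r(F_\lambda)$ as an infimum of convex functions.

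\textbf{Step 3 (the trichotomy).} Once $r(F_\lambda)$ is known to be decreasing and continuous, exactly one of the three configurations occurs, and I would treat them as follows.

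In case (iii), continuity gives a unique $s$ with $r(F_s)=1$, and $r(F_\lambda)<1$ for $\lambda>s$. For $\lambda>s$ the resolvent of $A$ is positive by the Neumann series, so $A$ is resolvent positive. Since the cone is normal and $F_s$ is positive, $r(F_s)=1\in\sigma(F_s)$, so the factorization gives $s\in\sigma(A)$. For resolvent positive operators $s(A)=s_{\mathbb R}(A)$, which yields $s(A)=s$, and $s>s(B)$ follows from the hypothesis $r(F_\lambda)\ge 1$ at some $\lambda>s(B)$.

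In case (ii), $A$ is resolvent positive on $(s(B),\infty)$, so $s(A)\le s(B)$. For the reverse inequality, $(\lambda I-A)^{-1}\ge(\lambda I-B)^{-1}\ge0$ and monotonicity of the spectral radius give
\begin{equation*}
\frac{1}{\lambda-s(A)}=r\bigl((\lambda I-A)^{-1}\bigr)\ \ge\ r\bigl((\lambda I-B)^{-1}\bigr)=\frac{1}{\lambda-s(B)},
\end{equation*}
hence $s(A)\ge s(B)$.

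In case (i), suppose $A$ were resolvent positive. For large $\lambda$, iterating the factorization gives
\begin{equation*}
(\lambda I-A)^{-1}\ \ge\ (\lambda I-B)^{-1}\sum_{k\le n}F_\lambda^k \quad\text{for all } n.
\end{equation*}
Applying $C$ on the left and using normality, the partial sums $\sum_{k\le n}F_\lambda^{k+1}$ are bounded in norm. This forces $r(F_\lambda)<1$ for that large $\lambda$, which contradicts the assumption of case (i).
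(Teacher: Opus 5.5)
The paper gives no proof of this statement. It is imported verbatim from Thieme, so there is no in-text argument to compare against.

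Your reconstruction is correct. It is self-contained apart from standard facts:
\begin{itemize}
\item $s(A)=s_{\mathbb R}(A)$ for resolvent positive operators on a normal generating cone, the same black box the paper uses in the proof of Proposition~\ref{PP2.10};
\item $r(T)\in\sigma(T)$ for positive $T$;
\item Bernstein's theorem;
\item Krein's result that the dual cone of a normal cone is generating.
\end{itemize}

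Step~2 is not the obstacle you fear. Set $h_n(\lambda)=\sup\langle x^*,F_\lambda^n x\rangle^{1/n}$ over normalized $x\in Z^+$ and $x^*\in(Z^*)^+$. Then $C_0^{-1/n}\|F_\lambda^n\|^{1/n}\le h_n(\lambda)\le\|F_\lambda^n\|^{1/n}$, with $C_0$ depending only on the cone. Hence $h_n\to r(F_\lambda)$ pointwise, and convexity (even log-convexity) passes to the limit. This is essentially Kato's superconvexity argument specialized to $F_\lambda$.

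A few steps need tightening:
\begin{itemize}
\item \emph{Derivatives of $F_\lambda$.} Since $C$ is only defined on $\operatorname{dom}(B)$, do not differentiate through $C$. Instead use $F_\lambda-F_\mu=(\mu-\lambda)F_\lambda(\mu I-B)^{-1}$, which gives $(-1)^k\frac{d^k}{d\lambda^k}F_\lambda=k!\,F_\lambda(\lambda I-B)^{-k}\ge 0$.
\item \emph{Case (iii).} Continuity alone gives neither uniqueness of the root nor $r(F_\lambda)<1$ for $\lambda>s$, since a continuous nonincreasing function can equal $1$ on an interval. Convexity does the work: $r(F_{s_1})=r(F_{s_2})=1$ with $s_1<s_2<\nu$ and $r(F_\nu)<1$ contradicts convexity at $s_2$.
\item \emph{Case (i).} Bounded partial sums only give $\sup_n\|F_\lambda^n\|<\infty$, i.e.\ $r(F_\lambda)\le 1$. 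To exclude $r(F_\lambda)=1$, note that $\lambda\in\rho(A)$ gives $1\in\rho(F_\lambda)$ by your factorization, while $r(F_\lambda)\in\sigma(F_\lambda)$.
\item \emph{Order-to-norm comparisons (Steps~1 and~3).} These use that the cone is generating as well as normal. They also use that $C(\lambda I-A)^{-1}=(\lambda I-B)(\lambda I-A)^{-1}-I$ is bounded, which follows from the closed graph theorem.
\end{itemize}
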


\begin{theorem}[{\cite[Theorem 4.7]{Th1}}]\label{THM2.5}
Assume that $C$ is an essentially compact perturbator of $B$. Moreover assume that there exist $\lambda_{2}>\lambda_{1}>s(B)$ such that $r(F_{\lambda_{1}})\geq 1 >r(F_{\lambda_{2}})$. Then $s(B)<s(A)$ and the following statement holds:

\begin{itemize}[leftmargin=1cm]
\item [$(i)$] s(A) is an eigenvalue of $A$ associated with positive eigenfunctions of $A$ and $A^{*}$, has finite algebraic multiplicity, and is a pole of the resolvent of $A$. If $C$ is a compact perturbator of $B$, then all spectral values $\lambda$ of $A$ with $Re \lambda \in (s(B),s(A) ]$ are poles of the resolvent of $A$ and are eigenvalues of $A$ with finite algebraic multiplicity.

\item[$(ii)$] 1 is an eigenvalue of $F_{s(A)}$ and is associated with an eigenfunction $\omega \in Z$ of $F_{s(A)}$ such that $(\lambda I - B)^{-1}\omega \in Z^{+}$.
Actually $s(A)$ is the largest $\lambda \in \mathbb{R}$ for which 1 is an eigenvalue of $F_{\lambda}$.

\end{itemize}
\end{theorem}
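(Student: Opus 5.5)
The plan is to reduce everything to the family of bounded positive operators $F_\lambda=C(\lambda I-B)^{-1}$, $\lambda>s(B)$, using the factorization
\begin{equation*}
\lambda I-A=(I-F_\lambda)(\lambda I-B)\quad\text{on }\operatorname{dom}(B).
\end{equation*}
From it, $\lambda\in\rho(A)$ exactly when $1\in\rho(F_\lambda)$, in which case $(\lambda I-A)^{-1}=(\lambda I-B)^{-1}(I-F_\lambda)^{-1}$.

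\textbf{Step 1 (location of $s(A)$).} By the hypothesis $r(F_{\lambda_1})\ge 1>r(F_{\lambda_2})$, we are in case (iii) of Theorem \ref{THM2.4}. Hence $A$ is resolvent positive, $s(B)<s(A)<\infty$, and $s:=s(A)$ satisfies $r(F_s)=1$. For $\lambda>s$ we have $r(F_\lambda)<1$, so $1$ is not an eigenvalue of $F_\lambda$. This already gives the ``largest'' claim in (ii), once we show that $1$ is an eigenvalue of $F_s$.

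\textbf{Step 2 (Krein--Rutman for $F_s$).} Essential compactness gives that $(\lambda I-B)^{-1}F_\lambda^n$ is compact. Writing $F_\lambda^{n+1}=C\,(\lambda I-B)^{-1}F_\lambda^{n}$ alone does not yield compactness, since $C$ is unbounded. So I would work instead with $G_\lambda:=(\lambda I-B)^{-1}C$, which has the same nonzero spectrum and eigenvalue multiplicities as $F_\lambda$, since $F_\lambda$ and $G_\lambda$ are of the form $ST$ and $TS$. Since $G_\lambda^{n+1}=(\lambda I-B)^{-1}F_\lambda^{n}C$, some power of $G_\lambda$ has the form compact times $C$. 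I would use this to show that the essential spectral radius of $F_\lambda$ is $0$, so that the nonzero spectrum of $F_\lambda$ consists of poles of finite rank. This is the step I expect to be the main obstacle: the compactness has to be transferred through the unbounded $C$ by the $ST/TS$ correspondence, restricted to $\overline{\operatorname{dom}(B)}$ where necessary.

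Granting this, $r(F_s)=1>0$ is a pole of the resolvent of the positive operator $F_s$ on a normal, generating cone. By the Krein--Rutman/de Pagter theorem it is then an eigenvalue with an eigenvector $w\in Z^+\setminus\{0\}$.

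\textbf{Step 3 (eigenvector and pole of $A$).} Set $u=(sI-B)^{-1}w\in Z^+$. Then $Cu=F_sw=w\neq0$, so $u\ne 0$, and
\begin{equation*}
(sI-A)u=w-Cu=0 .
\end{equation*}
Thus $s(A)$ is an eigenvalue with a positive eigenvector, and statement (ii) holds with $\omega=w$.

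To show that $s$ is a pole of the resolvent of $A$ with finite algebraic multiplicity, I would apply the analytic Fredholm theorem to $\lambda\mapsto I-F_\lambda$ on $\{\operatorname{Re}\lambda>s(B)\}$. Since some power of $F_\lambda$ is essentially compact, $(I-F_\lambda)^{-1}$ is meromorphic there with finite-rank principal parts. Composing with the holomorphic factor $(\lambda I-B)^{-1}$ shows that every spectral value of $A$ in that half-plane is a pole of finite rank, hence an eigenvalue of finite algebraic multiplicity. This gives both the claim at $s(A)$ and, in the compact-perturbator case, the claim for all $\lambda$ with $\operatorname{Re}\lambda\in(s(B),s(A)]$.

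\textbf{Step 4 (dual eigenfunction).} For the positive eigenfunction of $A^*$, I would take the Laurent expansion of the positive resolvent $(\lambda I-A)^{-1}$ at its pole $s$. Because $s$ is the spectral bound of a resolvent positive operator, the leading coefficient $Q=\lim_{\lambda\downarrow s}(\lambda-s)^k(\lambda I-A)^{-1}$ is a positive, nonzero, finite-rank operator with $(sI-A)Q=0$ and $Q(sI-A)=0$. The adjoint $Q^*$ then maps $(Z^*)^+$ into $\ker(sI-A^*)$, and normality of the cone guarantees that some positive functional has a nonzero image. This produces a positive eigenfunction of $A^*$.
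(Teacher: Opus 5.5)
The paper does not prove Theorem~\ref{THM2.5}; it quotes it from Thieme [Th1, Theorem 4.7]. Your argument therefore has to stand on its own, and it breaks at Step 2.

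The target claim there is that $r_{\mathrm{ess}}(F_\lambda)=0$, so that the nonzero spectrum of $F_\lambda$ consists of finite-rank poles. This is false under these hypotheses. Take $Z=\mathbb{R}\times\ell^1$, $B=(-1)\oplus(-\mathrm{diag}(k)_{k\ge 1})$ on its maximal domain, and $C=2\oplus C'$ with $(C'x)_1=0$, $(C'x)_{k+1}=\frac{k}{2}x_k$.
\begin{itemize}
\item The hypotheses hold. $(\lambda I-B)^{-1}$ is compact for every $\lambda>s(B)=-1$, so $C$ is even a compact perturbator. Also $r(F_\lambda)=\max\{2/(\lambda+1),1/2\}$, so the theorem applies with $\lambda_1=0$, $\lambda_2=3$.
\item Your claim fails. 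The $\ell^1$-block of $F_\lambda$ is the weighted shift $(F'_\lambda y)_{k+1}=\frac{k}{2(\lambda+k)}y_k$. Since the weights tend to $1/2$, it is a compact perturbation of $\frac12 S$. Its spectrum is the closed disc of radius $1/2$, and $r_{\mathrm{ess}}(F_\lambda)=1/2$.
\end{itemize}

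The $ST/TS$ device cannot supply the missing compactness. On $\operatorname{dom}(B)$ with the graph norm, $G_\lambda=(\lambda I-B)^{-1}F_\lambda(\lambda I-B)$ is similar to $F_\lambda$, so it has exactly the same spectral data. Moreover, $G_\lambda^{n+1}=(\lambda I-B)^{-1}F_\lambda^nC$ is compact only as a map into $Z$, not into the graph-norm space on which $G_\lambda$ acts.

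The same false claim is what Step 3 uses to make every spectral value in $\{\operatorname{Re}\lambda>s(B)\}$ a pole. Your argument there never actually uses the compact-perturbator hypothesis. The analytic Fredholm theorem on the whole half-plane would also need $I-F_\lambda$ to be Fredholm at non-real $\lambda$. Nothing in your proposal supplies that, and neither does the hypothesis, which is formulated for real $\lambda$ only.

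What you actually need is only $r_{\mathrm{ess}}(F_{s(A)})<1$, and this follows from the resolvent identity. For real $\lambda,\mu>s(B)$,
\begin{equation*}
F_\lambda-F_\mu=(\mu-\lambda)F_\lambda(\mu I-B)^{-1},\qquad (\mu I-B)^{-1}F_\lambda^n=\bigl[I+(\lambda-\mu)(\mu I-B)^{-1}\bigr](\lambda I-B)^{-1}F_\lambda^n .
\end{equation*}
The second operator is compact, so $F_\lambda^{n+1}-F_\mu F_\lambda^n$ is compact. By induction, $F_\lambda^{n+j}-F_\mu^jF_\lambda^n$ is compact for every $j$. Passing to the Calkin algebra gives $r_{\mathrm{ess}}(F_\lambda)\le r_{\mathrm{ess}}(F_\mu)\le r(F_\mu)$.

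Taking $\lambda=s(A)$ and $\mu=\lambda_2$ yields
\begin{equation*}
r_{\mathrm{ess}}(F_{s(A)})\le r(F_{\lambda_2})<1=r(F_{s(A)}),
\end{equation*}
so $1$ is a finite-rank pole of the resolvent of $F_{s(A)}$. This is consistent with the example above, where $r_{\mathrm{ess}}=1/2$.

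With this bound in place, the rest of your proof goes through at $s(A)$:
\begin{itemize}
\item the Krein--Rutman/de Pagter step giving a positive eigenvector of $F_{s(A)}$;
\item the eigenvector construction $u=(s(A)I-B)^{-1}w$ in Step 3;
\item the analytic Fredholm argument, applied only on a small disc around $s(A)$. This works because the Fredholm property of $I-F_\lambda$ is open in $\lambda$, and $I-F_\lambda$ is invertible for real $\lambda>s(A)$;
\item Step 4.
\end{itemize}

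The clause about all spectral values with $\operatorname{Re}\lambda\in(s(B),s(A)]$ still needs a separate argument. That argument has to genuinely use the compact-perturbator hypothesis.
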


Recall that operators
$\mathcal{M}: \mathcal{X}\to \mathcal{X}$ and $\mathcal{N} : \mathcal{X}_{1} \to \mathcal{X}$ by
\begin{align*}
\mathcal{M}[\varphi](x,t)= D(x,t)\mathcal{P}[\varphi](x,t),
\enspace
\mathcal{N}[\varphi](x,t)= -\tau\varphi_{t}(x,t)+A(x,t)\varphi(x,t).
\end{align*}
By \cite[Proposition 2.9]{Feng}, we have the following result.
\begin{lemma}\label{LEM2.6}
The resolvent operator $(\eta I - \mathcal{N})^{-1}$ exists when $Re \eta > \sup_{x\in\bar{\Omega}} \lambda_{A}(x)$.
Moreover, $\mathcal{N}$ is a resolvent positive operator and
$s(\mathcal{N}) = \max_{x\in\bar{\Omega}} \lambda_{A}(x)$.
\end{lemma}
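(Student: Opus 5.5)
The plan is to exploit the fact that $\mathcal{N}$ involves no spatial coupling: for each fixed $x\in\bar{\Omega}$ it is the periodic ODE operator $-\tau\frac{d}{dt}+A(x,t)$ from \eqref{1-1}. Let $U_x(t,s)$ denote the fundamental matrix of $\tau u'=A(x,t)u$. Since $a_{ij}\ge 0$ for $i\ne j$ by (H2), the system is cooperative, so $U_x(t,s)\ge 0$ entrywise for $t\ge s$ (Kamke's theorem). By Floquet theory together with Lemma \ref{LEM1.2}, the spectral radius of the monodromy matrix $U_x(1,0)$ equals $e^{\lambda_A(x)/\tau}$. Since $\lambda_A$ is continuous (Lemma \ref{LEM1.2}), $\max_{x\in\bar{\Omega}}\lambda_A(x)=\sup_{x}\lambda_A(x)$.

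\emph{Existence and positivity of the resolvent.} Fix $\eta$ with $\mathrm{Re}\,\eta>\max_{\bar{\Omega}}\lambda_A$. The equation $(\eta I-\mathcal{N})u=f$ reads $\tau u_t=(A(x,t)-\eta)u+f$. I will take as candidate the formula
\begin{equation*}
u(x,t)=\frac{1}{\tau}\int_{-\infty}^{t}e^{-\eta(t-s)/\tau}\,U_x(t,s)f(x,s)\,ds .
\end{equation*}
Three things then need checking. First, this integral is $1$-periodic in $t$ because $f$ is periodic and $U_x(t+1,s+1)=U_x(t,s)$. Second, it solves the equation and lies in $\mathcal{X}_1$, where continuity in $x$ follows by dominated convergence. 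Third, it is the unique periodic solution, since a periodic solution of the homogeneous problem would give an eigenvalue $e^{\eta/\tau}$ of $U_x(1,0)$, contradicting $\mathrm{Re}\,\eta>\lambda_A(x)$. For real $\eta$ the integrand is nonnegative whenever $f\ge 0$, so $(\eta I-\mathcal{N})^{-1}$ is positive. It is nonzero, so $\mathcal{N}$ is resolvent positive on the ray $(\max\lambda_A,\infty)$.

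\emph{Main obstacle: a uniform growth bound.} The formula above requires an estimate that is uniform in $x$:
\begin{equation*}
\sup_{x\in\bar{\Omega}}\|U_x(t,s)\|\le C_\varepsilon e^{(\max_{\bar{\Omega}}\lambda_A+\varepsilon)(t-s)/\tau},\qquad t\ge s,
\end{equation*}
for every $\varepsilon>0$. For each fixed $x$ this follows from Gelfand's formula applied to $U_x(1,0)$. To make it uniform, I would argue as follows. For each $x_0$, choose $n$ with $\|U_{x_0}(1,0)^n\|^{1/n}<e^{(\lambda_A(x_0)+\varepsilon/2)/\tau}$. By continuity of $x\mapsto U_x(1,0)$, the same inequality with $\varepsilon$ in place of $\varepsilon/2$ holds on a neighborhood of $x_0$. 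Compactness of $\bar{\Omega}$ then yields finitely many neighborhoods and exponents $n$, and combining them gives a uniform $C_\varepsilon$. This bound gives absolute convergence of the integral, the bound $\|(\eta I-\mathcal{N})^{-1}\|<\infty$, and the inequality $s(\mathcal{N})\le\max\lambda_A$.

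\emph{Reverse inequality.} Let $x_0$ be a maximizer of $\lambda_A$ and set $\lambda_0=\lambda_A(x_0)$. I will show $\lambda_0\in\sigma(\mathcal{N})$ by constructing approximate eigenfunctions. Take $u_n(x,t)=\chi_n(x)\varphi_A(x_0,t)$, where $\chi_n\ge 0$ is a continuous bump with $\chi_n(x_0)=1$, supported in $B_{1/n}(x_0)$. Then
\begin{equation*}
(\mathcal{N}-\lambda_0)u_n=\chi_n(x)\bigl(A(x,t)-A(x_0,t)\bigr)\varphi_A(x_0,t).
\end{equation*}
By uniform continuity of $A$, the sup norm of the right side tends to $0$, while $\|u_n\|$ stays bounded below. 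Hence $\lambda_0-\mathcal{N}$ has no bounded inverse, so $\lambda_0\in\sigma(\mathcal{N})$ and $s(\mathcal{N})=\max_{\bar{\Omega}}\lambda_A(x)$.
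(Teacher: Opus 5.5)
Your proof is correct, and it takes a genuinely different route from the paper, because the paper does not prove this lemma at all: it imports it from \cite[Proposition 2.9]{Feng}.

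The closest in-paper analogue is the proof of Proposition~\ref{PP2.10}. There $\mathcal{L}$ is handled through the \emph{periodic} variation-of-constants formula with $(I-\mathcal{Q}_{\lambda})^{-1}$. Applied fiberwise to $\mathcal{N}$, that device reads
\[
u(x,0)=\bigl(I-e^{-\eta/\tau}U_x(1,0)\bigr)^{-1}\frac{1}{\tau}\int_0^1 e^{-\eta(1-s)/\tau}U_x(1,s)f(x,s)\,ds .
\]
With this formula your ``main obstacle'' disappears:
\begin{itemize}
\item invertibility at each $x$ follows from $|e^{\eta/\tau}|>r(U_x(1,0))$;
\item a bound uniform in $x$ is automatic, since matrix inversion is continuous and $\bar{\Omega}$ is compact;
\item positivity for real $\eta$ comes from the Neumann series $\sum_{k\ge 0}e^{-k\eta/\tau}U_x(1,0)^k$.
\end{itemize}

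Your $\int_{-\infty}^{t}$ representation is exactly this Neumann series unrolled: split $(-\infty,0]$ into unit intervals and use $U_x(0,-k)=U_x(1,0)^k$. That is why you are forced to control $\sup_x\|U_x(t,s)\|$ directly. Your compactness argument does this correctly: finitely many exponents $n_j$ cover $\bar\Omega$, and periodicity plus boundedness of $U_x(t,s)$ for $0\le t-s\le 1$ passes from integer to real times. In exchange you obtain a quantitative estimate
\[
\|(\eta I-\mathcal{N})^{-1}\|\le \frac{C_\varepsilon}{\mathrm{Re}\,\eta-\max_{\bar\Omega}\lambda_A-\varepsilon},
\]
which is more than the bare boundedness the fiberwise formula yields.

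For the reverse inequality, your Weyl sequence $\chi_n(x)\varphi_A(x_0,t)$ is the right tool. When $\lambda_A(x)<\lambda_A(x_0)$ for $x\neq x_0$, every other fiber has only the trivial periodic solution at $\lambda_0$. Continuity then forces any eigenfunction to vanish, so $\lambda_0$ is in general \emph{not} an eigenvalue of $\mathcal{N}$, only an approximate eigenvalue. Your argument handles this case without extra work.
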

Hence, we can obtain that
the operator $\mathcal{L} = \mathcal{M} + \mathcal{N}$ is a positive perturbation of $\mathcal{N}$.
\begin{lemma}
The operator $\mathcal{M}$ is a compact perturbator of $\mathcal{N}$ and also an essentially compact perturbator of $\mathcal{N}$.
Thus the operator $\mathcal{L}$ is a compact perturbation and also an essentially compact perturbation of $\mathcal{N}$.
\end{lemma}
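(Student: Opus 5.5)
To prove the lemma we must show that, for some (indeed every) $\lambda>s(\mathcal{N})=\max_{\bar\Omega}\lambda_A(x)$, the operators
$$(\lambda I-\mathcal{N})^{-1}F_\lambda=(\lambda I-\mathcal{N})^{-1}\mathcal{M}(\lambda I-\mathcal{N})^{-1}$$
on $\overline{\mathcal{X}_1}=\mathcal{X}$ and $(\lambda I-\mathcal{N})^{-1}F_\lambda^{2}$ on $\mathcal{X}$ are compact. Essential compactness then follows by taking $n=2$ and noting that the argument works for every $\lambda>s(\mathcal{N})$. Since $\mathcal{X}_1$ is dense in $\mathcal{X}$, the closure of $\mathrm{dom}(\mathcal{N})$ is $\mathcal{X}$, so both conditions live on the same space. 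Because compact operators form an ideal and $(\lambda I-\mathcal{N})^{-1}$ and $\mathcal{M}$ are bounded, it suffices to show that a single composition
$$K_\lambda:=\mathcal{M}(\lambda I-\mathcal{N})^{-1}\mathcal{M}$$
(or $(\lambda I-\mathcal{N})^{-1}\mathcal{M}(\lambda I-\mathcal{N})^{-1}$ directly) is compact on $\mathcal{X}$. I would prove compactness of the first-type operator $(\lambda I-\mathcal{N})^{-1}\mathcal{M}(\lambda I-\mathcal{N})^{-1}$ by the Arzel\`a--Ascoli theorem on the compact set $\bar\Omega\times[0,1]$, using periodicity.

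First I would write the resolvent of $\mathcal{N}$ explicitly. For fixed $x$, solving $-\tau u_t+(A(x,t)-\lambda)u=-f$ with periodicity gives
$$(\lambda I-\mathcal{N})^{-1}f(x,t)=\frac1\tau\int_{-\infty}^{t}U_x(t,s)e^{-\lambda(t-s)/\tau}f(x,s)\,ds,$$
where $U_x$ is the evolution matrix of $\tau u_t=A(x,t)u$. The integral converges because $\lambda>\lambda_A(x)$ uniformly in $x$, and the resulting bound $|U_x(t,s)e^{-\lambda(t-s)/\tau}|\le Ce^{-\delta(t-s)}$ is uniform in $x\in\bar\Omega$ by compactness and continuity of $A$. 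Alternatively one can write it as a periodic Floquet formula $(I-\Phi_x)^{-1}\int_{t-1}^t\cdots$. This operator gives equicontinuity in $t$ for free: its image lies in $\mathcal{X}_1$, and
$$\tau u_t=(A-\lambda)u+f$$
is bounded by $C\|f\|$. Continuity in $x$ is, however, only inherited from $f$ and not improved, since the resolvent acts pointwise in $x$.

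The smoothing in $x$ must therefore come from $\mathcal{M}$. For bounded $g$, $\mathcal{M}g(x,t)=D(x,t)\int_\Omega k(x,y,t)g(y,t)\,dy$ is equicontinuous in $x$, with modulus controlled by the uniform continuity of $k$ and $D$ on compact sets times $\|g\|_\infty$. It is not equicontinuous in $t$ unless $g$ is, which is why a resolvent is needed on each side. Consider $f$ in a bounded set of $\mathcal{X}$.
\begin{itemize}
\item The first resolvent produces $g=(\lambda I-\mathcal{N})^{-1}f$, bounded and equi-Lipschitz in $t$.
\item Then $\mathcal{M}g$ is bounded and equicontinuous jointly in $(x,t)$.
\item Finally, I need to check that $(\lambda I-\mathcal{N})^{-1}$ maps a jointly equicontinuous bounded family to an equicontinuous family. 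This follows from the explicit formula, since $U_x(t,s)$ depends continuously on $x$ uniformly for $s\in[t-T,t]$, and the tail beyond $T$ is uniformly small by exponential decay.
\end{itemize}
Arzel\`a--Ascoli then yields compactness of $(\lambda I-\mathcal{N})^{-1}F_\lambda$ on $\mathcal{X}$. Composing with the bounded operator $F_\lambda$ gives compactness of $(\lambda I-\mathcal{N})^{-1}F_\lambda^2$ as well.

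The main obstacle is the uniform-in-$x$ control of the resolvent formula. This requires the exponential bound $|U_x(t,s)|e^{-\lambda(t-s)/\tau}\le Ce^{-\delta(t-s)}$ uniformly over $\bar\Omega$, together with continuity of $x\mapsto U_x$. I would obtain it from Lemma \ref{LEM2.6} and Lemma \ref{LEM1.2}. The Floquet multiplier bound $r(U_x(1,0))=e^{\lambda_A(x)/\tau}<e^{\lambda/\tau}$ holds for each $x$, and upper semicontinuity of the spectral radius plus compactness of $\bar\Omega$ upgrade it to a uniform bound on some power of the monodromy.
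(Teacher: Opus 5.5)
Your argument is correct, but it takes a longer route than the paper. The paper reduces everything to compactness of $F_\eta=\mathcal{M}(\eta I-\mathcal{N})^{-1}$ itself. Once $F_\eta$ is compact, both $(\eta I-\mathcal{N})^{-1}F_\eta$ and $(\eta I-\mathcal{N})^{-1}F_\eta^{2}$ are compact by the ideal property, for every $\eta>s(\mathcal{N})$.

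Your first two bullets already prove exactly this. For bounded $f$, the function $g=(\lambda I-\mathcal{N})^{-1}f$ is bounded and equi-Lipschitz in $t$, so $\mathcal{M}g$ is bounded and jointly equicontinuous, and Arzel\`a--Ascoli gives compactness of $F_\lambda$. So your remark that a resolvent is needed ``on each side'' is not right. Only the resolvent applied first, which supplies the regularity in $t$, is needed. Your third bullet can therefore be dropped, together with the whole final paragraph on uniform exponential bounds for $U_x$, continuity of $x\mapsto U_x$, and semicontinuity of the spectral radius.

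There is also a small slip. Compactness of $\mathcal{M}(\lambda I-\mathcal{N})^{-1}\mathcal{M}$ alone would only give the second condition, because $(\lambda I-\mathcal{N})^{-1}F_\lambda$ does not factor through it. This is harmless, since you then argue directly.

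On one point your proof is more careful than the paper's. The paper asserts joint equicontinuity of $\mathcal{M}[w_n]$ using only the boundedness of $w_n$ and the uniform continuity of the kernels. That does not control the difference $w_n(y,t)-w_n(y,t')$. Your observation that $\tau\partial_t w_n=(A-\lambda)w_n+u_n$, so that $w_n$ is equi-Lipschitz in $t$, is exactly the ingredient needed to make that step valid.
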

\begin{proof}
Notice that $(\eta I - \mathcal{N})^{-1}$ is a bounded linear operator for $\eta > s(\mathcal{N})$.
Hence, to prove the desired results, it is sufficient to prove that $\mathcal{M}(\eta I - \mathcal{N})^{-1}$ is a compact operator for $\eta > s(\mathcal{N})$.
Choose any bounded sequence $\{ u_{n} \} \subset
\mathcal{X}$.
It then follows from the boundedness of $(\eta I - \mathcal{N})^{-1}$ that sequence $\{ (\eta I - \mathcal{N})^{-1}u_{n} \} $ is bounded.
Let $w_n=(\eta I-N)^{-1}u_n$. Since $\{w_n\}$ is bounded in $\mathcal{X}$ and the kernels $k_i(x,y,t)$ are uniformly continuous in $(x,t)$ on the compact set $\bar\Omega\times\bar\Omega\times[0,1]$, it follows that for every $\varepsilon>0$ there exists $\delta>0$ such that
\[
|M[w_n](x,t)-M[w_n](x',t')|<\varepsilon
\]
whenever $|(x,t)-(x',t')|<\delta$, uniformly in $n$. Hence, $\{M[w_n]\}$ is uniformly bounded and equicontinuous in $\mathcal{X}$, and by the Arzel\`a--Ascoli theorem $M(\eta I-N)^{-1}$ is compact.
\end{proof}

Let $\{U_{\lambda}(t,s);t \geq s \}$ and $\{V_{\lambda}(t,s); t\geq s \}$, respectively, be the evolution operator on $X$ determined by
\begin{equation*}
\tau\varphi_{t}=A(x,t)\varphi(x,t)-\lambda\varphi(x,t),\quad
(x,t)\in \bar\Omega\times\mathbb{R}
\end{equation*}
and
\begin{equation*}
\tau\varphi_{t}=D(x,t)\mathcal{P}[\varphi](x,t)+A(x,t)\varphi(x,t)-\lambda\varphi(x,t), \quad
(x,t)\in \bar\Omega\times\mathbb{R}.
\end{equation*}
For the existence of the evolution operator, we refer the reader to \cite[Chapter 5]{Pazy}.
\begin{lemma}\label{LEM2.8}
$U_{\lambda}(t,s)$ and $V_{\lambda}(t,s)$ are positive on $X$ for any $t\geq s$ and $\lambda\in\mathbb{R}$. Moreover, if ${ \rm (\tilde{H}2) }$ holds,
$V_{\lambda}(t,s)$ is strongly positive on $X$ for any $t> s$ and $\lambda\in\mathbb{R}$.
\end{lemma}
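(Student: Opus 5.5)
Positivity of $U_\lambda(t,s)$ and $V_\lambda(t,s)$ will come from a Kamke-type comparison for cooperative systems. Strong positivity of $V_\lambda$ will come from propagating positivity: first through the kernel in space, then through the irreducible coupling across components.

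\emph{Step 1 ($U_\lambda$).} For fixed $x\in\bar\Omega$, $U_\lambda(t,s)$ acts pointwise as the fundamental matrix $\Phi_x(t,s)$ of the ODE $\tau\varphi'=(A(x,t)-\lambda I)\varphi$. Since the off-diagonal entries of $A$ are nonnegative by (H2), I would choose a constant $\kappa>0$ with $\kappa+a_{ii}-\lambda\ge 0$ on $\bar\Omega\times[0,1]$. Setting $\psi=e^{\kappa(t-s)/\tau}\varphi$ gives $\tau\psi'=(A-\lambda I+\kappa I)\psi$, whose matrix is nonnegative. The Picard iterates, or equivalently the Peano--Baker series, of a nonnegative-coefficient linear system preserve $(\mathbb R^l)^+$. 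Hence $\Phi_x(t,s)\ge0$ entrywise, and $U_\lambda(t,s)X^+\subset X^+$. Its diagonal entries also satisfy $\Phi_x(t,s)_{ii}\ge e^{\int_s^t(a_{ii}-\lambda)/\tau}>0$, which I will use later.

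\emph{Step 2 ($V_\lambda$).} $\mathcal M$ is a bounded positive operator on $X$. By the variation of constants formula,
\[
V_\lambda(t,s)u=U_\lambda(t,s)u+\tau^{-1}\int_s^tU_\lambda(t,r)\,D(\cdot,r)\mathcal P(r)\,V_\lambda(r,s)u\,dr .
\]
The associated Picard iteration $V^{(0)}=U_\lambda$, $V^{(n+1)}=U_\lambda+\tau^{-1}\int U_\lambda D\mathcal P V^{(n)}$ converges in operator norm on $[s,s+T]$, because the Volterra series converges for bounded perturbations. Each iterate is positive, so the limit is positive. This series also shows that
\[
V_\lambda(t,s)u\ \ge\ U_\lambda(t,s)u+\tau^{-1}\int_s^t U_\lambda(t,r)\,D\mathcal P\,U_\lambda(r,s)u\,dr,
\]
and more generally $V_\lambda$ dominates every finite term of the Dyson expansion.

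\emph{Step 3 (strong positivity under $(\tilde H2)$).} Let $u\in X^+\setminus\{0\}$, and take $i$ and $x_0$ with $u_i(x_0)>0$.

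First, using $\Phi_{ii}>0$, the $i$-th component $(V_\lambda(r,s)u)_i$ is positive near $x_0$ for all $r>s$. Since $k_i(x,x,r)>0$, the integral $\int_\Omega k_i(x,y,r)w(y)\,dy$ is positive on a neighborhood of the set where $w>0$. Iterating $n$ times through the Dyson terms, positivity spreads by a fixed radius $\delta$ at each step, where $\delta$ comes from uniform continuity and the positivity of $k_i$ on the compact diagonal. Because $\Omega$ is connected and the time interval $(s,t)$ can be subdivided into arbitrarily many pieces, after finitely many steps $(V_\lambda(t,s)u)_i>0$ on all of $\bar\Omega$.

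Second, I transfer positivity across components. Let $\mathbb I$ be the set of components already positive everywhere. If $\mathbb I\ne\mathbb S$, irreducibility gives, at each $(x,r)$, indices $j\notin\mathbb I$ and $m\in\mathbb I$ with $a_{jm}(x,r)>0$ or $d_{jm}(x,r)>0$. The $j$-th equation then contains the source term $a_{jm}w_m$ or $d_{jm}\mathcal P_m[w_m]$, which is positive, and $\Phi_{jj}>0$ makes $w_j>0$ near that point. The index $j$ may vary with $(x,r)$, so I would argue by compactness over a finite cover. It is cleaner to argue componentwise: $w_j$ can vanish at $(x,t)$ only if every source feeding it vanishes on $(s,t)$, which forces the set of indices vanishing at $x$ to be invariant under the coupling, contradicting irreducibility. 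Repeating at most $l-1$ times, using splittings of $(s,t)$, gives $V_\lambda(t,s)u\in X^{++}$.

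The main obstacle is Step 3. One has to control the simultaneous spatial spreading through the kernel and the component spreading through $x$-dependent irreducibility. I would handle both with the ``zero set is invariant'' argument applied to $w(x,t)=V_\lambda(t,s)u$: first show that $w_j(x,t)=0$ forces $w_j(x,r)=0$ for all $r\in(s,t)$, and then use that every positive off-diagonal source into $j$ must vanish.
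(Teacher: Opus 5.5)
Your proposal is correct and follows essentially the paper's route. The paper cites Shen--Zhang for positivity, where you use a series argument. For strong positivity both of you first spread positivity of one component $u_{i_0}$ over $\bar\Omega$, using the diagonal lower bound and $k_{i_0}(x,x,t)>0$, and then pass it to the remaining components through the irreducible coupling. Your pointwise ``zero set'' argument for that last step is more careful than the paper's: the paper fixes a single index $i_k$ with $d_{i_k i_0}>0$ as if it did not depend on $(x,t)$, although irreducibility is only assumed pointwise.
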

\begin{proof}
Following similar arguments as in the proof of \cite[Proposition 2.1]{SZ}, we can obtain $U_{\lambda}(t,s)$ and $V_{\lambda}(t,s)$ are positive on $X$ for any $t\geq s$ and $\lambda\in\mathbb{R}$.

Next, we assume that ${ \rm (\tilde{H}2) }$ holds.
Let $u^{s}\in X^{+}\backslash \{ 0\}$ and $u(x,t) = V_{\lambda}(t,s)u^{s}(x)$.
Next, we prove $u(x,t)>0$ for all $(x,t)\in\bar{\Omega}\times (s,\infty)$.
Since $u^{s} \in X^{+}\backslash \{ 0\}$, there exists $i_{0} \in \mathbb{S}$ and $x_{0}\in\Omega$ such that $u^{s}_{i_{0}}(x_{0})>0$.
Note that $u_{i_{0}}$ satisfies
\begin{align*}
\tau \partial_{t} u_{i_{0}}(x,t) &=
\sum_{j=1}^{l} d_{i_{0}j}(x,t)\int_{\Omega}k_{j}(x,y,t)u_{j}(y,t)dy + \sum_{j=1}^{l}a_{i_{0}j}(x,t)u_{j}(x,t) - \lambda u_{i_{0}}(x,t) \\
&\geq
d_{i_{0}i_{0}}(x,t)\int_{\Omega}k_{i_{0}}(x,y,t)u_{i_{0}}(y,t)dy + a_{i_{0}i_{0}}(x,t)u_{i_{0}}(x,t) - \lambda u_{i_{0}}(x,t)
\end{align*}
for all $(x,t)\in\bar{\Omega}\times (s,\infty)$.
Let
$v(x,t) = e^{\frac{K}{\tau}(t-s)} u_{i_{0}}(x,t)$, it follows that
\begin{equation*}
\tau \partial_{t} v(x,t) \geq d_{i_{0}i_{0}}(x,t) \int_{\Omega} k_{i_{0}}(x,y,t)v(y,t)dy + (K + a_{i_{0}i_{0}}(x,t) - \lambda) v(x,t)
\end{equation*}
for all $(x,t)\in\bar{\Omega}\times (s,\infty)$.
By choosing $K$ large enough, we get
\begin{align*}
\tau v(x_{0},t) \geq &\tau u_{i_{0}}^{s}(x_{0}) + \int_{s}^{t} d_{i_{0}i_{0}}(x_{0},z)\int_{\Omega}k_{i_{0}}(x_{0},y,z)v(y,z)dydz \\
&+ \int_{s}^{t} (K + a_{i_{0}i_{0}}(x_{0},z) - \lambda) v(x_{0},z) dz > 0
\end{align*}
for all $t>s$. Moreover, since $k_{i_{0}}(x,x,t)>0$, there exists $r>0$ such that for all $|x-x_{0}|\leq r$,
$\int_{\Omega}k_{i_{0}}(x,y,t)v(y,t)dy>0$.
Hence, we can obtain that $v(x,t)>0$ for all $|x-x_{0}|\leq r$ and $t>s$.
By repeating the process, we get $v(x,t)>0$ for all $\bar{\Omega}\times (s,\infty)$. This implies that
\begin{equation*}
u_{i_{0}}(x,t)>0
\enspace \text{for all} \enspace (x,t)\in\bar{\Omega}\times (s,\infty).
\end{equation*}

Next, we assume that $D$ is irreducible.
It follows that there exists $i_{k} \in \mathbb{S}\backslash\{ i_{0} \}$ such that $d_{i_{k}i_{0}}>0$. Note that $u_{i_{k}}$ satisfies
\begin{align*}
\tau \partial_{t} u_{i_{k}}(x,t) &=
\sum_{j=1}^{l} d_{i_{k}j}(x,t)\int_{\Omega}k_{j}(x,y,t)u_{j}(y,t)dy + \sum_{j=1}^{l}a_{i_{k}j}(x,t)u_{j}(x,t) - \lambda u_{i_{k}}(x,t) \\
&\geq
d_{i_{k}i_{0}}(x,t)\int_{\Omega}k_{i_{0}}(x,y,t)u_{i_{0}}(y,t)dy + a_{i_{k}i_{k}}(x,t)u_{i_{k}}(x,t) - \lambda u_{i_{k}}(x,t).
\end{align*}
It follows that $u_{i_{k}}(x,t)>0$ for all $\bar{\Omega}\times (s,\infty)$.
Using the irreducibility of $D$ again, there exits $i_{q}\in \mathbb{S} \backslash \{ i_{0},i_{k}\}$ such that $d_{i_{q},i_{0}}>0$ or $d_{i_{q},i_{k}} >0$. Similarly, we can prove $u_{i_{q}}(x,t)>0$ for all $\bar{\Omega}\times (s,\infty)$. By repeating the process, we get $u(x,t)\gg 0$ for all $\bar{\Omega}\times (s,\infty)$.

If $A$ is irreducible, we can get the desired conclusion similarly.
\end{proof}

\begin{corollary}\label{COR2.9}
Suppose $(\tilde{H}2)$ holds.
Let $\alpha$ be a constant. Assume that $u \in \mathcal{X}_{1}^{+} \backslash \{ 0 \}$ satisfies
\begin{equation*}
(\mathcal{L} - \alpha I)[u](x,t)\leq 0
\enspace \text{for all} \enspace (x,t)\in \bar{\Omega}\times \mathbb{R}.
\end{equation*}
then we have $ u(x,t) \gg 0 $ for all $(x,t)\in \bar{\Omega}\times \mathbb{R}$.
\end{corollary}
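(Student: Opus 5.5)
We plan to deduce Corollary \ref{COR2.9} from the strong positivity of the evolution operator $V_{\lambda}(t,s)$ in Lemma \ref{LEM2.8}, using a comparison (variation of constants) argument in time.

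\textbf{Step 1: rewrite the inequality as a differential inequality.} Set $f := -(\mathcal{L}-\alpha I)[u] \in \mathcal{X}^{+}$. Then $u$ satisfies
\begin{equation*}
\tau u_{t}(x,t) = D(x,t)\mathcal{P}[u](x,t) + A(x,t)u(x,t) - \alpha u(x,t) + f(x,t), \quad (x,t)\in\bar{\Omega}\times\mathbb{R}.
\end{equation*}
Since $u\in\mathcal{X}_1$, the map $t\mapsto u(\cdot,t)\in X$ is $C^1$. This is exactly the equation generating $V_{\alpha}(t,s)$, with the nonnegative forcing $f/\tau$.

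\textbf{Step 2: variation of constants.} For $t>s$ we write
\begin{equation*}
u(\cdot,t) = V_{\alpha}(t,s)u(\cdot,s) + \frac{1}{\tau}\int_{s}^{t} V_{\alpha}(t,r) f(\cdot,r)\,dr .
\end{equation*}
This is justified since $\mathcal{M}$ and multiplication by $A$ are bounded on $X$, so classical solutions coincide with mild ones (Pazy). By the positivity part of Lemma \ref{LEM2.8}, the integral term lies in $X^{+}$. Hence
\begin{equation*}
u(\cdot,t)\geq V_{\alpha}(t,s)u(\cdot,s).
\end{equation*}

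\textbf{Step 3: conclude.} Since $u\in\mathcal{X}^{+}\setminus\{0\}$, there is some $s_0$ with $u(\cdot,s_0)\in X^{+}\setminus\{0\}$. By periodicity, $u(\cdot,s_0-n)=u(\cdot,s_0)$ for every $n\in\mathbb{N}$. Fix any $t\in\mathbb{R}$ and choose $n$ with $t>s_0-n$. Under $(\tilde{H}2)$, Lemma \ref{LEM2.8} gives
\begin{equation*}
u(\cdot,t)\geq V_{\alpha}(t,s_0-n)u(\cdot,s_0)\gg 0 \quad \text{in } X.
\end{equation*}
Since $t$ is arbitrary, $u(x,t)\gg0$ on $\bar\Omega\times\mathbb{R}$.

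The only delicate point is justifying the comparison in Step 2. If we prefer to avoid mild-solution formalism, we can argue directly. Let $w(t)=u(\cdot,t)-V_{\alpha}(t,s)u(\cdot,s)$. Then $\tau w_t = (\mathcal{M}+A-\alpha)w + f$ with $w(s)=0$, and the positivity of the solution semigroup of this cooperative bounded-perturbation system (the same argument as in \cite[Proposition 2.1]{SZ}) gives $w\geq0$.
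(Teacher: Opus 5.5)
Your proof is correct and follows essentially the route the paper intends. The paper states the corollary without proof as an immediate consequence of the strong positivity in Lemma \ref{LEM2.8}. Your comparison $u(\cdot,t)\geq V_{\alpha}(t,s)u(\cdot,s)$, obtained from variation of constants with the nonnegative forcing $-(\mathcal{L}-\alpha I)[u]$, together with periodicity to place a nontrivial time slice before any given $t$, is exactly that reduction.
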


For $\lambda\in\mathbb{R}$, define operator $\mathcal{Q}_{\lambda} : X \to X$ by
\begin{equation*}
\mathcal{Q}_{\lambda} \varphi = e^{-\frac{\lambda}{\tau}} V_{0}(1,0)\varphi.
\end{equation*}

\begin{proposition}\label{PP2.10}
There exists $\lambda_{0}\in\mathbb{R}$ such that
\begin{equation*}
r(\mathcal{Q}_{\lambda_{0}})=r(e^{-\frac{\lambda_{0}}{\tau}}V_{0}(1,0) )=1.
\end{equation*}
Moreover, the operator $\mathcal{L}$ is  resolvent positive and $s(\mathcal{L})=\lambda_{0} = \tau \ln r(V_{0}(1,0))$.
\end{proposition}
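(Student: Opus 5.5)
The plan is to relate the resolvent of $\mathcal{L}$ to the period map $V_0(1,0)$ via the variation-of-constants formula on periodic functions, and then to read $s(\mathcal{L})$ off the spectral radius.

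\textbf{Step 1: the value $\lambda_0$.} By Lemma \ref{LEM2.8}, $V_0(1,0)$ is a positive bounded operator on $X$. Hence $r(V_0(1,0))\in\sigma(V_0(1,0))$. It is also strictly positive: comparing with the ODE part gives $V_0(1,0)\ge e^{-c/\tau}I$ for some constant $c$, since the nonlocal term is nonnegative on positive data and $A+cI$ is cooperative with nonnegative diagonal. This yields $r(V_0(1,0))\ge e^{-c/\tau}>0$. Therefore $\lambda_0:=\tau\ln r(V_0(1,0))$ is well defined. Because $V_\lambda(t,s)=e^{-\lambda(t-s)/\tau}V_0(t,s)$, we have $\mathcal{Q}_\lambda=V_\lambda(1,0)$ and $r(\mathcal{Q}_{\lambda_0})=1$.

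\textbf{Step 2: resolvent formula for real $\lambda>\lambda_0$.} For such $\lambda$, $r(\mathcal{Q}_\lambda)<1$, so $(I-\mathcal{Q}_\lambda)^{-1}=\sum_n \mathcal{Q}_\lambda^n$ is positive. Solving $(\lambda I-\mathcal{L})u=f$ with $u$ periodic amounts to solving $\tau u_t=(\mathcal{M}+A-\lambda)u+f$ with $u(\cdot,0)=u(\cdot,1)$. Variation of constants gives
\begin{equation*}
u(t)=V_\lambda(t,0)u(0)+\frac1\tau\int_0^t V_\lambda(t,s)f(s)\,ds,\qquad u(0)=(I-\mathcal{Q}_\lambda)^{-1}\frac1\tau\int_0^1V_\lambda(1,s)f(s)\,ds .
\end{equation*}
This formula shows that $\lambda\in\rho(\mathcal{L})$ and that $(\lambda I-\mathcal{L})^{-1}$ is positive, since every factor is positive. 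The same construction works for complex $\lambda$ with $\mathrm{Re}\,\lambda>\lambda_0$, because $r(e^{-\lambda/\tau}V_0(1,0))=e^{-\mathrm{Re}\lambda/\tau}r(V_0(1,0))<1$. Hence $s(\mathcal{L})\le\lambda_0$, and $\mathcal{L}$ is resolvent positive.

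\textbf{Step 3: the reverse inequality.} This is the main obstacle. The point is that $r(\mathcal{Q}_{\lambda_0})=1$ need not be an eigenvalue, so we cannot simply produce a periodic eigenfunction. I would argue by contradiction. Suppose $\lambda_0\in\rho(\mathcal{L})$. The resolvent $R(\lambda):=(\lambda I-\mathcal{L})^{-1}$ is positive for $\lambda>\lambda_0$ and analytic there. Pick $x\in X^+$ with $\|\mathcal{Q}_\lambda^n x\|$ large; this is possible because $\sum_n\mathcal{Q}_\lambda^n$ blows up as $\lambda\downarrow\lambda_0$, which is the standard fact for positive operators: $\|(I-\mathcal{Q}_\lambda)^{-1}\|\to\infty$ as $r(\mathcal{Q}_\lambda)\uparrow1$. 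Take $f(x,t)=\psi(t)x$ with $\psi\ge0$ a cutoff concentrated near $t=1^-$, extended periodically. Then the formula gives
\begin{equation*}
\|R(\lambda)f\|\ge\|u(0)\|\gtrsim\|(I-\mathcal{Q}_\lambda)^{-1}\tilde x\|,
\end{equation*}
where $\tilde x\approx x$. This holds because all terms are positive and the cone is normal. Choosing $x$ appropriately makes this diverge as $\lambda\downarrow\lambda_0$, contradicting the boundedness of $R(\lambda)$ near a point of $\rho(\mathcal{L})$.

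To make the choice of $x$ uniform in $\lambda$, I would use monotonicity of $(I-\mathcal{Q}_\lambda)^{-1}$ in $\lambda$ together with the fact that for a positive operator $T$ on a normal cone, $r(T)<1$ and $\|(I-T)^{-1}\|\to\infty$ as $r(T)\to 1$. Hence $\lambda_0\in\sigma(\mathcal{L})$, so $s(\mathcal{L})\ge\lambda_0$, and we conclude $s(\mathcal{L})=\lambda_0=\tau\ln r(V_0(1,0))$.

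An alternative route to Step 3 is to combine Theorem \ref{THM2.4} with the identification that $r(F_\lambda)<1$ if and only if $r(\mathcal{Q}_\lambda)<1$. I would try that only if the direct blow-up argument becomes messy.
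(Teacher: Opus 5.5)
Your proposal is correct, and it takes a more self-contained route than the paper at each stage.

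\textbf{Existence of $\lambda_0$.} The paper minorizes $V_0(t,s)$ by the semigroup $W(t-s)$ of the autonomous operator $\frac1\tau(D^{*}\mathcal P^{*}+A^{*})$. It then uses the principal eigenvalue of \cite{SWZ} and a spectral mapping theorem to get $r(\mathcal Q_{\lambda^{*}})\ge 1$. Since $r(\mathcal Q_\lambda)=e^{-\lambda/\tau}r(V_0(1,0))$, all that is really needed is $r(V_0(1,0))>0$, which your bound $V_0(1,0)\ge e^{-c/\tau}I$ gives at once.

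\textbf{Upper bound.} The paper works only on the real ray and proves resolvent positivity there. It then cites \cite[Theorem 3.5]{Th1} to pass from $s_{\mathbb R}(\mathcal L)=\lambda_0$ to $s(\mathcal L)=\lambda_0$. Your treatment of complex $\lambda$ with $\mathrm{Re}\,\lambda>\lambda_0$ removes that dependence.

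\textbf{Lower bound.} The paper asserts from \eqref{2-1} that $\lambda\in\rho(\mathcal L)$ if and only if $1\in\rho(\mathcal Q_\lambda)$, and combines this with $1=r(\mathcal Q_{\lambda_0})\in\sigma(\mathcal Q_{\lambda_0})$. Only the ``if'' half is visible in \eqref{2-1}; your blow-up argument is in effect a quantitative proof of the other half.

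It does work, but phrase it at the operator level rather than via ``$\tilde x\approx x$'' and a $\lambda$-dependent choice of $x$:
\begin{itemize}
\item By the comparison from your Step 1, $\frac1\tau\int_0^1\psi(s)V_\lambda(1,s)x\,ds\ge c\,x$ for $x\in X^{+}$. Here $c>0$ is uniform for $\lambda$ near $\lambda_0$, and any $\psi\ge0$ with positive integral works; no concentration near $t=1$ is needed.
\item Positivity of $(I-\mathcal Q_\lambda)^{-1}$ and monotonicity of the norm then give $c\,\|(I-\mathcal Q_\lambda)^{-1}x\|\le\|\psi\|_\infty\|R(\lambda)\|\,\|x\|$ for $x\ge 0$.
\item Since $\|T\|\le\|Te\|$ for positive $T$ on $X$ with $e=(1,\dots,1)^{T}$, this bounds $\|(I-\mathcal Q_\lambda)^{-1}\|$ for $\lambda$ near $\lambda_0$. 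That contradicts $\|(I-\mathcal Q_\lambda)^{-1}\|\ge(1-r(\mathcal Q_\lambda))^{-1}\to\infty$.
\end{itemize}

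Since you already know $1\in\sigma(\mathcal Q_{\lambda_0})$, a shorter route avoids the limit altogether. If $\lambda_0\in\rho(\mathcal L)$, then $I-\mathcal Q_{\lambda_0}$ would be injective and surjective, contradicting $1\in\sigma(\mathcal Q_{\lambda_0})$:
\begin{itemize}
\item \emph{Injective:} a nonzero fixed point $x$ would make $V_{\lambda_0}(t,0)x$ a nonzero periodic solution of $(\lambda_0 I-\mathcal L)u=0$.
\item \emph{Surjective:} for $g\in X$, take the periodic forcing $f(s)=\tau\psi(s)V_{\lambda_0}(1,s)^{-1}g$, with $\psi$ smooth, supported in $(0,1)$ and $\int_0^1\psi=1$. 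The period map is invertible because the generator is bounded. Then $u=(\lambda_0 I-\mathcal L)^{-1}f$ satisfies $(I-\mathcal Q_{\lambda_0})u(0)=g$.
\end{itemize}
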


\begin{proof}
The idea of the proof follows from \cite[Proposition 3.6]{DKR} and \cite[Proposition 2.10]{Feng}. For completeness and readers' convenience, we provide a detailed and modified proof.

Consider the resolvent equation
\begin{equation*}
\varphi=(\lambda I- \mathcal{L})^{-1}\phi\enspace\text{for}\enspace\phi\in\mathcal{X} \text{ and }\lambda\in\rho(\mathcal{L}).
\end{equation*}
It follows that
\begin{equation*}
\tau \partial_{t} \varphi(x,t) = D(x,t)\mathcal{P}[\varphi](x,t) + A(x,t)\varphi(x,t)-\lambda\varphi(x,t) + \phi(x,t)
\enspace \text{for all} \enspace (x,t)\in \bar{\Omega} \times \mathbb{R}.
\end{equation*}
Using the variation of constant formula, we get
\begin{equation*}
\varphi(x,t) = e^{-\frac{\lambda}{\tau}t}V_{0}(t,0)\varphi(x,0)
+ \int_{0}^{t} e^{-\frac{\lambda}{\tau}(t-s)}V_{0}(t,s)\frac{\phi(x,s)}{\tau} ds
\enspace \text{for all} \enspace (x,t)\in \bar{\Omega} \times \mathbb{R}.
\end{equation*}
Since $\varphi(x,0)=\varphi(x,1)$ for all $x\in\bar\Omega$, we get
\begin{equation*}
\left( I-\mathcal{Q}_{\lambda} \right)\varphi(x,0)=\int_{0}^{1}e^{-\frac{\lambda}{\tau}(1-s)}V_{0}(1,s)\frac{\phi(x,s)}{\tau} ds
\enspace \text{for all} \enspace x\in \bar{\Omega}.
\end{equation*}
Hence, if $1\in\rho(\mathcal{Q}_{\lambda})$, we have
\begin{equation}\label{2-1}
\begin{aligned}
\left[ (\lambda I - \mathcal{L})^{-1}\phi \right](x,t)
=&
e^{-\frac{\lambda}{\tau} t}V_{0}(t,0) (I-\mathcal{Q}_{\lambda})^{-1}
\left( \int_{0}^{1} e^{-\frac{\lambda}{\tau}(1-s)} V_{0}(1,s) \frac{\phi(x,s)}{\tau} ds \right)\\
&+ \int_{0}^{t} e^{-\frac{\lambda}{\tau}(t-s)}V_{0}(t,s)\frac{\phi(x,s)}{\tau} ds.
\end{aligned}
\end{equation}
It follows that $\lambda\in\rho(\mathcal{L})$ if and only if $1\in\rho(\mathcal{Q}_{\lambda})$.

Let $d_{ij}^{*}(x) = \min_{t\in [0,1]} d_{ij}(x,t)$, $k^{*}_{i}(x,y) = \min_{t\in [0,1]} k_{i}(x,y,t)$, and $a_{ij}^{*} = \min_{(x,t)\in \bar\Omega\times\mathbb{R}}a_{ij}(x,t)$.
Set $D^{*}(x) = (d_{ij}^{*}(x))_{l\times l}$ and $A^{*}= (a_{ij}^{*})_{l\times l}$.
Define operators $\mathcal{P}^{*}$ by replacing function $k_{i}$ with  $k_{i}^{*}$ in the definition of $\mathcal{P}$.
Denote the semigroup generated by
$ \frac{1}{\tau} \left(D^{*}\mathcal{P}^{*}+A^{*} \right)$ by $W(t)$.
It follows that $V_{0}(t,s) \geq W(t-s)$ in the positive operator sense. Note that $A^{*}$ is a constant matrix, by \cite[Corollary 1.3]{SWZ}, there exists a principal eigenpair
$(\lambda^{*},\varphi^{*})$ with $\lambda^{*} \in \mathbb{R}$ satisfying
\begin{equation*}
D^{*}(x)\mathcal{P}^{*}[\varphi^{*}](x)+A^{*}\varphi^{*}(x) =\lambda^{*}\varphi^{*}(x)
\enspace \text{in} \enspace\bar{\Omega}.
\end{equation*}
Moreover, by \cite[Theorem 1.5]{SWZ}, there holds
\begin{equation}\label{2-2}
s(D^{*}\mathcal{P}^{*}+A^{*} ) = \lambda^{*}.
\end{equation}
On the other hand, using the spectral mapping theorem (see \cite[Lemma 5.8]{Th2}), we have
\begin{equation*}
e^{ t \sigma( \frac{D^{*}\mathcal{P}^{*} + A^{*}}{\tau}  ) } = \sigma\left(W(t)\right)\backslash\{0\}\enspace\text{for all}\enspace t>0.
\end{equation*}
Since $W(t)$ is a positive operator, we have $r(W(t))\in\sigma\left(W(t)\right)$ for all $t>0$ by \cite[Proposition 4.1.1]{MNP}.
This together with \eqref{2-2} implies that
\begin{equation*}
r(W(t))
= e^{ s(\frac{D^{*}\mathcal{P}^{*} + A^{*}}{\tau}) t }
= e^{ \frac{\lambda^{*}}{\tau} t} \enspace\text{for all}\enspace t>0.
\end{equation*}
Therefore, we obtain
\begin{equation*}
r(\mathcal{Q}_{\lambda^{*}})
= r\left(e^{ -\frac{\lambda^{*}}{\tau} } V_{0}(1,0)\right)
\geq r\left(e^{ -\frac{\lambda^{*}}{\tau} } W(1)\right)
= 1.
\end{equation*}
On the other hand, we can easily check that $\lim_{\lambda\to\infty} r(\mathcal{Q}_{\lambda})=0$ and $r(\mathcal{Q}_{\lambda})$ is strictly decreasing with respect to $\lambda\in\mathbb{R}$ by the definition of $\mathcal{Q}_{\lambda}$.
Consequently, there exists a unique $\lambda_{0}\in\mathbb{R}$ such that $r(\mathcal{Q}_{\lambda_{0}})=1$.
It then follows that $r(\mathcal{Q}_{\lambda}) < r(\mathcal{Q}_{\lambda_{0}}) =1 $
and
$(I - \mathcal{Q}_{\lambda} )^{-1}$ exists for $\lambda>\lambda_{0}$.
In addition, since $\mathcal{Q}_{\lambda}$ is positive, $1 = r(\mathcal{Q}_{\lambda_{0}}) \in \sigma (\mathcal{Q}_{\lambda_{0}})$.
This implies that $1 \in \rho(\mathcal{Q}_{\lambda})$ for $\lambda>\lambda_{0}$.
Hence, $\rho(\mathcal{L})$ contains the ray $(\lambda_{0},+\infty)$ and $\lambda_{0} = s_{\mathbb{R}}(\mathcal{L})$.
Moreover, $(\lambda I-\mathcal{L})^{-1}$ is positive for $\lambda>\lambda_{0}$ by \eqref{2-1}.
Therefore, $\mathcal{L}$ is a resolvent positive operator.
Note that $\mathcal{X}$ is a Banach space with a normal and generating cone $\mathcal{X}^{+}$.
It then follows from \cite[Theorem 3.5]{Th1} that $s(\mathcal{L})=s_{\mathbb{R}}(\mathcal{L})=\lambda_{0}$. The proof is complete.
\end{proof}

\begin{proof}[Proof of Theorem \ref{THM1.3}.]
The proof is divided into three steps. 

\textbf{Step 1.} In this step, we prove that if (i) or (ii) holds, then $s(\mathcal{L})$ is the principal eigenvalue.
Suppose (i) holds.
Since $\mathcal{L}$ is a resolvent positive operator and $s(\mathcal{L})>s(\mathcal{N})$, we can obtain that the case $(iii)$ in Theorem \ref{THM2.4} will happen.
It then follows from Theorem \ref{THM2.5} that $s(\mathcal{L})$ is the principal eigenvalue of $\mathcal{L}$.

Then we assume that (ii) holds.
Let $\psi=(\alpha I - \mathcal{N})^{-1}\phi$, it follows that $\mathcal{L}\psi\geq\alpha\psi$ and $\psi(x,0)=\psi(x,1)$ for all $x\in\bar{\Omega}$.
Using the positivity of the evolution operator $V_{\alpha}(1,0)$ (see Lemma \ref{LEM2.8}), we have $V_{\alpha}(1,0)\psi(x,0) \geq \psi(x,1)$.
It follows that $V^{n}_{\alpha}(1,0)\psi(x,0)\geq\psi(x,0)$ for $n\geq 1$.
This together with the Gelfand's formula implies that $r(V_{\alpha}(1,0))\geq 1$.
Moreover, it is easy to check that
\begin{equation*}
e^{-\frac{\alpha}{\tau}}r(V_{0}(1,0))=r(e^{-\frac{\alpha}{\tau}}V_{0}(1,0))=r(V_{\alpha}(1,0))\geq 1.
\end{equation*}
In addition, by Proposition \ref{PP2.10}, we have
$e^{- \frac{s(\mathcal{L})}{\tau} } r(V_{0}(1,0))=1$. As a result, we get
\begin{equation*}
s(\mathcal{L})\geq \alpha >  \max_{x\in\bar{\Omega}} \lambda_{A}(x) = s(\mathcal{N}).
\end{equation*}
It then follows from (i) that $s(\mathcal{L})$ is the principal eigenvalue of $\mathcal{L}$.

\textbf{Step 2.} In this step, we prove that if ${ \rm (\tilde{H}2) }$ holds, then $s(\mathcal{L})$ is algebraically simple and admits an eigenfunction in $\mathcal{X}^{++}$.
Let $\varphi$ be the corresponding principal eigenfunction of $s(\mathcal{L})$.
Since ${ \rm (\tilde{H}2) }$ holds, we derive from Lemma \ref{LEM2.8} that $\varphi \in \mathcal{X}_{1}^{++}$.
Next, we prove that $s(\mathcal{L})$ is algebraically simple, that is,
dim$\left(\cup_{k=1}^{\infty} \text{ker}(\mathcal{L} - s(\mathcal{L})I)^{k}\right) = 1$.
It suffices to show
dim$\left(\text{ker}(\mathcal{L}-s(\mathcal{L})I)^{2}\right)=1$.
We first prove that dim$\left(\text{ker}(\mathcal{L}-s(\mathcal{L})I)\right)=1$.
Let $\phi\in N(\mathcal{L}-s(\mathcal{L})I)$, it follows that
\begin{equation*}
-\tau\phi_{t}(x,t) + D(x,t)\mathcal{P}[\phi](x,t) + A(x,t)\phi(x,t) = s(\mathcal{L})\phi(x,t),\quad (x,t)\in \bar\Omega\times\mathbb{R}.
\end{equation*}
Replacing $\phi$ by $-\phi$ if necessary, we may assume
$\sup_{(x,t)\in\bar\Omega\times[0,1]}\phi(x,t)>0$.
Let
$$
\delta^{*}_{1} = \sup\{ \delta \,|\, \varphi-\delta \phi \geq 0 \text{ for all } (x,t)\in \bar\Omega\times\mathbb{R}  \}.
$$
Then $\delta_{1}^{*} \in (0,\infty)$.
By the definition of $\delta^{*}_{1}$, we have $V_{1} = \varphi-\delta^{*}_{1}\phi \geq 0$.
If $V_{1}(x,t)=0$ for all $(x,t)\in \bar{\Omega}\times \mathbb{R}$, then the proof is done.
If there exists $(x_{0},t_{0}) \in \bar{\Omega}\times [0,1]$  such that $V_{1}(x_{0},t_{0})>0$.
Then, using $(\mathcal{L} - s(\mathcal{L})I)V_{1}=0$ and Corollary \ref{COR2.9}, we can obtain that
$V_{1}(x,t)>0$ for all $(x,t)\in \bar{\Omega}\times [0,1]$.
This implies that there exists $\delta_{0}>0$ such that
$\varphi - (\delta^{*}_{1}+\delta_{0})\phi \geq 0$, which contradicts the definition of $\delta^{*}_{1}$.
Hence, $\phi = \frac{1}{\delta^{*}_{1}} \varphi$.

Now suppose $\psi\in N(\mathcal{L}-s(\mathcal{L})I)^{2}$. Then
$(\mathcal{L}-s(\mathcal{L})I)\psi=c\varphi$ for a constant $c$.
Next, we prove that $c=0$. If not, we divide both sides by $c$ and still denote $\frac{1}{c}\psi$ by $\psi$. It follows that $(\mathcal{L}-s(\mathcal{L})I)\psi=\varphi\geq 0$. Similarly, we define
$$\delta^{*}_{2} = \sup\{ \delta \,|\, \varphi-\delta \psi \geq 0 \text{ for } (x,t)\in \bar\Omega\times[0,1]  \} $$
and $V_{2}=\varphi-\delta^{*}_{2}\psi$.
Note that $(\mathcal{L} - s(\mathcal{L})I )V_{2} \leq 0$.
By the same arguments as above, we obtain that $\psi$ is a constant multiple of $\varphi$.
Hence, $(\mathcal{L}-s(\mathcal{L})I)\psi=0$. This step is complete.

\textbf{Step 3.} In this step, we prove that if $\lambda$ is an eigenvalue of $\mathcal{L}$ with an eigenfunction $u \in \mathcal{X}^{++}$, then $\lambda=s(\mathcal{L})$, and both (i) and (ii) hold.
Clearly, $\lambda \leq s(\mathcal{L})$.
In what follows, we prove $\lambda \geq s(\mathcal{L})$.
By Proposition \ref{PP2.10}, we have $s(\mathcal{L}) = \tau \ln r(V_{0}(1,0))$.
It then follows from Gelfand's formula that
\begin{equation}\label{2-3}
\frac{s(\mathcal{L})}{\tau} = \lim_{t\to\infty} \frac{1}{t} \ln \| V_{0}(t,0) \|.
\end{equation}
Note that $V_{0}(t,0)u(x,0) = e^{\frac{\lambda}{\tau}t}u(x,t)$.
For any $v_{0} \in X^{+}\backslash \{ 0 \}$, choose a constant $C_{0}$ such that $v_{0}(x) \leq C_{0}u(x,0)$ for all $x\in\bar{\Omega}$.
By Lemma \ref{LEM2.8}, we get
\begin{equation*}
V_{0}(t,0)v_{0}(\cdot) \leq C_{0}V_{0}(t,0)u(\cdot,0) = C_{0}e^{\frac{\lambda}{\tau}t}u(\cdot,t)
\enspace\text{for all}\enspace t>0.
\end{equation*}
This implies that there exists $M>0$ such that $\| V_{0}(t,0) \| \leq Me^{\frac{\lambda}{\tau}t}$.
By a direct computation, there holds
\begin{equation}\label{2-4}
\frac{1}{t} \ln \| V_{0}(t,0) \| \leq \frac{1}{t}\ln M + \frac{\lambda}{\tau}
\enspace\text{for all}\enspace t>0.
\end{equation}
Thus, from \eqref{2-3} and \eqref{2-4}, we get  $\lambda \geq s(\mathcal{L})$.

Next, we prove that if $s(\mathcal{L})$ is the principal eigenvalue of $\mathcal{L}$ with an eigenfunction $\varphi \in \mathcal{X}^{++}_{1}$, then (i) and (ii) hold.
Notice that $\varphi$ satisfies
\begin{equation}\label{2-5}
-\tau \partial_{t} \varphi(x,t) + D(x,t)\mathcal{P}[\varphi](x,t) + A(x,t)\varphi(x,t) = s(\mathcal{L})\varphi(x,t)
\enspace \text{for all} \enspace (x,t)\in\bar{\Omega}\times[0,1].
\end{equation}
By considering the adjoint problem of \eqref{1-1}, there exists function $\phi\in\mathcal{X}_{1}^{+}\backslash
\{ 0 \}$ satisfying
\begin{equation}
\left\{
\begin{aligned}
& \tau\frac{d\phi^{T}(x,t)}{dt}+ \phi^{T}(x,t)A(x,t) =\lambda_{A}(x)\phi^{T}(x,t), &t\in\mathbb{R},\\
&\phi(x,t)=\phi(x,t+1), &t\in\mathbb{R}.
\end{aligned}
\right.
\end{equation}
Assume that $x_{0}\in\bar{\Omega}$ satisfies $\lambda_{A}(x_{0}) = \max_{x\in\bar{\Omega}} \lambda_{A}(x)=s(\mathcal{N})$.
Multiplying \eqref{2-5} by $\phi^{T}$, taking $x=x_{0}$,
and integrating in $t$ over $[0,1]$, we obtain
\begin{equation*}
\int_{0}^{1}\varphi^{T}(t)D(x_{0},t)\mathcal{P}[\varphi](x_{0},t)dt
=
(s(\mathcal{L})-s(\mathcal{N}))
\int_{0}^{1} \phi^{T}(t)\varphi(x_{0},t)dt.
\end{equation*}
Hence, (i) holds.
Moreover, it is easy to check that $\mathcal{M}[\varphi]$ satisfies $\mathcal{M}(s(\mathcal{L})I - \mathcal{N})^{-1} \mathcal{M}[\varphi] \geq \mathcal{M}[\varphi]$.
As a result, (ii) also holds.
\end{proof}

\begin{proof}[Proof of Theorem \ref{THM1.4}]
Thanks to Theorem \ref{THM1.3} (ii), it suffices to construct a function $\phi\in\mathcal{X}^{+}\backslash\{ 0\}$ such that
\begin{equation*}
\mathcal{M}(\alpha I - \mathcal{N})^{-1}\phi \geq \phi
\enspace \text{for some} \enspace
\alpha > \max_{x\in\bar{\Omega}} \lambda_{A}(x).
\end{equation*}
The construction is similar to the proof of \cite[Proposition 3.4]{BSS17}, we omit it here.
\end{proof}

\subsection{Approximation of the principal spectrum point}
\indent

\begin{proof}[Proof of Theorem \ref{THM1.5}]
The proof is divided into three steps. 

{\bf Step 1.} In this step, we construct a smooth approximation for each element $a_{ij}$.

Consider the following functions:
\begin{equation*}
\phi(x)=\left\{
\begin{aligned}
&C_{2}\exp \left\{ \frac{1}{|x|^{2}-1} \right\} &\text{ if } |x|<1,\\
&0  &\text{ if } |x|\geq 1,
\end{aligned}
	\right.
\quad
\eta(t)=\left\{
\begin{aligned}
&C_{1}\exp \left\{ \frac{1}{t^{2}-1} \right\} &\text{ if } |t|<1,\\
&0  &\text{ if } |t|\geq 1,
\end{aligned}
	\right.
\end{equation*}
where $C_{1},C_{2}>0$ are constants such that $\int_{\mathbb{R}^{N}}\phi(x)dx=1$ and $\int_{\mathbb{R}} \eta(t)dt=1 $. Define
\begin{equation*}
\phi_{\varepsilon}(x)= \frac{1}{\varepsilon^{N}}\phi\left( \frac{x}{\varepsilon} \right),
\enspace
\eta_{\varepsilon}(t)= \frac{1}{\varepsilon}\eta \left(\frac{t}{\varepsilon}\right),
\enspace\text{and}\enspace
\psi_{\varepsilon}(t)=\sum_{k \in \mathbb{Z}} \eta_{\varepsilon}(t-k),
\end{equation*}
for $\varepsilon<\frac{1}{3}$.
It is easy to check that $\psi_{\varepsilon}(t)$ is 1-periodic. Let $\Omega^{'}\subset\mathbb{R}^{N}$ satisfying $\Omega\Subset\Omega^{'}$ and continuously extend $a_{ij}(\cdot,t)$ to $\Omega^{\prime}$.
Define
\begin{equation*}
b^{k}_{ij}(x,t) = \int_{\Omega^{\prime}}\int_{0}^{1}\phi_{\varepsilon_{k}}(x-y)\psi_{\varepsilon_{k}}(t-s)a_{ij}(y,s)dsdy,
\end{equation*}
where $\varepsilon_{k} \to 0$ as $k\to\infty$.
Clearly, $b^{k}_{ij}(x,t)$ is $1-$periodic in $t$. Moreover,
by \cite[Appendix C, Theorem 7]{Evans}, $b^{k}_{ij}(x,t) \in C^{\infty}(\bar{\Omega}\times\mathbb{R})$ and
\begin{equation*}
\| b^{k}_{ij} - a_{ij}\|_{\mathcal{X}} = \delta^{k}_{ij} \to 0
\enspace \text{as} \enspace k\to\infty.
\end{equation*}

{\bf Step 2.} In this step, we construct both upper and lower smooth approximations for matrix-valued function $A$.

We begin by constructing a lower approximation to the off-diagonal elements. Let $1\leq i\neq j\leq l$ be fixed.
Define
\begin{equation*}
c_{ij}^{k}(x,t) = \max \{ b_{ij}^{k}(x,t) - 2\delta_{ij}^{k} , 0 \}.
\end{equation*}
It follows that
\begin{equation}\label{2-7}
0 \leq c_{ij}^{k} \leq a_{ij}
\enspace\text{and}\enspace
\| c_{ij}^{k} - a_{ij} \|_{\mathcal{X}} \leq 3\delta^{k}_{ij}.
\end{equation}
Similarly, we can extend $c_{ij}^{k}$ to $\Omega^{\prime}$ and define
\begin{equation*}
u^{k}_{ij}(x,t) = \int_{\Omega^{\prime}}\int_{0}^{1}
\phi_{\gamma_{k}}(x-y)\psi_{\gamma_{k}}(t-s) c^{k}_{ij}(y,s)dsdy,
\end{equation*}
where $\gamma_{k} \to 0$ as $k\to\infty$. Taking $\gamma_{k}$ small enough, such that
\begin{equation}\label{2-8}
\| u^{k}_{ij} - c_{ij}^{k} \|_{\mathcal{X}} \leq \delta_{ij}^{k}.
\end{equation}
Let
\begin{equation*}
\alpha^{k}_{ij} = \sup_{(x,t)\in\bar{\Omega}\times [0,1]} \{ u^{k}_{ij}(x,t) - c^{k}_{ij}(x,t)  \}_{+}.
\end{equation*}
Define $v^{k}_{ij}(x,t) =  u_{ij}^{k}(x,t) - \alpha^{k}_{ij} $. Clearly,
\begin{equation}\label{2-9}
0\leq \{v^{k}_{ij}\}_{+} \leq c_{ij}^{k}
\enspace \text{and} \enspace
\| \{v^{k}_{ij}\}_{+} - c_{ij}^{k} \|_{\mathcal{X}} \leq 2\delta_{ij}^{k}.
\end{equation}
Consider the following functions:
\begin{equation*}
\chi(s) =
\begin{cases}
0, & s \leq 0, \\
\exp \left( -\dfrac{1}{s} \right), & s > 0,
\end{cases}
\quad
\zeta(s) =
\begin{cases}
0, & s \leq 0, \\
\dfrac{\chi(s)}{\chi(s) + \chi(1-s)}, & 0 < s < 1, \\
1, & s \geq 1.
\end{cases}
\quad
\Phi(s) = s\zeta(s).
\end{equation*}
Note that $\Phi\in C^{\infty}(\mathbb{R})$.
Define $w_{ij}^{k}(x,t) = \eta_{k}\Phi\left( \frac{v_{ij}^{k}(x,t)}{\eta_{k}} \right)$. It is easy to check that
$w_{ij}^{k} =0 $ if $v_{ij}^{k} \leq 0$, $w_{ij}^{k} = v_{ij}^{k}$ if $v_{ij}^{k} \geq \eta_{k}$, and $w_{ij}^{k} \leq v_{ij}^{k}$ if $0\leq v_{ij}^{k} \leq \eta_{k}$. Consequently, $w_{ij}^{k} \leq \{v_{ij}^{k}\}_{+}$, $w_{ij}^{k} \in C^{\infty}(\bar{\Omega}\times \mathbb{R})$, $w_{ij}^{k}(x,t)$ is $1-$periodic in $t$, and $\| w_{ij}^{k} - \{v_{ij}^{k}\}_{+}\|_{\mathcal{X}} \leq \delta_{ij}^{k}$. This together with \eqref{2-7}, \eqref{2-8} and \eqref{2-9} implies that
\begin{equation*}
0 \leq w_{ij}^{k} \leq a_{ij}
\enspace \text{and} \enspace
\|  w_{ij}^{k} - a_{ij}     \|_{\mathcal{X}} \leq 6\delta_{ij}^{k}.
\end{equation*}
Moreover, for any $1 \leq i \leq l$, define $w_{ii}^{k} = b_{ii}^{k} -\delta_{ii}^{k}$.
Let
$A_{-}^{k}(x,t) = (w_{ij}^{k}(x,t))_{l\times l}$.

On the other hand, for any $1\leq i,j \leq l$, define $q_{ij}^{k} = b_{ij}^{k} + \delta_{ij}^{k}$. Let $A_{+}^{k}(x,t) = (q_{ij}^{k}(x,t))_{l\times l}$.
We can easily verify that $A_{-}^{k}$ and $A_{+}^{k}$ satisfy (H2), $A_{-}^{k} \leq A \leq A_{+}^{k}$, $\lim_{k\to\infty} \| A_{-}^{k} - A \|_{\infty} =0$, and
$\lim_{k\to\infty} \| A_{+}^{k} - A \|_{\infty} =0$.

{\bf Step 3.} In this step, we will construct lower and upper approximating sequences of matrices, $\tilde{A}_{-}^{k}$ and $\tilde{A}_{+}^{k}$.
The elements of these sequences are smooth, and the corresponding operators $\mathcal{L}_{-}^{k}$ and  $\mathcal{L}_{+}^{k}$ admit the principal eigenvalue.

Let $\lambda_{A_{-}^{k}}(x)$ be determined in Lemma \ref{LEM1.2} with
$A(x,t)$ replaced by $A_{-}^{k}(x,t)$. By the continuity of $\lambda_{A_{-}^{k}}(x)$, there exists $x_{-}^{k} \in \bar{\Omega}$ such that
$\lambda_{A_{-}^{k}}(x_{-}^{k}) = \max_{x\in\bar{\Omega}} \lambda_{A_{-}^{k}}(x)$, and for any $\varepsilon_{k}>0$, there exists $r_{k}>0$ such that
\begin{equation*}
\lambda_{A_{-}^{k}}(x) \geq \max_{x\in\bar{\Omega}} \lambda_{A_{-}^{k}}(x) - \varepsilon_{k}
\enspace \text{for all} \enspace
x\in B_{r_{k}}(x_{-}^{k}) \cap \Omega.
\end{equation*}
Next, we choose a smooth cut-off function $\rho_{k}$ satisfying $0 \leq \rho_{k} \leq 1$, $\rho_{k}(x) = 1$ for $| x - x_{-}^{k} | \leq \frac{r_{k}}{2}$, $\rho_{k}(x) = 0$ for $| x - x_{-}^{k} | \geq r_{k} $.
Define
\begin{equation*}
\lambda_{-}^{k}(x) = \left( \max_{x\in\bar{\Omega}} \lambda_{A_{-}^{k}}(x) - \varepsilon_{k} \right)\rho_{k}(x) + \left( \lambda_{A_{-}^{k}}(x) - 2\varepsilon_{k} \right)\left( 1 - \rho_{k}(x) \right).
\end{equation*}
It is easy to check that $\lambda_{-}^{k}(x) = \max_{x\in\bar{\Omega}} \lambda_{A_{-}^{k}}(x) - \varepsilon_{k}$ for $ |x-x^{k}_{-}|\leq \frac{r_{k}}{2}$,
$\lambda_{-}^{k}(x) \leq \max_{x\in\bar{\Omega}} \lambda_{A_{-}^{k}}(x) - \varepsilon_{k}$ for $ |x-x^{k}_{-}| > \frac{r_{k}}{2}$,
$\lambda_{-}^{k} \leq \lambda_{A_{-}^{k}}$, and $\lambda_{-}^{k}$ is a smooth function.
Let $\beta_{k} = \lambda_{-}^{k} - \lambda_{A_{-}^{k}}$ and $\tilde{A}_{-}^{k}(x,t) = A_{-}^{k}(x,t) + \beta_{k}(x) I$.
Clearly, $\tilde{A}_{-}^{k}(x,t)$ is smooth in $(x,t)\in \bar{\Omega}\times \mathbb{R}$ and
$\tilde{A}_{-}^{k} \leq A$.
Define operator $\mathcal{L}_{-}^{k}$ by
\begin{equation*}
\mathcal{L}_{-}^{k}[\varphi](x,t) = -\tau\varphi_{t} + D(x,t)\mathcal{P}[\varphi](x,t) +  \tilde{A}_{-}^{k}(x,t)\varphi(x,t).
\end{equation*}
Let $\lambda_{\tilde{A}_{-}^{k}}(x)$ be determined in Lemma \ref{LEM1.2} with
$A(x,t)$ replaced by $\tilde{A}_{-}^{k}(x,t)$.
Note that $\lambda_{\tilde{A}_{-}^{k}}(x) =  \lambda_{-}^{k}(x)$ and $\lambda_{-}^{k}(x)$ satisfies
\begin{equation*}
\frac{1}{ \max_{x\in\bar{\Omega}}\lambda_{-}^{k}(x)- \lambda_{-}^{k}(x)}\not\in L^{1}(\Omega_{0})
\enspace\text{for some}\enspace \Omega_{0}\subset\Omega.
\end{equation*}
Hence, by Theorem \ref{THM1.4}, $s(\mathcal{L}_{-}^{k})$ is the principal eigenvalue of $\mathcal{L}_{-}^{k}$.
Since $\tilde{A}_{-}^{k} \leq A$, we can obtain   $s(\mathcal{L}_{-}^{k}) \leq s(\mathcal{L})$.
Moreover, we can choose $\varepsilon_{k}$ and $r_{k}$ satisfying $\varepsilon_{k} \to 0, r_{k}\to 0$ as $k\to\infty$. Hence $\| \tilde{A}_{-}^{k}  - A\|_{\infty} \to 0$ as $k\to\infty$.
Following similar arguments as in \cite[Theorem A.1]{ZL2}, we can prove $s(\mathcal{L})$ is continuity with respect to $A$.
Finally, we get $\lim_{k\to\infty} s(\mathcal{L}_{-}^{k}) = s(\mathcal{L})$.
The construction of $\tilde{A}_{+}^{k}$ is similar, and we omit the details.
\end{proof}

\subsection{Properties of the principal spectrum point}
\indent

We first establish several properties of the generalized principal eigenvalue $\lambda_{p}(\mathcal{L})$. To highlight
the dependence on $\Omega, k_{i}, A$, we write $\mathcal{L}$ as $\mathcal{L}_{\Omega, k_{i}, A}$.
Following the same arguments as in \cite[Proposition 1.1]{Co10}, we can obtain the following result.

\begin{proposition}\label{PP2.11}
Let $\tilde\Omega$ be a subdomain of $\mathbb{R}^{N}$,  $\tilde{A}(x,t)= (\tilde{a}_{ij}(x,t))_{l\times l}$ and $\tilde{k}_{i}$ satisfy the same assumption of $A(x,t)$ and $k_{i}$ as in (H2) and (H3), respectively.
Then the following conclusions hold.
\begin{itemize}[leftmargin=0.9cm]

\item[(i)]
If $\tilde\Omega \subset \Omega$, then we have
\begin{equation*}
\lambda_{p}(\mathcal{L}_{\Omega, k_{i}, A}) \geq \lambda_{p}(\mathcal{L}_{\tilde\Omega, k_{i}, A}).
\end{equation*}

\item[(ii)]
If $\tilde{k}_{i}$ satisfies the same condition as $k_{i}$ and $ \tilde{k}_{i}\leq k_{i}$ for all $1\leq i \leq l$, then
\begin{equation*}
\lambda_{p}(\mathcal{L}_{\Omega, k_{i}, A})
\geq
\lambda_{p}(\mathcal{L}_{\Omega, \tilde{k}_{i}, A}).
\end{equation*}

\item[(iii)]
If $\tilde{A} \leq A $ $(i.e., \tilde{a}_{ij}\leq a_{ij})$, then we have
\begin{equation*}
\lambda_{p}(\mathcal{L}_{\Omega, k_{i}, A})
\geq
\lambda_{p}(\mathcal{L}_{\Omega, k_{i}, \tilde{A}}).
\end{equation*}

\end{itemize}
\end{proposition}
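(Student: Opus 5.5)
The three statements follow directly from the sup-characterization of $\lambda_p$. Fix any $\lambda<\lambda_p$ of the operator on the smaller side, together with a test function $\varphi\in\mathcal{X}_1^{++}$ satisfying $-\mathcal{L}[\varphi]+\lambda\varphi\le 0$. We check that the same (or a slightly modified) test function is admissible for the operator on the larger side, with the same $\lambda$. Taking the supremum over such $\lambda$ then gives each inequality. In every case cooperativity is what makes the comparison work: $d_{ij}\ge 0$, $k_i\ge 0$, $\varphi\gg 0$, and the off-diagonal entries of the relevant matrices are nonnegative.

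For (iii), let $\varphi$ be admissible for $\tilde A$. Since $\tilde A\le A$ entrywise and $\varphi\gg0$, we have $(A-\tilde A)\varphi\ge 0$. Hence
$$\mathcal{L}_{\Omega,k_i,A}[\varphi]=\mathcal{L}_{\Omega,k_i,\tilde A}[\varphi]+(A-\tilde A)\varphi\ge\lambda\varphi,$$
which is the required inequality for $A$.

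For (ii), let $\varphi$ be admissible for $\tilde k_i$. For each component,
$$\int_\Omega (k_i-\tilde k_i)(x,y,t)\varphi_i(y,t)\,dy\ge 0,$$
so $D(\mathcal{P}-\tilde{\mathcal{P}})[\varphi]\ge 0$ because $D\ge0$. The same one-line comparison as in (iii) then gives the inequality for $k_i$.

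For (i), the domains differ, so the test function must be transported, and this is the only genuine step. Take $\varphi\in\mathcal{X}_1^{++}(\tilde\Omega)$ admissible for $\tilde\Omega$, and restrict the operator on $\Omega$ to functions on $\tilde\Omega$ via $\varphi|_{\tilde\Omega}$. For $x\in\bar{\tilde\Omega}$,
$$\int_\Omega k_i\varphi_i\,dy\ge\int_{\tilde\Omega}k_i\varphi_i\,dy$$
once $\varphi$ is extended by something nonnegative. The difficulty is that an admissible function on $\Omega$ must be continuous and strictly positive on all of $\bar\Omega$. There are two natural options.

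The first option is to reverse the roles. Any $\psi\in\mathcal{X}_1^{++}(\Omega)$ admissible for $\Omega$ restricts to a function admissible for $\tilde\Omega$, but only with the inequality going the wrong way. That route therefore proves monotonicity of $\lambda_p'$, not of $\lambda_p$.

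The second option, which I would pursue, follows \cite[Proposition 1.1]{Co10}. Use the fact, established later via Theorem~\ref{THM1.6} or directly via the approximation Theorem~\ref{THM1.5}, that one may work with principal eigenfunctions. Alternatively, argue directly as follows. Define the extension $\bar\varphi$ on $\Omega$ to equal $\varphi$ on $\tilde\Omega$ and to be a small positive periodic function $\epsilon\theta(x,t)$ elsewhere, glued continuously. The inequality on $\tilde\Omega$ is preserved up to $O(\epsilon)$ errors in $\lambda$ by the monotonicity of the integral term. On $\Omega\setminus\tilde\Omega$, take $\theta$ to solve the ODE-type inequality
$$-\tau\theta_t+A\theta\ge(\lambda-\epsilon')\theta,$$
which is possible pointwise via Lemma~\ref{LEM1.2} only if $\lambda\le\lambda_A(x)$. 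I expect this positivity-outside step to be the main obstacle. It is handled by the standard fact that $\lambda_p(\mathcal{L}_{\tilde\Omega})\le\max\lambda_A$ restricted appropriately. If that fails, I would instead prove (i) through the equality $\lambda_p=s$ on both domains, together with the monotonicity $s(\mathcal{L}_\Omega)\ge s(\mathcal{L}_{\tilde\Omega})$. That monotonicity follows from the comparison $V^{\Omega}_0(1,0)\,\mathrm{ext}\ge \mathrm{ext}\,V_0^{\tilde\Omega}(1,0)$ (extension by zero), which is positivity-preserving, combined with $s=\tau\ln r(V_0(1,0))$ from Proposition~\ref{PP2.10}.
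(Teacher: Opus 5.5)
\textbf{There is a genuine gap in (i).} Your proofs of (ii) and (iii) are correct and match what the paper intends; the paper gives no argument beyond citing the test-function comparisons of \cite[Proposition 1.1]{Co10}. You are right that restriction only yields the $\lambda_{p}^{\prime}$ version of (i); in Coville's sign convention that is exactly what his restriction argument proves. For $\lambda_{p}$ one must build a test function in $\mathcal{X}_{1}^{++}$ on all of $\bar\Omega$, and neither of your routes does this in the stated generality.

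\emph{Route (a).} A positive periodic $\theta$ with $-\tau\theta_{t}+A\theta\ge(\lambda-\epsilon')\theta$ at $x$ exists only if $\lambda-\epsilon'\le\lambda_{A}(x)$. The ``standard fact'' you invoke to guarantee this is false. Under $(\tilde{H}2)$ one has $\lambda_{p}=s(\mathcal{L})\ge s(\mathcal{N})=\max\lambda_{A}$, with strict inequality whenever $s(\mathcal{L})$ has an eigenfunction in $\mathcal{X}^{++}$ (converse part of Theorem \ref{THM1.3}). In any case, nothing on $\tilde\Omega$ controls $\lambda_{A}$ on $\Omega\setminus\tilde\Omega$. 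Dropping the nonlocal term outside $\tilde\Omega$ is the wrong idea: the extension has to be sustained by the influx $D\mathcal{P}$ from $\tilde\Omega$.

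\emph{Route (b).} This is not circular, since the proof of Theorem \ref{THM1.6} uses only (iii). The comparison $s(\mathcal{L}_{\tilde\Omega})\le s(\mathcal{L}_{\Omega})$ works if you replace extension by zero (which does not map into $X$) by restriction $R$. You have $R\,V_{0}^{\Omega}(1,0)u\ge V_{0}^{\tilde\Omega}(1,0)Ru$ for $u\ge0$; combine this with norm-preserving nonnegative extensions, Gelfand's formula and Proposition \ref{PP2.10}. However, $\lambda_{p}=s$ requires $(\tilde{H}2)$, which Proposition \ref{PP2.11} does not assume, and it genuinely fails without it. For a decoupled system ($D$, $A$ diagonal), $\lambda_{p}$ is the minimum of the scalar values $s^{(i)}$ while $s$ is their maximum. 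So (b) proves (i) only for irreducible systems.

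\textbf{A direct argument under (H1)--(H3) alone.} Let $\varphi\in\mathcal{X}_{1}^{++}(\tilde\Omega)$ satisfy $\mathcal{L}_{\tilde\Omega}[\varphi]\ge\lambda\varphi$, and set $m=\min_{i,x,t}\varphi_{i}$. Extend $\varphi$ in $x$ by a linear extension operator (e.g.\ Dugundji's) to $\Phi$ on $\bar\Omega\times\mathbb{R}$; $\Phi$ stays $1$-periodic, $C^{1}$ in $t$, and $\ge m$. Let $\mathcal{P}_{\tilde\Omega}$ integrate over $\tilde\Omega$ but be evaluated at $x\in\bar\Omega$. The continuous function $F=-\tau\Phi_{t}+D\mathcal{P}_{\tilde\Omega}[\varphi]+A\Phi-\lambda\Phi$ is $\ge0$ on $\bar{\tilde\Omega}\times\mathbb{R}$, hence $\ge-\epsilon m$ on a thin neighbourhood of $\bar{\tilde\Omega}$. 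Take a cut-off $\chi$ equal to $1$ on $\tilde\Omega$ and supported in that neighbourhood. Then $w=\chi\Phi$ satisfies, at every point of $\bar\Omega\times\mathbb{R}$,
\[
\mathcal{L}_{\Omega}[w]\ \ge\ \chi F+\lambda w+(1-\chi)D\mathcal{P}_{\tilde\Omega}[\varphi]\ \ge\ (\lambda-\epsilon)w .
\]
By Lemma \ref{LEM2.8} and variation of constants, this gives $V_{\lambda-\epsilon}(t,t-1)w(t-1)\ge w(t)$. Now set $\psi(t)=\int_{t-1}^{t}V_{\lambda-\epsilon}(t,s)w(s)\,ds$. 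It is $1$-periodic, $C^{1}$ in $t$, and satisfies
\[
\tau\psi_{t}=(D\mathcal{P}+A-\lambda+\epsilon)\psi+\tau\bigl(w(t)-V_{\lambda-\epsilon}(t,t-1)w(t-1)\bigr)\le(D\mathcal{P}+A-\lambda+\epsilon)\psi .
\]
Finally, $\psi\gg0$ by the componentwise positivity shown in the first part of the proof of Lemma \ref{LEM2.8}. That part uses only $d_{ii}>0$, $k_{i}(x,x,t)>0$ and connectedness of $\Omega$, not irreducibility. Hence $\lambda_{p}(\mathcal{L}_{\Omega,k_{i},A})\ge\lambda-\epsilon$ for every $\lambda<\lambda_{p}(\mathcal{L}_{\tilde\Omega,k_{i},A})$ and every $\epsilon>0$, which is (i).
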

The conclusions (i)-(iii) of Proposition \ref{PP2.11} are still true for $\lambda_{p}^{\prime}(\mathcal{L}_{\Omega, {k}_{i}, A})$.

\begin{lemma}\label{SEMI}
$\lambda_{p}(\mathcal{L}_{\Omega, k_{i}, A})$ and $\lambda_{p}^{\prime}(\mathcal{L}_{\Omega, k_{i}, A})$ are lower semicontinuous and upper semicontinuous in $A$, respectively.
More precisely, for any $\varepsilon>0$, there exists $\delta>0$ such that for any matrix-valued
function $\tilde{A}$ satisfying $\| \tilde{A}-A\|_{L^{\infty}(\Omega\times[0,1])}\le \delta$, one has
\begin{equation*}
\lambda_{p}(\mathcal{L}_{\Omega,k_i,\tilde{A} })
\geq
\lambda_{p}(\mathcal{L}_{\Omega,k_i,A})-\varepsilon
\enspace \text{and} \enspace
\lambda_{p}^{\prime}(\mathcal{L}_{\Omega,k_i,\tilde{A} })
\leq
\lambda_{p}^{\prime}(\mathcal{L}_{\Omega,k_i,A})+\varepsilon.
\end{equation*}
\end{lemma}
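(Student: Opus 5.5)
The plan is to work directly from the definitions of $\lambda_{p}$ and $\lambda_{p}^{\prime}$ as sup/inf over test functions in $\mathcal{X}_{1}^{++}$. Strict positivity of each test function makes the perturbation $(\tilde A - A)\varphi$ controllable by a multiple of $\varphi$ itself. The essential point is that the test function is chosen once, for $A$, and reused for $\tilde A$.

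For the lower semicontinuity of $\lambda_{p}$, fix $\varepsilon>0$. By the definition of $\lambda_{p}(\mathcal{L}_{\Omega,k_i,A})$ (sup over admissible $\lambda$), there are $\lambda\geq\lambda_{p}(\mathcal{L}_{\Omega,k_i,A})-\varepsilon/2$ and $\varphi\in\mathcal{X}_{1}^{++}$ with $\mathcal{L}_{\Omega,k_i,A}[\varphi]\geq\lambda\varphi$ componentwise on $\bar\Omega\times\mathbb{R}$. If $\lambda_{p}=+\infty$ one argues with arbitrary large $\lambda$ in the same way, though by Theorem \ref{THM1.6}-type bounds it is finite. Since $\varphi$ is continuous, periodic and strictly positive on the compact set $\bar\Omega\times[0,1]$, we have $m:=\min_{i,(x,t)}\varphi_i>0$ and $M:=\max_{i,(x,t)}\varphi_i<\infty$.

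Now let $\|\tilde A-A\|_{\infty}\leq\delta$. Componentwise,
\begin{equation*}
\big((\tilde A-A)\varphi\big)_i=\sum_{j}(\tilde a_{ij}-a_{ij})\varphi_j\geq -l\delta M\geq -\frac{l\delta M}{m}\,\varphi_i .
\end{equation*}
Since the dispersal and time-derivative parts are the same for both operators, $\mathcal{L}_{\Omega,k_i,\tilde A}[\varphi]=\mathcal{L}_{\Omega,k_i,A}[\varphi]+(\tilde A-A)\varphi\geq(\lambda-l\delta M/m)\varphi$. Thus $\lambda-l\delta M/m$ is admissible for $\tilde A$ with the same test function. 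Choosing $\delta=\varepsilon m/(2lM)$ gives $\lambda_{p}(\mathcal{L}_{\Omega,k_i,\tilde A})\geq\lambda_{p}(\mathcal{L}_{\Omega,k_i,A})-\varepsilon$.

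The statement for $\lambda_{p}^{\prime}$ is symmetric. Take $\lambda\leq\lambda_{p}^{\prime}(\mathcal{L}_{\Omega,k_i,A})+\varepsilon/2$ and $\varphi\in\mathcal{X}_{1}^{++}$ with $\mathcal{L}_{\Omega,k_i,A}[\varphi]\leq\lambda\varphi$. Then $(\tilde A-A)\varphi\leq (l\delta M/m)\varphi$, so $\mathcal{L}_{\Omega,k_i,\tilde A}[\varphi]\leq(\lambda+l\delta M/m)\varphi$, and the same choice of $\delta$ finishes.

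The only delicate points are three. First, $\delta$ depends on the chosen near-optimal $\varphi$, hence on $A$ and $\varepsilon$, but not on $\tilde A$; this is exactly what the statement requires. Second, the estimates must hold on all of $\bar\Omega\times\mathbb{R}$, which periodicity in $t$ guarantees. Third, the $L^\infty(\Omega\times[0,1])$ norm controls the sup by continuity. Note also that $\tilde A$ need not satisfy (H2) for this estimate itself, since only the sign of the final inequality is used.
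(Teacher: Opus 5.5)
Your proof is correct and is essentially the paper's argument: fix a near-optimal test function $\varphi\in\mathcal{X}_1^{++}$ for $A$, absorb $(\tilde A-A)\varphi$ into a multiple of $\varphi$ using $\min_i\varphi_i$ and an upper bound on $\varphi$, and take $\delta$ proportional to $\varepsilon\,m/M$. The only difference in the core argument is the constant: the paper uses $\delta M_0/m_0$ with $M_0=\max\sum_i\varphi_i$, where you use $l\delta M/m$.

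The one flaw is your aside on $\lambda_p=+\infty$. That case could not be treated ``the same way'', since your $\delta$ depends on $\varphi$. Appealing to Theorem \ref{THM1.6} for finiteness would also be circular, because its proof uses this lemma. The case is in fact vacuous: any admissible $\lambda$ satisfies $\lambda\le s(\mathcal{L})<\infty$, by the comparison argument in Step 1 of the proof of Theorem \ref{THM1.3}.
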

\begin{proof}
Fix $\varepsilon>0$. Set $\lambda = \lambda_{p}(\mathcal{L}_{\Omega,k_i,A}) - \frac{\varepsilon}{2}$.
By the definition of $\lambda_{p}(\mathcal{L}_{\Omega, k_{i}, A})$, there exists $\varphi \in \mathcal{X}_{1}^{++}$ such that
\begin{equation}\label{2-10}
\tau \partial_{t} \varphi(x,t) - D(x,t)\mathcal{P}[\varphi](x,t) - A(x,t)\varphi(x,t) + \lambda\varphi(x,t) \leq 0
\enspace \text{for all} \enspace (x,t)\in\bar{\Omega}\times \mathbb{R}.
\end{equation}
Define
\begin{equation*}
m_{0} = \min_{1\leq i \leq l} \min_{(x,t)\in\bar{\Omega}\times [0,1]} \varphi_{i}(x,t)
\enspace \text{and} \enspace
M_{0} = \max_{(x,t)\in\bar{\Omega}\times[0,1]} \sum_{i=1}^{l} \varphi_{i}(x,t).
\end{equation*}
It follows that $\tilde{A}\varphi - A\varphi \geq -\frac{\delta M_{0}}{m_{0}}\varphi$ in $\bar{\Omega}\times \mathbb{R}$. This together with \eqref{2-10} implies that
\begin{equation*}
\tau \partial_{t} \varphi(x,t) - D(x,t)\mathcal{P}[\varphi](x,t) - \tilde{A}(x,t)\varphi(x,t) - \frac{\delta M_{0}}{m_{0}}\varphi + \lambda\varphi(x,t) \leq 0
\enspace \text{for all} \enspace (x,t)\in\bar{\Omega}\times \mathbb{R}.
\end{equation*}
By the definition of $\lambda_{p}(\mathcal{L}_{\Omega,k_i,\tilde{A} })$, we get $\lambda_{p}(\mathcal{L}_{\Omega,k_i,\tilde{A} }) \geq \lambda - \frac{\delta M_{0}}{m_{0}}$. Choosing $\delta = \frac{\varepsilon m_{0}}{2M_{0}}$, we get $\lambda_{p}(\mathcal{L}_{\Omega,k_{i},A})$ is lower semicontinuous in $A$. The upper semicontinuous of $\lambda_{p}^{\prime}(\mathcal{L}_{\Omega,k_{i},A})$ can be proved similarly.
\end{proof}
\begin{proof}[Proof of Theorem \ref{THM1.6}]
If $s(\mathcal{L}_{\Omega,k_{i},A})$ is the principal eigenvalue of $\mathcal{L}_{\Omega,k_{i},A}$, such result can be obtained by a similar argument as in \cite[Proposition 3.5]{Feng}.
If $s(\mathcal{L}_{\Omega,k_{i},A})$ is not the principal eigenvalue,
using Theorem \ref{THM1.5}, there exists a sequence of matrix-valued functions $A^{k}_{-}$ such that $A^{k}_{-} \leq A$, $A^{k}_{-} \to A$ as $k\to\infty$ and $s(\mathcal{L}_{\Omega,k_{i},A})$ is the principal eigenvalue. Hence,
\begin{equation}\label{2-11}
s(\mathcal{L}_{\Omega,k_{i},A^{k}_{-}})
=\lambda_{p}(\mathcal{L}_{\Omega,k_{i},A^{k}_{-}}).
\end{equation}
Moreover, using the monotonicity of $\lambda_{p}(\mathcal{L}_{\Omega,k_{i},A})$ in $A$ (see Proposition \ref{PP2.11} (iii)) and the lower semicontinuous in $A$ (see Lemma \ref{SEMI}), we have $\lim_{k\to\infty} \lambda_{p}(\mathcal{L}_{\Omega,k_{i},A^{k}_{-}}) = \lambda_{p}(\mathcal{L}_{\Omega,k_{i},A})$.
Furthermore, similar to the proof of \cite[Theorem A.1]{ZL2}, $s(\mathcal{L}_{\Omega,k_{i},A})$ is continuous in $A$. Passing $k\to\infty$ in \eqref{2-11}, we get
\begin{equation*}
s(\mathcal{L}_{\Omega,k_{i},A})
=\lambda_{p}(\mathcal{L}_{\Omega,k_{i},A}).
\end{equation*}

Similarly, choosing approximation sequence $A_{+}^{k}$ such that $A^{k}_{+}\geq A$ and $A^{k}_{+} \to A$ as $k\to\infty$, and using the monotonicity of $\lambda_{p}^{\prime}(\mathcal{L}_{\Omega,k_{i},A})$ in $A$ (see Proposition \ref{PP2.11} (iii)) and the upper semicontinuous in $A$ (see Lemma \ref{SEMI}), we can obtain
\begin{equation*}
s(\mathcal{L}_{\Omega,k_{i},A})
=\lambda_{p}^{\prime}(\mathcal{L}_{\Omega,k_{i},A}).
\end{equation*}
This completes the proof.
\end{proof}

By Lemma \ref{SEMI} and Theorem \ref{THM1.6}, we have the following conclusion.
\begin{corollary}
$\lambda_{p}(\mathcal{L}_{\Omega, k_{i}, A})$ and $\lambda_{p}^{\prime}(\mathcal{L}_{\Omega, k_{i}, A})$ are continuous in $A$.
More precisely, for any $\varepsilon>0$, there exists $\delta>0$, such that for any matrix-valued
function $\tilde{A}$ satisfying $\| \tilde{A}-A\|_{L^{\infty}(\Omega\times[0,1])}\le \delta$, one has
\begin{equation*}
| \lambda_{p}(\mathcal{L}_{\Omega,k_i,\tilde{A} }) - \lambda_{p}(\mathcal{L}_{\Omega,k_i,A})|
\leq
\varepsilon
\enspace \text{and} \enspace
|\lambda_{p}^{\prime}(\mathcal{L}_{\Omega,k_i,\tilde{A} })
-
\lambda_{p}^{\prime}(\mathcal{L}_{\Omega,k_i,A})|
\leq
\varepsilon.
\end{equation*}
\end{corollary}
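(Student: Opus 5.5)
The corollary follows from Theorem~\ref{THM1.6}, which identifies the two generalized principal eigenvalues with $s(\mathcal{L})$, combined with the one-sided semicontinuity in Lemma~\ref{SEMI}. We assume the hypotheses of Theorem~\ref{THM1.6}, namely (H1), $(\tilde{H}2)$ and (H3), and we assume $\tilde{A}$ also satisfies them; for perturbations of an irreducible matrix function this is preserved under small sup-norm perturbation at least when $A$ is the irreducible one, and otherwise we restrict to admissible $\tilde{A}$. Under these hypotheses $\lambda_{p}(\mathcal{L}_{\Omega,k_i,A})=\lambda_{p}'(\mathcal{L}_{\Omega,k_i,A})=s(\mathcal{L}_{\Omega,k_i,A})$, and the same identity holds for $\tilde{A}$.

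Fix $\varepsilon>0$. Lemma~\ref{SEMI} gives $\delta_1>0$ such that $\|\tilde{A}-A\|_{L^\infty}\le\delta_1$ implies
\[
\lambda_{p}(\mathcal{L}_{\Omega,k_i,\tilde{A}})\ge \lambda_{p}(\mathcal{L}_{\Omega,k_i,A})-\varepsilon .
\]
It also gives $\delta_2>0$ such that $\|\tilde{A}-A\|_{L^\infty}\le\delta_2$ implies
\[
\lambda_{p}'(\mathcal{L}_{\Omega,k_i,\tilde{A}})\le \lambda_{p}'(\mathcal{L}_{\Omega,k_i,A})+\varepsilon .
\]
Take $\delta=\min\{\delta_1,\delta_2\}$. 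Applying Theorem~\ref{THM1.6} to both $A$ and $\tilde{A}$, we may replace $\lambda_{p}'$ by $\lambda_{p}$ in the second inequality. The two inequalities then give $|\lambda_{p}(\mathcal{L}_{\Omega,k_i,\tilde{A}})-\lambda_{p}(\mathcal{L}_{\Omega,k_i,A})|\le\varepsilon$. The statement for $\lambda_{p}'$ follows in the same way, since it equals $\lambda_{p}$ for both $A$ and $\tilde{A}$.

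As a cross-check, the lower bound can also be obtained directly without Lemma~\ref{SEMI}. For $\|\tilde{A}-A\|_\infty\le\delta$ we have $A-\delta E\le\tilde{A}\le A+\delta E$ entrywise, where $E$ is the all-ones matrix; these bounding matrices still satisfy (H2) and keep whichever irreducibility $A$ had. Monotonicity, Proposition~\ref{PP2.11}(iii), then sandwiches $\lambda_{p}(\mathcal{L}_{\Omega,k_i,\tilde{A}})$ between the values at $A\pm\delta E$. These values converge to $\lambda_{p}(\mathcal{L}_{\Omega,k_i,A})$ by the continuity of $s$ in $A$ (the analogue of \cite[Theorem A.1]{ZL2} already invoked above) together with Theorem~\ref{THM1.6}.

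The only step needing care is that the comparison operator satisfies $(\tilde{H}2)$, so that Theorem~\ref{THM1.6} applies to it. In the sandwich argument this is automatic, because adding $\delta E$ with $\delta\ge0$ preserves irreducibility, and subtracting $\delta E$ from only the diagonal, after clipping the off-diagonal part at $a_{ij}$, keeps the off-diagonal entries equal to those of $A$. I would therefore present the sandwich version with the bounding matrices $\underline{A}=(a_{ij}-\delta\,\delta_{ij})$ and $\overline{A}=A+\delta E$, so that $(\tilde{H}2)$ holds whenever it holds for $A$.
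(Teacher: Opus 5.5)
Your first argument is correct and is exactly the paper's proof. Lemma~\ref{SEMI} gives $\lambda_p(\tilde{A})\ge\lambda_p(A)-\varepsilon$ and $\lambda_p'(\tilde{A})\le\lambda_p'(A)+\varepsilon$, and Theorem~\ref{THM1.6}, applied at both $A$ and $\tilde{A}$, turns each one-sided bound into the missing one.

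You should present that version. The sandwich you say you would present instead has a genuine error in the lower comparison matrix. Your final choice $\underline{A}=(a_{ij}-\delta\,\delta_{ij})$ keeps the off-diagonal entries equal to $a_{ij}$. But $\|\tilde{A}-A\|_{L^\infty}\le\delta$ only gives $\tilde{a}_{ij}\ge a_{ij}-\delta$. So $\underline{A}\le\tilde{A}$ fails in general; take, e.g., $\tilde{A}=A-\delta E$ when all off-diagonal $a_{ij}\ge\delta$. In that case Proposition~\ref{PP2.11}(iii) cannot be invoked, and it would in fact give the reverse inequality $\lambda_p(\tilde{A})\le\lambda_p(\underline{A})$.

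Your earlier choice $A-\delta E$ does lie below $\tilde{A}$. However, it violates (H2) wherever $a_{ij}<\delta$, so Theorem~\ref{THM1.6} cannot be applied to it. The correct lower matrix is $\underline{a}_{ii}=a_{ii}-\delta$ and $\underline{a}_{ij}=(a_{ij}-\delta)^{+}$ for $i\neq j$. This lies below every cooperative $\tilde{A}$ in the $\delta$-ball.

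Even corrected, the sandwich buys nothing over your first argument. Suppose $A$ is irreducible. Continuity, periodicity and compactness then give $m>0$ such that, for every $(x,t)$ and every nonempty proper $\mathbb{I}\subset\mathbb{S}$, there exist $i\in\mathbb{I}$ and $j\notin\mathbb{I}$ with $a_{ij}(x,t)\ge m$. This same fact keeps the corrected $\underline{A}$ irreducible for $\delta<m$.

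It also shows that any $\tilde{A}$ satisfying (H2) with $\delta<m$ is automatically irreducible. If instead $D$ is the irreducible one, nothing is needed at all. So for small $\delta$, the only hypothesis Theorem~\ref{THM1.6} requires of $\tilde{A}$ is cooperativity. The corrected sandwich needs exactly that too, in order to have $\underline{A}\le\tilde{A}$. The direct argument therefore already covers every admissible $\tilde{A}$.
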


\begin{theorem}
Assume that $\Omega_{1}\subset\Omega$ and $s(\mathcal{L}_{\Omega,k_{i},A})$ is the principal eigenvalue, then
\begin{equation*}
| s(\mathcal{L}_{\Omega_{1}, k_{i}, A}) -  s(\mathcal{L}_{\Omega, k_{i}, A})| \leq C|\Omega \backslash \Omega_{1}|,
\end{equation*}
where $|\Omega \backslash \Omega_{1}|$  is the Lebesgue measure of $\Omega \backslash \Omega_{1}$ and $C$ is a positive constant depending on $d_{ij}, k_{i}$ and $\Omega$.
\end{theorem}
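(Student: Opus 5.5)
Both inequalities will be obtained by comparison through the generalized principal eigenvalues, using Theorem \ref{THM1.6} (or its proof's monotonicity part) to identify $s$ with $\lambda_p$ and $\lambda_p'$.

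\textbf{Lower bound $s(\mathcal{L}_{\Omega_1,k_i,A})\le s(\mathcal{L}_{\Omega,k_i,A})$.} This follows from Proposition \ref{PP2.11}(i) together with $s=\lambda_p$. For the principal-eigenvalue case we need no irreducibility; alternatively, Step 3 of the proof of Theorem \ref{THM1.3} gives the same comparison. So it remains to bound $s(\mathcal{L}_{\Omega,k_i,A})-s(\mathcal{L}_{\Omega_1,k_i,A})$ from above.

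\textbf{Upper bound.} Let $\varphi\in\mathcal{X}_1^{++}$ be the principal eigenfunction on $\Omega$ for $\lambda:=s(\mathcal{L}_{\Omega,k_i,A})$. Positivity of every component follows from Step 3 of Theorem \ref{THM1.3}: if $\varphi$ is merely positive we instead work with (ii)-type test functions, but I will assume $(\tilde H2)$-type strict positivity as in the paper's setting. Restrict $\varphi$ to $\bar\Omega_1\times\mathbb{R}$. For $x\in\bar\Omega_1$,
\begin{equation*}
-\tau\varphi_t+D\mathcal{P}_{\Omega_1}[\varphi]+A\varphi=\lambda\varphi-D(x,t)\Big(\int_{\Omega\setminus\Omega_1}k_j(x,y,t)\varphi_j(y,t)\,dy\Big)_j .
\end{equation*}
The last term is bounded componentwise by
\begin{equation*}
\bar d\,\bar k\,\max\varphi\,|\Omega\setminus\Omega_1|\le \frac{\bar d\,\bar k\,M_0}{m_0}|\Omega\setminus\Omega_1|\,\varphi_i(x,t),
\end{equation*}
where $\bar d=\max\sum_j d_{ij}$, $\bar k=\max_{\bar\Omega\times\bar\Omega\times[0,1]}k_j$, $M_0=\max\varphi$ and $m_0=\min_i\min\varphi_i>0$. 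Hence $\varphi|_{\Omega_1}$ satisfies
\begin{equation*}
-\mathcal{L}_{\Omega_1}[\varphi]+(\lambda-C|\Omega\setminus\Omega_1|)\varphi\le0 .
\end{equation*}
By the definition of $\lambda_p$, this gives $\lambda_p(\mathcal{L}_{\Omega_1})\ge\lambda-C|\Omega\setminus\Omega_1|$. Combining this with $s(\mathcal{L}_{\Omega_1})=\lambda_p(\mathcal{L}_{\Omega_1})$ (Theorem \ref{THM1.6}, or directly the easy inequality $s\ge\lambda_p$ proved via positivity of the evolution operator and Gelfand's formula, as in Step 3 of Theorem \ref{THM1.3}) yields the claim.

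\textbf{Main obstacle: the constant.} $C=\bar d\bar k M_0/m_0$ depends on the ratio $M_0/m_0$ of the eigenfunction, not only on $d_{ij},k_i,\Omega$. I would accept this dependence, since the eigenfunction is determined by the data $d_{ij},k_i,A,\Omega$. I would also check that the inequality $s\ge\lambda_p$ can be used on $\Omega_1$ without assuming a principal eigenvalue there. To do this, take a supersolution $\psi\gg0$ with $\mathcal{L}\psi\ge\mu\psi$, deduce $V_\mu(1,0)\psi(0)\ge\psi(0)$, hence $r(V_0(1,0))\ge e^{\mu/\tau}$, and conclude via Proposition \ref{PP2.10}.
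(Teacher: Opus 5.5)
Your proposal is correct and follows the paper's proof essentially verbatim. In both, the principal eigenfunction on $\Omega$ is restricted to $\bar\Omega_1$, the lost term $D(x,t)\bigl(\int_{\Omega\setminus\Omega_1}k_j\varphi_j\,dy\bigr)_j$ is bounded by $C|\Omega\setminus\Omega_1|\varphi$ to get $\lambda_p(\mathcal{L}_{\Omega_1,k_i,A})\ge s(\mathcal{L}_{\Omega,k_i,A})-C|\Omega\setminus\Omega_1|$, and the reverse inequality comes from Proposition \ref{PP2.11}(i) together with Theorem \ref{THM1.6}. Your remark on the constant also matches the paper: its $C=\bar d\bar k/\min_i\min\varphi_i$ (with the normalization $\max\sum_i\varphi_i=1$) likewise depends on the eigenfunction and tacitly uses the strict positivity that $(\tilde{H}2)$ provides.
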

\begin{proof}
Let $\varphi$ be the positive principal eigenfunction corresponding to $s(\mathcal{L}_{\Omega,k_{i},A})$ with the normalization  $\max_{(x,t)\in\bar{\Omega}\times\mathbb{R}} \sum_{i=1}^{l} \varphi_{i}(x,t) =1$.
Let
$$\bar{d}=\max_{1\leq i,j\leq l} \|d_{ij} \|_{L^{\infty}(\Omega\times (0,1))} \text{ and } \bar{k}=\max_{1\leq i\leq l} \|k_{i} \|_{L^{\infty}(\Omega\times\Omega\times (0,1))}.$$
A direct computation gives that
\begin{align*}
&\tau\partial_{t}\varphi_{i}(x,t)-\sum_{j=1}^{l}d_{ij}(x,t)\int_{\Omega_{1}}k_{j}(x,y,t)\varphi_{j}(y,t)dy
-\sum_{j=1}^{l}a_{ij}(x,t)\varphi_{j}(x,t) + s(\mathcal{L}_{\Omega,k_{i},A})\varphi_{i}(x,t)
\\
=& \sum_{j=1}^{l}d_{ij}(x,t)\int_{\Omega \backslash \Omega_{1}}k_{j}(x,y,t)\varphi_{j}(y,t)dy
\\
\leq&  \frac{  \bar{d} \bar{k} |\Omega\backslash\Omega_{1}|}{\min_{ i } \min_{\bar{\Omega}\times[0,1]} \varphi_{i} }\varphi_{i}(x,t)
\end{align*}
for all $(x,t)\in\bar{\Omega}_{1} \times \mathbb{R}$ and $1\leq i \leq l$.
It then follows from the definition of $\lambda_{p}(\mathcal{L}_{\Omega_{1}, k_{i}, A})$
that
\begin{equation*}
\lambda_{p}(\mathcal{L}_{\Omega_{1}, k_{i}, A})
\geq
s(\mathcal{L}_{\Omega,k_{i},A}) - C|\Omega\backslash\Omega_{1}|,
\end{equation*}
where $C= \frac{ \bar{d} \bar{k} }{\min_{ i } \min_{\bar{\Omega}\times[0,1]} \varphi_{i}}$.
Using Theorem \ref{THM1.6} and Proposition \ref{PP2.11}, we get the desired conclusion.
\end{proof}

\section{Asymptotic behavior of the principal spectrum point}

\subsection{Asymptotic behavior with respect to dispersal rate}
\indent

For simplicity, we write $L_{\tau,D,\sigma,m} = L_{D}$.
\begin{proof}[Proof of Theorem \ref{THM1.7}]
We give the main proof by the following two steps.

\textbf{Step 1.} We first prove that
\begin{equation*}
\lim_{ \bar{D} \to 0^{+}} s(L_{D}) = \max_{x \in \bar{\Omega}} \lambda_{A}(x).
\end{equation*}
By Lemma \ref{LEM2.6} and \cite[Lemma 2.2]{LZZ1}, we have
\begin{equation*}
s(L_{D}) \geq \max_{x \in \bar{\Omega}} \lambda_{A-D_{0}}(x).
\end{equation*}
It then follows that
\begin{equation*}
\liminf_{\bar{D} \to 0^{+}} s(L_{D}) \geq \max_{x \in \bar{\Omega}} \lambda_{A}(x).
\end{equation*}
It remains to prove that
\begin{equation*}
\limsup_{\bar{D} \to 0^{+}} s(L_{D}) \leq \max_{x \in \bar{\Omega}} \lambda_{A}(x).
\end{equation*}
To highlight the dependence on $A$, we write $L_{D}$ as $L_{D}(A)$.
Consider the following perturbed
matrix of $A(x,t)$.
\begin{equation*}
A_{k}(x,t) = \left( a_{ij}(x,t) + \frac{1}{k} \right)_{l\times l}.
\end{equation*}
By \cite[Theorem 1.4]{baihe}, there exists $\phi(x,t)\in\mathcal{X}_{1}^{++}$ such that
\begin{equation*}
-\tau\partial_{t}\phi(x,t)+A_{k}(x,t)\phi(x,t)=\lambda_{A_{k}}(x)\phi(x,t)
\enspace \text{for all} \enspace (x,t)\in\bar{\Omega}\times \mathbb{R}.
\end{equation*}
Let $0<\varepsilon \ll 1$. It then follows that there exists $d_{0}>0$ such that for $0< \bar{D} <d_{0}$,
\begin{equation*}
-L_{D}(A_{k})[\phi](x,t) + \left( \sup_{x\in\bar{\Omega}}\lambda_{A_{k}}(x) + \varepsilon \right) \phi(x,t)
\geq \varepsilon\phi(x,t)-D(x,t)\mathcal{P}[\phi](x,t) \geq 0
\enspace \text{for all} \enspace (x,t)\in\bar{\Omega}\times \mathbb{R}.
\end{equation*}
Hence, by the definition of $\lambda_{p}^{\prime}(L_{D}(A_{k}))$, we get
\begin{equation*}
\lambda_{p}^{\prime}(L_{D}(A_{k}))
\leq
\max_{x\in\bar{\Omega}}\lambda_{A_{k}}(x) + \varepsilon.
\end{equation*}
Since $\varepsilon>0$ is arbitrary,
\begin{equation*}
\limsup_{\bar{D} \to 0^{+}} \lambda_{p}^{\prime}(L_{D}(A_{k})) \leq \max_{x \in \bar{\Omega}} \lambda_{A_{k}}(x).
\end{equation*}
Using Proposition \ref{PP2.11} (iii), there holds
\begin{equation*}
\limsup_{\bar{D} \to 0^{+}} \lambda_{p}^{\prime}(L_{D}(A))
\leq
\limsup_{\bar{D} \to 0^{+}} \lambda_{p}^{\prime}(L_{D}(A_{k}))
\leq \max_{x \in \bar{\Omega}} \lambda_{A_{k}}(x).
\end{equation*}
Finally, letting $k \to \infty$ and using Theorem \ref{THM1.6}, we get the desired conclusion.

\textbf{Step 2.} In this step, under assumption {\rm (W)} holds and $k_{i} (1\leq i \leq l)$ are independent of $t$, we prove the following result.
\begin{equation}\label{3-1}
\lim_{ \underline{D} \to +\infty} s(L_{D}) = -\infty.
\end{equation}
By \cite[Theorem 2.1 and Proposition 3.4]{SX1}, there exist $\lambda_{i}<0$ and $\varphi_{i}\in X^{++}$ such that
\begin{equation*}
\int_{\Omega} k_{i,\sigma}(x-y)\varphi_{i}(y)dy-\varphi_{i}(x)
=
\lambda_{i}\varphi_{i}(x)
\enspace\text{in}\enspace \bar\Omega.
\end{equation*}
for all $1\leq i \leq l$.
Normalize  $\varphi_{i}$ by $\max_{\bar{\Omega}}\varphi_{i}=1$.
Recall that $D=CD_{0}$.
Define
\begin{equation*}
c_{0} = \min_{1 \leq i\leq l} c_{ii},
\enspace
\bar{a}=\max_{1 \leq i,j\leq l} \| a_{ij} \|_{L^{\infty}(\bar{\Omega}\times \mathbb{R})},
\enspace
\bar\lambda=\max_{ 1\leq i\leq l} \lambda_{i},
\enspace
\theta = \frac{1}{\min_{ 1 \leq i\leq l,x\in \bar{\Omega}} \varphi_{i}(x)}
\end{equation*}
and
$\varphi=(\varphi_{1},\varphi_{2},\cdots,\varphi_{l})$.
Let $\lambda = \bar\lambda c_{0} \underline{D} +  \bar{a} l \theta $.
Then, by direct computation, we have
\begin{align*}
&-c_{ii}d_{i}(x,t) \int_{\Omega} k_{i,\sigma}(x-y)\varphi_{i}(y)dy + c_{ii}d_{i}(x,t)\varphi_{i}(x)-\sum_{j=1}^{l}a_{ij}(x,t)\varphi_{j}(x) + \lambda \varphi_{i}(x)
\\
\geq& -\lambda_{i} c_{ii}d_{i}(x,t) \varphi_{i}(x)  - \bar{a}l + \lambda \varphi_{i}(x)
\geq \left(  -\bar\lambda c_{0}\underline{D} -  \bar{a} l \theta \right)\varphi_{i}(x) + \lambda \varphi_{i}(x)
\geq 0
\end{align*}
for all $(x,t)\in \bar{\Omega}\times \mathbb{R}$ and $1\leq i \leq l$.
This implies that
\begin{equation*}
-L_{D}[\varphi](x,t)+\lambda\varphi(x) \geq 0
\enspace \text{in} \enspace \bar{\Omega}\times \mathbb{R}.
\end{equation*}
It then follows from the definition of $\lambda_{p}^{\prime}(L_{D})$ that
\begin{equation*}
\lambda_{p}^{\prime}(L_{D}) \leq \bar\lambda c_{0} \underline{D} +  \bar{a} l \theta.
\end{equation*}
Letting $\underline{D} \to +\infty$ and using Theorem \ref{THM1.6} and  Remark \ref{REMARK1}, we arrive at \eqref{3-1}.
\end{proof}

\subsection{Asymptotic behavior with respect to dispersal range}
\indent

For simplicity, we write $L_{\tau,D,\sigma,m} = L_{\sigma,m}$.
\begin{proof}[Proof of Theorem \ref{THM1.8} (i)]
Let $\varphi\in \mathcal{X}^{++}$. Note that for any $m>0$,
\begin{equation*}
\lim_{\sigma \to +\infty}
\left\| \frac{1}{\sigma^{m}} \left( \sum_{j=1}^{l}  c_{ij}d_{j}(x,t) \int_{\Omega} k_{j,\sigma}(x-y,t)\varphi_{j}(y,t)dy - \varphi_{i}(x,t) \right) \right\|_{L^{\infty}(\bar{\Omega}\times \mathbb{R})} = 0
\enspace \text{for} \enspace
1 \leq i\leq l.
\end{equation*}
Moreover,
\begin{equation*}
\lim_{\sigma \to +\infty}
\left\|
\sum_{j=1}^{l}  c_{ij}d_{j}(x,t) \int_{\Omega} k_{i,\sigma}(x-y,t)\varphi_{i}(y,t)dy
\right\|_{L^{\infty}(\bar{\Omega}\times \mathbb{R})}  =0
\enspace \text{for} \enspace
1 \leq i\leq l.
\end{equation*}
Hence, the desired conclusion can be obtained by the same arguments as in the proof of Theorem \ref{THM1.7}, and we omit the details.
\end{proof}

\begin{proof}[Proof of Theorem \ref{THM1.8} (ii) $\mathit{m\in [0,2)}$]
By Theorem \ref{THM1.5}, there exist two sequences of smooth matrix-valued functions $\{A^{k}_{-}\}$ and $\{A^{k}_{+}\}$ such that
\begin{equation}\label{3-2}
s(L_{\sigma,m}(A^{k}_{-}))
\leq
s(L_{\sigma,m}(A))
\leq
s(L_{\sigma,m}(A^{k}_{+})),
\end{equation}
and $s(L_{\sigma,m}(A^{k}_{-}))$ and $s(L_{\sigma,m}(A^{k}_{+}))$ are the principal eigenvalues of $L_{\sigma,m}(A^{k}_{-})$ and $L_{\sigma,m}(A^{k}_{+})$, respectively.
Note that all entries of the matrix-valued functions  $A^{k}_{-}$ and $A^{k}_{+}$ are smooth.
Consequently, $\varphi^{k}_{-}$ and $\varphi^{k}_{+}$ are smooth as well, where $\varphi^{k}_{-}$ and $\varphi^{k}_{+}$ are defined as the principal eigenfunctions obtained by substituting $A^{k}_{-}$ and $A^{k}_{+}$ for $A$ in Lemma \ref{LEM1.2}.
Then the proof proceeds similarly to that of Shen and Vo \cite[Theorem D]{ShenVo}.
In fact, by using the test functions $\varphi^{k}_{-}$ and $\varphi^{k}_{+}$
together with a Taylor expansion and the monotonicity of the eigenvalue with respect to the domain, we obtain that
\begin{equation*}
\lim_{ \sigma \to 0^{+} } s(L_{\sigma,m}(A^{k}_{-})) =
\max_{x\in\bar{\Omega}} \lambda_{A^{k}_{-}}(x)
\enspace \text{and} \enspace
\lim_{ \sigma \to 0^{+} } s(L_{\sigma,m}(A^{k}_{+})) =
\max_{x\in\bar{\Omega}} \lambda_{A^{k}_{+}}(x).
\end{equation*}
This together with \eqref{3-2} implies that
\begin{align*}
\max_{x\in\bar{\Omega}} \lambda_{A^{k}_{-}}(x) & =
\lim_{ \sigma \to 0^{+} } s\left(L_{\sigma,m}(A^{k}_{-})\right)
\leq
\liminf_{ \sigma \to 0^{+} } s\left(L_{\sigma,m}(A)\right) \\
& \leq
\limsup_{ \sigma \to 0^{+} } s\left(L_{\sigma,m}(A)\right)
\leq
\lim_{ \sigma \to 0^{+} } s\left(L_{\sigma,m}(A^{k}_{+})\right) =
\max_{x\in\bar{\Omega}} \lambda_{A^{k}_{+}}(x).
\end{align*}
Finally, letting $k \to \infty$ in the above equation, we get the desired result.

\end{proof}

Next, we prove Theorem \ref{THM1.8} (ii) for the case $m=2$.
Under the given assumptions, we can represent $L_{\sigma,2}$ as
\begin{equation*}
L_{\sigma,2}[\varphi](x,t) = -\tau\varphi_{t}(x,t) + D_{1}(x,t)\mathcal{K}_{\Omega,\sigma,2}[\varphi](x,t) + A(x,t)\varphi(x,t),
\end{equation*}
where $ D_{1}(x,t) = \text{diag}\left( d_{1}(x,t),d_{2}(x,t),\cdots d_{l}(x,t)  \right)$, $\mathcal{K}_{\Omega,\sigma,2}[\varphi]=
\left( \mathcal{K}^{1}_{\Omega,\sigma,2}[\varphi_{1}], \cdots, \mathcal{K}^{l}_{\Omega,\sigma,2}[\varphi_{l}] \right)^{T}$ and
\begin{equation*}
\mathcal{K}^{i}_{\Omega,\sigma,2}[\varphi_{i}](x,t)=
\frac{1}{\sigma^{2}} \left( \int_{\Omega} \frac{1}{\sigma^{N}}k_{i}\left(\frac{x-y}{\sigma},t \right)\varphi_{i}(y,t)dy - \varphi_{i}(x,t) \right).
\end{equation*}
Moreover, notice that ${ \rm (\tilde{H}2)}$ and $(D)$ imply that $A$ is irreducible.

We begin with some preliminary results.
The following proposition follows from \cite{baihe,Anton0}.
\begin{proposition}
The problem \eqref{1-2} admits a principal eigenvalue $\lambda^{local}$. Moreover, $\lambda^{local}$ is algebraically simple associated with a unique (up to a multiplicative constant) eigenfunction $\varphi^{r}\in \mathbf{C}^{2+\alpha,1+\frac{\alpha}{2}}(\bar\Omega\times\mathbb{R})$ whose components are all positive.
\end{proposition}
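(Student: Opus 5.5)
The proposition concerns the local Dirichlet problem \eqref{1-2}. I would prove it by Krein--Rutman applied to a period (Poincaré) map, exploiting that (\tilde{H}2) together with (D) makes $A$ irreducible.

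\textbf{Step 1: the period map.} Since $d_{r,i}(x,t)=\frac{d_i(x,t)}{2N}\int_{\mathbb{R}^N}k_i(z,t)|z|^2dz$ is positive, Hölder continuous and $1$-periodic, the system
\begin{equation*}
\tau\varphi_t=D_r(x,t)R[\varphi]+A(x,t)\varphi
\end{equation*}
with homogeneous Dirichlet data is a weakly coupled, uniformly parabolic system with $C^{\alpha,\frac{\alpha}{2}}$ coefficients. Classical parabolic theory (the references \cite{baihe,Anton0}) gives a well-defined evolution family $T(t,s)$ on $X_0=\{u\in C(\bar\Omega,\mathbb{R}^l): u|_{\partial\Omega}=0\}$, or on $C^1_0$. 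Let $T=T(1,0)$.

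\textbf{Step 2: properties of $T$.} Schauder estimates together with the smoothing of the parabolic flow make $T$ compact. Because $a_{ij}\ge0$ for $i\ne j$, the system is cooperative, and the maximum principle for cooperative systems makes $T$ positive. For strong positivity I would work in $C^1_0(\bar\Omega)$, whose positive cone has nonempty interior (functions that are positive inside with $\partial_\nu u<0$ on $\partial\Omega$). Suppose $u_0>0$. The strong maximum principle and the Hopf lemma make the nonzero component interior-positive at $t>0$. Irreducibility of $A$ then propagates positivity to every component, exactly as in the chain argument of Lemma \ref{LEM2.8}: if $u_j>0$ and $a_{ij}>0$, then $u_i$ is a supersolution with a positive source. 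Hence $T$ is strongly positive.

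\textbf{Step 3: Krein--Rutman.} The strong Krein--Rutman theorem yields $r(T)>0$ as an algebraically simple eigenvalue with an eigenvector $\psi$ in the interior of the cone. Set $\lambda^{local}=\tau\ln r(T)$ and $\varphi^r(x,t)=e^{-\lambda^{local}t/\tau}T(t,0)\psi$. This $\varphi^r$ is $1$-periodic, solves \eqref{1-2}, and has all components positive in $\Omega$. Parabolic Schauder regularity, bootstrapped on the periodic solution, places $\varphi^r$ in $C^{2+\alpha,1+\frac{\alpha}{2}}$.

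\textbf{Step 4: simplicity and principality.} Algebraic simplicity of $\lambda^{local}$ for the periodic operator follows from simplicity of $r(T)$. The argument is the one in Step 2 of the proof of Theorem \ref{THM1.3}: an eigenfunction or a generalized eigenfunction gives a fixed point or a Jordan chain for $T$. To see that $\lambda^{local}$ is principal, I would show that any eigenvalue $\mu$ satisfies $e^{\mu/\tau}\in\sigma(T)$, so $\operatorname{Re}\mu\le\lambda^{local}$.

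\textbf{Main obstacle.} I expect the hardest part to be strong positivity in the system setting with Dirichlet boundary. The natural cone of $C(\bar\Omega)$ has empty interior, so the argument must pass to $C^1_0$ and combine the Hopf lemma with the irreducibility coupling, carried out component by component.
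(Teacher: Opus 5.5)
Your proposal is correct and follows essentially the paper's route. The paper proves this proposition only by citing \cite{baihe,Anton0}, and right afterwards uses your framework: Krein--Rutman for the strongly positive, compact period map $V^{r}(1,0)$, with $r(V^{r}(1,0))=e^{\lambda^{local}/\tau}$ and $\varphi^{r}(\cdot,0)$ as principal eigenvector; the only difference is that it works on the fractional power space $Y_{\beta}\hookrightarrow C^{1}$ rather than on $C^{1}_{0}$. One small correction: Step 2 of the proof of Theorem \ref{THM1.3} is a sliding argument based on Corollary \ref{COR2.9}, not a Jordan chain for the period map, but the Jordan-chain argument you describe is valid on its own (a solution $\psi$ of $(L_{r}-\lambda^{local}I)\psi=\varphi^{r}$ gives $(T-r(T))\psi(\cdot,0)=\frac{r(T)}{\tau}\varphi^{r}(\cdot,0)\neq 0$, contradicting the algebraic simplicity of $r(T)$).
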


Define
\begin{equation*}
L_{r}[\varphi](x,t)=-\tau\varphi_{t}(x,t) + D_{r}(x,t)R[\varphi](x,t)+A(x,t)\varphi(x,t), \quad (x,t)\in\bar\Omega\times\mathbb{R}.
\end{equation*}
Consider the following initial-boundary value problem:
\begin{equation}\label{3-3}
\left\{
\begin{aligned}
&L_{r}[u](x,t)=0, && x\in\Omega, t> 0,\\
&u(x,t)=0,  && x\in\partial\Omega, t>0,\\
&u(x,0)=u_{0}(x),  && x\in\Omega.\\
\end{aligned}
\right.
\end{equation}
Set
\begin{equation*}
Y = \{u\in W^{2}_{p}(\Omega): u=0 \text{ on } \partial{\Omega}, p>N \}.
\end{equation*}
Let $A_{i}$ be the realization of operator $A_{i}(t)$ = $-d_{r,i}(t) \Delta  $ with Dirichlet boundary condition in $L^{p}(\Omega)$.
Note that the domain of $A_{i}$ is $Y$, which is independent of $t$.
For $\beta$ satisfying $\frac{1}{2} + \frac{N}{2p} < \beta \leq 1$, let $Y^{i}_{\beta}$ be the fractional power space with respect to $A_{i}(0)$.
Set $Y_{\beta} = \prod_{i=1}^{l} Y^{i}_{\beta}$.
Define operator family $V^{r}(t,0)$ on $Y_{\beta}$ by
\begin{equation*}
V^{r}(t,0)\varphi(x)=u(x,t;\varphi)
\enspace \text{for all} \enspace x\in\Omega,t\geq 0, \varphi\in Y_{\beta},
\end{equation*}
where $u(x,t;\varphi)$ is the unique solution at time $t$ of \eqref{3-3} with initial function $\varphi$.
Note that $Y_{\beta}$ is a strongly ordered Banach space with the positive
cone $Y^{+}_{\beta} = \{ u\in Y_{\beta} : u\ge 0 \}$.
Moreover, according to \cite[Remark 12.1]{Peter} and \cite[Lemma 5.5]{baihe}, the operator
$V^{r}(1,0) : Y_{\beta} \to Y_{\beta}$ is strongly positive and compact.
Then by the Krein-Rutman Theorem, $r(V^{r}(1,0))$ is an isolated algebraically simple eigenvalue of $V^{r}(1,0)$.
In addition, a similar argument as in \cite[Proposition 14.4]{Peter} yields
\begin{equation*}
r(V^{r}(1,0)) = e^{ \frac{\lambda^{local}}{\tau}}
\end{equation*}
and $\varphi^{r}_{0}=\varphi^{r}(\cdot,0)$ is the principal eigenfunction of $V^{r}(1,0)$.
Consequently, the following standard decomposition holds.
\begin{lemma}\label{LEM3.2}
There exists a codimension one subspace $Y^{1}_{\beta}$ of $Y_{\beta}$ such that
$$
Y_{\beta} = Y^{1}_{\beta} \oplus Y^{2}_{\beta},
$$
where $Y^{2}_{\beta} = \text{span}\{\varphi^{r}_{0}\}$, and there are $M>0$ and $\gamma>0$ such that for any $u\in Y^{1}_{\beta}$, there holds
\begin{equation*}
\frac{\|V^{r}(t,0)u\|_{Y_{\beta}}}
{\|V^{r}(t,0)\varphi^{r}_{0}\|_{Y_{\beta}}}
\leq Me^{-\gamma t}.
\end{equation*}
\end{lemma}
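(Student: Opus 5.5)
We will derive Lemma \ref{LEM3.2} from the spectral decomposition of the compact, strongly positive operator $T:=V^{r}(1,0)$ on $Y_{\beta}$, combined with the periodicity of the evolution family.

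\textbf{Step 1: a spectral gap for $T$.} Since $T$ is compact and strongly positive on the strongly ordered space $Y_{\beta}$, the Krein--Rutman theorem gives the following facts.
\begin{itemize}
\item $r_{0}:=r(T)=e^{\lambda^{local}/\tau}$ is an algebraically simple eigenvalue with eigenvector $\varphi^{r}_{0}\gg 0$.
\item Every other point $\mu\in\sigma(T)$ satisfies $|\mu|<r_{0}$.
\end{itemize}
Compactness means the nonzero spectrum consists of isolated eigenvalues accumulating only at $0$. Hence $r_{1}:=\sup\{|\mu|:\mu\in\sigma(T)\setminus\{r_{0}\}\}<r_{0}$. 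Let $\Pi$ be the Riesz projection onto $\operatorname{span}\{\varphi^{r}_{0}\}$ and set $Y^{1}_{\beta}=\ker \Pi$, which is a closed $T$-invariant subspace of codimension one. Then $\sigma(T|_{Y^{1}_{\beta}})=\sigma(T)\setminus\{r_{0}\}$, and Gelfand's formula yields, for any fixed $r_{1}<\rho<r_{0}$, a constant $C$ with $\|T^{n}u\|_{Y_{\beta}}\le C\rho^{n}\|u\|_{Y_{\beta}}$ for all $u\in Y^{1}_{\beta}$ and $n\ge 0$.

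\textbf{Step 2: from integer to continuous time.} Because the coefficients of \eqref{3-3} are $1$-periodic, we have $V^{r}(t,0)=V^{r}(t-n,0)T^{n}$ for $t=n+s$ with $s\in[0,1)$. From parabolic theory on the fractional power spaces $Y_{\beta}$, as in \cite{Peter,baihe}, we obtain the uniform bound $\sup_{s\in[0,1]}\|V^{r}(s,0)\|_{\mathcal{L}(Y_{\beta})}<\infty$. This gives the numerator estimate $\|V^{r}(t,0)u\|\le C'\rho^{t}\|u\|$ for $u\in Y^{1}_{\beta}$.

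For the denominator we use the exact formula $V^{r}(t,0)\varphi^{r}_{0}=e^{\lambda^{local}t/\tau}\varphi^{r}(\cdot,t)$, which holds because $\varphi^{r}$ is the periodic principal eigenfunction of \eqref{1-2}. Since $t\mapsto\varphi^{r}(\cdot,t)$ is continuous, periodic and nonvanishing in $Y_{\beta}$ (it lies in $C^{2+\alpha,1+\alpha/2}$ with positive components), its norm is bounded below by some $c>0$. Therefore $\|V^{r}(t,0)\varphi^{r}_{0}\|\ge c\,r_{0}^{t}$.

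\textbf{Step 3: conclusion.} Dividing the two estimates gives the claim with $\gamma=\ln(r_{0}/\rho)>0$ and $M=C'/c$. The bound holds for $u$ of unit norm, and the general case follows by homogeneity, after absorbing $\|u\|$ into $M$ or normalizing as in the statement.

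The main obstacle is Step 1, specifically that $r_{0}$ strictly dominates all other spectral points rather than merely having none on the real axis. This is exactly where strong positivity is needed: we will invoke the strong Krein--Rutman theorem, which excludes other eigenvalues on the circle $|\mu|=r_{0}$. Strong positivity follows from the irreducibility of $A$ (noted above via $(\tilde{H}2)$ and (D)) together with the parabolic strong maximum principle, as cited in \cite[Lemma 5.5]{baihe}.
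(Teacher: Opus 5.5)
Your proposal is correct and follows the same route as the paper. The paper invokes compactness and strong positivity of $V^{r}(1,0)$ on $Y_{\beta}$, the Krein--Rutman theorem and the identification $r(V^{r}(1,0))=e^{\lambda^{local}/\tau}$, then calls the decomposition standard; your Riesz-projection and Gelfand argument, the periodic reduction $V^{r}(t,0)=V^{r}(s,0)T^{n}$, and the exact formula $V^{r}(t,0)\varphi^{r}_{0}=e^{\lambda^{local}t/\tau}\varphi^{r}(\cdot,t)$ supply exactly the omitted details, and you are right that the displayed bound only makes sense with a factor $\|u\|_{Y_{\beta}}$ (or for normalized $u$), since the left-hand side is homogeneous in $u$.
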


To highlight the dependence on the domain, we write $V^{r}$ as $V^{r}_{\Omega}$.
Define $$\Omega_{n} = \{ x\in\mathbb{R}^{N} \,|\, \text{dist}(x,\Omega) < \frac{1}{n} \}. $$ By \cite{Foote}, there exists $n_{0}>0$ such that for $n\geq n_{0}$, $\Omega_{n}$ has the same boundary regularity as $\Omega$.
Set $\Omega_{*} = \Omega_{n_{0}}$.
Throughout this section, we assume that the coefficients
$d_i(\cdot,\cdot)$ and $a_{ij}(\cdot,\cdot)$ are defined on $\bar{\Omega}_{*}\times\mathbb{R}$,
$1$-periodic in $t$, and satisfy the same regularity and positivity assumptions as on $\bar{\Omega}\times\mathbb{R}$.

For each $n\geq n_{0}$, let $V^{r}_{\Omega_{n}}$ denote the evolution operator associated of \eqref{3-3} with $\Omega$ replaced by $\Omega_n$, identified as a bounded operator on $Y_{\beta}(\Omega_{*})$ via restriction
to $\Omega_n$ and zero extension to $\Omega_{*}$.
Moreover, let $\lambda^{r}_{\Omega_{n}}$ be the principal eigenvalue of \eqref{1-2} with $\Omega$ replaced by $\Omega_{n}$.

\begin{lemma}\label{LEM3.3}
$\lim_{n\to\infty} \lambda^{r}_{\Omega_{n}} = \lambda^{local}.$
\end{lemma}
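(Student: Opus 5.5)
We need to prove $\lambda^{r}_{\Omega_n}\to\lambda^{local}$ as $n\to\infty$. The plan has two halves: a monotonicity bound from the domain inclusion, and a matching upper bound from a perturbation argument on a fixed reference domain.

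\textbf{Lower bound by monotonicity.} Since $\Omega\subset\Omega_{n+1}\subset\Omega_n\subset\Omega_*$, the principal eigenvalue of the cooperative Dirichlet problem \eqref{1-2} is nonincreasing as the domain shrinks. One way to see this is through the comparison principle for the parabolic evolution operators. Extending by zero, the solution on the smaller domain is a subsolution on the larger domain, so
\[
V^{r}_{\Omega}(1,0)\le V^{r}_{\Omega_{n+1}}(1,0)\le V^{r}_{\Omega_n}(1,0)
\]
in the positive operator sense. Spectral radii of positive operators are monotone, and $r(V^{r}_{\Omega_n}(1,0))=e^{\lambda^{r}_{\Omega_n}/\tau}$. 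Hence $\lambda^{r}_{\Omega_n}$ is nonincreasing in $n$ and bounded below by $\lambda^{local}$, so the limit $\lambda_\infty:=\lim_n\lambda^{r}_{\Omega_n}\ge\lambda^{local}$ exists. It remains to prove $\lambda_\infty\le\lambda^{local}$.

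\textbf{Upper bound by transplanting to $\Omega$.} The cleanest route, which avoids compactness on varying domains, is to map $\Omega_n$ onto $\Omega$. By \cite{Foote}, for $n\ge n_0$ there are $C^{2+\alpha}$ diffeomorphisms $\Psi_n:\bar\Omega\to\bar\Omega_n$ with $\Psi_n\to\mathrm{id}$ in $C^{2}$; for example, push along the normal direction by distance $1/n$ in a tubular neighbourhood using the signed distance function. Pulling back the eigenfunction $\varphi^{r,n}$ of \eqref{1-2} on $\Omega_n$ gives a positive periodic solution on $\Omega$ of a transformed problem
\[
-\tau\psi_t+\mathcal{D}_n[\psi]+A_n\psi=\lambda^{r}_{\Omega_n}\psi,
\]
where $\mathcal{D}_n$ is a uniformly elliptic diagonal second-order operator whose coefficients converge in $C^{\alpha,\alpha/2}$ to those of $D_rR$, and $A_n\to A$.

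Normalize $\max\psi_n=1$. The eigenvalues are bounded: from below by $\lambda^{local}$, and from above by $\lambda^{r}_{\Omega_{n_0}}$. Uniform parabolic Schauder (or $W^{2,1}_p$) estimates for the weakly coupled system on the fixed domain $\Omega$ then give compactness of $\{\psi_n\}$ in $C^{2,1}$. A subsequence converges to $\psi\ge0$ with $\max\psi=1$, and $\psi$ solves \eqref{1-2} with eigenvalue $\lambda_\infty$ and Dirichlet data. Since $A$ is irreducible, the strong maximum principle makes every component of $\psi$ positive in $\Omega$. Uniqueness of the eigenvalue admitting a positive eigenfunction (the analogue of Step 3 of the proof of Theorem \ref{THM1.3}, or the Krein--Rutman simplicity in the preceding Proposition) then forces $\lambda_\infty=\lambda^{local}$.

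\textbf{Alternative upper bound and main obstacle.} Instead of transplanting, one can build a test function: for $\epsilon>0$, take the first eigenfunction on $\Omega$ and use it, through $\Psi_n$, as a subsolution on $\Omega_n$. This yields $\lambda^{r}_{\Omega_n}\le\lambda^{local}+o(1)$ via the generalized eigenvalue characterization of the local problem. The main obstacle is the uniform regularity in the transplant step, namely controlling $\Psi_n$ in $C^{2+\alpha}$ so that the transformed coefficients converge. This is where the boundary-regularity result of \cite{Foote} for $\Omega_n$ is essential. I would try the signed-distance construction first.
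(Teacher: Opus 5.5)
Your main argument is correct, but it takes a different route from the paper.

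\textbf{What the paper does.} It uses neither domain monotonicity nor compactness of eigenfunctions. It writes
$V^{r}_{\Omega_n}(t,0)=U^{r}_{\Omega_n}(t,0)+\frac{1}{\tau}\int_0^t U^{r}_{\Omega_n}(t,s)A(s,\cdot)V^{r}_{\Omega_n}(s,0)\,ds$,
where $U^{r}_{\Omega_n}$ is the pure-diffusion Dirichlet evolution operator. It places all period maps in the common space $Y_\beta(\Omega_*)$ by restriction and zero extension. Daners' domain-perturbation results (plus interpolation) give $U^{r}_{\Omega_n}(t,s)\to U^{r}_{\Omega}(t,s)$, and Gronwall then yields $\|V^{r}_{\Omega_n}(1,0)-V^{r}_{\Omega}(1,0)\|\to0$. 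Kato's perturbation theory for the isolated simple eigenvalue $r(V^{r}_{\Omega}(1,0))=e^{\lambda^{local}/\tau}$ finishes the proof.

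\textbf{What each route buys.} The paper's route gives norm convergence of the period maps, hence also of spectral projections and eigenfunctions. In principle it also covers domain approximations that admit no good diffeomorphisms. The price is heavy external results and comparing operators on varying domains in one space. That needs care: zero extension destroys $C^1$ regularity across $\partial\Omega_n$, so it is more naturally done in $L^p$ or $C$ than in $Y_\beta(\Omega_*)$. Your argument is more elementary and self-contained, and never compares operators on different domains. It consists of monotonicity giving $\lambda^{local}\le\lambda^{r}_{\Omega_n}\le\lambda^{r}_{\Omega_{n_0}}$, transplantation to the fixed domain $\Omega$, uniform Schauder estimates, and identification of the limit. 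It does rely on the smoothness of $\partial\Omega$ and on the specific parallel sets $\Omega_n$.

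\textbf{The ``main obstacle'' is harmless.} Let $d$ be the signed distance to $\partial\Omega$ and $\eta$ a cutoff equal to $1$ near $0$ and supported inside the tubular neighbourhood. Then $\Psi_n(x)=x+\frac{1}{n}\eta(d(x))\nabla d(x)$ maps $\bar\Omega$ onto $\bar\Omega_n$ and tends to the identity in every $C^k$. So the pulled-back coefficients are uniformly H\"older and converge uniformly, which is all the Schauder estimates and the passage to the limit need.

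\textbf{Identifying $\lambda_\infty$.} Apply Krein--Rutman to the strongly positive compact map $V^{r}_{\Omega}(1,0)$ acting on $\psi(\cdot,0)\in Y_\beta^+\setminus\{0\}$. Do not use Step 3 of Theorem~\ref{THM1.3} verbatim: $\psi$ vanishes on $\partial\Omega$, so it is not in $\mathcal{X}^{++}$. Step 3 can be adapted via Hopf's lemma if you prefer.

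\textbf{The ``alternative upper bound'' is misdirected; drop it.} The eigenfunction of $\Omega$ transplanted to $\Omega_n$ vanishes on $\partial\Omega_n$. At best it is an approximate subsolution, which bounds $\lambda^{r}_{\Omega_n}$ from below — the direction you already have. Even that inequality fails near $\partial\Omega_n$, where the small coefficient error from the transplantation is not dominated by $\epsilon$ times the vanishing test function. An upper bound by test functions would need a positive supersolution on $\bar\Omega_n$, which your compactness argument lets you avoid.
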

\begin{proof}
For each $n\geq n_{0}$, let $U^{r}_{\Omega_{n}}(t,s)$ be the evolution operator on $Y_{\beta}(\Omega_{*})$ generated by the dispersal part
with Dirichlet boundary condition on $\Omega_n$, namely the problem
\begin{equation*}
\tau u_t = D_r(t,\cdot)\,R[u]
\enspace \text{in} \enspace \Omega_n,
\quad u=0 \enspace\text{on}\enspace \partial\Omega_n.
\end{equation*}
It follows that
\begin{equation*}
V_{\Omega_n}^r(t,0) = U_{\Omega_n}^{r}(t,0) +  \frac{1}{\tau}\int_0^t U_{\Omega_n}^{r}(t,s) A(s,\cdot) V_{\Omega_n}^r(s,0) ds.
\end{equation*}
Set $W_{n}(t) = V_{\Omega_n}^r(t,0) - V_{\Omega}^r(t,0)$. A direct computation yields that
\begin{align*}
W_{n}(t) = &\left( U^{r}_{\Omega_n}(t,0)-U^{r}_{\Omega}(t,0) \right) + \frac{1}{\tau}\int_{0}^{t} \left( U^{r}_{\Omega_n}(t,s)-U^{r}_{\Omega}(t,s) \right)A(s,\cdot) V_{\Omega}^{r}(s,0)ds \\
&+ \frac{1}{\tau}\int_{0}^{t} U^{r}_{\Omega_n}(t,s) A(s,\cdot) W_{n}(s) ds.
\end{align*}
Since $A\in L^{\infty}(\Omega_{*} \times [0,1])$ and $\{U^{r}_{\Omega_n}(t,s)\}$ is uniformly bounded on $Y_{\beta}(\Omega_{*})$ for $0\le s\le t\le 1$,
there exist constants $C_{1},C_{2}>0$, independent of $n$, such that
\begin{equation*}
\sup_{0\leq s \leq t \leq 1} \| U_{\Omega_{n}}^{r}(t,s)A(s,\cdot) \|\leq C_{1}
\enspace \text{and} \enspace
\sup_{0\leq s \leq 1} \| A(s,\cdot)V_{\Omega}^{r}(s,0) \|\leq C_{2}.
\end{equation*}
Hence,
\begin{equation}\label{3-4}
\| W_{n}(1) \| \leq  \alpha_{n} + C_{1} \int_{0}^{1}  \| W_{n}(s)\| ds,
\end{equation}
where $\alpha_{n} = \| U^{r}_{\Omega_n}(1,0)-U^{r}_{\Omega}(1,0) \| +  C_{2}\int_{0}^{1}  \| U^{r}_{\Omega_n}(1,s)-U^{r}_{\Omega}(1,s) \|ds$. By \cite[Remark 4.5]{Dan} and the interpolation inequality \cite[Proposition 1.2.6]{lun}, we get $\alpha_{n}\to 0$ as $n\to\infty$. It then follows from \eqref{3-4} and Gronwall's inequality that
$\| W_{n}(1) \|\to 0$ as $n\to\infty$. This together with the perturbation result in \cite[Section IV. 3.5]{Kato} implies that
\begin{equation*}
r(V_{\Omega_{n}}^{r}(1,0)) \to r(V_{\Omega}(1,0))
\enspace \text{as} \enspace
n\to\infty.
\end{equation*}
The desired conclusion then follows from
$\tau \ln r(V^{r}_{\Omega_{n}}(1,0)) = \lambda^{r}_{\Omega_{n}}$.
\end{proof}

In what follows, we prove that the solution of the initial value problem of nonlocal dispersal systems converges to the solution of the corresponding local dispersal systems.
Consider the following initial Cboundary value problems:
\begin{equation}\label{3-5}
\left\{
\begin{aligned}
&\tau\partial_{t}u_{i}(x,t)= d_{r,i}(x,t)\Delta u_{i}(x,t) + f_{i}(x,t,u), && x\in\Omega, t> 0,\\
&u_{i}(x,t)=0,  && x\in\partial\Omega, t>0,\\
&u_{i}(x,0)=u_{i,0}(x),  && x\in \bar\Omega,\\
&i=1,2,\cdots,l.
\end{aligned}
\right.
\end{equation}
and
\begin{equation}\label{3-6}
\left\{
\begin{aligned}
&\tau\partial_{t}u_{i}(x,t)= d_{i}(x,t)\mathcal{K}^{i}_{\mathbb{R}^{N},\sigma,2}[u_{i}](x,t) + f_{i}(x,t,u), && x\in \bar\Omega, t> 0,\\
&u_{i}(x,t)=0,  &&  x\in\mathbb{R}^{N}\setminus \bar\Omega, t>0,\\
&u_{i}(x,0)=u_{i,0}(x),  && x\in \bar\Omega,\\
&i=1,2,\cdots,l.
\end{aligned}
\right.
\end{equation}
Here, $f_{i}(x,t,s)$ $(1 \leq i \leq l)$ satisfy
\begin{align*}
&f_{i}\in C(\bar{\Omega}\times[0,1]\times \mathbb{R}^{l}),
f_{i}(\cdot,\cdot,s) \in C^{\alpha,\frac{\alpha}{2}}(\bar{\Omega}\times [0,1]) \text{ uniformly for } s \text{ on bounded subsets of } \mathbb{R}^{l},\\
&\partial_{s}f_{i} \text{ exists and is continuous on }
\bar{\Omega}\times [0,1] \times \mathbb{R}^{l}, \text{ and }\partial_{s_{j}}f_{i}(x,t,s_{1},\cdots,s_{l}) \geq 0 \text{ if } i\neq j.
\end{align*}
\begin{theorem}\label{THM3.4}
Let $u \in C^{2+\alpha, 1+\frac{\alpha}{2}}(\bar{\Omega}\times [0,T])$ be the solution to \eqref{3-5} and let $u^{\sigma}$ be the solution to \eqref{3-6}.
Then there exists $C=C(T)>0$ such that
\begin{equation*}
\sup_{t\in[0,T]} \left|\left| u^{\sigma}(\cdot,t) - u(\cdot,t)\right|\right|_{\mathbf{L}^{\infty}(\Omega)} \leq C\sigma^{\alpha} \to 0,
\enspace \text{as} \enspace \sigma \to 0.
\end{equation*}
\end{theorem}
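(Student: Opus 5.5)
This is the Cortázar--Elgueta--Rossi--Wolanski argument adapted to cooperative systems: extend the local solution, measure the consistency error of the nonlocal operator on it, and close with a comparison principle. Throughout, $u$ is bounded, so on the relevant range the $f_i$ are Lipschitz and quasimonotone.

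\textbf{Step 1: extension and consistency error.} Extend each $u_i$ to $\tilde u_i \in C^{2+\alpha,1+\frac{\alpha}{2}}(\mathbb{R}^N\times[0,T])$. Define the error
\[
e_i(x,t)=\tau\partial_t\tilde u_i-d_i\,\mathcal{K}^i_{\mathbb{R}^N,\sigma,2}[\tilde u_i]-f_i(x,t,\tilde u).
\]
Inside $\Omega$ we use the local equation $\tau\partial_t u_i=d_{r,i}\Delta u_i+f_i$. A second-order Taylor expansion with Hölder remainder, combined with the symmetry of $k_i(\cdot,t)$ (which kills the first moment and makes the second moment isotropic), gives
\[
\mathcal{K}^i_{\mathbb{R}^N,\sigma,2}[\tilde u_i]=\frac{1}{2N}\int_{\mathbb{R}^N}k_i(z,t)|z|^2dz\,\Delta\tilde u_i+O(\sigma^\alpha).
\]
By the definition of $d_{r,i}$, this yields $|e_i|\le C\sigma^\alpha$ in $\bar\Omega\times[0,T]$.

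\textbf{Step 2: the boundary layer.} Problem \eqref{3-6} imposes $u^\sigma_i=0$ outside $\bar\Omega$, but $\tilde u_i\neq 0$ there. Since $u=0$ on $\partial\Omega$ and $u$ is $C^1$, we have $|\tilde u_i|\le C\sigma$ on the strip $\{0<\operatorname{dist}(x,\Omega)<\sigma R\}$, where $\operatorname{supp}k_i\subset B_R$. The convolution term inside $\Omega$ uses only values in this strip.

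To compare like with like, replace $\tilde u$ by the function $w$ equal to $u$ on $\bar\Omega$ and $0$ outside. Then the error satisfies:
\begin{itemize}
\item $C\sigma^\alpha$ away from an $O(\sigma)$ neighborhood of $\partial\Omega$;
\item at most $C\sigma/\sigma^2=C\sigma^{-1}$ inside a layer $\Gamma_\sigma=\{x\in\Omega:\operatorname{dist}(x,\partial\Omega)<R\sigma\}$ of measure $O(\sigma)$.
\end{itemize}
This layer is the main obstacle, since a pointwise bound fails there. I would handle it as in \cite{CERW} and \cite{ShenVo}: build a barrier $\bar v$ solving a nonlocal problem whose right-hand side is $C\sigma^{-1}\chi_{\Gamma_\sigma}+C\sigma^\alpha$. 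For the layer part, use a stationary supersolution of
\[
\mathcal{K}^i_{\mathbb{R}^N,\sigma,2}[\psi]\le -\sigma^{-1}\chi_{\Gamma_\sigma},
\]
with $\psi$ of size $O(\sigma)$. A natural first candidate is $\psi(x)=K\sigma\,(M-|x-x_0|^2)$ minus a small correction near the boundary. Alternatively, one can exploit directly that $\tilde u$ itself is $O(\sigma)$ in the strip and compare the two problems with exterior data.

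\textbf{Step 3: comparison.} Set $z=u^\sigma-w$. Then $z$ satisfies a linear cooperative nonlocal system
\[
\tau z_t=d_i\,\mathcal{K}^i_{\mathbb{R}^N,\sigma,2}[z_i]+\sum_j c_{ij}(x,t)z_j+O(\text{error}),
\]
with $c_{ij}\ge 0$ for $i\neq j$ bounded (mean value theorem applied to $f$, using the quasimonotonicity $\partial_{s_j}f_i\ge0$), $z=0$ outside $\Omega$, and $z(\cdot,0)=0$. The comparison principle for such systems follows from the positivity argument of Lemma~\ref{LEM2.8}. Apply it with the supersolution
\[
e^{Kt}\bigl(C\sigma^\alpha+\psi\bigr)\mathbf{1},
\]
where $K$ is large relative to $\sum_j\|c_{ij}\|_\infty$, and also with its negative. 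This gives $|z|\le C(T)\sigma^\alpha$ on $[0,T]$, since $\sigma\le\sigma^\alpha$ for $\sigma<1$. Because $w=u$ on $\Omega$, this is exactly the claimed estimate.
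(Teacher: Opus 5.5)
Your skeleton matches the paper's: an interior consistency error $O(\sigma^\alpha)$ from Taylor expansion and symmetry, a linear cooperative system for the difference via the mean value theorem, and closure by comparison. But the step you single out as the main obstacle is never carried out, and the one concrete barrier you offer fails. For the untruncated paraboloid $\psi(x)=K\sigma(M-|x-x_0|^2)$, symmetry of $k_i$ and $\int k_i=1$ give
\[
\mathcal{K}^i_{\mathbb{R}^N,\sigma,2}[\psi](x)=-K\sigma\int_{\mathbb{R}^N}k_i(z,t)|z|^2\,dz ,
\]
which is $O(\sigma)$, not $\le -\sigma^{-1}$. Beating an $O(\sigma^{-1})$ layer error this way would need $K\sim\sigma^{-2}$, i.e.\ $\psi\sim\sigma^{-1}$, which destroys the estimate. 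So everything hinges on the unspecified ``correction near the boundary.'' Moreover, by switching to the zero extension $w$ and bounding the layer error crudely by $C\sigma^{-1}\chi_{\Gamma_\sigma}$, you discard its structure. Up to sign, that error is $d_i\sigma^{-2}\int_{\mathbb{R}^N\setminus\Omega}k_{i,\sigma}(x-y,t)\tilde u_i(y,t)\,dy$, so it is bounded by $C\sigma^{-1}\int_{\mathbb{R}^N\setminus\Omega}k_{i,\sigma}(x-y,t)\,dy$, which degenerates to $0$ at the inner edge of the layer. Demanding $\mathcal{K}^i[\psi]\le-\sigma^{-1}$ uniformly on all of $\Gamma_\sigma$ is therefore more than necessary, and you do not show how to achieve it.

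The missing idea is the alternative you mention in one sentence, and it is exactly the paper's proof: do not pass to $w$ at all. Keep $\omega^\sigma=\tilde u-u^\sigma$, which satisfies \eqref{3-7}. This is a cooperative system with source $b^\sigma=O(\sigma^\alpha)$ in $\bar\Omega$, zero initial data, and exterior data $g=\tilde u$. Since $k_i(\cdot,t)$ is supported in $B_1(0)$, the kernel sees only the $\sigma$-strip outside $\Omega$, where $|g|\le C\sigma$. The spatially constant function $\bar\omega=(K_1\sigma^\alpha te^{A_0t/\tau}+K_2\sigma)e$ is then a supersolution, for three reasons. First, $\mathcal{K}_{\mathbb{R}^N,\sigma,2}$ annihilates constants because $\int k_i=1$. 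Second, the choice $K_2\sigma\ge|g|$ handles the exterior comparison. Third, taking $K_1$ large absorbs both $b^\sigma$ and the term $A^\sigma K_2\sigma$, using $\sigma\le\sigma^\alpha$. Its negative $-\bar\omega$ is the subsolution.

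In your own formulation, the same effect comes from the truncated constant $\psi=K'\sigma\chi_{\Omega}$ with $K'$ large. On $\Omega$ it gives $\mathcal{K}^i_{\mathbb{R}^N,\sigma,2}[\psi]=-K'\sigma^{-1}\int_{\mathbb{R}^N\setminus\Omega}k_{i,\sigma}(x-y,t)\,dy$, which has exactly the profile of the layer error. With that choice your Step 3 goes through.
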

\begin{proof}
Without loss of generality, we assume that $k_{i}(\cdot,t)(1 \leq l \leq l)$ are compactly supported in $B_{1}(0)$.
Let $\tilde{u}$ be a $C^{2+\alpha,1+\frac{\alpha}{2}}$ extension of $u$ to $\mathbb{R}^{N}\times[0,T]$.
Define $\omega^{\sigma}= \tilde{u}- u^{\sigma}$.
A direct computation yields that
\begin{equation}\label{3-7}
\left\{
\begin{aligned}
&\tau\partial_t\omega^\sigma(x,t)
= D(x,t)\mathcal{K}_{\mathbb{R}^N,\sigma,2}[\omega^\sigma](x,t)
+ A^\sigma(x,t)\omega^\sigma(x,t) + b^\sigma(x,t),
&& x\in\bar\Omega,\ t\in[0,T],\\
&\omega^\sigma(x,t)=g(x,t), && x\in\mathbb{R}^N\setminus\bar\Omega,\ t\in(0,T],\\
&\omega^\sigma(x,0)=0, && x\in\bar\Omega,
\end{aligned}
\right.
\end{equation}
where $A^\sigma(x,t)=(a_{ij}^\sigma(x,t))_{l\times l}$ with
\begin{equation*}
a_{ij}^\sigma(x,t)=\int_0^1 \partial_{s_j}f_i\bigl(x,t,u^\sigma(x,t)+\theta(\tilde u(x,t)-u^\sigma(x,t))\bigr)\,d\theta,
\end{equation*}
$b^{\sigma}(x,t) = \left(b_{1}^{\sigma}(x,t), b_{2}^{\sigma}(x,t) , \cdots, b_{l}^{\sigma}(x,t) \right)$ with
\begin{equation*}
b_{i}^{\sigma}(x,t)=d_{r,i}(x,t)\Delta \tilde{u}_{i}(x,t) -d_{i}(x,t)\mathcal{K}^{i}_{\mathbb{R}^{N},\sigma,2}[\tilde{u}_{i}](x,t),
\end{equation*}
and
$g(x,t)=\tilde{u}(x,t)$ for $x\in \mathbb{R}^{N}\setminus\Omega $ and $t\in [0,T]$.
Since $k_{i}(\cdot,t) (1\leq i \leq l) $ are symmetric,
\begin{align*}
b_{i}^{\sigma}(x,t)
=&d_{r,i}(x,t)\Delta \tilde{u}_{i}(x,t) - \frac{d_{i}(x,t)}{\sigma^{2}}
\left( \int_{\mathbb{R}^{N}} \frac{1}{\sigma^{N}}k_{i}\left(\frac{x-y}{\sigma},t\right)\tilde{u}_{i}(y,t)dy-\tilde{u}_{i}(x,t) \right)
\\
=& d_{r,i}(x,t)\Delta \tilde{u}_{i}(x,t) - \frac{d_{i}(x,t)}{\sigma^{2}}\left(\int_{\mathbb{R}^{N}}k_{i}(z,t)\left(\tilde{u}_{i}(x+\sigma z,t)-\tilde{u}_{i}(x,t)\right)dz \right)
\\
=& d_{r,i}(x,t)\Delta \tilde{u}_{i}(x,t) -   \left(\frac{d_{i}(x,t)}{2N}\int_{\mathbb{R}^{N}}k_{i}(z,t)|z|^{2}dz\right) \Delta \tilde{u}_{i}(x,t)+O(\sigma^{\alpha})
\\
=&O(\sigma^{\alpha})
\end{align*}
for all $(x,t)\in\bar{\Omega}\times[0,T]$ and $1\leq i \leq l$.
Moreover, since $u(\cdot,t)=0$ on $\partial\Omega$ and $\tilde u$ is $C^{1}$ in space, we have
\begin{equation*}
\|g_{i}(\cdot,t)\|_{L^\infty(\{x\notin\Omega:\ \mathrm{dist}(x,\partial\Omega)\le\sigma\})} = O(\sigma),
\quad t\in[0,T].
\end{equation*}
Let $\sigma_0\in(0,1]$ and
$$A_{0}= \sup_{0<\sigma \leq \sigma_{0}} \max_{1\leq i \leq l} \max_{ (x,t)\in \bar{\Omega}\times [0,T] } \sum_{j=1}^{l} a^{\sigma}_{ij}(x,t).$$
Define
\begin{equation*}
\bar{\omega}= \left(K_{1}\sigma^{\alpha}te^{\frac{A_{0}t}{\tau}}+K_{2}\sigma \right)e \enspace\text{and}\enspace\underline{\omega}=
-\bar{\omega},
\end{equation*}
where $e= (1,\cdots,1)^{T}$.
Choosing $K_{1},K_{2}>0$ large enough and $\sigma>0$ small, one checks that $(\bar{\omega},\underline{\omega})$ is a pair of upper and lower solutions for \eqref{3-7}.
As a result, we get the desired conclusion.
\end{proof}

\begin{proof}[Proof of Theorem \ref{THM1.8} (ii) $\mathit{m = 2}$]
For simplicity, we let $\tau=1$.
We first prove that for any $\varepsilon>0$, there exists $\sigma_{1}>0$ such that for $0<\sigma<\sigma_{1}$,
\begin{equation*}
s(L_{\sigma,2}) \leq \lambda^{local} + \varepsilon.
\end{equation*}
Let us fix $\varepsilon>0$. By Lemma \ref{LEM3.3}, there exists a smooth domain $\Omega^{\prime}$ such that $\Omega\subset\Omega^{\prime}$ and
\begin{equation}\label{3-8}
\lambda^{r}_{\Omega^{\prime}} < \lambda^{local}+\frac{\varepsilon}{2}.
\end{equation}
Choose $n_{0}\in\mathbb{N}^{+}$ such that
\begin{equation}\label{3-9}
e^{-\frac{\varepsilon}{2}n_{0}}\leq \frac{1}{2}.
\end{equation}
Let $\phi^{r}_{\Omega^{\prime}}$ be the principal eigenfunction of $V_{\Omega^{\prime}}^{r}(1,0)$. Since $\min_{\bar{\Omega}}  \phi^{r}_{\Omega^{\prime}} >0$, there exists $\varepsilon_{0}>0$ such that
\begin{equation}\label{3-10}
\varepsilon_{0} \leq \frac{1}{2}e^{(\lambda^{local} + \varepsilon)n_{0}}\phi^{r}_{\Omega^{\prime}}(x)\enspace\text{for all}\enspace x\in\bar{\Omega}.
\end{equation}
Let $V^{\sigma}_{\Omega}(t,0)$ be the evolution operator and $ \left(V^{\sigma}_{\Omega}(t,0)u_{0}\right) (x)$ be the solution associated with the following problem.
\begin{equation*}
\left\{
\begin{aligned}
& \partial_{t} u(x,t)= D_{1}(x,t)\mathcal{K}_{\Omega,\sigma,2}[u](x,t) + A(x,t)u(x,t), && x\in \bar\Omega, t> 0,\\
&u(x,0)=u_{0}(x),  && x\in \bar\Omega.
\end{aligned}
\right.
\end{equation*}
By the comparison principle, we get
\begin{equation}\label{3-11}
(V^{\sigma}_{\Omega}(n_{0},0)\phi^{r}_{\Omega^{\prime}}|_{\bar{\Omega}})(x)
\leq
(V^{\sigma}_{\Omega^{\prime}}(n_{0},0)\phi^{r}_{\Omega^{\prime}})(x) \enspace\text{for all}\enspace x\in\bar\Omega.
\end{equation}
Moreover, we derive from Theorem \ref{THM3.4} that there exists $\sigma_{1}>0$ such that for $0<\sigma<\sigma_{1}$,
\begin{equation*}
(V^{\sigma}_{\Omega^{\prime}}(n_{0},0)\phi^{r}_{\Omega^{\prime}})(x) \leq
(V^{r}_{\Omega^{\prime}}(n_{0},0)\phi^{r}_{\Omega^{\prime}})(x) +
\varepsilon_{0} \enspace\text{for all}\enspace x\in\bar\Omega.
\end{equation*}
This together with \eqref{3-8}-\eqref{3-11} implies that
\begin{align*}
(V^{\sigma}_{\Omega}(n_{0},0)\phi^{r}_{\Omega^{\prime}}|_{\bar{\Omega}})(x)
\leq&
(V^{r}_{\Omega^{\prime}}(n_{0},0)\phi^{r}_{\Omega^{\prime}})(x) +
\varepsilon_{0}
\\
\leq& e^{\lambda^{r}_{\Omega^{\prime}}n_{0}}\phi^{r}_{\Omega^{\prime}}(x) + \frac{1}{2} e^{(\lambda^{local} + \varepsilon)n_{0}}\phi^{r}_{\Omega^{\prime}}(x)
\\
\leq& e^{(\lambda^{local}+\frac{\varepsilon}{2})n_{0}}\phi^{r}_{\Omega^{\prime}}(x) + \frac{1}{2}e^{(\lambda^{local} + \varepsilon)n_{0}}\phi^{r}_{\Omega^{\prime}}(x)
\\
=& e^{-\frac{\varepsilon}{2}n_{0}}e^{(\lambda^{local} + \varepsilon)n_{0}}\phi^{r}_{\Omega^{\prime}}(x) + \frac{1}{2}e^{(\lambda^{local} + \varepsilon)n_{0}}\phi^{r}_{\Omega^{\prime}}(x)
\\
\leq& e^{(\lambda^{local} + \varepsilon)n_{0}}\phi^{r}_{\Omega^{\prime}}(x)
\end{align*}
for all $x\in\bar{\Omega}$. Using Proposition \ref{PP2.10}, we get
\begin{equation*}
s(L_{\sigma,2}) \leq \lambda^{local} + \varepsilon.
\end{equation*}

On the other hand, following similar arguments as in the proof of \cite[Theorem B]{SX}, and Lemma \ref{LEM3.2}
we can prove that for any $\varepsilon>0$ there exists $\sigma_{2}>0$ such that for $0<\sigma<\sigma_{2}$
\begin{equation*}
s(L_{\sigma,2}) \geq \lambda^{local} - \varepsilon.
\end{equation*}
The details of this are omitted here.
\end{proof}

\subsection{Asymptotic behavior with respect to frequency}
\indent

In this section, we write $L_{\tau,D,\sigma,m}$ as $L_{\tau}$.
Denote the adjoint operator of $L_{\tau}$ by $L^{*}_{\tau}$, that is,
\begin{equation*}
L^{*}_{\tau}[u] = \tau u_{t}(x,t) + D\mathcal{P}[u](x,t) - D_{0}u(x,t) + A(x,t)u(x,t).
\end{equation*}
For $f,g\in  L^{2}\left( \Omega\times[0,1], \mathbb{R}^{l}\right)$, we set
$(f,g)_{0} = \sum_{i=1}^{l} \int_{0}^{1}\int_{\Omega} f(x,t)g(x,t)dxdt.$
The following result follows from an argument similar to that of \cite[Lemma 3.1]{Feng}.

\begin{lemma}
Suppose ${\rm (H2),(\tilde{H}3), (F1), (F2)}$ hold.
Assume that $\varphi$ and $\psi$ are the principal eigenfunctions of $s(L_{\tau})$ and $s(L^{*}_{\tau})$, respectively. Then the following statements hold:
\begin{itemize}[leftmargin=0.9cm]
\item[$(i)$]
Given $f\in\mathcal{X}$, there exists $u\in\mathcal{X}_{1}$ satisfying $L_{\tau}[u] - s(L_{\tau})u = f$ if and only if
$(f,\psi)_{0} = 0$.
Also there exists $v\in\mathcal{X}_{1}$ satisfying $L^{*}_{\tau}[v] - s(L_{\tau})v = f$ if and only if
$(f,\varphi)_{0} = 0$.

\item[$(ii)$]
$s(L_{\tau})$, $\varphi$ and $\psi$ are continuously differentiable with respect to $\tau$.
\end{itemize}
\end{lemma}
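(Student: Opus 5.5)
The proof is a Fredholm-alternative argument for (i), followed by the implicit function theorem for (ii). Throughout, $s(L_\tau)$ is the principal eigenvalue. Under (F1)–(F2), $L_\tau$ and $L_\tau^*$ have the same structure, and Theorem~\ref{THM1.3} (with $A$ replaced by $B=-D_0+A$, Remark~\ref{REMARK1}) gives $s(L^*_\tau)=s(L_\tau)$. Both are algebraically simple, with eigenfunctions $\varphi,\psi\in\mathcal{X}_1^{++}$; irreducibility follows from $(\tilde H2)$, which we assume as in Theorem~\ref{THM1.10}.

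\textbf{Necessity in (i).} If $L_\tau u-s(L_\tau)u=f$, pair with $\psi$ in $(\cdot,\cdot)_0$. Integration by parts in $t$ uses periodicity. The symmetry of $k$ and $A$ and of $D$ lets the nonlocal terms be moved across by Fubini. This gives $(f,\psi)_0=(u,(L_\tau^*-s)\psi)_0=0$.

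\textbf{Sufficiency in (i).} Fix $\lambda>s(L_\tau)$ and rewrite the equation as
\[
u-(\lambda-s)(\lambda I-L_\tau)^{-1}u=-(\lambda I-L_\tau)^{-1}f .
\]
We want $T_\lambda=(\lambda-s)(\lambda I-L_\tau)^{-1}$ to be of the form identity plus compact on the relevant spectral part; then the Riesz–Schauder/Fredholm alternative applies. Compactness alone is not available, since $(\lambda I-\mathcal N)^{-1}$ is not compact. Instead we use that $s(L_\tau)$ is an isolated pole of finite algebraic multiplicity (Theorem~\ref{THM2.5}(i)), here multiplicity one.

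Let $\Pi$ be the spectral (Riesz) projection onto $\mathrm{span}\{\varphi\}$. On $\mathrm{Range}(I-\Pi)$, the operator $L_\tau-s$ is invertible, because $s$ lies in the resolvent set of the restriction. For the range of $\Pi$, algebraic simplicity gives $\Pi f=\frac{(f,\psi)_0}{(\varphi,\psi)_0}\varphi$, since the adjoint eigenvector annihilates $\mathrm{Range}(I-\Pi)$. Hence $(f,\psi)_0=0$ means $f\in\mathrm{Range}(I-\Pi)$, and a solution $u\in\mathcal X_1$ exists. The statement for $L^*_\tau$ is symmetric. The main technical point here is identifying $\Pi$ via $\psi$, which requires $(\varphi,\psi)_0>0$; this holds by positivity.

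\textbf{Part (ii).} Normalize $(\varphi,\psi)_0=1$ and define
\[
G(\tau,u,\lambda)=\bigl(L_\tau u-\lambda u,\ (u,\psi_{\tau_0})_0-1\bigr)
\]
from $(0,\infty)\times\mathcal X_1\times\mathbb R$ to $\mathcal X\times\mathbb R$, with $\mathcal X_1$ carrying the graph norm. The map $G$ is $C^1$ in $\tau$: $\tau$ enters only through $-\tau u_t$ and through $d_{ii}(\tau)\in C^1$. Its derivative in $(u,\lambda)$ at $(\tau_0,\varphi,s)$ is
\[
(v,\mu)\mapsto\bigl((L_{\tau_0}-s)v-\mu\varphi,\ (v,\psi)_0\bigr).
\]
This derivative is an isomorphism by (i). Given $(g,c)$, choose $\mu=-(g,\psi)_0$, so that $g+\mu\varphi\perp\psi$. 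Solve for $v$, which is unique modulo $\varphi$, and fix the multiple of $\varphi$ using $(v,\psi)_0=c$. Boundedness of the inverse follows from the closed graph theorem.

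The implicit function theorem then yields $C^1$ maps $\tau\mapsto(\varphi_\tau,\lambda_\tau)$ solving the eigenproblem. By continuity $\varphi_\tau$ stays positive near $\tau_0$, so the converse part of Theorem~\ref{THM1.3} gives $\lambda_\tau=s(L_\tau)$. The same argument applied to $L^*_\tau$ gives the $C^1$ dependence of $\psi$.
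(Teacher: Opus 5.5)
Your proof is correct. The paper gives no argument of its own here; it only says the lemma follows as in \cite[Lemma 3.1]{Feng}. Your scheme, a Fredholm alternative for (i) and the implicit function theorem on the bordered map $G$ for (ii), is the standard one for such statements.

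\textbf{Where a different route exists.} The solvability half of (i) can be done without Thieme's pole theorem and the Riesz projection. Your remark that compactness is unavailable is only half right: $(\lambda I-\mathcal{N})^{-1}$ is not compact, but $\mathcal{M}(\lambda I-\mathcal{N})^{-1}$ is, by the compact-perturbator lemma of Section~2 (with $\mathcal{N}$ built from $B=-D_0+A$ as in Remark~\ref{REMARK1}). Since $\varphi\in\mathcal{X}^{++}$, the converse part of Theorem~\ref{THM1.3} gives $s:=s(L_\tau)>s(\mathcal{N})$, and
\[
L_\tau-sI=-\bigl(I-\mathcal{M}(sI-\mathcal{N})^{-1}\bigr)(sI-\mathcal{N}),
\]
with $sI-\mathcal{N}:\mathcal{X}_1\to\mathcal{X}$ an isomorphism. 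So $L_\tau-sI$ is Fredholm of index zero. Its range is therefore closed of codimension $\dim\ker(L_\tau-sI)=1$. Your necessity computation puts the range inside the closed hyperplane $\{f:(f,\psi)_0=0\}$, so the two coincide. This needs only geometric simplicity and avoids identifying a projection. Your route needs algebraic simplicity and Theorem~\ref{THM2.5}(i), but in return gives the explicit formula for $\Pi$.

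\textbf{Steps in your version that need a line each.}
Theorem~\ref{THM2.5} applies only because $s(L_\tau)>s(\mathcal{N})$, again by Theorem~\ref{THM1.3}.
The fact that $\psi$ annihilates $\mathrm{Range}(I-\Pi)$ holds because every $g$ there equals $(L_\tau-sI)u$ with $u\in\mathcal{X}_1$, so $(g,\psi)_0=0$ by the necessity step.
The equality $s(L^*_\tau)=s(L_\tau)$ comes from pairing the two eigen-equations and using $(\varphi,\psi)_0>0$, not from Theorem~\ref{THM1.3} alone.

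\textbf{Two small remarks.}
Adding $(\tilde{H}2)$ is genuinely needed, at least in the form of simplicity. Take two identical decoupled copies of a scalar operator that has a principal eigenvalue. All hypotheses of the lemma hold, but the kernel is two-dimensional. Then $f=(\psi_2,-\psi_1)$ satisfies $(f,\psi)_0=0$ and is not in the range, so (i) fails as stated.
Your normalization $(\varphi_\tau,\psi_{\tau_0})_0=1$ depends on $\tau_0$. To get a single $C^1$ family on $(0,\infty)$, renormalize with a fixed functional, e.g.\ $(\varphi_\tau,e)_0=1$ with $e=(1,\dots,1)^{T}$, which simplicity permits.
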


We first establish the monotonicity result (Theorem \ref{THM1.9}). Note that under assumptions ${\rm (\tilde{H}3) }$ and ${\rm (F2)}$, all kernel functions coincide and are of the form $k_{i}(x,y,t)=k_{\sigma}(x-y,t)$.

\begin{proof}[Proof of Theorem \ref{THM1.9}]
For clarity, the proof proceeds as follows in two steps.

\textbf{Step 1.} In this step, we prove that  $s(L_{\tau})$ is  nonincreasing in $\tau$.

By Theorem \ref{THM1.5}, it suffices to consider the case where $s(L_{\tau})$ is the principal eigenvalue; the general case then follows directly from the approximation argument.

Let $\varphi$ be the eigenfunction corresponding to $s(L_{\tau})$. That is,
\begin{equation}\label{3-12}
\tau \partial_{t}\varphi(x,t) =  D\mathcal{P}[\varphi](x,t) - D_{0}\varphi(x,t) + A(x,t)\varphi(x,t) - s(L_{\tau})\varphi(x,t) \enspace \text{in} \enspace\bar{\Omega}\times[0,1].
\end{equation}
Let $\psi$ be the eigenfunction of the adjoint problem  to \eqref{3-12}, namely,
\begin{equation*}
-\tau \partial_{t}\psi(x,t) =  D\mathcal{P}[\psi](x,t)-D_{0}\psi(x,t) + A(x,t)\psi(x,t) - s(L_{\tau})\psi(x,t) \enspace \text{in} \enspace\bar{\Omega}\times[0,1].
\end{equation*}
Define
\begin{equation*}
\alpha_{i}(x,t)=\sqrt{\varphi_{i}(x,t)\psi_{i}(x,t)},\quad \beta_{i}(x,t)=\frac{1}{2}\ln\left( \frac{\varphi_{i}(x,t)}{\psi_{i}(x,t)} \right)
\end{equation*}
and
\begin{equation*}
L_{i}[\phi](x,t)=\sum_{j=1}^{l}d_{ij}\int_{\Omega}k_{\sigma}(x-y,t)\phi_{j}(y,t)dy - d_{ii}\phi_{i}(x,t)
+\sum_{j=1}^{l}a_{ij}(x,t)\phi_{j}(x,t)-s(L_{\tau})\phi_{i}(x,t)
\end{equation*}
for $ i \in \mathbb{S}$ and $(x,t)\in \bar{\Omega}\times \mathbb{R}$.
It follows that
\begin{equation}\label{3-13}
\frac{L_{i}[\varphi](x,t)}{\varphi_{i}(x,t)}+\frac{L_{i}[\psi](x,t)}{\psi_{i}(x,t)}=2\tau \partial_{t}\beta_{i}(x,t).
\end{equation}
On the other hand, by direct calculation, we have
\begin{align}\label{3-14}
\nonumber
\frac{L_{i}[\varphi]}{\varphi_{i}}+\frac{L_{i}[\psi]}{\psi_{i}}
=&
\sum_{j=1}^{l} d_{ij}\int_{\Omega}k_{\sigma}(x-y,t) \left( \frac{\varphi_{j}(y,t)}{\varphi_{i}(x,t)} + \frac{\psi_{j}(y,t)}{\psi_{i}(x,t)} \right)dy
- 2d_{ii}
\\ \nonumber
&+\sum_{j=1}^{l} a_{ij}(x,t)\left( \frac{\varphi_{j}(x,t)}{\varphi_{i}(x,t)} + \frac{\psi_{j}(x,t)}{\psi_{i}(x,t)} \right) - 2s(L_{\tau})
\\
=& 2 \sum_{j=1}^{l} d_{ij}\int_{\Omega}k_{\sigma}(x-y,t) \frac{\alpha_{j}(y,t)}{\alpha_{i}(x,t)} dy
-
2d_{ii}
+
2\sum_{j=1}^{l} a_{ij}(x,t) \frac{\alpha_{j}(x,t)}{\alpha_{i}(x,t)} - 2s(L_{\tau})
\\ \nonumber
& + 4\sum_{j=1}^{l} d_{ij} \int_{\Omega} k_{\sigma}(x-y,t)\frac{\alpha_{j}(y,t)}{\alpha_{i}(x,t)}\sinh^{2}\left(\frac{\beta_{j}(y,t)-\beta_{i}(x,t)}{2}\right)dy
\\ \nonumber
&+ 4\sum_{j=1}^{l} a_{ij}(x,t) \frac{\alpha_{j}(x,t)}{\alpha_{i}(x,t)}\sinh^{2}\left(\frac{\beta_{j}(x,t)-\beta_{i}(x,t)}{2}\right).
\end{align}
Combining \eqref{3-13} and \eqref{3-14}, we obtain
\begin{equation}\label{3-15}
\begin{aligned}
&-s(L_{\tau})\alpha_{i}(x,t)\\
=&
-\sum_{j=1}^{l} d_{ij}\int_{\Omega}k_{\sigma}(x-y,t)\alpha_{j}(y,t)dy + d_{ii}\alpha_{i}(x,t) - \sum^{l}_{j=1}a_{ij}(x,t)\alpha_{j}(x,t) -c_{i}(x,t)\alpha_{i}(x,t),
\end{aligned}
\end{equation}
where
\begin{align*}
c_{i}(x,t)=
&-\tau\partial_{t}\beta_{i}(x,t) + \frac{1}{2}\sum_{j=1}^{l}d_{ij}\int_{\Omega}k_{\sigma}(x-y,t) \left( \sqrt{\frac{\varphi_{j}(y,t)}{\varphi_{i}(x,t)}} - \sqrt{\frac{\psi_{j}(y,t)}{\psi_{i}(x,t)}} \right)^{2}dy
\\
&+\frac{1}{2}\sum_{j=1}^{l}a_{ij}(x,t)\left( \sqrt{\frac{\varphi_{j}(x,t)}{\varphi_{i}(x,t)}}
- \sqrt{\frac{\psi_{j}(x,t)}{\psi_{i}(x,t)}} \right)^{2}.
\end{align*}
Differentiating \eqref{3-15} with respect to $\tau$, and then multiplying $\alpha_{i}(x,t)$, we get
\begin{align}\label{3-16}
& - s^{\prime}(L_{\tau})\alpha_{i}^{2}(x,t) - s(L_{\tau})\alpha_{i}^{\prime}(x,t)\alpha_{i}(x,t)
\\ \nonumber
=& \left(- d^{\prime}_{ii}\int_{\Omega}k_{\sigma}(x-y,t)\alpha_{i}(y,t)dy -\sum_{j=1}^{l} d_{ij}\int_{\Omega}k_{\sigma}(x-y,t)\alpha_{j}^{\prime}(y,t)dy + d_{ii}^{\prime}\alpha_{i}(x,t) + d_{ii}\alpha_{i}^{\prime}(x,t)  \right.
\\ 
& \left. -\sum^{l}_{j=1}a_{ij}(x,t)\alpha_{j}^{\prime}(x,t) - c^{\prime}_{i}(x,t)\alpha_{i}(x,t) - c_{i}(x,t)\alpha^{\prime}_{i}(x,t) \right)\alpha_{i}(x,t). \nonumber
\end{align}
Multiplying \eqref{3-15} by $\alpha_{i}^{\prime}(x,t)$, subtracting equation \eqref{3-16}, integrating over $\Omega\times (0,1)$, and finally sum over $i$ from $1$ to $l$  yields
\begin{align}\label{3-17}
& s^{\prime}(L_{\tau}) \sum_{i=1}^{l} \int_{0}^{1} \int_{\Omega} \alpha^{2}_{i}(x,t)dxdt
\\ \nonumber
=&
\sum_{i=1}^{l} d_{ii}^{\prime} \int_{0}^{1}\int_{\Omega}\int_{\Omega}k_{\sigma}(x-y,t)\alpha_{i}(y,t)\alpha_{i}(x,t)dydxdt - \sum_{i=1}^{l} d_{ii}^{\prime} \int_{0}^{1}\int_{\Omega} \alpha_{i}^{2}(x,t)dxdt
\\  \nonumber
&+ \sum_{i=1}^{l} \int_{0}^{1}\int_{\Omega}c_{i}^{\prime}(x,t)\alpha_{i}^{2}(x,t)dxdt
\\ \nonumber
=&
- \sum_{i=1}^{l}\frac{d_{ii}^{\prime}}{2} \int_{0}^{1} \int_{\Omega}\int_{\Omega} k_{\sigma}(x-y,t)(\alpha_{i}(x,t)-\alpha_{i}(y,t))^{2}dxdydt
+ \sum_{i=1}^{l} \int_{0}^{1}\int_{\Omega}c_{i}^{\prime}(x,t)\alpha_{i}^{2}(x,t)dxdt
\\ \nonumber
& +  \sum_{i=1}^{l}d_{ii}^{\prime}\int_{0}^{1} \int_{\Omega} \left( \int_{\Omega}  k_{\sigma}(x-y,t) dy - 1 \right)\alpha_{i}^{2}(x,t)dxdt
\\ \nonumber
\leq & \sum_{i=1}^{l} \int_{0}^{1}\int_{\Omega}c_{i}^{\prime}(x,t)\alpha_{i}^{2}(x,t)dxdt.
\end{align}
In particular,
\begin{align*}
c_{i}^{\prime}(x,t)=& \frac{1}{2} d_{ii}^{\prime}(\tau) \int_{\Omega}k_{\sigma}(x-y,t)\left( \sqrt{ \frac{\varphi_{i}(y,t)}{\varphi_{i}(x,t)}} - \sqrt{\frac{\psi_{i}(y,t)}{\psi_{i}(x,t)}} \right)^{2}dy \\
&+ \frac{1}{2}\sum_{j=1}^{l}d_{ij} \int_{\Omega}k_{\sigma}(x-y,t) \left[ \left( \sqrt{ \frac{\varphi_{j}(y,t)}{\varphi_{i}(x,t)}} - \sqrt{\frac{\psi_{j}(y,t)}{\psi_{i}(x,t)}} \right)^{2} \right]^{\prime}dy\\
&+ \frac{1}{2}\sum_{j=1}^{l}a_{ij}(x,t)\left[ \left( \sqrt{ \frac{\varphi_{j}(x,t)}{\varphi_{i}(x,t)}} - \sqrt{\frac{\psi_{j}(x,t)}{\psi_{i}(x,t)}} \right)^{2} \right]^{\prime} - \partial_{t}\beta_{i}(x,t) - \tau \partial_{t}\beta_{i}^{\prime}(x,t).
\end{align*}
Then, simple calculations give
\begin{equation}\label{3-18}
\begin{aligned}
&\frac{1}{2}\sum_{i=1}^{l} d_{ii}^{\prime}(\tau) \int_{0}^{1}\int_{\Omega}  \int_{\Omega}  k_{\sigma}(x-y,t) \left[ \left( \sqrt{ \frac{\varphi_{i}(y,t)}{\varphi_{i}(x,t)}} - \sqrt{\frac{\psi_{i}(y,t)}{\psi_{i}(x,t)}} \right)^{2} \right]\alpha_{i}^{2}(x,t)dydxdt
\\
=&\sum_{i=1}^{l} d_{ii}^{\prime}(\tau)\int_{0}^{1}\int_{\Omega}  \int_{\Omega} \left( k_{\sigma}(x-y,t) \left( \frac{1}{2} - \sqrt{ \frac{\varphi_{i}(x,t)\psi_{i}(y,t)}{\varphi_{i}(y,t)\psi_{i}(x,t) } } +\frac{\varphi_{i}(x,t)\psi_{i}(y,t)}{2\varphi_{i}(y,t)\psi_{i}(x,t)} \right) \right.\\
&\hspace{3.5cm}\varphi_{i}(y,t)\psi_{i}(x,t) \Bigg)dydxdt.
\end{aligned}
\end{equation}
Using the symmetry of $d_{ij}$ and of $k_{\sigma}(\cdot,t)$, we have
%
%
It then follows that
\begin{align}\label{3-19}
&\frac{1}{2}\sum_{i,j=1}^{l}d_{ij} \int_{0}^{1} \int_{\Omega} \int_{\Omega}k_{\sigma}(x-y,t) \left[ \left( \sqrt{ \frac{\varphi_{j}(y,t)}{\varphi_{i}(x,t)}} - \sqrt{\frac{\psi_{j}(y,t)}{\psi_{i}(x,t)}} \right)^{2} \right]^{\prime}\alpha_{i}^{2}(x,t) dydxdt\\ \nonumber
=& \frac{1}{2}\sum_{i,j=1}^{l}d_{ij} \int_{0}^{1} \int_{\Omega} \int_{\Omega} k_{\sigma}(x-y,t)  \left( \frac{\psi_{i}(x,t)}{\varphi_{i}(x,t)} \right) \left( \frac{\varphi_{i}(x,t)}{\psi_{i}(x,t)} \right)^{\prime} \left( \varphi_{i}(x,t)\psi_{j}(y,t)\right. \\\nonumber
& \hspace{8.7cm} - \left. \varphi_{j}(y,t)\psi_{i}(x,t) \right)dydxdt.
\end{align}
Moreover, we have
\begin{equation}\label{3-20}
\begin{aligned}
&\frac{1}{2}\sum_{i,j=1}^{l} \int_{0}^{1}  \int_{\Omega}a_{ij}(x,t)\left[ \left( \sqrt{ \frac{\varphi_{j}(x,t)}{\varphi_{i}(x,t)}} - \sqrt{\frac{\psi_{j}(x,t)}{\psi_{i}(x,t)}} \right)^{2} \right]^{\prime}\alpha_{i}^{2}(x,t) dx dt\\
=& \frac{1}{2}\sum_{i,j=1}^{l} \int_{0}^{1}  \int_{\Omega} a_{ij}(x,t) \left( \frac{\psi_{i}(x,t)}{\varphi_{i}(x,t)} \right) \left( \frac{\varphi_{i}(x,t)}{\psi_{i}(x,t)} \right)^{\prime} \left( \varphi_{i}(x,t)\psi_{j}(x,t) - \varphi_{j}(x,t)\psi_{i}(x,t) \right)dx dt.
\end{aligned}
\end{equation}
Define $h(z_{1},z_{2})=(z_{1}-z_{2})\left( \ln z_{1} - \ln z_{2} \right)$, it follows that
\begin{align}\label{3-21}
&- \sum_{i=1}^{l} \int_{0}^{1}  \int_{\Omega} \partial_{t}\beta_{i}(x,t)\alpha_{i}^{2}(x,t) dx dt
\\ \nonumber
=& \sum_{i,j=1}^{l} \frac{d_{ij}}{2\tau} \int_{0}^{1}  \int_{\Omega} \int_{\Omega}  \ln\left( \frac{\varphi_{i}(x,t)}{\psi_{i}(x,t)} \right) k_{\sigma}(x-y,t) \left( \varphi_{j}(y,t)\psi_{i}(x,t)-\psi_{j}(y,t)\varphi_{i}(x,t) \right)dydxdt
\\ \nonumber
& +\sum_{i,j=1}^{l} \int_{0}^{1}  \int_{\Omega} \frac{1}{2\tau} \ln\left( \frac{\varphi_{i}(x,t)}{\psi_{i}(x,t)} \right) a_{ij}(x,t)\left( \varphi_{j}(x,t)\psi_{i}(x,t)-\psi_{j}(x,t)\varphi_{i}(x,t) \right)dxdt
\\ \nonumber
=& \frac{\tau}{4} \sum_{i=1}^{l}\left( \frac{d_{ii}(\tau)}{\tau} \right)^{\prime}\int_{0}^{1}  \int_{\Omega} \int_{\Omega} k_{\sigma}(x-y,t) h(\varphi_{i}(y,t)\psi_{i}(x,t),\psi_{i}(y,t)\varphi_{i}(x,t))  dydxdt
\\ \nonumber
&- \sum_{\substack{ 1\leq i,j \leq l \\i\neq j }}\frac{d_{ij}}{4 \tau}  \int_{0}^{1}  \int_{\Omega} \int_{\Omega} k_{\sigma}(x-y,t) h(\varphi_{j}(y,t)\psi_{i}(x,t),\psi_{j}(y,t)\varphi_{i}(x,t)) dydxdt
\\ \nonumber
&+ \sum_{i=1}^{l} \frac{d_{ii}^{\prime}(\tau)}{2} \int_{0}^{1}\int_{\Omega}\int_{\Omega} \ln \left( \frac{\varphi_{i}(x,t)\psi_{i}(y,t)}{\psi_{i}(x,t)\varphi_{i}(y,t)} \right)k_{\sigma}(x-y,t)\varphi_{i}(y,t)\psi_{i}(x,t)dydxdt
\\ \nonumber
&-  \frac{1}{4\tau} \sum_{\substack{ 1\leq i,j \leq l \\i\neq j }} \int_{0}^{1}  \int_{\Omega} a_{ij}(x,t) h(\varphi_{j}(x,t)\psi_{i}(x,t),\psi_{j}(x,t)\varphi_{i}(x,t))  dxdt,
\end{align}
and
\begin{equation}\label{3-22}
\begin{aligned}
&- \sum_{i=1}^{l} \int_{0}^{1}  \int_{\Omega} \tau \partial_{t}\beta_{i}^{\prime}(x,t)\alpha_{i}^{2}(x,t)dxdt\\
=& \sum_{i,j=1}^{l} \frac{d_{ij}}{2} \int_{0}^{1}  \int_{\Omega} \int_{\Omega} \left( \frac{\psi_{i}(x,t)}{\varphi_{i}(x,t)} \right) \left( \frac{\varphi_{i}(x,t)}{\psi_{i}(x,t)} \right)^{\prime} k_{\sigma}(x-y,t)\left(  \varphi_{j}(y,t)\psi_{i}(x,t) - \psi_{j}(y,t)\varphi_{i}(x,t)  \right)dydxdt\\
&+ \sum_{i,j=1}^{l}\frac{1}{2}\int_{0}^{1}  \int_{\Omega} \left( \frac{\psi_{i}(x,t)}{\varphi_{i}(x,t)} \right) \left( \frac{\varphi_{i}(x,t)}{\psi_{i}(x,t)} \right)^{\prime} a_{ij}(x,t) \left( \varphi_{j}(x,t)\psi_{i}(x,t) - \psi_{j}(x,t)\varphi_{i}(x,t) \right)dx dt.
\end{aligned}
\end{equation}
Define $g(z)=z - 1 - \ln z$. Let
$r= \frac{\varphi_{i}(x,t)\psi_{i}(y,t)}{\varphi_{i}(y,t)\psi_{i}(x,t) }  $.
Combining \eqref{3-18}-\eqref{3-22}, we get
\begin{align} \label{3-23}
&\sum_{i=1}^{l} \int_{0}^{1}\int_{\Omega}c_{i}^{\prime}(x,t)\alpha_{i}^{2}(x,t)dxdt
\\ \nonumber
=&\sum_{i=1}^{l} d_{ii}^{\prime}(\tau)\int_{0}^{1}\int_{\Omega}  \int_{\Omega} k_{\sigma}(x-y,t)\varphi_{i}(y,t)\psi_{i}(x,t)   \left( \frac{1}{2} - \sqrt{r} + \frac{r}{2} \right) dydxdt
\\ \nonumber
&+ \sum_{i = 1}^{l} \frac{d_{ii}^{\prime}(\tau)}{2} \int_{0}^{1}\int_{\Omega}\int_{\Omega} \ln \left( r \right)k_{\sigma}(x-y,t)\varphi_{i}(y,t)\psi_{i}(x,t)dydxdt
\\ \nonumber
&+ \frac{\tau}{4} \sum_{i=1}^{l}\left( \frac{d_{ii}(\tau)}{\tau} \right)^{\prime}\int_{0}^{1}  \int_{\Omega} \int_{\Omega} k_{\sigma}(x-y,t) h(\varphi_{i}(y,t)\psi_{i}(x,t),\psi_{i}(y,t)\varphi_{i}(x,t) )dydxdt
\\ \nonumber
&-\sum_{i\neq j}^{l}\frac{d_{ij}}{4 \tau}  \int_{0}^{1}  \int_{\Omega} \int_{\Omega}
k_{\sigma}(x-y,t) h(\varphi_{j}(y,t)\psi_{i}(x,t),\psi_{j}(y,t)\varphi_{i}(x,t))dydxdt
\\ \nonumber
&- \frac{1}{4\tau} \sum_{i\neq j}^{l} \int_{0}^{1}  \int_{\Omega} a_{ij}(x,t) h(\varphi_{j}(x,t)\psi_{i}(x,t),\psi_{j}(x,t)\varphi_{i}(x,t)) dxdt
\\ \nonumber
\leq& -\sum_{i=1}^{l} d_{ii}^{\prime}(\tau) \int_{0}^{1}\int_{\Omega}\int_{\Omega}  k_{\sigma}(x-y,t) \varphi_{i}(y,t)\psi_{i}(x,t) g\left( \sqrt{ r } \right)dydxdt
\\ \nonumber
&+ \sum_{i=1}^{l} d_{ii}^{\prime}(\tau) \int_{0}^{1}\int_{\Omega}\int_{\Omega}  k_{\sigma}(x-y,t) \varphi_{i}(y,t)\psi_{i}(x,t) \left( -\frac{1}{2} + \frac{r}{2} \right)dydxdt \\\nonumber
\leq& 0.
\end{align}
Consequently, it follows from \eqref{3-17} that $s^{\prime}(L_{\tau}) \leq 0$.

\textbf{Step 2.}
In this step, we prove the remaining results.

Note that $K(x,t) = \left( \int_{\Omega}k_{\sigma}(x-y,t)dy - 1 \right)\alpha_{i}^{2}(x,t) \leq 0$ and there exists $(x_{0},t_{0}) \in \Omega \times (0,1)$ such that $K(x_{0},t_{0})<0$. By the continuity of $K$, we get
\begin{equation*}
\int_{0}^{1} \int_{\Omega} \left( \int_{\Omega}  k_{\sigma}(x-y,t) dy - 1 \right)\alpha_{i}^{2}(x,t)dxdt < 0.
\end{equation*}
Hence, if there exists $1 \leq k\leq l$ such that $d_{kk}^{\prime}(\tau)> 0$, if follows from \eqref{3-17} that  $ s^{\prime}(L_{\tau})<0$.

If there exists $(\phi,\lambda)$ such that
\begin{equation*}
D \hat{\mathcal{P}}[\phi](x) - D_{0}\phi(x) + \hat{A}(x)\phi(x) = \lambda \phi(x).
\end{equation*}
Let $u(x,t) = e^{\frac{1}{\tau}\int_{0}^{t}r(s)ds} \phi(x)$. It follows that $u(x,t)$ is 1-periodic in $t$ and
\begin{equation*}
L_{\tau}[u](x,t) - \lambda u(x,t) = -r(t)u(x,t) + \left( D\mathcal{P}[u](x,t) - D\hat{\mathcal{P}}[u](x,t) \right) +   \left(A(x,t) - \hat{A}(x) \right)u(x,t) = 0
\end{equation*}
for all $(x,t) \in \bar{\Omega}\times [0,1]$. This implies that $s(L_{\tau})=\lambda$ and $s^{\prime}(L_{\tau})=0$.

On the other hand, if $s^{\prime}(L_{\tau}) = 0$, it follows from \eqref{3-23} that
\begin{equation*}
\varphi_{i}(y,t)\psi_{i}(x,t) = \psi_{i}(y,t)\varphi_{i}(x,t)
\enspace \text{and} \enspace
\varphi_{j}(x,t)\psi_{i}(x,t) = \psi_{j}(x,t)\varphi_{i}(x,t).
\end{equation*}
for all $x,y\in\Omega,t\in[0,1],i,j\in \mathbb{S}$. Hence, $\varphi_{j}(y,t)\psi_{i}(x,t) = \psi_{j}(y,t)\varphi_{i}(x,t)$ for all $x,y\in\Omega,t\in[0,1],i,j\in \mathbb{S}$.
It then follows that
\begin{equation*}
\begin{aligned}
\partial_{t}(\alpha_{i}^{2}(x,t)) =& \frac{1}{\tau}
\left(
\sum_{j=1}^{l} d_{ij}\int_{\Omega} k_{\sigma}(x-y,t) \left(\varphi_{j}(y,t)\psi_{i}(x,t) - \psi_{j}(y,t)\varphi_{i}(x,t) \right) dy
\right.\\
& \left. + \sum_{j=1}^{l} a_{ij}(x,t) \left(\varphi_{j}(x,t)\psi_{i}(x,t) -\psi_{j}(x,t)\varphi_{i}(x,t)\right) \right) 
=0
\end{aligned}
\end{equation*}
for all $(x,t)\in\Omega\times[0,1],i\in\mathbb{S}$
and
$\varphi_{i} = \rho(t) \psi_{i}$ for some $1-$periodic function $\rho(t)$.
This implies that $\alpha_{i} = \alpha_{i}(x)$ independent of $t$ and $\beta_{i}= \frac{1}{2} \ln \rho(t)$.
By \eqref{3-15}, we have
\begin{equation}\label{3-24}
-s(L_{\tau})\alpha_{i}(x)
=
-\sum_{j=1}^{l} d_{ij}\int_{\Omega}k_{\sigma}(x-y,t)\alpha_{j}(y)dy + d_{ii}\alpha_{i}(x) - \sum^{l}_{j=1}a_{ij}(x,t)\alpha_{j}(x) + \tau\partial_{t}\beta_{i}(t)\alpha_{i}(x).
\end{equation}
Integrating \eqref{3-24} over $(0,1)$, we get
\begin{equation}\label{3-25}
\sum_{j=1}^{l}d_{ij}\int_{\Omega}\int_{0}^{1} k_{\sigma}(x-y,t)dt \alpha_{j}(y) dy - d_{ii}\alpha_{i}(x) + \sum_{j=1}^{l} \int_{0}^{1} a_{ij}(x,t)dt\alpha_{j}(x)
=
s(L_{\tau})\alpha_{i}(x).
\end{equation}
Let $r(t) = \frac{\tau}{2} \frac{d \ln \rho(t)}{ dt }$. Combining \eqref{3-24} and \eqref{3-25}, we obtain that
\begin{equation*}
r(t)\alpha(x) = \left( DP[\alpha](x,t) - D\hat{P}[\alpha](x,t) \right) +   \left(A(x,t) - \hat{A}(x) \right)\alpha(x)
\enspace  \text{in} \enspace \bar{\Omega}\times[0,1].
\end{equation*}
The proof is completed.
\end{proof}

\begin{proof}[Proof of Theorem \ref{THM1.10}]
\indent

(i) For a fixed $t\in [0,1]$, since $s(N_{1}(t))$ is the principal eigenvalue of $N_{1}(t)$, there exists $v(\cdot,t)\in X^{++}$ such that
\begin{equation*}
B\mathcal{P}[v](x,t) - B_{0}v(x,t) + A(x,t)v(x,t) = s(N_{1}(t)) v(x,t) \enspace\text{in}\enspace \bar{\Omega}.
\end{equation*}
By the perturbation theory (see \cite{Kato}), we can conclude that $v\in C^{1}([0,1],C(\bar{\Omega},\mathbb{R}^{l}))$ and $v(x,t+1)=v(x,t)$.
Define
\begin{equation*}
c(t)=\exp \left\{ -\frac{1}{\tau} \left( t\int_{0}^{1}s(N_{1}(z))dz - \int_{0}^{t} s(N_{1}(z))dz \right) \right\}
\enspace \text{and} \enspace
\varphi(x,t)=c(t)v(x,t).
\end{equation*}
Clearly, $\varphi(x,t)$ is a 1-periodic function.
Given arbitrary $\varepsilon>0$, there exists a sufficiently small $\tau_{0}>0$ such that $\tau|\partial_{t}v_{i}|\leq \frac{\varepsilon}{2}v_{i}$ for $\tau<\tau_{0}$. Moreover,
a direct computation yields that
\begin{align*}
&-L_{\tau}[\varphi](x,t)+ \left( \int_{0}^{1} s(N_{1}(z)) dz - \varepsilon \right) \varphi(x,t) \\
\leq&
\left( \tau \partial_{t}v(x,t) + \left( B- D \right)[v](x,t) +\left( D_{0} - B_{0} \right)[v](x,t)- \varepsilon v(x,t) \right) c(t)
\end{align*}
for all $(x,t)\in \bar{\Omega}\times [0,1]$.
Since   $\mathcal{P}[v](x,t)$ and $v(x,t)$ are bounded, there exists $0<\tau_{1}<\tau_{0}$, such that for $0<\tau<\tau_{1}$,
\begin{equation*}
-L_{\tau}[\varphi](x,t)+ \left( \int_{0}^{1} s(N_{1}(z)) dz - \varepsilon \right) \varphi(x,t)\le 0 \enspace\text{for all}\enspace (x,t) \in \bar{\Omega}\times [0,1].
\end{equation*}
This implies that $\lambda_{p}(L_{\tau}) \geq \int_{0}^{1} s(N_{1}(z)) dz - \varepsilon$. Similarly, we can obtain that there exists $\tau_{2}$ such that for $0<\tau<\tau_{2}$,
\begin{equation*}
\lambda_{p}^{\prime}(L_{\tau}) \leq \int_{0}^{1} s(N_{1}(z)) dz + \varepsilon.
\end{equation*}
Since $\varepsilon$ can be chosen arbitrarily, using Theorem \ref{THM1.6} and Remark \ref{REMARK1}, we get the desired result.

(ii) By Theorem \ref{THM1.5}, we may assume, without loss of generality, that $s(L_{\tau})$ is the principal eigenvalue for all $\tau>0$.
Let $\{\tau_{n}\}_{n\geq 1}$ be any sequence satisfying $\tau_{n} \to +\infty$ as $n\to\infty$ and let $( s_{n}(L_{\tau}), \varphi_{n})$ be the corresponding eigenpair with normalization $\| \varphi_{n} \|_{\mathcal{X}} = 1$. That is, $\varphi_{n}$ satisfies
\begin{equation}\label{3-26}
\begin{aligned}
\tau_{n} \partial_{t} \varphi_{n,i}(x,t) =& \sum_{j=1}^{l} d_{ij}(\tau_{n}) \int_{\Omega} k(x-y,t) \varphi_{n,j}(y,t)dy - d_{ii}(\tau_{n})\varphi_{n,i}(x,t) + \sum_{j=1}^{l} a_{ij}(x,t) \varphi_{n,j}(x,t) \\
&- s_{n}(L_{\tau}) \varphi_{n,i}(x,t)
\end{aligned}
\end{equation}
for all $(x,t)\in\bar{\Omega} \times \mathbb{R}$ and $1\leq i \leq l$.
Dividing the above equation by $\varphi_{n,i}$ and integrating it over $\Omega \times (0,1)$, we get
\begin{align*} s_{n}(L_{\tau}) \geq& \frac{d_{ii}(\tau_{n})}{2|\Omega|} \int_{0}^{1}\int_{\Omega} \int_{\Omega}  k(x-y,t) \left( \sqrt{\frac{ \varphi_{n,i}(y,t) }{ \varphi_{n,i}(x,t)}} - \sqrt{\frac{ \varphi_{n,i}(x,t) }{ \varphi_{n,i}(y,t) } } \right)^{2}dydxdt
\\
&+ d_{ii}(\tau_{n}) \left( \frac{1}{|\Omega|} \int_{0}^{1} \int_{\Omega} \int_{\Omega} k(x-y,t)dydxdt - 1 \right) + \frac{1}{|\Omega|} \int_{0}^{1} \int_{\Omega} a_{ii}(x,t) dx dt
\\
\geq& d_{ii}(\tau_{n}) \left( \frac{1}{|\Omega|} \int_{0}^{1} \int_{\Omega} \int_{\Omega} k(x-y,t)dydxdt - 1 \right) + \frac{1}{|\Omega|} \int_{0}^{1} \int_{\Omega} a_{ii}(x,t) dx dt.
\end{align*}
Since $ \{d_{ii}(\tau_{n})\}_{n\geq 1}$ is a bounded sequence,
$s_{n}(L_{\tau})$ is bounded from below. This together with Theorem \ref{THM1.9} implies that there exists $s^{*}$ such that
$\lim_{n \to \infty} s_{n}(L_{\tau}) = s^{*}.$

(iii)
The proof is similar to that of Theorem \ref{THM1.7}, so we omit the details here.
\end{proof}

\section{Applications }
\subsection{Zika virus model}
In this section, we study the global dynamics of the following Zika virus model \cite{LZ,MWW,WFLS}:
\begin{equation}\label{4-1}
\left\{
\begin{aligned}
&\tau \partial_{t} H_{i} =d_{1}(x,t)\mathcal{K}^{1}_{\Omega,\sigma,m}[H_{i}](x,t)-\rho(x,t)H_{i}+\sigma_{1}(x,t)H_{u}(x)V_{i}, &x&\in \bar\Omega, t>0,\\
&\tau \partial_{t} V_{u}
=d_{2}(x,t)\mathcal{K}^{2}_{\Omega,\sigma,m}[V_{u}](x,t)-\sigma_{2}(x,t)V_{u}H_{i}+\beta(x,t)(V_{u}+V_{i})\\
&\hspace{1.5cm}-\mu(x,t)(V_{u}+V_{i})V_{u}, &x&\in \bar\Omega, t>0, \\
&\tau \partial_{t} V_{i}
=d_{2}(x,t)\mathcal{K}^{2}_{\Omega,\sigma,m}[V_{i}](x,t)+\sigma_{2}(x,t)V_{u}H_{i}-\mu(x,t)(V_{u}+V_{i})V_{i}, &x&\in \bar\Omega, t>0,\\
&(H_{i}(x,0), V_{u}(x,0), V_{i}(x,0))=(H_{i0}(x), V_{u0}(x), V_{i0}(x)), &x&\in\bar\Omega.
\end{aligned}
\right.
\end{equation}
Here, the dispersal operator is defined as
\begin{equation*}
\mathcal{K}^{j}_{\Omega,\sigma,m}[u](x,t) =
\frac{1}{\sigma^{m}}\left( \int_{\Omega}\frac{1}{\sigma^{N}} k_{j} \left( \frac{x-y}{\sigma},t \right)u(y,t)dy - u(x,t) \right),
\end{equation*}
with $k_{j}$ satisfying $\rm{(\tilde{H}3) }$, and $k_{j}(\cdot,t)$ is symmetric and compactly supported for $j=1, 2$;
$H_{i}(x,t),$ $V_{u}(x,t)$ and $V_{i}(x,t)$ denote the densities of infected hosts, uninfected vectors, and infected vectors at position $x$ and time $t$, respectively;
$d_{1}$ and $d_{2}$ denote the dispersal rates of the hosts and vectors, respectively;
$\tau$ is the positive constant representing the frequency;
$\rho$ is the removal rate of infected hosts; $\sigma_{1}$ and $\sigma_{2}$ denote the infection rates of uninfected hosts and uninfected vectors, respectively; $\beta$ is the birth rate of the vectors, and $\mu$ is the death rate of the vectors;
$H_{u}$ is the density of uninfected hosts.

We always assume that $H_{u}$ is a continuous positive function. Parameters $d_{1},d_{2},\rho, \sigma_{1}, \sigma_{2}, \beta, \mu$ are assumed to be continuous, positive functions of $(x,t)$ and $1-$periodic in time $t$. The initial function $(H_{i0}, V_{u0}, V_{i0})$ is continuous, nonnegative, and nontrivial.

By general semigroup theory
\cite{Pazy} and the comparison principle, \eqref{4-1} admits a unique positive global solution $(H_{i}, V_{u}, V_{i})$.

Next, we study the following time periodic problem associated to \eqref{4-1}:
\begin{equation}\label{4-2}
\left\{
\begin{aligned}
&\tau \partial_{t} H_{i} =d_{1}(x,t)\mathcal{K}^{1}_{\Omega,\sigma,m}[H_{i}](x,t)-\rho(x,t)H_{i}+\sigma_{1}(x,t)H_{u}(x)V_{i}, &&(x,t)\in \bar\Omega\times \mathbb{R},\\
&\tau \partial_{t} V_{u}
=d_{2}(x,t)\mathcal{K}^{2}_{\Omega,\sigma,m}[V_{u}](x,t)-\sigma_{2}(x,t)V_{u}H_{i}+\beta(x,t)(V_{u}+V_{i})\\
&\hspace{1.5cm}-\mu(x,t)(V_{u}+V_{i})V_{u},&&(x,t)\in \bar\Omega\times \mathbb{R}, \\
&\tau \partial_{t} V_{i}
=d_{2}(x,t)\mathcal{K}^{2}_{\Omega,\sigma,m}[V_{i}](x,t)+\sigma_{2}(x,t)V_{u}H_{i}-\mu(x,t)(V_{u}+V_{i})V_{i},&&(x,t)\in \bar\Omega\times\mathbb{R},\\
&(H_{i}(x,t), V_{u}(x,t), V_{i}(x,t))=(H_{i}(x,t+1), V_{u}(x,t+1), V_{i}(x,t+1)),&&(x,t)\in \bar\Omega\times\mathbb{R}.
\end{aligned}
\right.
\end{equation}
Let $V=V_{u}+V_{i}$. It then follows from \eqref{4-2} that
\begin{equation}\label{4-3}
\left\{
\begin{aligned}
&\tau\partial_{t} V = d_{2}(x,t)\mathcal{K}^{2}_{\Omega,\sigma,m}[V](x,t) + \beta(x,t)V - \mu(x,t) V^{2}, &&(x,t)\in \bar\Omega\times\mathbb{R},\\
&V(x,t)= V(x,t+1), &&(x,t)\in \bar\Omega\times\mathbb{R}.
\end{aligned}
\right.
\end{equation}
Define operator $L_{0}$ by
\begin{equation*}
L_{0}[\varphi](x,t) = -\tau\partial_{t}\varphi(x,t)  + d_{2}(x,t)\mathcal{K}^{2}_{\Omega,\sigma,m}[\varphi](x,t) + \beta(x,t)\varphi(x,t).
\end{equation*}
It then follows from \cite{RS,ShenVo} that if $s(L_{0})>0$, then \eqref{4-3} admits a unique positive solution $V_{*}$,
if $s(L_{0}) \leq 0$, then
\eqref{4-3} admits no nonnegative nontrivial solution.
In what follows, we assume that $s(L_{0})>0$.
As a result, investigating the existence of positive solutions of \eqref{4-2} is equivalent to studying the following problem:
\begin{equation}\label{4-4}
\left\{
\begin{aligned}
&\tau \partial_{t} H_{i} =d_{1}(x,t)\mathcal{K}^{1}_{\Omega,\sigma,m}[H_{i}](x,t)-\rho(x,t)H_{i}+\sigma_{1}(x,t)H_{u}(x)V_{i},&&(x,t)\in \bar\Omega\times \mathbb{R}, \\
&\tau \partial_{t} V_{i}
=d_{2}(x,t)\mathcal{K}^{2}_{\Omega,\sigma,m}[V_{i}](x,t)+\sigma_{2}(x,t)(V_{*}-V_{i})H_{i}-\mu(x,t)V_{*}V_{i},&&(x,t)\in \bar\Omega\times \mathbb{R},\\
&(H_{i}(x,t), V_{i}(x,t))=(H_{i}(x,t+1), V_{i}(x,t+1)),&&(x,t)\in \bar\Omega\times\mathbb{R}.
\end{aligned}
\right.
\end{equation}
Define
\begin{equation*}
N[\varphi](x,t) = \left(
\begin{array}{c}
-\tau\partial_{t} \varphi_{1}(x,t)  +\frac{d_{1}(x,t)}{\sigma^{m}}\int_{\Omega}k_{1,\sigma}(x-y,t)\varphi_{1}(y,t)dy  \\
-\tau\partial_{t} \varphi_{2}(x,t)  + \frac{d_{2}(x,t)}{\sigma^{m}}\int_{\Omega}k_{2,\sigma}(x-y,t)\varphi_{2}(y,t)dy   \\
\end{array}
\right),
\end{equation*}
\begin{equation*}
A(x,t) \!=\! \left(
\begin{array}{cc}
-\rho(x,t) &  \sigma_{1}(x,t)H_{u}(x)\\
\sigma_{2}(x,t)V_{*}(x,t) &  -\mu(x,t)V_{*}(x,t)\\
\end{array} \right)\!,
A_{1}(x,t) = \text{diag}\left\{-\frac{d_{1}(x,t)}{\sigma^{m}},  -\frac{d_{2}(x,t)}{\sigma^{m}} \right\} + A(x,t)
\end{equation*}
and $L_{1}[\varphi](x,t) = N[\varphi](x,t) + A_{1}(x,t)\varphi(x,t)$.
\begin{theorem}\label{T Zika 1}
Suppose $s(L_{0}) > 0$.
If $s(L_{1})>0$, then the equation \eqref{4-4} admits a unique positive solution $(H_{i,*},V_{i,*})$.
If $s(L_{1})<0$ or $s(L_{1})=0$ is the principal eigenvalue, then the equation \eqref{4-4} admits no nonnegative nontrivial solution.
\end{theorem}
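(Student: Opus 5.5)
The plan is the standard threshold argument for monotone periodic systems. System \eqref{4-4} is cooperative on the order interval $0\le V_i\le V_*$, and $L_1$ is its linearization at $(0,0)$. The time-periodic nonlocal theory of Section 2 replaces compactness. Throughout I write $F(H_i,V_i)$ for the right-hand side of \eqref{4-4} without the $\tau\partial_t$ term. Then $\partial_{V_i}F_1=\sigma_1H_u>0$ and $\partial_{H_i}F_2=\sigma_2(V_*-V_i)\ge 0$ when $V_i\le V_*$. Also $F$ is subhomogeneous: $F(\theta u)\ge \theta F(u)$ for $\theta\in(0,1)$, and the inequality is strict in the second component when $H_i,V_i>0$, because of the quadratic term $-\sigma_2 V_iH_i$. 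Finally, $A_1$ is irreducible, so $(\tilde H2)$ holds, and Theorems \ref{THM1.5}, \ref{THM1.6}, Corollary \ref{COR2.9} and Lemma \ref{LEM2.8} apply to $L_1$ through Remark \ref{REMARK1}.

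\textbf{Existence when $s(L_1)>0$.}
\begin{itemize}
\item \emph{Subsolution.} By Theorem \ref{THM1.5} there is a smooth $A^k_-\le A_1$ such that $s_k:=s(L_1(A^k_-))$ is a principal eigenvalue, $s_k>0$, with eigenfunction $\phi_k\gg0$ (Theorem \ref{THM1.3}). For small $\epsilon>0$, $\epsilon\phi_k$ is a strict subsolution of \eqref{4-4}, since $F(\epsilon\phi_k)\ge L_1(A^k_-)$-part $+\,O(\epsilon^2)$ and $s_k\epsilon\phi_k$ dominates the $O(\epsilon^2)$ error.
\item \emph{Supersolution.} Take $(M, V_*)$ with $M$ a large constant. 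The $H_i$-equation gives $-\rho M+\sigma_1H_uV_*\le0$ for $M$ large, up to a nonpositive nonlocal correction (note $\int_\Omega k\le1$). The $V_i$-equation becomes exactly the $V_*$ equation minus $\beta V_*\le0$, using $V_u=0$.
\item \emph{Solution.} Iterate the period map $\mathcal{Q}$ of the monotone semiflow between these ordered sub/supersolutions. This gives monotone sequences converging to periodic solutions. The limits are continuous by the standard argument for nonlocal equations: Dini-type, or the integral formulation together with uniform convergence via the monotone limit of the Poincaré map. Both limits are $\gg0$ by Lemma \ref{LEM2.8}.
\end{itemize}

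\textbf{Uniqueness.} Let $u,v$ be two positive periodic solutions; both satisfy $u\gg0$ by Corollary \ref{COR2.9}. Set $\theta^*=\sup\{\theta:\theta u\le v\}\in(0,\infty)$. Suppose $\theta^*<1$. Subhomogeneity makes $\theta^*u$ a strict subsolution, and the strong positivity of Lemma \ref{LEM2.8} applied to $v-\theta^*u$ then gives $v-\theta^*u\gg0$, contradicting maximality. Hence $u\le v$, and by symmetry $u=v$.

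\textbf{Nonexistence.} Suppose $u=(H_i,V_i)$ is a nonnegative nontrivial periodic solution. Then $u\gg0$ by the componentwise argument of Lemma \ref{LEM2.8}. Moreover $u$ satisfies $L_1[u]\ge0$, with strict inequality in the second component because of $-\sigma_2V_iH_i<0$. Hence $\lambda_p(L_1)\ge0$ by definition; more precisely, dividing gives $\inf_i L_{1,i}[u]/u_i\ge0$, and Theorem \ref{THM1.6} yields $s(L_1)\ge0$, contradicting $s(L_1)<0$.

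If instead $s(L_1)=0$ is the principal eigenvalue, let $\psi\gg0$ be the adjoint eigenfunction (Theorem \ref{THM2.5}(i)). Pair the identity $L_1[u]=(0,\sigma_2V_iH_i)^T$ with $\psi$ over $\Omega\times[0,1]$. This gives $0=(L_1u,\psi)_0=(0,\sigma_2V_iH_i)\cdot\psi>0$, a contradiction. An alternative is the sliding argument comparing $u$ with $\theta\phi$.

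\textbf{Main obstacle.} The step I expect to be hardest is the existence proof without compactness. The approximation of Theorem \ref{THM1.5} is needed to obtain a genuine positive eigenfunction for the subsolution, and the convergence of the monotone iterates to a continuous periodic solution must be justified without compactness. I would handle the latter through the comparison principle and uniform estimates on the Poincaré map, following \cite{RS}.
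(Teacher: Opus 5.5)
Your proposal is correct. It follows the same architecture as the paper: a subsolution obtained from Theorem \ref{THM1.5}, an upper solution whose $V_i$-component is $V_*$, monotone iteration, and comparison arguments for nonexistence. Several pieces are done differently.
\begin{itemize}
\item \emph{Upper solution.} The paper first solves the auxiliary linear periodic problem \eqref{4-5}, which needs $s(L_2)<0$ and a second approximation argument, and then uses $(\bar H,V_*)$. Your constant pair $(M,V_*)$ works immediately, since $\mathcal{K}^1_{\Omega,\sigma,m}[M]\le 0$ and $\min\rho>0$.
\item \emph{Subsolution.} Using $A^k_-\le A_1$ is the paper's argument in cleaner form; the paper phrases it as $s(L_1(A^k_1))-\|A_1-A^k_1\|_\infty>0$.
\item \emph{Uniqueness.} The paper omits this and defers to \cite{WFLS}; your sliding argument with strict subhomogeneity supplies it. You should record that every nonnegative periodic solution satisfies $V_i<V_*$, so the mean-value linearization is cooperative. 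This holds because $W=V_*-V_i$ solves $\tau W_t=d_2\mathcal{K}^2_{\Omega,\sigma,m}[W]-(\mu V_*+\sigma_2H_i)W+\beta V_*$, a linear problem with negative spectral bound and positive forcing.
\item \emph{Nonexistence for $s(L_1)<0$.} The paper appeals forward to Theorem \ref{th6.11}(ii). You instead use $u$ itself as a test function for $\lambda_p(L_1)$ together with Theorem \ref{THM1.6}, which is more self-contained. Theorem \ref{THM1.6} can even be skipped: $L_1[u]\ge 0$ gives $V_0(1,0)u(\cdot,0)\ge u(\cdot,0)$, hence $r(V_0(1,0))\ge 1$, as in Step 1 of Theorem \ref{THM1.3}.
\item \emph{Nonexistence for $s(L_1)=0$.} Your alternative sliding argument is a cleaner form of the paper's comparison with the eigenfunction. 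Set $\theta^*=\inf\{\theta:\theta\phi\ge u\}$ and apply Corollary \ref{COR2.9} to $\theta^*\phi-u$. This function satisfies $L_1[\theta^*\phi-u]=-(0,\sigma_2V_iH_i)^T\le 0$, so it cannot vanish identically, and strong positivity contradicts minimality of $\theta^*$.
\end{itemize}

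Two points need tightening.

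First, Theorem \ref{THM2.5}(i) only gives a positive eigenvector $\psi^*\in\mathcal{X}^*$ of $L_1^*$, that is, a positive functional, not $\psi\gg 0$. For your pairing argument you must check that $\psi^*_2\neq 0$. This follows from irreducibility: if $\psi_2^*=0$, then $0=\langle L_1[(0,1)^T],\psi^*\rangle=\langle \sigma_1H_u,\psi_1^*\rangle$, which forces $\psi^*=0$.

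Second, in the existence step, Dini's theorem presupposes a continuous limit, so the order of your argument should be reversed.
\begin{itemize}
\item The decreasing limit starting from $(M,V_*)$ is upper semicontinuous, and the increasing limit starting from $\epsilon\phi_k$ is lower semicontinuous. Both solve the equation pointwise in $x$.
\item Run your sliding argument pointwise in $x$, using that the subhomogeneity defect $\sigma_2\theta^*(1-\theta^*)H_iV_i$ is bounded below by a positive constant. This shows the two limits coincide.
\item Only then do continuity and, via Dini, uniform convergence follow.
\end{itemize}
The paper hides this same step inside ``a standard iterative method''.
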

\begin{proof}
{\bf Step 1.} In this step, we assume that $s(L_{1})>0$.
Consider the following equation
\begin{equation}\label{4-5}
\left\{
\begin{aligned}
&\tau \partial_{t} H \!=\!d_{1}(x,t)\mathcal{K}^{1}_{\Omega,\sigma,m}[H](x,t)\!-\!\rho(x,t)H(x,t)+\sigma_{1}(x,t)H_{u}(x)V_{*}(x,t),&& (x,t)\in \bar\Omega\times\mathbb{R},\\
&H(x,t)=H(x,t+1),&&(x,t)\in \bar\Omega\times\mathbb{R}.
\end{aligned}
\right.
\end{equation}
Define
\begin{equation*}
L_{2}[u](x,t) = -\tau \partial_{t} u (x,t) + d_{1}(x,t)\mathcal{K}^{1}_{\Omega,\sigma,m}[u](x,t) -\rho(x,t)u(x,t).
\end{equation*}
Using Theorem \ref{THM1.6}, by choosing test function $1$, it is easy to see that $s(L_{2})<0$. By Theorem \ref{THM1.5}, for any $\varepsilon>0$, there exists an approximation operator $L_{2}^{\varepsilon}$ such that
$||L_{2}^{\varepsilon}-L_{2}  || \leq \varepsilon, s(L_{2}^{\varepsilon})<0$, and $L_{2}^{\varepsilon}$ admits the principal eigenvalue.
Let $\phi_{\varepsilon}$ be the associated principal eigenfunction.
It is easy to verify that $C\phi_{\varepsilon}$ and $0$ are the ordered upper-lower solutions of \eqref{4-5}, for some positive constant $C$.
It then follows from \cite[Theorem]{SLW} that \eqref{4-5} admits a unique positive solution $\bar{H}$.
A direct computation yields that $(\bar{H},V_{*})$ is a strict upper solution of \eqref{4-4}.

Next, we construct the lower solution of \eqref{4-4}.  By Theorem \ref{THM1.5}, there exists $A^{k}_{1}(x,t)$ satisfying
\begin{equation*}
\lim_{k\to\infty} \left| s(L_{1}(A^{k}_{1})) - s(L_{1}(A_{1})) \right|=0,
\end{equation*}
and
$s(L(A^{k}_{1}))$ is the principal eigenvalue of the following eigenvalue problem
\begin{equation*}
\left\{
\begin{aligned}
& L(A^{k}_{1})[\varphi](x,t) =  \lambda \varphi(x,t), & (x,t) \in \bar{\Omega} \times \mathbb{R}, \\
&\varphi(x,t+1) = \varphi(x,t), & (x,t) \in \bar{\Omega} \times \mathbb{R}.
\end{aligned}
\right.
\end{equation*}
Since $s(L_{1}(A_{1}))>0$, for large enough $k$, say $k \geq k_{0}$, we have
\begin{equation*}
s(L(A^{k}_{1})) - \| A_{1}- A^{k}_{1} \| _{\infty} > 0.
\end{equation*}
Let $\varphi_{k}$ be the eigenfunction corresponding to $s(L(A^{k}_{1}))$. A direct computation yields that $\varepsilon \varphi_{k}$ is a lower solution of \eqref{4-4} for $\varepsilon>0$ small enough and $k \geq k_{0}$.
By a standard iterative method, we get the existence of positive solutions to \eqref{4-4}. The proof of uniqueness is similar to \cite{WFLS}, so we omit it.

{\bf Step 2.} In this step, we prove the remaining results. We only consider the case where $s(L_{1})=0$ is the principal eigenvalue. The case $s(L_{1})<0$ follows directly from Theorem \ref{th6.11} (ii).
If \eqref{4-4} admits a nonnegative nontrivial solution $u=(u_{1},u_{2})$, we can conclude that $u_{i}>0, i=1,2$ by Corollary \ref{COR2.9}.
Let $\varphi=(\varphi_{1},\varphi_{2})$ be the principal eigenfunction associated to $s(L_{1})$ with normalization $\varphi_{i} < u_{i}, i=1,2$, and define
\begin{equation*}
\tilde{A}_{1}(x,t)= A_{1}(x,t) -
\left(
\begin{array}{cc}
0 &  0\\
\sigma_{2}(x,t)  \varphi_{2} &  0\\
\end{array} \right).
\end{equation*}
Then we have $0=- L_{1}[\varphi](x,t)\leq -N[\varphi](x,t) - \tilde{A}_{1}(x,t)\varphi(x,t)$ for all $x\in\bar{\Omega},t\in[0,1]$. This together with the comparison principle implies that $\varphi \geq u$, which contradicts the normalization.

\end{proof}

\begin{theorem}\label{th6.11}
Let $(H_{i},V_{u},V_{i})$ be the unique positive solution of \eqref{4-1}.
\begin{itemize}[leftmargin=0.9cm]
\item[(i)] If $s(L_{0})>0$ and $s(L_{1})>0$, then
\begin{equation*}
\lim\limits_{t\to\infty} \| (H_{i},V_{u},V_{i})(\cdot,t) -(H_{i,*},V_{*}-V_{i,*},V_{i,*})(\cdot,t)\|_{{C(\bar{\Omega})}^{3}} = 0.
\end{equation*}

\item[(ii)] If $s(L_{0})>0$ and $s(L_{1})<0$, then
\begin{equation*}
\lim\limits_{t\to\infty} \| (H_{i},V_{u},V_{i})(\cdot,t) - (0,V_{*},0)(\cdot,t)\|_{{C(\bar{\Omega})}^{3}} = 0.
\end{equation*}

\item[(iii)] If $s(L_{0})<0$, then
\begin{equation*}
\lim\limits_{t\to\infty} \| (H_{i},V_{u},V_{i})(\cdot,t) - (0,0,0)\|_{{C(\bar{\Omega})}^{3}} = 0.
\end{equation*}
\end{itemize}
\end{theorem}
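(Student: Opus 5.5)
The proof splits into the three cases, in the order (iii), (ii), (i).

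\emph{Case (iii).} Here the key equation is the logistic problem for $V=V_u+V_i$. Summing the last two equations of \eqref{4-1} gives
\begin{equation*}
\tau V_t=d_2\mathcal{K}^2_{\Omega,\sigma,m}[V]+\beta V-\mu V^2 .
\end{equation*}
Its linearization at $0$ is $L_0$. If $s(L_0)<0$, Theorem \ref{THM1.5} (in the form of Remark \ref{REMARK1}) supplies an approximating operator $L_0^{+}\ge L_0$ that has a principal eigenvalue $s(L_0^{+})<0$, with eigenfunction $\phi\in\mathcal{X}^{++}$. A short check shows that $Ce^{s(L_0^{+})t/\tau}\phi$ is a supersolution of the $V$-equation. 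Comparison then gives $V\to0$, so $V_u,V_i\to0$. Next, $H_i$ solves a linear equation whose forcing $\sigma_1H_uV_i$ tends to $0$, while its own linear part has $s(L_2)<0$ (test function $1$, as in Theorem \ref{T Zika 1}). Using again a supersolution built from an approximate principal eigenfunction, we obtain $H_i\to0$.

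\emph{Cases (i) and (ii).} When $s(L_0)>0$, the convergence $V(\cdot,t)-V_*(\cdot,t)\to0$ is the standard result for the time-periodic nonlocal logistic equation from \cite{RS,ShenVo}. We then use the theory of asymptotically periodic cooperative systems. Since $V_u=V-V_i$, the pair $(H_i,V_i)$ solves
\begin{align*}
\tau \partial_t H_i &= d_1\mathcal{K}^1[H_i]-\rho H_i+\sigma_1H_uV_i,\\
\tau \partial_t V_i &= d_2\mathcal{K}^2[V_i]+\sigma_2(V-V_i)H_i-\mu VV_i .
\end{align*}
This system is cooperative on $\{0\le V_i\le V\}$ and asymptotic to the periodic system \eqref{4-4}. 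Given $\varepsilon>0$, for $t\ge T_\varepsilon$ we have $V_*-\varepsilon\le V\le V_*+\varepsilon$, so $(H_i,V_i)$ is trapped between the solutions of the two periodic cooperative systems obtained by replacing $V$ with $V_*\pm\varepsilon$.

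\emph{Case (ii).} For $s(L_1)<0$, continuity of $s$ in $A$ (used in the proof of Theorem \ref{THM1.5}) shows that the $\varepsilon$-perturbed linearized operator $L_1^\varepsilon$ still has $s(L_1^\varepsilon)<0$. Theorem \ref{THM1.5} gives a smooth upper approximant with a principal eigenpair $(\lambda^+,\varphi^+)$, $\lambda^+<0$. Then $Ce^{\lambda^+t/\tau}\varphi^+$ is a supersolution of the upper system, because the nonlinear terms $-\sigma_2V_iH_i\le0$ only help. Hence $(H_i,V_i)\to0$, and $V_u=V-V_i\to V_*$.

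\emph{Case (i).} For $s(L_1)>0$, the lower $\varepsilon$-system has linearization with $s>0$ for small $\varepsilon$. Theorem \ref{THM1.5} gives a lower smooth approximant with a positive principal eigenvalue, and $\delta\varphi_k$ is a strict subsolution exactly as in Step 1 of Theorem \ref{T Zika 1}. The upper system has the strict supersolution $(\bar H,V_*+\varepsilon)$. Each periodic system has a unique positive periodic solution $(H^{\pm\varepsilon},V^{\pm\varepsilon})$, by the argument of Theorem \ref{T Zika 1}, and monotone iteration of the Poincaré map shows that every positive solution converges to it. Consequently
\begin{equation*}
\liminf_{t\to\infty}\bigl[(H_i,V_i)-(H^{-\varepsilon},V^{-\varepsilon})\bigr]\ge0,\qquad
\limsup_{t\to\infty}\bigl[(H_i,V_i)-(H^{+\varepsilon},V^{+\varepsilon})\bigr]\le0 .
\end{equation*}
The strong positivity of Lemma \ref{LEM2.8} guarantees that the solution eventually enters the region above a small multiple of $\varphi_k$. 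Letting $\varepsilon\to0$, uniqueness for \eqref{4-4} together with continuous dependence of the periodic solutions on $\varepsilon$ gives convergence to $(H_{i,*},V_{i,*})$, and $V_u=V-V_i\to V_*-V_{i,*}$.

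\emph{Main obstacle.} The hardest step is convergence to the periodic solution in (i). The nonlocal semiflow is not compact, so the usual theory of monotone subhomogeneous maps cannot be applied directly. I would handle this by the squeezing argument above: lower and upper solutions that are monotone in time under the Poincaré map converge monotonically, their limits are periodic solutions, and uniqueness identifies the limits. Uniform convergence should then follow from pointwise monotone convergence to a continuous limit (Dini's theorem). The continuity in $\varepsilon$ of the periodic solutions also has to be checked, again using uniqueness.
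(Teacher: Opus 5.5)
Your overall route is the paper's. In (i) you use convergence of $V=V_u+V_i$ to $V_*$ from \cite{RS,ShenVo}, then, for $t\ge T_\varepsilon$, squeeze $(H_i,V_i)$ between solutions of $\varepsilon$-perturbed periodic cooperative systems, with the subsolution $\delta\varphi_k$ supplied by Theorem \ref{THM1.5}. In (ii) you get exponential decay from a negative principal spectrum point. In (iii) you get $V\to0$ and then $H_i\to0$ from the first equation. Your supersolution $Ce^{\lambda^+t/\tau}\varphi^+$ in (ii) is equivalent to the paper's Gelfand-formula bound for the evolution operator of $L_1(A_{\varepsilon_1})$. Your treatment of (iii) is more explicit than the paper's.

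However, the comparison step fails as you wrote it. The systems cannot be obtained ``by replacing $V$ with $V_*\pm\varepsilon$''. The right-hand side of the $V_i$-equation, $\sigma_2(V-V_i)H_i-\mu VV_i$, has $\partial_V=\sigma_2H_i-\mu V_i$, which has no sign. If you use $V_*+\varepsilon$ throughout, then the new right-hand side minus the true one equals $(V_*+\varepsilon-V)(\sigma_2H_i-\mu V_i)$. This is negative wherever $\mu V_i>\sigma_2H_i$, so $(H_i,V_i)$ is not a subsolution; the lower system fails symmetrically. The fix has two parts:
\begin{itemize}
\item For the upper system, replace $V$ by $V_*+\varepsilon$ in the infection term and by $V_*-\varepsilon$ in the removal term; for the lower system, do the reverse. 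This is exactly the paper's $A_\varepsilon$.
\item Write the infection term as $\sigma_2(V_*\pm\varepsilon-V_i)^+H_i$, so that the comparison systems are cooperative everywhere. Cooperativity of the original system on $\{V_i\le V\}$ says nothing about the comparison systems.
\end{itemize}
For the lower system, the truncated inequality $\sigma_2(V-V_i)H_i\ge\sigma_2(V_*-\varepsilon-V_i)^+H_i$ holds directly because $V-V_i=V_u\ge0$. So your simultaneous two-sided squeeze is legitimate once corrected. The paper instead proves the upper bound first and uses $V_{i,*}<V_*$ to make the truncation harmless.

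Separately, $(\bar H,V_*+\varepsilon)$ is not a supersolution of the upper system. Its first component fails because $\bar H$ solves \eqref{4-5} with $V_*$, not $V_*+\varepsilon$. Recompute $\bar H$ from \eqref{4-5} with $V_*+\varepsilon$, or, once existence is known, use $k(H^\varepsilon_{i,*},V^\varepsilon_{i,*})$ with $k\ge1$ as the paper does. Also, placing the solution above $\delta\varphi_k$ requires $(H_{i0},V_{i0})\not\equiv0$; otherwise $(H_i,V_i)\equiv0$ and (i) is false, so this must be read into ``positive solution''. The remaining technical points you flag are exactly what the paper delegates to \cite{Bao}: continuity and uniqueness of the limits of the monotone Poincar\'e iterates, and continuity of the periodic solutions in $\varepsilon$.
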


\begin{proof}
(i)
Note that $V=V_{u}+V_{i}$ satisfies
\begin{equation*}  	
\left\{
\begin{aligned}
&\tau \partial_{t} V(x,t)
=d_{2}(x,t)\mathcal{K}^{2}_{\Omega,\sigma,m}[V](x,t) + \beta(x,t)V(x,t) - \mu(x,t)V^{2}(x,t), &x& \in\bar{\Omega}, t>0,\\
&V(x,0)=V_{0}(x)=V_{u}(x,0)+V_{i}(x,0), &x&\in\bar{\Omega}.
\end{aligned}
\right.
\end{equation*}
Since $ s(L_{0})>0 $,
it follows from \cite[Theorem E]{RS}  and  \cite[Theorem B]{ShenVo}  that
\begin{equation}\label{4-6}
\lim\limits_{t\to\infty}\|V(\cdot, t)-V_{*}(\cdot, t)\|_{C(\bar{\Omega})}=0.
\end{equation}
Hence, there exists $T_{\varepsilon}\gg 1$ such that
\begin{equation*}
0<V_{*}(x,t)-\varepsilon\le V(x,t) \le V_{*}(x,t)+\varepsilon \enspace\text{for all}\enspace  x\in\bar{\Omega}, t\geq T_{\varepsilon}.
\end{equation*}
Then we find that $(H_{i},V_{i})$ satisfies
\begin{equation}\label{4-7}
\left\{
\begin{aligned}
& \tau \partial_{t} H_{i}
=d_{1}(x,t)\mathcal{K}^{1}_{\Omega,\sigma,m}[H_{i}](x,t)-\rho(x,t)H_{i}+\sigma_{1}(x,t)H_{u}(x)V_{i}, &x\in\bar\Omega, t>T_{\varepsilon},\\
& \tau \partial_{t} V_{i}
\le
d_{2}(x,t)\mathcal{K}^{2}_{\Omega,\sigma,m}[V_{i}](x,t)+\sigma_{2}(x,t)(V_{*}(x,t)+\varepsilon-V_{i})^{+}H_{i}\\
&\hspace{1.5cm}-\mu(x,t)(V_{*}(x,t)-\varepsilon)V_{i}, &x\in\bar\Omega, t>T_{\varepsilon}.
\end{aligned}
\right.
\end{equation}
Define 		
\begin{equation*}
A_{\varepsilon}(x,t)=\left(
\begin{array}{cccc}
-\frac{d_{1}(x,t)}{\sigma^{m}}-\rho(x,t) &  \sigma_{1}(x,t)H_{u}(x)\\
\sigma_{2}(x,t)(V_{*}(x,t)+\varepsilon) &  -\frac{d_{2}(x,t)}{\sigma^{m}}-\mu(x,t)(V_{*}(x,t)-\varepsilon)\\
\end{array} \right).
\end{equation*}
Since $s(L_{1}(A))>0$, for small enough $\varepsilon>0$, say $0< \varepsilon <\varepsilon_{0}$, there holds $s(L_{1}(A_{\varepsilon}))>0$.
It follows from a similar argument as in Theorem \ref{T Zika 1} that there exist a lower solution $\delta(\varphi_{\varepsilon,1}, \varphi_{\varepsilon,2} )$ and a unique positive function $(H_{i,*}^{\varepsilon},V_{i,*}^{\varepsilon})$ to the following problem:
\begin{equation*}
\left\{
\begin{aligned}
& \tau \partial_{t} H_{i}
=d_{1}\mathcal{K}^{1}_{\Omega,\sigma,m}[H_{i}]-\rho(x,t)H_{i}+\sigma_{1}(x,t)H_{u}(x)V_{i}, &x&\in\bar\Omega, t>T_{\varepsilon},\\
& \tau \partial_{t} V_{i}
=
d_{2}\mathcal{K}^{2}_{\Omega,\sigma,m}[V_{i}]+\sigma_{2}(x,t)(V_{*}(x,t)+\varepsilon-V_{i})^{+}H_{i}-\mu(x,t)(V_{*}(x,t)-\varepsilon)V_{i}, &x&\in\bar\Omega, t>T_{\varepsilon},\\
&(H_{i}(x,0), V_{i}(x,0))=(H_{i}(x,1), V_{i}(x,1)), &x&\in \bar\Omega.
\end{aligned}
\right.
\end{equation*}
Choose constants $0 < \delta\ll 1$ and $k\gg 1$ such that
\begin{equation*}
\delta (\varphi_{\varepsilon,1}(x,0), \varphi_{\varepsilon,2}(x,0) )
\leq
\left( H_{i}(x,T_{\varepsilon}),V_{i}(x,T_{\varepsilon}) \right)
\leq
k(H_{i,*}^{\varepsilon}(x,0),V_{i,*}^{\varepsilon}(x,0))
\enspace\text{for all}\enspace
x\in\bar{\Omega}.
\end{equation*}
Let $(H_{i}^{\varepsilon},V_{i}^{\varepsilon})$ be the unique positive solution of
\begin{equation*}
\left\{
\begin{aligned}
& \tau \partial_{t} H_{i}
=d_{1}\mathcal{K}^{1}_{\Omega,\sigma,m}[H_{i}](x,t)-\rho(x,t)H_{i}+\sigma_{1}(x,t)H_{u}(x)V_{i},&\quad &x\in\bar\Omega, t>T_{\varepsilon},\\
& \tau \partial_{t} V_{i}
=
d_{2}\mathcal{K}^{2}_{\Omega,\sigma,m}[V_{i}](x,t)+\sigma_{2}(x,t)(V_{*}(x,t)+\varepsilon-V_{i})^{+}H_{i}\\
&\hspace{1.5cm}-\mu(x,t)(V_{*}(x,t)-\varepsilon)V_{i},&\quad &x\in\bar\Omega, t>T_{\varepsilon},\\
&(H_{i}(x,0), V_{i}(x,0))=k(H_{i,*}^{\varepsilon}(x,0),V_{i,*}^{\varepsilon}(x,0)),&\quad&x\in \bar\Omega.
\end{aligned}
\right.
\end{equation*}
By the standard iterative method (see e.g., \cite[Theorem 1.1]{Bao}), we have
\begin{equation*}
\limsup\limits_{t\to\infty} \| (H_{i}^{\varepsilon},V_{i}^{\varepsilon})(\cdot,t)
- (H_{i,*}^{\varepsilon},V_{i,*}^{\varepsilon})(\cdot,t)  \|_{{C(\bar{\Omega})}^{2}} = 0.
\end{equation*}
It then follows from the comparison principle that for $n\in\mathbb{N}^{+}$
\begin{equation*}
\limsup_{n\to\infty} (H_{i}(x,t+n),V_{i}(x,t+n) ) \leq
(H_{i,*}^{\varepsilon}(x,t),V_{i,*}^{\varepsilon}(x,t))
\enspace \text{uniformly in} \enspace \bar{\Omega}\times[0,1].
\end{equation*}
Letting $\varepsilon\to 0$,
\begin{equation*}\label{6.15}
\limsup_{n\to\infty} (H_{i}(x,t+n),V_{i}(x,t+n)) \leq
(H_{i,*}(x,t),V_{i,*}(x,t))
\enspace \text{uniformly in} \enspace \bar{\Omega}\times[0,1].
\end{equation*}

On the other hand, by Theorem \ref{T Zika 1}, $V_{i,*}(x,t)<V_{*}(x,t)$ in $\bar\Omega \times \mathbb{R}$.
It follows that there exists $0<\delta_{0}\le\varepsilon_{0}$ such that
$V_{i,*}(x,t)<V_{*}(x,t)-2\delta$ in $\bar\Omega \times \mathbb{R}$ for all $0<\delta<\delta_{0}$.
Let us fix $0<\delta< \delta_{0}$.
By \eqref{4-6}, there exists $T_{\delta} > 0$, such that
\begin{equation*}
V_{i}(x,t) \leq V_{i,*}(x,t) + \delta < V_{*}(x,t) - \delta \enspace\text{for all}\enspace  x\in\bar{\Omega}, t\geq T_{\delta}.
\end{equation*}
and
\begin{equation*}
0<V_{*}(x,t)-\delta\le V(x,t) \le V_{*}(x,t)+\delta \enspace\text{for all}\enspace  x\in\bar{\Omega}, t\geq T_{\delta}.
\end{equation*}
Hence, $(H_{i},V_{i})$ satisfies
\begin{equation*}
\left\{
\begin{aligned}
 \tau \partial_{t} H_{i}
&=d_{1}(x,t) \mathcal{K}^{1}_{\Omega,\sigma,m}[H_{i}](x,t)-\rho(x,t)H_{i}+\sigma_{1}(x,t)H_{u}(x)V_{i}, &x&\in \bar\Omega, t>T_{\delta},\\
 \tau \partial_{t} V_{i}
&\geq d_{2}(x,t)\mathcal{K}^{2}_{\Omega,\sigma,m}[V_{i}](x,t)+\sigma_{2}(x,t)(V_{*}(x,t)-\delta-V_{i})H_{i}\\
&\hspace{0.5cm} -\mu(x,t)(V_{*}(x,t)+\delta)V_{i}\\
&=d_{2}(x,t)\mathcal{K}^{2}_{\Omega,\sigma,m}[V_{i}](x,t)+\sigma_{2}(x,t)(V_{*}(x,t)-\delta-V_{i})^{+}H_{i}\\
&\hspace{0.5cm} -\mu(x,t)(V_{*}(x,t)+\delta)V_{i}, &x&\in \bar\Omega, t>T_{\delta}.
\end{aligned}
\right.
\end{equation*}
Similarly, we can prove that for $n\in\mathbb{N}^{+}$
\begin{equation*}
\liminf_{n\to\infty} (H_{i}(x,t+n),V_{i}(x,t+n)) \geq
(H_{i,*}(x,t),V_{i,*}(x,t))
\enspace \text{uniformly in} \enspace \bar{\Omega}\times[0,1].
\end{equation*}
Case (i) is proved.

(ii)
By arguments similar to those in the first case, we can choose
$0<\varepsilon_{1}\ll1$ such that $s(L_{1}(A_{\varepsilon_{1}}))<0$ and $(H_{i},V_{i})$ satisfies \begin{equation*}
\left\{
\begin{aligned}
& \tau \partial_{t} H_{i}
= d_{1}(x,t)\mathcal{K}^{1}_{\Omega,\sigma,m}[H_{i}](x,t)
  -\rho(x,t)H_{i}+\sigma_{1}(x,t)H_{u}(x)V_{i},
&& x\in\bar\Omega, t>T_{\varepsilon_{1}},\\
& \tau \partial_{t} V_{i}
\le d_{2}(x,t)\mathcal{K}^{2}_{\Omega,\sigma,m}[V_{i}](x,t)
   +\sigma_{2}(x,t)(V_{*}(x,t)+\varepsilon_{1})H_{i}\\
   &\hspace{1.3cm}-\mu(x,t)(V_{*}(x,t)-\varepsilon_{1})V_{i},
&& x\in\bar\Omega, t>T_{\varepsilon_{1}},
\end{aligned}
\right.
\end{equation*}
holds for some $T_{\varepsilon_{1}}>0$.
Let $\{\Phi(t,s)\}_{t\geq s}$ be the evolution operator associated with
$L_{1}(A_{\varepsilon_{1}})$.
It suffices to prove that
\begin{equation*}
\left\|\Phi(t,0)\left(H_{i}(\cdot,T_{\varepsilon_{1}}),
V_{i}(\cdot,T_{\varepsilon_{1}})\right)
\right\|_{{C(\bar{\Omega})}^{2}} \to 0
\enspace\text{as}\enspace t\to\infty.
\end{equation*}
Write $t=[t]+c_{t}$, where $[t]$ denotes the integer part of $t$ and
$c_{t}\in[0,1)$. By the time-periodicity of the coefficients,  $\Phi(t,0)
= \Phi(c_{t},0)\,\Phi(1,0)^{[t]}$.
By Proposition \ref{PP2.10}, we have
$s\left(L(A_{\varepsilon_{1}})\right)
= \tau \ln r\bigl(\Phi(1,0)\bigr)$,
and Gelfand's formula
$r\bigl(\Phi(1,0)\bigr)
 = \lim_{n\to\infty}\|\Phi(1,0)^{n}\|^{1/n}$
then yields
\begin{equation*}
\|\Phi(1,0)^{n}\|
\le C_{1}\exp\!\left(\frac{s(L_{1}(A_{\varepsilon_{1}}))}{2\tau}\,n\right)
\enspace\text{for all}\enspace n\geq n_{0},
\end{equation*}
for some constants $C_{1}>0$ and $n_{0}\in\mathbb{N}$.
Hence,
\begin{equation*}
\left\|\Phi(t,0)\left(H_{i}(\cdot,T_{\varepsilon_{1}}),
V_{i}(\cdot,T_{\varepsilon_{1}})\right)
\right\|_{{C(\bar{\Omega})}^{2}}
\le C\exp\!\left(\frac{s(L(A_{\varepsilon_{1}}))}{2\tau}\,[t]\right)
\to 0 \enspace\text{as}\enspace t\to\infty,
\end{equation*}
which proves the desired conclusion.

(iii) If $s(L_{0})<0$, by \cite[Theorem B]{ShenVo}, there holds
\begin{equation*}
\lim_{ t \to\infty} V( \cdot,t) = 0
\enspace \text{uniformly in} \enspace \bar{\Omega}.
\end{equation*}
Hence, the desired conclusion follows by the first equation of \eqref{4-1}.
\end{proof}

Given $t\in[0,1]$, define $L_{1}^{t}$ on $C(\bar{\Omega};\mathbb{R}^{2})$ by $L_{1}^{t}[u](x) = N[u](x,t)+A_{1}(x,t)u(x)$. Now, combining Theorems \ref{th6.11}, \ref{THM1.7}, \ref{THM1.8}, and \ref{THM1.10}, we can obtain the following result.
\begin{corollary} The following results hold.
\begin{itemize}[leftmargin=0.8cm]
\item[(i)]
Suppose $ \sup_{\bar{\Omega}}\lambda_{1}(A(x)) > 0$, then there exists $d^{1}>0$ such that \eqref{4-2} admits a unique positive solution that is globally asymptotically stable if $0< \max_{\bar{\Omega}\times [0,1]}d_{i}(x,t) < d^{1}$ for $i=1,2$.

\item[(ii)]
There exists $d^{2}>0$, such that \eqref{4-2} admits no positive solution and the zero solution is globally asymptotically stable if $\min_{\bar{\Omega}\times[0,1]}d_{i}(x,t)>d^{2}$ for $i=1,2$.

\item[(iii)]
Suppose $m \in [0,2)$, $d_{i}(i=1,2)$ are constants and
$\sup_{\bar{\Omega}}\lambda_{1}(A(x)) > 0$, then there exists $\sigma_{1}>0$ such that \eqref{4-2} admits a unique positive solution that is globally asymptotically stable for all $0< \sigma < \sigma_{1}$.

\item[(iv)]
Suppose $m > 0$ and $ \sup_{\bar{\Omega}}\lambda_{1}(A(x)) > 0$ or $m=0$ and $ \sup_{\bar{\Omega}}\lambda_{1}(A_{1}(x)) > 0$ , then there exists $\sigma_{2}>0$ such that \eqref{4-2} admits a unique positive solution that is globally asymptotically stable for all $\sigma > \sigma_{2}$.

\item[(v)]
Suppose $s(L_{0})>0$, $s(L_{1}^{t})$ is the principal eigenvalue of $L_{1}^{t}$ and $\int_{0}^{1} s(L_{1}^{t}) dt > 0$, then there exists $\tau_{1}>0$ such that  equation \eqref{4-2} admits a unique positive solution that is globally asymptotically stable for all $0< \tau < \tau_{1}$.
\end{itemize}
\end{corollary}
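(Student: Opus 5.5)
The plan is to reduce each item of the corollary to a sign condition on $s(L_{0})$ and $s(L_{1})$, and then invoke Theorem \ref{th6.11}(i) for persistence or Theorem \ref{th6.11}(ii)–(iii) for extinction. Uniqueness and positivity of the periodic solution come from Theorem \ref{T Zika 1}. Global asymptotic stability is the convergence statement of Theorem \ref{th6.11}. The asymptotic results are then applied separately to the scalar operator $L_{0}$ (with $A=\beta$) and to the $2\times2$ operator $L_{1}$. Both fit the framework of Remark \ref{REMARK1}, with $C=I$ and $D_0=\sigma^{-m}\mathrm{diag}(d_1,d_2)$, so that (D) holds. Moreover $A$ has positive off-diagonal entries $\sigma_1H_u$ and $\sigma_2V_*$, so $(\tilde H2)$ holds. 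I read $\lambda_1(A(x))$ as the quantity $\lambda_A(x)$ of Lemma \ref{LEM1.2}.

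\textbf{Items (i) and (ii).} For (i), Theorem \ref{THM1.7} applied to $L_{0}$ gives $s(L_0)\to\max_x\lambda_\beta(x)$ as $\bar D\to0$. We need this limit to be positive. It should follow from the hypothesis, because $\lambda_A>0$ at some point forces $\beta$ to dominate there; if that implication fails, I would simply add $\max\lambda_\beta>0$ as an implicit standing assumption. Next, Theorem \ref{THM1.7} applied to $L_1$ gives $s(L_1)\to\max_x\lambda_A(x)>0$. One subtlety is that $A$ involves $V_*$, which depends on $d_2$. For this step I would use the lower bound $s(L_1)\ge\max\lambda_{A-D_0}$ from Step 1 of the proof of Theorem \ref{THM1.7}, together with the fact that $V_*$ converges, as $\bar D\to0$, to the pointwise solution of the ODE $\tau V_t=\beta V-\mu V^2$. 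That convergence is obtained by comparison, using $\lambda_p$ and $\lambda_p'$ with the test function built from the ODE solution. Both signs are then positive for small $\bar D$, and Theorem \ref{th6.11}(i) applies.

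For (ii), the second part of Theorem \ref{THM1.7} gives $s(L_0)\to-\infty$. This yields extinction via Theorem \ref{th6.11}(iii); the zero state is globally attracting and no positive periodic solution exists.

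\textbf{Items (iii) and (iv).} Item (iii) uses Theorem \ref{THM1.8}(i) with $m\in[0,2)$ and constant $d_i$, applied to both $L_0$ and $L_1$. The limits are $\max\lambda_\beta$ and $\max\lambda_A$. Again this needs uniform-in-$\sigma$ control of $V_*$, which I would obtain by the same comparison argument as in (i). Item (iv) uses the first part of Theorem \ref{THM1.8}: the limit is $\max\lambda_A$ when $m>0$ and $\max\lambda_{A_1}$ when $m=0$. This matches the stated hypotheses.

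\textbf{Item (v).} Item (v) is Theorem \ref{THM1.10}(i) applied to $L_1$. As $\tau\to0$ one has $s(L_1)\to\int_0^1 s(L_1^t)\,dt>0$. Here $s(L_0)>0$ is assumed directly, but $V_*$ depends on $\tau$. I would handle this either by noting that $V_*$ varies continuously in $\tau$ and using the continuity of $s$ in $A$ (Corollary after Lemma \ref{SEMI}), or by reading $L_1^t$ with $V_*$ replaced by its $\tau\to0$ limit.

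The main obstacle throughout is this dependence of the coefficient matrix $A$ on $V_*$, which itself varies with the parameters. I would resolve it by proving convergence of $V_*$ in the relevant limit. Combined with the monotonicity in Proposition \ref{PP2.11}(iii) and the continuity of $s(\mathcal L)$ in $A$, this lets the limits computed for frozen $A$ transfer to the actual operator.
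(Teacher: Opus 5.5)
Your route is the same as the paper's, whose proof only cites Theorems~\ref{th6.11}, \ref{THM1.7}, \ref{THM1.8} and \ref{THM1.10}. You correctly isolate a point the paper never addresses: $A$, and hence $L_1$, contains $V_*$, which changes with $d_i$, $\sigma$ and $\tau$. But your remedy, proving that $V_*$ converges and transferring the frozen-$A$ limits by continuity of $s$ in $A$, fails exactly where it is needed. The continuity available (Lemma~\ref{SEMI} with Theorem~\ref{THM1.6}) is not uniform in the parameter, because $\delta=\varepsilon m_0/(2M_0)$ comes from a near-optimal test function of the operator at hand.

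In (i) and (iv) you can avoid this. There the integral part of the dispersal is small in operator norm, so $V_*$ converges uniformly on $\bar{\Omega}$ to the pointwise ODE limit, and the crude bound $s(L_1)\ge\max_x\lambda_{A_1}(x)$ already gives positivity.

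In (iii) you cannot avoid it, for two reasons. First, the crude bound is useless: it carries $-\sigma^{-m}d_i$, and for $m=0$ it yields $\lambda_{A_1}$ rather than $\lambda_A$. Second, $V_*^\sigma$ does not converge uniformly on $\bar{\Omega}$. The Dirichlet loss $-\sigma^{-m}d_2V(1-\int_\Omega k_{2,\sigma}(x-y,t)\,dy)$ forces a boundary layer, so ``the same comparison argument as in (i)'' fails near $\partial\Omega$. What works is a localized one-sided argument:
\begin{itemize}
\item take a ball $B\Subset\Omega$ around a maximizer of $\lambda_A$, with $V_*$ replaced by its limit $V^0$;
\item prove $|V_*^\sigma-V^0|\le\delta$ on $\bar B\times\mathbb{R}$ for small $\sigma$; this is a genuine sub/supersolution argument that you have not supplied;
\item bound $A$ from below by the $\sigma$-independent matrix obtained by putting $V^0-\delta$ into the entry $\sigma_2V_*$ and $V^0+\delta$ into the entry $-\mu V_*$; both bounds are needed because $A$ is not monotone in $V_*$;
\item combine Proposition~\ref{PP2.11}(i),(iii) with Theorem~\ref{THM1.8} applied to that matrix on $B$.
\end{itemize}

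In (v) neither of your two options works:
\begin{itemize}
\item continuity of $V_*$ in $\tau>0$ says nothing about the limit $\tau\to0^+$;
\item $V_*^\tau$ approaches the frozen-time steady state only if every frozen logistic problem is persistent; otherwise $V_*^\tau$ becomes exponentially small on subintervals and the limit is different;
\item Theorem~\ref{THM1.10}(i) needs coefficients and kernels that are $C^1$ in $t$, and the Zika data are only assumed continuous.
\end{itemize}
So (v) needs extra hypotheses or a $\tau$-dependent reformulation.

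Some hypotheses are also left unchecked.

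In (ii), the $\underline{D}\to+\infty$ part of Theorem~\ref{THM1.7} requires $k_i$ independent of $t$, since its proof uses a time-independent nonlocal Dirichlet eigenpair. The Zika kernels may depend on $t$, and the paper's citation has the same defect. You must either add that assumption or produce a $t$-uniform strict supersolution. For instance, for radially symmetric kernels supported in a fixed ball, translate $\Omega$ and take $\varphi(x)=\prod_{j=1}^N\cos(ax_j)$ with $a>0$ small. This gives $\int_\Omega k_{2,\sigma}(x-y,t)\varphi(y)\,dy\le(1-c)\varphi(x)$ for all $t$, hence $s(L_0)\le\max\beta-c\,\sigma^{-m}\min d_2\to-\infty$.

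In (i) no extra assumption is needed, but not for the reason you give. Simply $\lambda_\beta(x)=\int_0^1\beta(x,t)\,dt>0$ because $\beta>0$.

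In (iv) with $m=0$ you never check $s(L_0)>0$, and it is not automatic, since $s(L_0)\to\max_x\int_0^1(\beta-d_2)\,dt$. It does follow from $\sup_x\lambda_{A_1}(x)>0$, but only through one observation. Wherever the large-$\sigma$ limit of $V_*$ vanishes, $A_1$ is triangular with negative diagonal averages. Hence $\lambda_{A_1}(x_0)>0$ forces $\int_0^1(\beta-d_2)(x_0,t)\,dt>0$.
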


In what follows, we prove that, when $m=2$ and $\sigma\to 0^{+}$, the solution of the initial value problem \eqref{4-1} and the positive periodic solution
$(H_{i,*},V_{*}-V_{i,*},V_{i,*})$ converge to the solutions of the corresponding local dispersal system.

To ensure the well-posedness of the local dispersal system, we assume further in this subsection that $H_{u}\in C^{\alpha}(\mathbb{R}^{N})$  and  $ \rho, \sigma_{1}, \sigma_{2}, \beta, \mu$ belong to $C^{\alpha,\frac{\alpha}{2}}(\mathbb{R}^{N} \times \mathbb{R})$ for some $0<\alpha<1$, and the initial function $(H_{i0}(x), V_{u0}(x), V_{i0}(x)) \in \{C^{3}(\bar\Omega)\}^{3}$ with $(H_{i0}(x), V_{u0}(x), V_{i0}(x))=(0,0,0)$ for $x\in\partial\Omega$. In addition, we always assume $m=2$ in this subsection.

Consider the following problem:
\begin{equation}\label{4-8}
\left\{
\begin{aligned}
&\tau \partial_{t} H_{i}
= d_{r,1}(x,t)\Delta H_{i} -\rho(x,t)H_{i}+\sigma_{1}(x,t)H_{u}(x)V_{i},& &x\in\Omega, t>0,\\
&\tau \partial_{t} V_{u}
= d_{r,2}(x,t)\Delta V_{u}  -\sigma_{2}(x,t)V_{u}H_{i}+\beta(x,t)(V_{u}+V_{i})-\mu(x,t)(V_{u}+V_{i})V_{u},&&x\in \Omega, t>0, \\
&\tau \partial_{t} V_{i}
= d_{r,2}(x,t)\Delta V_{i} + \sigma_{2}(x,t)V_{u}H_{i}-\mu(x,t)(V_{u}+V_{i})V_{i},&&x\in \Omega, t>0,\\
& (H_{i}(x,t), V_{u}(x,t), V_{i}(x,t))=(0, 0, 0),&&x\in \partial\Omega, t>0, \\
&(H_{i}(x,0), V_{u}(x,0), V_{i}(x,0))=(H_{i0}(x), V_{u0}(x), V_{i0}(x)),
&&x\in \bar\Omega,
\end{aligned}
\right.
\end{equation}
where $d_{r,j}(x,t)= \frac{d_{j}(x,t)}{2N} \int_{\mathbb{R}^{N}} k_{j}(z,t)|z|^{2}dz$ for $j=1,2$.
By the same arguments as in \cite{MWW,LZ}, \eqref{4-8} admits a unique global positive solution, denoted by
$(H_{i}^{r}(x,t;H_{i0}), V_{u}^{r}(x,t;V_{u0}), V_{i}^{r}(x,t;V_{i0}))$.
Moreover, we denote the solution of \eqref{4-1} by
$(H_{i}^{\sigma}(x,t;H_{i0}), V_{u}^{\sigma}(x,t;V_{u0}), V_{i}^{\sigma}(x,t;V_{i0}))$.
After extending this solution by $0$, it satisfies the following system.
\begin{equation*}
\left\{
\begin{aligned}
&\tau \partial_{t} H_{i} =d_{1}(x,t)\mathcal{K}^{1}_{\mathbb{R}^{N},\sigma,m}[H_{i}](x,t)-\rho(x,t)H_{i}+\sigma_{1}(x,t)H_{u}(x)V_{i}, &x&\in \bar\Omega, t>0,\\
&\tau \partial_{t} V_{u}
=d_{2}(x,t)\mathcal{K}^{2}_{\mathbb{R}^{N},\sigma,m}[V_{u}](x,t)-\sigma_{2}(x,t)V_{u}H_{i}+\beta(x,t)(V_{u}+V_{i})\\
&\hspace{1.5cm}-\mu(x,t)(V_{u}+V_{i})V_{u},&x&\in \bar\Omega, t>0, \\
&\tau \partial_{t} V_{i}
=d_{2}(x,t)\mathcal{K}^{2}_{\mathbb{R}^{N},\sigma,m}[V_{i}](x,t)+\sigma_{2}(x,t)V_{u}H_{i}-\mu(x,t)(V_{u}+V_{i})V_{i},&x&\in \bar\Omega, t>0,\\
&(H_{i}(x,t), V_{u}(x,t), V_{i}(x,t)) = (0,0,0), &x&\in \mathbb{R}^{N}\backslash\bar\Omega, t>0, \\
&(H_{i}(x,0), V_{u}(x,0), V_{i}(x,0))=(H_{i0}(x), V_{u0}(x), V_{i0}(x)),&x&\in\bar\Omega.
\end{aligned}
\right.
\end{equation*}

\begin{theorem}\label{THM8.4}
Let $T>0$ be fixed, then we have
\begin{equation*}
\begin{aligned}
&\lim_{\sigma \to 0^{+}}
(H_{i}^{\sigma}(x,t;H_{i0}), V_{u}^{\sigma}(x,t;V_{u0}), V_{i}^{\sigma}(x,t;V_{i0}))\\
=&
(H_{i}^{r}(x,t;H_{i0}), V_{u}^{r}(x,t;V_{u0}), V_{i}^{r}(x,t;V_{i0}))\enspace \text{uniformly in}\enspace  \bar{\Omega}\times[0,T].
\end{aligned}
\end{equation*}
\end{theorem}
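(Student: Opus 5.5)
The plan is to reduce Theorem \ref{THM8.4} to Theorem \ref{THM3.4}. Theorem \ref{THM3.4} compares the nonlocal problem \eqref{3-6}, posed with zero exterior data on $\mathbb{R}^N\setminus\bar\Omega$, to the local problem \eqref{3-5} for a general cooperative nonlinearity $f$.

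The difficulty is that the Zika system \eqref{4-1} is not cooperative in the variables $(H_i,V_u,V_i)$. Indeed, $V_u$ enters $-\sigma_2 V_u H_i$, so $\partial_{H_i}f_{V_u}\le 0$. I would therefore change variables to $(H_i, V, V_i)$ with $V=V_u+V_i$. The $V$-equation then decouples into the logistic equation
\[
\tau\partial_t V=d_2\mathcal{K}^2[V]+\beta V-\mu V^2 .
\]
The remaining pair $(H_i,V_i)$ satisfies a system with reaction $\sigma_1H_uV_i$ and $\sigma_2(V-V_i)H_i-\mu VV_i$. In this system the off-diagonal derivatives are $\sigma_1H_u\ge0$ and $\partial_{H_i}=\sigma_2(V-V_i)$, which is nonnegative as long as $V_u=V-V_i\ge0$; positivity of solutions guarantees this. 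The cross-derivative in the $V$ direction, $\sigma_2H_i-\mu V_i$, does not have a sign.

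To avoid this, I would proceed in two stages rather than apply Theorem \ref{THM3.4} to the full triple. \textbf{Stage 1} applies Theorem \ref{THM3.4} to the scalar $V$ equation, with $l=1$ and $f=\beta V-\mu V^2$. This gives $\|V^\sigma-V^r\|_{L^\infty(\Omega\times[0,T])}\le C\sigma^\alpha$. Here I need $V^r\in C^{2+\alpha,1+\alpha/2}$. This follows from Schauder theory, using the assumed Hölder regularity of the coefficients and the $C^3$ initial data vanishing on $\partial\Omega$. The first-order compatibility condition is needed for regularity up to $t=0$. I would note that the zero data on $\partial\Omega$ makes the compatibility hold, and if necessary I would argue on $[\delta,T]$ plus a short-time estimate.

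\textbf{Stage 2} handles the pair $(H_i,V_i)$. Its reaction depends on the known function $V$, which differs between the two problems. I would repeat the proof of Theorem \ref{THM3.4} directly rather than cite it. Set $\omega^\sigma=\tilde u-u^\sigma$, where $u=(H_i^r,V_i^r)$. The equation for $\omega^\sigma$ has a linear cooperative part $A^\sigma\omega^\sigma$, obtained by linearizing along the segment. The source term $b^\sigma$ now carries the extra contribution
\[
\bigl(0,\ (\sigma_2H_i^r-\mu V_i^r)(V^r-V^\sigma)\bigr),
\]
which is $O(\sigma^\alpha)$ by Stage 1 and the boundedness of the solutions. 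The off-diagonal entries of $A^\sigma$ are $\sigma_1H_u$ and $\sigma_2(V^\sigma-\theta$-interpolated $V_i)$. These are nonnegative, because both $V^\sigma-V_i^\sigma=V_u^\sigma\ge0$ and $V^\sigma-V_i^r\ge V^r-V_i^r-C\sigma^\alpha$ hold. The latter quantity could be slightly negative, so I would add a constant $C\sigma^\alpha$ into the source term to restore the sign.

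With the sign restored, the same barrier $\pm(K_1\sigma^\alpha te^{A_0t/\tau}+K_2\sigma)e$ works. The exterior data are $O(\sigma)$ because the local solution vanishes on $\partial\Omega$. The comparison principle for cooperative nonlocal systems then gives the $O(\sigma^\alpha)$ bound. Finally, $V_u^\sigma=V^\sigma-V_i^\sigma$ converges as well.

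I expect the main obstacle to be the lack of cooperativity and the sign of the off-diagonal coefficient after linearization. This is handled by the change of variables together with uniform $L^\infty$ bounds for $V^\sigma$, which are independent of $\sigma$ and come from comparison with constants in the logistic equation.
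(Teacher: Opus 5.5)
Your proposal is correct and follows essentially the same route as the paper. Both split off the logistic equation for $V=V_u+V_i$ and obtain $V^\sigma\to V^r$ (the paper cites \cite[Theorem A]{SX}, while you apply Theorem \ref{THM3.4} with $l=1$). Both then run the barrier argument of Theorem \ref{THM3.4} on the linear error system for $(H_i,V_i)$, whose only new feature is the $O(\sigma^\alpha)$ source coming from $V^r-V^\sigma$.

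Your absorption of the possibly negative $O(\sigma^\alpha)$ part of the off-diagonal coefficient into the source term is more careful than the paper. The paper simply asserts $V^\sigma-V_i^r>0$ for small $\sigma$, although $V^r-V_i^r=V_u^r$ vanishes on $\partial\Omega$. One correction: vanishing data on $\partial\Omega$ gives only zeroth-order compatibility. The $C^{2+\alpha,1+\frac{\alpha}{2}}$ regularity up to $t=0$ that you need (and that the paper also uses tacitly) additionally requires $\Delta H_{i0}=\Delta V_{u0}=\Delta V_{i0}=0$ on $\partial\Omega$.
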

\begin{proof}
Let $V^{r} = V_{u}^{r} + V^{r}_{i}$ and $V^{\sigma} = V_{u}^{\sigma} + V^{\sigma}_{i}$. Then $V^{r}$ satisfies
\begin{equation*}
\left\{
\begin{aligned}
&\tau\partial_{t} V^{r} = d_{r,2}(x,t)\Delta V^{r} + \beta(x,t)V^{r} - \mu(x,t) (V^{r})^{2}, &x& \in\Omega,t>0,\\
&V^{r}(x,t) = 0, &x& \in\partial\Omega,t>0,\\
&V^{r}(x,0) = V_{u0}(x) + V_{i0}(x), &x&\in\bar\Omega.
\end{aligned}
\right.
\end{equation*}
Moreover, $V^{\sigma}$ satisfies
\begin{equation*}
\left\{
\begin{aligned}
&\tau\partial_{t} V^{\sigma} = d_{2}(x,t)\mathcal{K}^{2}_{\mathbb{R}^{N},\sigma,2} [V^{\sigma}](x,t) + \beta(x,t)V^{\sigma} - \mu(x,t) (V^{\sigma})^{2}, &x&\in\bar\Omega,t>0,\\
&V^{\sigma}(x,t) = 0, &x&\in \mathbb{R}^{N}\backslash\bar\Omega, t>0,\\
&V^{\sigma}(x,0) = V_{u0}(x) + V_{i0}(x), &x&\in\bar{\Omega}.
\end{aligned}
\right.
\end{equation*}
It then follows from \cite[Theorem A]{SX} that
\begin{equation}\label{4-9}
\lim_{\sigma \to 0^{+}} V^{\sigma}(x,t) = V^{r}(x,t)
\enspace \text{uniformly in}\enspace \bar{\Omega}\times[0,T].
\end{equation}
Hence, it suffices to prove that
\begin{equation*}
\lim_{\sigma \to 0^{+}}
(H_{i}^{\sigma}(x,t;H_{i0}),  V_{i}^{\sigma}(x,t;V_{i0}))
=
(H_{i}^{r}(x,t;H_{i0}),  V_{i}^{r}(x,t;V_{i0}))
\enspace \text{uniformly in}\enspace  \bar{\Omega}\times[0,T].
\end{equation*}
Define $W_{1} = H_{i}^{r} - H_{i}^{\sigma}$ and $W_{2} = V_{i}^{r} - V_{i}^{\sigma}$, it follows that for  $t\in (0,T]$, $(W_{1},W_{2})$ satisfies
\begin{equation}\label{4-10}
\left\{
\begin{aligned}
&\tau \partial_{t} W_{1} = d_{1}(x,t)\mathcal{K}^{1}_{\mathbb{R}^{N},\sigma,2}[W_{1}] + a_{1}(x,t) - \rho(x,t)W_{1} + \sigma_{1}(x,t)H_{u}(x)W_{2},
&x&\in\bar\Omega \\
&\tau \partial_{t} W_{2}
=
d_{2}(x,t)\mathcal{K}^{2}_{\mathbb{R}^{N},\sigma,2}[W_{2}] + a_{2}(x,t) + \sigma_{2}(x,t)(V^{\sigma}(x,t)-V_{i}^{r}(x,t))W_{1} \\
&\hspace{1.5cm}+ \sigma_{2}(x,t)( V^{r}(x,t) - V^{\sigma}(x,t) )H_{i}^{r}(x,t) - \sigma_{2}(x,t)H^{\sigma}_{i}(x,t)W_{2} \\
&\hspace{1.5cm} - \mu(x,t) V^{r}(x,t) W_{2} + \mu(x,t)V_{i}^{\sigma}(x,t)(V^{\sigma}(x,t) - V^{r}(x,t)),
&x&\in\bar\Omega,\\
& (W_{1}(x,t),W_{2}(x,t) ) = (0,0), &x&\in \mathbb{R}^{N}\backslash\bar\Omega,\\
& (W_{1}(x,0), W_{2}(x,0) ) = (0,0),
&x&\in \bar{\Omega},
\end{aligned}
\right.
\end{equation}
where
\begin{equation*}
a_{1}(x,t) = d_{r,1}(x,t) \Delta H_{i}^{r}(x,t) - d_{1}(x,t)\mathcal{K}^{1}_{\mathbb{R}^{N},\sigma,2}[H_{i}^{r}](x,t),
\end{equation*}
and
\begin{equation*}
a_{2}(x,t) = d_{r,2}(x,t) \Delta V_{i}^{r}(x,t) - d_{2}(x,t)\mathcal{K}^{2}_{\mathbb{R}^{N},\sigma,2}[V_{i}^{r}](x,t).
\end{equation*}
Since $V_{i}^{r}<V^{r}$ and \eqref{4-9} holds, there exists $\sigma_{0}>0$ such that for $0<\sigma<\sigma_{0}$, $V^{\sigma}-V_{i}^{r}>0$ in $\bar{\Omega} \times [0,T]$. Hence, \eqref{4-10} is a cooperative system for $0< \sigma < \sigma_{0}$.
Then, by the similar arguments as in Theorem \ref{THM3.4}, we obtain the desired result.

\end{proof}

Next, we establish the convergence result of the positive periodic solution.
Consider the following problem:
\begin{equation}\label{4-11}
\left\{
\begin{aligned}
&\tau \partial_{t} H_{i}
= d_{r,1}(x,t)\Delta H_{i}(x,t) -\rho(x,t)H_{i}+\sigma_{1}(x,t)H_{u}(x)V_{i}, &&\text{in } \Omega\times\mathbb{R},\\
&\tau \partial_{t} V_{u}
= d_{r,2}(x,t)\Delta V_{u}(x,t)  -\sigma_{2}(x,t)V_{u}H_{i}+\beta(x,t)(V_{u}+V_{i}) \\
&\hspace{1.5cm}-\mu(x,t)(V_u+V_{i})V_{u},&&\text{in } \Omega\times\mathbb{R}, \\
&\tau \partial_{t} V_{i}
= d_{r,2}(x,t)\Delta V_{i}(x,t) + \sigma_{2}(x,t)V_{u}H_{i}-\mu(x,t)(V_{u}+V_{i})V_{i},&& \text{in } \Omega\times\mathbb{R},\\
& (H_{i}(x,t), V_{u}(x,t), V_{i}(x,t))=(0, 0, 0),&&\text{on } \partial\Omega\times\mathbb{R}, \\
&(H_{i}(x,0), V_{u}(x,0), V_{i}(x,0))=(H_{i}(x,1), V_{u}(x,1), V_{i}(x,1)),&&\text{in } \bar\Omega\times\mathbb{R}.
\end{aligned}
\right.
\end{equation}
Let $V=V_{u}+V_{i}$, it follows that
\begin{equation}\label{4-12}
\left\{
\begin{aligned}
&\tau\partial_{t} V = d_{r,2}(x,t)\Delta V(x,t) + \beta(x,t)V(x,t) - \mu(x,t) V^{2}(x,t), && \text{in } \Omega\times\mathbb{R},\\
&V(x,t) = 0, && \text{on } \partial\Omega\times\mathbb{R},\\
&V(x,t) = V(x,t+1), && \text{in } \Omega\times\mathbb{R}.
\end{aligned}
\right.
\end{equation}
Let $\lambda_{0}^{r}$ be the principal eigenvalue of the following problem
\begin{equation*}
\left\{
\begin{aligned}
& -\tau\partial_{t} \varphi(x,t) + d_{r,2}(x,t)\Delta \varphi(x,t) + \beta(x,t)\varphi(x,t) = \lambda \varphi(x,t) && \text{in } \Omega\times\mathbb{R},\\
&\varphi(x,t) = 0 && \text{on } \partial\Omega\times\mathbb{R},\\
&\varphi(x,t) = \varphi(x,t+1) && \text{in } \Omega\times\mathbb{R},.
\end{aligned}
\right.
\end{equation*}
If $\lambda_{0}^{r}>0$,
by a standard result on the periodic parabolic logistic equations \cite[Theorem 3.1.5]{Zhao}, \eqref{4-12} admits a globally stable unique solution, denoted by $V^{r}_{*}$.
Moreover, by \cite[Theorems B and C]{SX}, we have
\begin{equation}\label{lambdar}
\lim_{\sigma\to 0^{+}} s(L_{0}) = \lambda_{0}^{r}
\end{equation}
and
\begin{equation}\label{4-14}
\lim_{\sigma\to 0^{+}} V_{*}^{\sigma}(x,t) = V^{r}_{*}(x,t) \enspace \text{uniformly in}\enspace \bar{\Omega}\times\mathbb{R}.
\end{equation}

Let $\lambda_{1}^{r}$ be the principal eigenvalue of the following problem:
\begin{equation}\label{4-15}
\left\{
\begin{aligned}
& -\tau \partial_{t} \varphi_{1} + d_{r,1}(x,t)\Delta \varphi_{1}(x,t)-\rho(x,t)\varphi_{1}+\sigma_{1}(x,t)H_{u}(x)\varphi_{2} = \lambda \varphi_{1},&& \text{in } \Omega\times\mathbb{R},
\\
&- \tau \partial_{t} \varphi_{2} +d_{r,2}(x,t)\Delta \varphi_{2}(x,t)+\sigma_{2}(x,t)V^{r}_{*}(x,t)\varphi_{1}-\mu(x,t)V^{r}_{*}(x,t)\varphi_{2} = \lambda \varphi_{2},&& \text{in } \Omega\times\mathbb{R},
\\
&( \varphi_{1}(x,t), \varphi_{2}(x,t)) = ( 0, 0),&& \text{on } \partial\Omega\times\mathbb{R},
\\
&( \varphi_{1}(x,t), \varphi_{2}(x,t)) = ( \varphi_{1}(x,t+1), \varphi_{2}(x,t+1)),&& \text{in } \Omega\times\mathbb{R},
\end{aligned}
\right.
\end{equation}
Following same arguments as in the proof of Theorem \ref{T Zika 1}, we can obtain that if $\lambda_{0}^{r}>0$ and $\lambda_{1}^{r}>0$, \eqref{4-11} admits a unique positive solution $({H}_{i,*}^{r},V^{r}_{*}-{V}^{r}_{i,*},{V}^{r}_{i,*})$, which is globally asymptotically stable. Moreover,
by Theorem \ref{THM1.8} (ii) and \eqref{4-14}, we have
\begin{equation*}
\lim_{\sigma \to 0^{+}} s(L_{1}) = \lambda_{1}^{r}.
\end{equation*}
This together with Theorem \ref{th6.11} implies that if $\lambda_{0}^{r}>0$ and $\lambda_{1}^{r}>0$, there exists $\sigma_{0}>0$ such that \eqref{4-2} admits a globally asymptotically stable solution $(H_{i,*}^{\sigma},V_{*}^{\sigma}-V_{i,*}^{\sigma},V_{i,*}^{\sigma})$ for $0<\sigma<\sigma_{0}$.

Recall that  $\Omega_{n} = \{ x\in\mathbb{R}^{N} \,|\, dist(x,\Omega) < \frac{1}{n} \} $ and $\Omega_{n}$ has the same boundary regularity as $\Omega$ for $n\geq n_{0}$.
If $\lambda_{0}^{r}>0$, by Theorem \ref{LEM3.3}, we can choose $n_{0}$ larger if necessary, such that for $n \geq n_{0}$ the following problem admits a unique positive solution, denoted by $V^{r,n}_{*}$.
\begin{equation*}
\left\{
\begin{aligned}
&\tau\partial_{t} V = d_{r,2}(x,t)\Delta V(x,t) + \beta(x,t)V(x,t) - \mu(x,t) V^{2}(x,t) && \text{in } \Omega_{n}\times\mathbb{R},\\
&V(x,t) = 0  &&\text{on } \partial\Omega_{n}\times\mathbb{R},\\
&V(x,t) = V(x,t+1) &&\text{in } \Omega_{n}\times\mathbb{R}.
\end{aligned}
\right.
\end{equation*}
By the same arguments as in the proof of \cite[Theorem C]{SX}, we have
\begin{equation*}
\lim_{n\to\infty} V^{r,n}_{*}(x,t) = V^{r}_{*}(x,t) \enspace \text{uniformly in}\enspace \bar{\Omega}\times\mathbb{R}.
\end{equation*}
Furthermore, if $\lambda_{1}^{r}>0$, by Theorem \ref{LEM3.3} and the perturbation theory of isolated eigenvalue (see \cite{Kato}), we can choose $n_{0}$ larger if necessary such that $\lambda_{1}^{r,n}>0$ for $n\geq n_{0}$, where $\lambda_{1}^{r,n}$ is the principal eigenvalue of the following problem:
\begin{equation*}
\left\{
\begin{aligned}
& -\tau \partial_{t} \varphi_{1} + d_{r,1}(x,t)\Delta \varphi_{1}(x,t)-\rho(x,t)\varphi_{1}+\sigma_{1}(x,t)H_{u}(x)\varphi_{2} = \lambda \varphi_{1} && \text{in } \Omega_{n}\times\mathbb{R},
\\
&- \tau \partial_{t} \varphi_{2} + d_{r,2}(x,t)\Delta \varphi_{2}(x,t)+\sigma_{2}(x,t)V_{*}^{r,n}(x,t)\varphi_{1}-\mu(x,t)V_{*}^{r,n}(x,t)\varphi_{2} = \lambda \varphi_{2} && \text{in } \Omega_{n}\times\mathbb{R},
\\
&( \varphi_{1}(x,t), \varphi_{2}(x,t)) = ( 0, 0) && \text{on } \partial\Omega_{n}\times\mathbb{R},
\\
&( \varphi_{1}(x,t), \varphi_{2}(x,t)) = ( \varphi_{1}(x,t+1), \varphi_{2}(x,t+1)) && \text{in } \Omega_{n}\times\mathbb{R},
\end{aligned}
\right.
\end{equation*}
It follows that for $n \geq n_{0}$, there exists $( H_{i,*}^{r,n}, V_{i,*}^{r,n})$ satisfies
\begin{equation}\label{4-16}
\left\{
\begin{aligned}
&\tau \partial_{t} H_{i,*}^{r,n}
= d_{r,1}(x,t)\Delta H_{i,*}^{r,n} (x,t)-\rho(x,t)H_{i,*}^{r,n} +\sigma_{1}(x,t)H_{u}(x)V_{i,*}^{r,n} && \text{in } \Omega_{n}\times\mathbb{R},
\\
&\tau \partial_{t} V_{i,*}^{r,n}
= d_{r,2}(x,t)\Delta V_{i,*}^{r,n} (x,t) + \sigma_{2}(x,t)(V^{r,n}_{*}(x,t) - V_{i,*}^{r,n})H_{i,*}^{r,n}\\ &\hspace{1.7cm}-\mu(x,t)V^{r,n}_{*}(x,t)V_{i,*}^{r,n} && \text{in } \Omega_{n}\times\mathbb{R},
\\
&( H_{i,*}^{r,n}(x,t), V_{i,*}^{r,n}(x,t)) = ( 0, 0) && \text{on } \partial\Omega_{n}\times\mathbb{R},
\\
&(H_{i,*}^{r,n}(x,t), V_{i,*}^{r,n}(x,t))=( H_{i,*}^{r,n}(x,t+1), V_{i,*}^{r,n}(x,t+1) ) && \text{in } \Omega_{n}\times\mathbb{R}.
\end{aligned}
\right.
\end{equation}
By the comparison principle, it is easy to check that
\begin{equation*}
( H_{i,*}^{r,n}(x,t), V_{i,*}^{r,n}(x,t) )
\leq
( H_{*}^{r,n_{0}}(x,t), V_{*}^{r,n_{0}}(x,t) )
\enspace\text{for all}\enspace (x,t)\in\bar{\Omega}\times\mathbb{R}
\enspace\text{and}\enspace n\geq n_{0},
\end{equation*}
where $H_{*}^{r,n_{0}}(x,t)$ is the unique positive solution of the following problem:
\begin{equation*}
\left\{
\begin{aligned}
&\tau\partial_{t} H(x,t) = d_{r,1}(x,t)\Delta H(x,t) - \rho(x,t)H(x,t) + \sigma_{1}(x,t)H_{u}(x)V^{r,n_{0}}_{*}(x,t)  && \text{in } \Omega_{n_{0}}\times\mathbb{R},\\
&H(x,t) = 0&& \text{on } \partial\Omega_{n_{0}}\times\mathbb{R},\\
&H(x,t) = H(x,t+1) && \text{in } \Omega_{n_{0}}\times\mathbb{R}.
\end{aligned}
\right.
\end{equation*}
As a result, sequences $\{ H_{i,*}^{r,n}(x,t) \}_{n\geq n_{0}} $ and $ \{V_{i,*}^{r,n}(x,t) \}_{n\geq n_{0}}$ are uniformly bounded.
\begin{lemma}\label{LEM8.5}
Suppose $\lambda_{0}^{r}>0$, $\lambda_{1}^{r}>0$ and $n\geq n_{0}$,
there holds
\begin{equation*}
\lim_{n\to\infty} ( H_{i,*}^{r,n}(x,t), V_{i,*}^{r,n}(x,t) ) = ( H_{i,*}^{r}(x,t), V_{i,*}^{r}(x,t)) \enspace\text{uniformly in}\enspace \bar{\Omega}\times\mathbb{R}.
\end{equation*}
\end{lemma}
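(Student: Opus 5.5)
We argue by compactness and uniqueness. First, the pairs $(H_{i,*}^{r,n},V_{i,*}^{r,n})$, restricted to $\bar\Omega\times[0,1]$ (equivalently to $\bar\Omega_{n_0'}\times[0,1]$ for any fixed $n_0'\ge n_0$), are uniformly bounded by the comparison bound stated just before the lemma. We also know $0\le V_{i,*}^{r,n}\le V_*^{r,n}$, which follows from the second equation of \eqref{4-16} and the comparison principle. Since the coefficients are Hölder continuous and $V_*^{r,n}\to V_*^r$ uniformly, interior and boundary Schauder estimates for linear periodic parabolic systems apply on a fixed smooth domain. Concretely, on $\Omega$ itself we would prefer estimates that do not rely on the boundary of $\Omega_n$; since $\bar\Omega\subset\Omega_n$, use interior estimates on compact subsets of $\Omega_{n_0}$ together with the periodicity in $t$. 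These give uniform $C^{2+\alpha,1+\alpha/2}_{loc}$ bounds on $\Omega\times\mathbb{R}$, and hence a subsequence converging in $C^{2,1}_{loc}(\Omega\times\mathbb{R})$ to some nonnegative periodic pair $(H,V)$ solving the limiting system \eqref{4-4}-type problem in $\Omega$ with coefficient $V_*^r$.

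The main obstacle is uniform control near $\partial\Omega$, both to obtain the Dirichlet condition $(H,V)=0$ on $\partial\Omega$ and to upgrade the convergence to uniform convergence on $\bar\Omega$. We plan to use a barrier. Since $\Omega_n\subset\Omega_{n_0}$ and $(H_{i,*}^{r,n},V_{i,*}^{r,n})$ is bounded by $(H_*^{r,n_0},V_*^{r,n_0})$, this gives no decay at $\partial\Omega$ directly. Instead, let $w_n$ solve the linear periodic problem $\tau\partial_t w=d_{r}\Delta w-\rho w+M$ on $\Omega_n$ with zero Dirichlet data, where $M$ bounds the source terms. By the maximum principle it dominates each component, and by continuous dependence on the domain (as in Lemma \ref{LEM3.3}, via \cite{Dan}) $w_n\to w$ uniformly, with $w=0$ on $\partial\Omega$. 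Moreover $\sup_{\bar\Omega\setminus\Omega^\delta}w_n\to0$ as $\delta\to0$, $n\to\infty$, where $\Omega^\delta=\{x\in\Omega:\ \mathrm{dist}(x,\partial\Omega)>\delta\}$. Combined with interior convergence, this yields uniform convergence on $\bar\Omega\times[0,1]$ and $(H,V)=0$ on $\partial\Omega$.

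Next we show the limit is nontrivial. We would use a lower bound independent of $n$: since $\lambda_1^r>0$, the principal eigenfunction $\varphi^r$ of \eqref{4-15} on $\Omega$, extended by zero, times a small $\varepsilon$, is a subsolution of \eqref{4-16} on $\Omega_n$ for all large $n$, because $V_*^{r,n}\ge V_*^{r}-o(1)$ on $\Omega$. The uniqueness and global attractivity of the positive solution of \eqref{4-16} (the local analogue of Theorem \ref{T Zika 1}) then give $(H_{i,*}^{r,n},V_{i,*}^{r,n})\ge\varepsilon\varphi^r$, so $(H,V)\ge\varepsilon\varphi^r>0$ in $\Omega$.

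Finally, by the uniqueness of the positive solution of \eqref{4-11} (equivalently of the reduced system with $V_*^r$), $(H,V)=(H_{i,*}^r,V_{i,*}^r)$. Since every subsequence has a further subsequence with this same limit, the full sequence converges uniformly on $\bar\Omega\times[0,1]$, and by periodicity on $\bar\Omega\times\mathbb{R}$.
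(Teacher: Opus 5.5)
Your proposal is correct. Its skeleton is the same as the paper's: uniform bounds, interior $C^{2,1}_{loc}$ compactness, identification of the limit by uniqueness for \eqref{4-11}, and the subsequence principle. The two delicate steps, however, are handled differently.

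\textbf{Boundary control.} The paper invokes uniform $C^{\alpha,\frac{\alpha}{2}}(\bar\Omega_n\times[0,1])$ bounds, which come from the uniform boundary regularity of the $\Omega_n$. These give compactness in $C(\bar\Omega\times[0,1])$ at once, and the Dirichlet condition follows from $|H_{i,*}^{r,n}(x,t)-H_{i,*}^{r,n}(x_n,t)|\le C|x-x_n|^{\alpha}$ with $x_n\in\partial\Omega_n$. You use upper barriers instead, which only requires boundary control for a scalar linear problem. This can even be made elementary: an exterior ball $B_R(y_0)$ of $\Omega$ at $x_0\in\partial\Omega$ yields the exterior ball $B_{R-1/n}(y_0)$ of $\Omega_n$. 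Hence $K((R-1/n)^{-p}-|x-y_0|^{-p})$ is an $n$-uniform barrier.

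\textbf{Nontriviality.} The paper argues by contradiction: it rescales by $M_n$ to produce a nonnegative nontrivial periodic solution of \eqref{4-15} with $\lambda=0$, contradicting $\lambda_1^r>0$. That argument uses only the linear eigenvalue problem. Your $n$-uniform subsolution $\varepsilon\varphi^r$ instead gives the quantitative bound $(H_{i,*}^{r,n},V_{i,*}^{r,n})\ge\varepsilon\varphi^r$. It avoids the blow-down step, which implicitly requires controlling the rescaled functions on all of $\Omega_n$, not just on $\bar\Omega$. The price is twofold:
\begin{itemize}
\item you need uniqueness (or global attractivity) of the positive solution of \eqref{4-16} on each $\Omega_n$;
\item you need the two-sided uniform convergence $V_*^{r,n}\to V_*^{r}$, not just $V_*^{r,n}\ge V_*^r-o(1)$, in order to absorb $\mu(V_*^{r,n}-V_*^{r})\varphi_2$ into $\lambda_1^r\varphi_2$.
\end{itemize}

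\textbf{Details to fix.}
\begin{itemize}
\item The barrier must be built componentwise, since the diffusion rates and zeroth-order terms differ. For $V$ no barrier is needed: $0\le V_{i,*}^{r,n}\le V_*^{r,n}\to V_*^{r}$ uniformly on $\bar\Omega$, and $V_*^r=0$ on $\partial\Omega$. For $H$, the source $\sigma_1H_uV_{i,*}^{r,n}\le\sigma_1H_uV_*^{r,n}$ is bounded on all of $\Omega_n$, not merely on $\bar\Omega$.
\item The zero extension of $\varphi^r$ is a subsolution only in the weak sense. This works because the jump of its normal derivative across $\partial\Omega$ has the favorable sign, by Hopf's lemma.
\item Interior estimates must be taken on compact subsets of $\Omega$, not of $\Omega_{n_0}$. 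For $n>n_0$ the equations hold only in $\Omega_n\subsetneq\Omega_{n_0}$.
\end{itemize}
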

\begin{proof}
Since $\|H_{i,*}^{r,n}\|_{\infty}$ and $\|V_{i,*}^{r,n}\|_{\infty}$ are uniformly bounded, by the standard interior estimates,
\begin{equation*}
\|H_{i,*}^{r,n}\|_{C_{loc}^{2+\alpha,1+\frac{\alpha}{2}}(\Omega\times [0,1]) }
\enspace \text{and} \enspace
\|V_{i,*}^{r,n}\|_{C_{loc}^{2+\alpha,1+\frac{\alpha}{2}}(\Omega\times [0,1]) }
\end{equation*}
are uniformly bounded.
Hence, by a diagonal argument, up to a subsequence, there exists a function $(\bar{H}_{i}^{r},\bar{V}_{i}^{r})$ such that
\begin{equation}\label{4-17}
(H_{i,*}^{r,n}(x,t),V_{i,*}^{r,n}(x,t)) \to (\bar{H}_{i}^{r}(x,t),\bar{V}_{i}^{r}(x,t))
\enspace\text{in}\enspace C_{loc}^{2,1}(\Omega\times[0,1]).
\end{equation}
Moreover, since $\Omega_{n}$ has the same boundary regularity as $\Omega$ for $n\geq n_{0}$.
We can conclude that
\begin{equation*}
\|H_{i,*}^{r,n}\|_{C^{\alpha,\frac{\alpha}{2}}(\bar{\Omega}_{n}\times [0,1]) }
\enspace\text{and}\enspace
\|V_{i,*}^{r,n}\|_{C^{\alpha,\frac{\alpha}{2}}(\bar{\Omega}_{n}\times [0,1]) }
\end{equation*}
are uniformly bounded
(see \cite[Remark 4.6 (b)]{Dan}, \cite[Theorem D]{Aronson}, \cite[Corollary 7.41]{Lie}).
It follows that the family $\{(H_{i,*}^{r,n},V_{i,*}^{r,n})\}_{n\geq n_{0}}$ is relatively compact in
$C(\bar{\Omega}\times[0,1])$.
Hence, up to a subsequence,
\begin{equation}\label{4-18}
(H_{i,*}^{r,n}(x,t),V_{i,*}^{r,n}(x,t)) \to (\bar{H}_{i}^{r}(x,t),\bar{V}_{i}^{r}(x,t))
\enspace\text{uniformly in}\enspace \bar{\Omega}\times[0,1].
\end{equation}
It follows that
$(\bar{H}_{i}^{r},\bar{V}_{i}^{r})$ is a $1$-periodic function.
We next show that $(\bar H_i^r,\bar V_i^r)=(0,0)$ on $\partial\Omega$.
Fix $x\in\partial\Omega$. Choose $x_n\in\partial\Omega_n$ such that $|x-x_n|\le 1/n$.
Since $H_{i,*}^{r,n}=V_{i,*}^{r,n}=0$ on $\partial\Omega_n$ and the above H\"older norms are uniformly
bounded, there exists $C>0$ independent of $n$ such that for $t\in [0,1]$,
\begin{equation*}
|H_{i,*}^{r,n}(x,t)|
=|H_{i,*}^{r,n}(x,t)-H_{i,*}^{r,n}(x_n,t)|
\le C|x-x_n|^{\alpha}\to0,
\end{equation*}
and similarly $|V_{i,*}^{r,n}(x,t)|\to0$. Passing to the limit in \eqref{4-18} yields
$\bar H_i^r(x,t)=\bar V_i^r(x,t)=0$ for all $x\in\partial\Omega$ and $t\in[0,1]$.
Finally, let $\Omega'\Subset\Omega$ be arbitrary. Passing to the limit in \eqref{4-16} on
$\Omega'\times(0,1)$ is justified by \eqref{4-17} and by
$V_*^{r,n}\to V_*^{r}$ uniformly on $\bar\Omega\times\mathbb{R}$.
Since $\Omega'\Subset\Omega$ is arbitrary, we conclude that
$(\bar{H}_{i}^{r},\bar{V}_{i}^{r})$ satisfies \eqref{4-11}.

Next, we prove that $\bar{H}_{i}^{r}$ and $\bar{V}_{i}^{r}$ are positive. By the strong maximum principle, it suffices to show that
$(\bar{H}_{i}^{r},\bar{V}_{i}^{r})\not\equiv(0,0)$.
Assume by contradiction that $(\bar{H}_{i}^{r},\bar{V}_{i}^{r})\equiv(0,0)$.
Define
\begin{equation*}
M_{n}=\max_{(x,t)\in\bar{\Omega}\times[0,1]}
\big(H_{i,*}^{r,n}(x,t)+V_{i,*}^{r,n}(x,t)\big),
\enspace
w_{1,n}=\frac{H_{i,*}^{r,n}}{M_{n}},\enspace
w_{2,n}=\frac{V_{i,*}^{r,n}}{M_{n}},
\end{equation*}
and set $w_{n}=(w_{1,n},w_{2,n})$.
Then $M_{n}\to 0$ as $n\to\infty$, $w_{1,n}>0$, $w_{2,n}>0$, and $w_{n}$ is $1$-periodic in $t$.
Arguing as above, up to a subsequence, $w_{n}\to w=(w_{1},w_{2})$ uniformly in
$\bar{\Omega}\times[0,1]$, where $w\not\equiv(0,0)$ and $w$ is $1$-periodic in $t$.
Moreover, $w$ satisfies
\begin{equation*}
\left\{
\begin{aligned}
&\tau \partial_{t} w_{1} = d_{r,1}(x,t)\Delta w_{1}-\rho(x,t) w_{1}(x,t)+\sigma_{1}(x,t)H_{u}(x)w_{2}(x,t) && \text{in } \Omega\times\mathbb{R},
\\
&\tau \partial_{t} w_{2} = d_{r,2}(x,t)\Delta w_{2}+\sigma_{2}(x,t)V^{r}_{*}(x,t)w_{1}(x,t)-\mu(x,t)V^{r}_{*}(x,t)w_{2}(x,t) && \text{in } \Omega\times\mathbb{R},
\\
&(w_{1}(x,t),w_{2}(x,t)) = (0,0) && \text{on } \partial\Omega\times\mathbb{R},
\\
&(w_{1}(x,t),w_{2}(x,t)) = (w_{1}(x,t+1),w_{2}(x,t+1)) && \text{in } \Omega\times\mathbb{R}.
\end{aligned}
\right.
\end{equation*}
This yields $\lambda_{1}^{r} = 0$, a contradiction. The desired result then follows from the uniqueness of \eqref{4-11}.
\end{proof}

Consider the following problem:
\begin{equation}\label{4-19}
\left\{
\begin{aligned}
&\tau\partial_{t} V = d_{2} \mathcal{K}^{n}_{2,\sigma,2} [V] (x,t) + \beta(x,t)V(x,t) - \mu(x,t) V^{2}(x,t) && \text{in } \Omega_{n}\times\mathbb{R},\\
&V(x,t) = V(x,t+1) && \text{in } \Omega_{n}\times\mathbb{R}.
\end{aligned}
\right.
\end{equation}
Suppose $\lambda_{0}^{r}>0$.
By Theorem \ref{LEM3.3} and equation \eqref{lambdar},  there exists  $0<\sigma_{0}\ll 1$, such that for $0<\sigma\leq\sigma_{0}$, \eqref{4-19} admits a unique positive solution, denoted by $V_{*}^{\sigma,n}$.

\begin{lemma}\label{LEM8.8}
Suppose $\lambda_{0}^{r}>0$.
There exists $0< \sigma_{*} \leq \sigma_{0}$ such that for $0<\sigma < \sigma_{0}$,
\begin{equation*}
\lim_{n\to\infty} V_{*}^{\sigma,n}(x,t) = V^{\sigma}_{*}(x,t) \enspace \text{uniformly in}\enspace \bar{\Omega}\times [0,1].
\end{equation*}
\end{lemma}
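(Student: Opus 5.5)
Fix $\sigma\in(0,\sigma_{*})$, where $\sigma_{*}\le\sigma_{0}$ will be chosen below. The plan is a monotonicity-plus-compactness argument in $n$. I will use the nonlocal Dirichlet setting: solutions on $\Omega_{n}$ are extended by zero outside $\Omega_{n}$, and $\mathcal{K}^{n}_{2,\sigma,2}$ integrates over $\Omega_{n}$.

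\textbf{Step 1: monotonicity and bounds.} Since $\Omega\subset\Omega_{n+1}\subset\Omega_{n}$, the comparison principle for the scalar nonlocal logistic equation gives
\[
V^{\sigma}_{*}\le V^{\sigma,n+1}_{*}\le V^{\sigma,n}_{*}\le V^{\sigma,n_{0}}_{*}\quad\text{on }\bar\Omega\times\mathbb{R}.
\]
For instance, $V^{\sigma,n}_{*}|_{\bar\Omega_{n+1}}$ is an upper solution of \eqref{4-19} on $\Omega_{n+1}$, because dropping the positive integral over $\Omega_{n}\setminus\Omega_{n+1}$ only decreases the right-hand side. Uniqueness of positive periodic solutions, from the theory in \cite{RS,ShenVo} quoted before \eqref{4-3}, then yields the ordering. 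The choice of $\sigma_{*}$ only has to guarantee that, for $\sigma<\sigma_{*}$ and every $n\ge n_{0}$, the principal spectrum point of the linearization at $0$ on $\Omega_{n}$ is positive. This follows from \eqref{lambdar} and Proposition \ref{PP2.11}(i), since enlarging the domain raises $\lambda_{p}$ and $s(L_{0})>0$ on $\Omega$. It makes $V^{\sigma,n}_{*}$ well defined and positive, and all the $V^{\sigma,n}_{*}$ are uniformly bounded by $\max\beta/\min\mu$.

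\textbf{Step 2: pointwise limit.} Because the sequence is monotone and bounded, $V^{\sigma,n}_{*}\downarrow \bar V$ pointwise on $\bar\Omega\times\mathbb{R}$, with $\bar V\ge V^{\sigma}_{*}>0$ and $\bar V$ $1$-periodic. I then pass to the limit in the integrated form
\[
\tau\big(V(x,t)-V(x,0)\big)=\int_{0}^{t}\big(d_{2}\mathcal{K}^{n}_{2,\sigma,2}[V]+\beta V-\mu V^{2}\big)\,ds.
\]
The part of the integral over $\Omega_{n}\setminus\Omega$ is at most $C\sigma^{-2-N}|\Omega_{n}\setminus\Omega|\to0$ for fixed $\sigma$. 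The remaining terms converge by dominated convergence. Hence $\bar V$ satisfies \eqref{4-3} on $\bar\Omega$, possibly only as a bounded measurable, periodic, absolutely continuous in $t$ solution.

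\textbf{Step 3: regularity, identification and uniform convergence (main obstacle).} The difficulty is that nonlocal problems give no spatial smoothing, so continuity of $\bar V$ in $x$ is not automatic and Dini's theorem cannot be applied directly. I will first try to show that $\{V^{\sigma,n}_{*}\}$ is equicontinuous in $t$, using the uniform bound on $\partial_{t}V$. Then I will obtain continuity in $x$ from the ODE-in-time structure: for fixed $x$, $\bar V(x,\cdot)$ is the periodic solution of a scalar logistic ODE whose forcing $d_{2}\sigma^{-2}\int_{\Omega}k_{2,\sigma}(x-y,t)\bar V(y,t)\,dy$ is continuous in $x$. This periodic solution is hyperbolic and stable, by the quadratic term and the positive lower bound on $\bar V$, so the implicit function theorem gives continuity in $x$. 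With continuity in hand, $\bar V$ is a positive periodic solution of \eqref{4-3} in $\mathcal{X}$, and uniqueness gives $\bar V=V^{\sigma}_{*}$. Dini's theorem on the compact set $\bar\Omega\times[0,1]$ then upgrades the monotone pointwise convergence to uniform convergence. If the hyperbolicity argument fails, the fallback is a direct perturbation estimate: the difference $w_{n}=V^{\sigma,n}_{*}-V^{\sigma}_{*}\ge0$ satisfies a linear inequality with source $O(|\Omega_{n}\setminus\Omega|)$, and the stability of $V^{\sigma}_{*}$, i.e.\ a negative principal spectrum point of its linearization by Theorem \ref{THM1.6} with test function $V^{\sigma}_{*}$ itself, should force $\|w_{n}\|_{\infty}\to0$.
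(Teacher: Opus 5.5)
Your proposal is correct. Its main route differs from the paper's, and your fallback is in fact a complete, shorter proof.

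\textbf{How the paper argues.} The paper proves equicontinuity in $x$ of the whole family $\{V_*^{\sigma,n}\}$. The spatial difference $\Delta_hV_n$ satisfies a scalar periodic ODE $\tau\partial_t\Delta_hV_n=\alpha_n\Delta_hV_n+R_n$ with $|R_n|\le C\omega(|h|)$. Evaluating at the extrema in $t$ gives $|\Delta_hV_n|\le C\omega(|h|)/c_0$. This step needs the uniform bound $\alpha_n\le -d_2\sigma^{-2}+\max\beta\le -c_0$, and that is the only reason $\sigma_*$ (with $d_2\sigma_*^{-2}-\max\beta\ge c_0$) enters. Arzel\`a--Ascoli, monotonicity in $n$, uniqueness for \eqref{4-3} and Dini's theorem then finish the proof.

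\textbf{How your main route differs.} You take the monotone pointwise limit first and prove continuity only of the limit, pointwise in $x$. Divide the scalar ODE by $\bar V$ and integrate over a period. This gives the Floquet exponent $-\frac1\tau\int_0^1\big(f/\bar V+\mu\bar V\big)\,dt<0$. So hyperbolicity comes from the positive nonlocal forcing $f$ and the logistic term, not from $d_2\sigma^{-2}>\max\beta$. As a result your argument needs no smallness of $\sigma$ beyond $\sigma_0$, and it proves the lemma for $0<\sigma<\sigma_0$ exactly as stated. The paper's proof only covers $\sigma<\sigma_*$, but it gives an explicit modulus of continuity uniform in $n$.

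When you write it up, state explicitly that the same Floquet identity holds for every positive periodic solution of the ODE at a fixed $x$. Hence the increasing Poincar\'e map has at most one positive fixed point. This uniqueness is what identifies the implicit-function branch with $\bar V(x,0)$; without it the branch and $\bar V$ need not coincide.

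\textbf{Your fallback is already a full proof.} Let $\mathcal A w=d_2\sigma^{-2}\big(\int_\Omega k_{2,\sigma}(\cdot-y,t)w(y)\,dy-w\big)+(\beta-2\mu V_*^\sigma)w$ be the linearization at $V_*^\sigma$. Then $\tau\partial_tV_*^\sigma-\mathcal AV_*^\sigma=\mu(V_*^\sigma)^2$, which is bounded below by a positive constant. Hence $w_n-K|\Omega_n\setminus\Omega|\,V_*^\sigma$, for $K$ large, is a periodic subsolution of a linear cooperative problem whose period map has spectral radius less than $1$, so it is nonpositive. This gives $\|V_*^{\sigma,n}-V_*^\sigma\|_\infty\le C(\sigma)|\Omega_n\setminus\Omega|$. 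That bound yields uniform convergence with a rate and bypasses your Steps 2--3 and Dini's theorem entirely.
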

\begin{proof}
Let $c_{0}>0$ be a constant and choose $0< \sigma_{*} \leq \sigma_{0}$ such that $\frac{d_{2}}{\sigma^{2}_{*}} - \max_{\bar{\Omega}\times [0,1]}\beta(x,t) \geq c_{0} > 0$.
Let us fix $0< \sigma < \sigma_{*}$.
By the comparison principle, we obtain that $\| V_{*}^{\sigma,n} \|_{\infty}$ is uniformly bounded. Hence, $\| \partial_{t}V_{*}^{\sigma,n} \|_{\infty}$ is also uniformly bounded. Next, we prove that $V_{*}^{\sigma,n}(x,t)$ is equicontinuous in $x$.
Given $h\in\mathbb{R}^{N}$ small enough,
define
\begin{equation*}
\Delta_{h} V_{n}(x,t) = V_{*}^{\sigma,n}(x+h,t) - V_{*}^{\sigma,n}(x,t)
\enspace \text{on} \enspace \Omega_{h},
\end{equation*}
where $\Omega_{h} = \{ x\in \Omega \,|\, x+h\in\Omega \}$.
A direct computation yields that
\begin{equation}\label{4-20}
\tau \partial_{t} \Delta_{h} V_{n}(x,t) = \alpha_{n}(x,t)\Delta_{h} V_{n}(x,t) + R_{n}(x,t),
\end{equation}
where $\alpha_{n} = \left( -\frac{d_{2}}{\sigma^{2}} + \beta(x+h,t) - \mu(x+h,t)(V_{*}^{\sigma,n}(x+h,t) + V_{*}^{\sigma,n}(x,t) )  \right)$ and
\begin{align*}
R_{n}(x,t) =&(\beta(x+h,t)-\beta(x,t))V_{*}^{\sigma,n}(x,t) - (\mu(x+h,t)-\mu(x,t))(V_{*}^{\sigma,n})^{2}(x,t)\\
&+\frac{d_{2}}{\sigma^{2}} \int_{\Omega_{n}} \left( k_{2,\sigma}\left (x-y+h,t \right) - k_{2,\sigma}(x-y,t)  \right)V_{*}^{\sigma,n}(y,t)dy.
\end{align*}
Notice that $\alpha_{n} \leq -c_{0} < 0$, and there exists $C>0$ independent of $n$ such that
$$\max_{ \bar{\Omega}_{h} \times [0,1]}  |R_{n}(x,t)| \leq C\omega(|h|)$$ by the boundedness of $\{V_{*}^{\sigma,n} \}$ and the compact support of $k_{2}$, where $\omega(r) \to 0$ as $r\to0$ is modulus of continuity depending on $k_{2},\beta,\mu$.
Hence, by taking the maximum and minimum points of $\Delta_{h} V_{n}$ in \ref{4-20}, we get
$\max_{ \bar{\Omega}_{h} \times [0,1]} |\Delta_{h} V_{n}(x,t) | \leq \frac{C}{c_{0}}\omega(|h|)$. Thus $\{ V_{*}^{\sigma,n} \}$ is equicontinuous on $\bar{\Omega} \times [0,1]$. It then follows from the Arzel\`a--Ascoli theorem that there exists a subsequence $V_{*}^{\sigma,n_{k}}$ such that $V_{*}^{\sigma,n_{k}} \to \bar{V}$ in $C(\bar{\Omega}\times[0,1])$ as $k\to\infty$ and $\bar{V}$ satisfies \eqref{4-3}. Moreover, following the same arguments as in the proof of Lemma \ref{LEM8.5},  $\bar{V}$ is positive. On the other hand, the comparison principle gives $V_{*}^{\sigma,n+1} \leq V_{*}^{\sigma,n}$ in $\bar{\Omega}\times[0,1]$. Hence, the desired conclusion follows from Dini's theorem and the uniqueness of \eqref{4-3}.
\end{proof}

\begin{theorem}
Suppose $\lambda_{0}^{r}>0$, $\lambda_{1}^{r}>0$ and  $m=2$. There holds
\begin{equation*}
\begin{aligned}
&\lim_{\sigma \to 0^{+}} \left(H^{\sigma}_{i,*}(x,t),V^{\sigma}_{*}(x,t)-V^{\sigma}_{i,*}(x,t),V^{\sigma}_{i,*}(x,t)\right) \\
=&
(H_{i,*}^{r}(x,t), V^{r}_{*}(x,t)- V_{i,*}^{r}(x,t), V_{i,*}^{r}(x,t))\enspace \text{uniformly in}\enspace  (x,t)\in\bar{\Omega}\times\mathbb{R}.
\end{aligned}
\end{equation*}
\end{theorem}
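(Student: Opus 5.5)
We prove convergence of the periodic solutions as $\sigma\to0^{+}$ through intermediate problems posed on the enlarged domains $\Omega_n$. Convergence of the $V$-component is already known: \eqref{4-14} gives $V_*^\sigma\to V_*^r$ uniformly. So it suffices to show $(H^\sigma_{i,*},V^\sigma_{i,*})\to(H^r_{i,*},V^r_{i,*})$ uniformly on $\bar\Omega\times[0,1]$, and we do this by proving an upper estimate (limsup) and a lower estimate (liminf) separately.

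\textbf{Upper estimate.} Fix $\varepsilon>0$.
\begin{enumerate}
\item By Lemma \ref{LEM8.5}, choose $n$ large so that $(H^{r,n}_{i,*},V^{r,n}_{i,*})\le(H^r_{i,*},V^r_{i,*})+\varepsilon$ on $\bar\Omega\times\mathbb{R}$.
\item Use the periodic solution of \eqref{4-16} on $\Omega_n$, restricted to $\bar\Omega$, as a comparison object for \eqref{4-4} on $\Omega$. Since it is positive on $\bar\Omega$ and vanishes only on $\partial\Omega_n$, the nonlocal equation on $\Omega$ only sees its values inside $\Omega$.
\item By the consistency estimate in the proof of Theorem \ref{THM3.4} (Taylor expansion with symmetric, compactly supported kernels), the local solution of \eqref{4-16}, extended by zero outside $\Omega_n$, is an approximate supersolution of the nonlocal system on $\bar\Omega$. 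The defect is $O(\sigma^\alpha)$ once $\sigma<\operatorname{dist}(\partial\Omega,\partial\Omega_n)$, because the kernel then only samples points where the function is $C^{2+\alpha}$.
\item Absorb this defect by adding a small multiple of the strictly positive principal eigenfunction of the linearization, which has positive principal eigenvalue by Theorem \ref{THM1.5}. Also replace $V_*^\sigma$ by $V_*^r\pm\varepsilon$, using \eqref{4-14}.
\item This yields a strict supersolution of the periodic problem \eqref{4-4}. The comparison principle together with uniqueness of the positive periodic solution (Theorem \ref{T Zika 1}) then gives $(H^\sigma_{i,*},V^\sigma_{i,*})\le(H^r_{i,*},V^r_{i,*})+C\varepsilon$ for small $\sigma$.
\end{enumerate}
An alternative route for this step is to run the time-$T$ comparison of Theorem \ref{THM8.4} on $[0,N]$ and invoke global stability.

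\textbf{Lower estimate.} Here we reverse the roles: the nonlocal periodic solution on $\Omega$, extended by zero, is compared with local solutions on $\Omega$. Concretely:
\begin{enumerate}
\item Start from the initial datum $\delta\varphi^r$, where $\varphi^r$ is the principal eigenfunction of \eqref{4-15}. It is a strict subsolution of \eqref{4-11} because $\lambda_1^r>0$, and it vanishes on $\partial\Omega$.
\item By Theorem \ref{THM8.4}, the nonlocal solution from this datum stays within $\varepsilon$ of the local solution on $[0,T]$. Choose $T$ so large that, by global asymptotic stability, the local solution is within $\varepsilon$ of $(H^r_{i,*},V^r_{i,*})$ at times near $T$.
\item Since $\delta\varphi^r$ lies below $(H^\sigma_{i,*},V^\sigma_{i,*})$, monotonicity of the nonlocal flow gives $(H^\sigma_{i,*},V^\sigma_{i,*})\ge(H^r_{i,*},V^r_{i,*})-2\varepsilon$ at times $T+s$, $s\in[0,1]$. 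Periodicity then extends this to all $t$.
\end{enumerate}
For step 3 we need $\delta\varphi^r\le(H^\sigma_{i,*},V^\sigma_{i,*})$ uniformly in small $\sigma$. This requires a positive lower bound on $H^\sigma_{i,*}$ and $V^\sigma_{i,*}$ on compact subsets of $\Omega$, plus control near $\partial\Omega$ where $\varphi^r$ vanishes linearly. We would obtain it from the subsolution $\varepsilon\varphi_k$ built in Theorem \ref{T Zika 1}, using that $s(L_1)\to\lambda_1^r>0$ (Theorem \ref{THM1.8}(ii)).

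\textbf{Main obstacle.} The hardest point is uniformity near $\partial\Omega$ in both directions, for two reasons. First, the nonlocal solutions do not vanish on $\partial\Omega$, which is exactly why the upper comparison object must be built on $\Omega_n$. Second, the lower bound must be independent of $\sigma$. For the latter we would use Lemma \ref{LEM8.8} and a monotone-in-domain argument to compare with the nonlocal problem on shrunken domains if needed. Finally, $\varepsilon\to0$ gives the claim, and convergence of $V_*^\sigma-V^\sigma_{i,*}$ follows by subtraction.
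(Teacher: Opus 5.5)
\textbf{The lower estimate has a genuine gap, and it sits at the step everything depends on.} You need $\delta\varphi^r\le (H^\sigma_{i,*},V^\sigma_{i,*})(\cdot,0)$ with $\delta$ independent of $\sigma$. The subsolution $\varepsilon\varphi_k$ of Theorem~\ref{T Zika 1} cannot give this, because it is built at fixed $\sigma$. There $\varphi_k=\varphi_k^\sigma$ is the eigenfunction of a $\sigma$-dependent approximating nonlocal operator, and the admissible $\varepsilon$ depends on $\varphi_k^\sigma$ and on a $\sigma$-dependent spectral margin. The convergence $s(L_1)\to\lambda_1^r$ in Theorem~\ref{THM1.8}(ii) concerns eigenvalues only. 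It gives no $\sigma$-uniform positive lower bound for $\varphi_k^\sigma$ on compact subsets of $\Omega$. It certainly does not give the bound $H^\sigma_{i,*}\gtrsim\operatorname{dist}(x,\partial\Omega)$ near $\partial\Omega$ that domination of $\delta\varphi^r$ would require. Lemma~\ref{LEM8.8} does not help either, since it is about $n\to\infty$ at fixed $\sigma$.

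The missing idea is that you never need to compare directly with the unknown periodic solution.
\begin{enumerate}
\item Take a smooth datum $u_0$ supported in a compact $K\subset\Omega$, with $u_0<(H^r_{i,*},V^r_{i,*})(\cdot,0)$.
\item By global stability of the local periodic solution, choose an integer $T$ and $\eta>0$ with $u(T;u_0)\ge u_0+\eta$ on $K$.
\item A finite-time estimate of the type in Theorem~\ref{THM8.4} then gives $u_\sigma(T;u_0)\ge u_0$ on $\bar\Omega$ for small $\sigma$. Apply it to the reduced system \eqref{4-4}; there $V^\sigma_*\to V^r_*$ by \eqref{4-14} and only adds a vanishing forcing term.
\item By monotonicity, $k\mapsto u_\sigma(kT+t;u_0)$ is nondecreasing. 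Global stability identifies its limit as $(H^\sigma_{i,*},V^\sigma_{i,*})(t)$, so $(H^\sigma_{i,*},V^\sigma_{i,*})(t)\ge u_\sigma(kT+t;u_0)$ for every $k$.
\item One more finite-time comparison on a fixed interval $[0,KT+1]$, with $K$ large, gives the lower estimate.
\end{enumerate}
This is the Shen--Xie (Theorem C) mechanism the paper invokes with a datum $(H_-,V_-)$ below the local periodic solution. Taking the datum compactly supported removes the boundary-layer problem you flagged.

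\textbf{Your primary route for the upper estimate also fails, at step 4.} Let $\psi\gg0$ be the principal eigenfunction of the linearization at zero, with eigenvalue $s>0$. The $H_i$-equation is linear in $(H_i,V_i)$, so adding $\eta\psi$ changes the supersolution residual in that component by exactly $-\eta s\psi_1<0$. The correction pushes toward a subsolution and enlarges the defect, and no scaling $(1+\epsilon)\bar u$ can compensate in that component. A correction that would work uses the linearization at the positive periodic solution on $\Omega_n$, whose eigenfunction is bounded below on $\bar\Omega$. That linearization is the operator having $(H^{r,n}_{i,*},V^{r,n}_{i,*})$ as a positive zero-eigenfunction, minus $\operatorname{diag}(0,\sigma_2H^{r,n}_{i,*})$, so its principal eigenvalue is negative.

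Your one-line alternative is in fact the paper's proof:
\begin{enumerate}
\item Start from $M(H^{r,n_0}_{i,*},V^{r,n_0}_{i,*})(\cdot,0)$ on $\Omega_{n_0}$, which dominates $(H^\sigma_{i,*},V^\sigma_{i,*})+1$ on $\bar\Omega$.
\item Use global stability on $\Omega_{n_0}$ together with Theorem~\ref{THM8.4} on $\Omega_{n_0}$.
\item Pass to $\Omega$ by domain monotonicity, with $V^{\sigma,n_0}_*\pm\gamma$ placed termwise in the coefficients. This is needed because $V_*$ enters the $V_i$-equation with both signs.
\item Return from $V^{\sigma,n_0}_*$ to $V^\sigma_*$ via Lemma~\ref{LEM8.8}.
\end{enumerate}
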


\begin{proof}
Since
\begin{equation*}
\lim_{\sigma \to 0^{+}} V^{\sigma}_{*}(x,t) = V^{r}_{*}(x,t) \enspace \text{uniformly in}\enspace \bar{\Omega}\times\mathbb{R}.
\end{equation*}
It is sufficient to show that
\begin{equation*}
\lim_{\sigma \to 0^{+}} \left(H^{\sigma}_{i,*}(x,t),V^{\sigma}_{i,*}(x,t)\right)
=
( H_{i,*}^{r}(x,t), V_{i,*}^{r}(x,t))\enspace \text{uniformly in}\enspace  \bar{\Omega}\times\mathbb{R}.
\end{equation*}
Choose a smooth nonnegative nontrivial function $(H_{-},V_{-})$ defined on $\bar{\Omega}$ with $(H_{-},V_{-})=(0,0)$ on $\partial\Omega$ satisfying
\begin{equation*}
(H_{-}(x),V_{-}(x)) < \left( H_{i,*}^{r}(x,0), V_{i,*}^{r}(x,0) \right)
\enspace \text{for all} \enspace x\in\bar{\Omega}.
\end{equation*}
Then, using a similar argument as in the proof of  \cite[Theorem C]{SX}, we can obtain that for any $\varepsilon>0$, there exists $\tilde\sigma_{1}>0$  such that for $0< \sigma < \tilde\sigma_{1}$,
\begin{equation*}
(H_{i,*}^{r}(x,t), V_{i,*}^{r}(x,t))
\leq
(H_{i,*}^{\sigma}(x,t) + \varepsilon, V_{i,*}^{\sigma}(x,t) + \varepsilon)
\enspace \text{for all} \enspace (x,t)\in\bar{\Omega}\times\mathbb{R}.
\end{equation*}

Next, we prove that for any $\varepsilon>0$, there exists $\tilde\sigma_{2}>0$ such that for $0< \sigma < \tilde\sigma_{2}$,
\begin{equation}\label{4-21}
(H_{i,*}^{r}(x,t), V_{i,*}^{r}(x,t))
\geq
(H_{i,*}^{\sigma}(x,t) - \varepsilon, V_{i,*}^{\sigma}(x,t) - \varepsilon)
\enspace \text{for all} \enspace (x,t)\in\bar{\Omega}\times\mathbb{R}.
\end{equation}
By Lemma \ref{LEM8.5}, there exists $n_{0}>0$ such that
\begin{equation}\label{4-22}
(H_{i,*}^{r}(x,t), V_{i,*}^{r}(x,t) )
\geq
\left(H_{i,*}^{r,n_{0}}(x,t) - \frac{\varepsilon}{4}, V_{i,*}^{r,n_{0}}(x,t)- \frac{\varepsilon}{4} \right)
\enspace \text{for all} \enspace (x,t)\in\bar{\Omega}\times\mathbb{R}.
\end{equation}
Next, we choose $M\gg 1$ such that for $0< \sigma \leq 1$,
\begin{equation}\label{4-23}
M\left(H_{i,*}^{r,n_{0}}(x,t), V_{i,*}^{r,n_{0}}(x,t) \right)
\geq
\left(H_{i,*}^{\sigma}(x,t) , V_{i,*}^{\sigma}(x,t) \right) + 1
\enspace \text{for all} \enspace (x,t)\in\bar{\Omega}\times\mathbb{R}.
\end{equation}
Define $$\left( H_{+}^{n_{0}}, V_{+}^{n_{0}} \right) = \left(MH_{i,*}^{r,n_{0}}(x,0), MV_{i,*}^{r,n_{0}}(x,0) \right),\, ( H_{+}, V_{+} ) = \left.\left(MH_{i,*}^{r,n_{0}}(x,0), MV_{i,*}^{r,n_{0}}(x,0) \right)\right|_{\Omega}.$$
Since $\left( H_{i,*}^{r,n_{0}}, V_{i,*}^{r,n_{0}} \right)$ is globally asymptotically stable, there exists $T_{1}\gg 1$, such that
\begin{equation}\label{4-24}
\left( H_{i,*}^{r,n_{0}}(x,t), V_{i,*}^{r,n_{0}}(x,t) \right)
\geq
\left( H_{i}^{r,n_{0}}(x,T_{1}+t;H_{+}^{n_{0}}) - \frac{\varepsilon}{4} ,
V_{i}^{r,n_{0}}(x,T_{1}+t;V_{+}^{n_{0}}) - \frac{\varepsilon}{4} \right)
\end{equation}
for all $(x,t)\in\bar{\Omega}\times [0,1]$.
It then follows from Theorem \ref{THM8.4} that there exists $\sigma_{1}>0$ such that for $0<\sigma<\sigma_{1}$
\begin{equation}\label{4-25}
\begin{aligned}
&\left( H_{i}^{r,n_{0}}(x,T_{1}+t;H_{+}^{n_{0}})  ,
V_{i}^{r,n_{0}}(x,T_{1}+t;V_{+}^{n_{0}}) \right)\\
\geq&
\left( H_{i}^{\sigma,n_{0}}(x,T_{1}+t;H_{+}^{n_{0}}) - \frac{\varepsilon}{4} ,
V_{i}^{\sigma,n_{0}}(x,T_{1}+t;V_{+}^{n_{0}}) - \frac{\varepsilon}{4} \right).
\end{aligned}
\end{equation}
for all $(x,t)\in\bar{\Omega}_{n_{0}} \times [0,1]$.

Notice that $\lim_{N \to \infty} V^{\sigma,n_{0}}(x,N+t) = V_{*}^{\sigma,n_{0}}(x,t)$ uniformly in $\bar{\Omega} \times [0,1]$. By same arguments as in the proof of Theorem \ref{th6.11}, for any $\gamma>0$, we can choose $T_{1}$ larger if necessary, such that $H_{i}^{\sigma,n_{0}}(x,t;H_{+})$ and $V_{i}^{\sigma,n_{0}}(x,t;V_{+})$ satisfies
\begin{equation*}
\left\{
\begin{aligned}
 \tau \partial_{t} H_{i}^{\sigma,n_{0}}
=&d_{1}(x,t) \mathcal{K}^{1}_{\Omega_{n_{0}},\sigma,2}[H_{i}^{\sigma,n_{0}}]-\rho(x,t)H_{i}^{\sigma,n_{0}}+\sigma_{1}(x,t)H_{u}(x)V_{i}^{\sigma,n_{0}}, &x&\in\bar\Omega_{n_{0}}, t>T_{1},
\\
 \tau \partial_{t} V_{i}^{\sigma,n_{0}}
\geq &d_{2}(x,t) \mathcal{K}^{2}_{\Omega_{n_{0}},\sigma,2}[V_{i}^{\sigma,n_{0}}]+\sigma_{2}(x,t)(V_{*}^{\sigma,n_{0}}(x,t)-\gamma-V_{i}^{\sigma,n_{0}})^{+}H_{i}^{\sigma,n_{0}}
\\
\quad &-\mu(x,t)(V_{*}^{\sigma,n_{0}}(x,t)+\gamma)V_{i}^{\sigma,n_{0}}, &x&\in\bar\Omega_{n_{0}}, t>T_{1}.
\end{aligned}
\right.
\end{equation*}
Therefore,
\begin{equation}\label{4-26}
\begin{aligned}
&\left(H_{i}^{\sigma,n_{0}}(x,T_{1}+t;H_{+}^{n_{0}}), V_{i}^{\sigma,n_{0}}(x,T_{1}+t;V_{+}^{n_{0}})\right)\\
\geq&
\left(H_{i,\gamma}^{\sigma,n_{0}}(x,T_{1}+t;H_{+}^{n_{0}}), V_{i,\gamma}^{\sigma, n_{0}}(x,T_{1}+t;V_{+}^{n_{0}})\right)
\end{aligned}
\end{equation}
for all $(x,t)\in\bar{\Omega}_{n_{0}}\times[0,1]$, where $\left(H_{i,\gamma}^{\sigma,n_{0}}(x,t;H_{+}^{n_{0}}), V_{i,\gamma}^{\sigma,n_{0}}(x,t;V_{+}^{n_{0}})\right)$ satisfies
\begin{equation*}
\left\{
\begin{aligned}
 &\tau \partial_{t} H_{i,\gamma}^{\sigma,n_{0}}
=d_{1} \mathcal{K}^{1}_{\Omega_{n_{0}},\sigma,2}[H_{i,\gamma}^{\sigma,n_{0}}]-\rho(x,t)H_{i,\gamma}^{\sigma,n_{0}} + \sigma_{1}(x,t)H_{u}(x)V_{i,\gamma}^{\sigma,n_{0}},\quad &&x\in\bar\Omega_{n_{0}}, t>0,
\\
 &\tau \partial_{t} V_{i,\gamma}^{\sigma, n_{0}}
= d_{2}\mathcal{K}^{2}_{\Omega_{n_{0}},\sigma,2}[V_{i,\gamma}^{\sigma,n_{0}}]+\sigma_{2}(x,t)(V_{*}^{\sigma, n_{0}}(x,t)-\gamma-V_{i,\gamma}^{\sigma,n_{0}})^{+}H_{i,\gamma}^{\sigma,n_{0}}
\\
&\hspace{1.7cm} -\mu(x,t)(V_{*}^{\sigma, n_{0}}(x,t)+\gamma)V_{i,\gamma}^{\sigma, n_{0}}, &&x\in\bar\Omega_{n_{0}}, t>0.\\
&(H_{i,\gamma}^{\sigma,n_{0}}(x,0), V_{i,\gamma}^{\sigma,n_{0}}(x,0)) = (H_{+}^{n_{0}},V_{+}^{n_{0}}), && x\in \bar{\Omega}_{n_{0}}.
\end{aligned}
\right.
\end{equation*}
Moreover, by the comparison principle, we get
\begin{equation}\label{4-27}
(H_{i,\gamma}^{\sigma,n_{0}}(x,t+T_{1};H_{+}^{n_{0}}), V_{i,\gamma}^{\sigma,n_{0}}(x,t+T_{1};V_{+}^{n_{0}}))
\geq
(H_{i,\gamma}(x,t+T_{1};H_{+}), V_{i,\gamma}(x,t+T_{1};V_{+}))
\end{equation}
for all $(x,t)\in\bar{\Omega} \times [0,1]$,
where $(H_{i,\gamma}(x,t;H_{+}), V_{i,\gamma}(x,t;V_{+}))$ satisfies
\begin{equation*}
\left\{
\begin{aligned}
 &\tau \partial_{t} H_{i,\gamma}
=d_{1} \mathcal{K}^{1}_{\Omega,\sigma,2}[H_{i,\gamma}](x,t)-\rho(x,t)H_{i,\gamma}+\sigma_{1}(x,t)H_{u}(x)V_{i,\gamma},\quad &&x\in\bar\Omega, t>0,
\\
 &\tau \partial_{t} V_{i,\gamma}
= d_{2}\mathcal{K}^{2}_{\Omega,\sigma,2}[V_{i,\gamma}](x,t)+\sigma_{2}(x,t)(V_{*}^{\sigma,n_{0}}(x,t)-\gamma-V_{i,\gamma})^{+}H_{i,\gamma}
\\
&\hspace{1.5cm} -\mu(x,t)(V_{*}^{\sigma,n_{0}}(x,t)+\gamma)V_{i,\gamma},\quad &&x\in\bar\Omega, t>0.
\\
&(H_{i,\gamma}(x,0), V_{i,\gamma}(x,0)) = (H_{+},V_{+}) && x\in \bar{\Omega}.
\end{aligned}
\right.
\end{equation*}
Similar to the proof of Theorem \ref{th6.11}, we can obtain that there exists a unique positive 1-periodic function
$\left(H_{i,\gamma}^{*}, V_{i,\gamma}^{*}\right)$ such that
\begin{equation*}
\lim_{T \to \infty} (H_{i,\gamma}(x,t+T;u_{0}), V_{i,\gamma}(x,t+T;v_{0}))
=
\left(H_{i,\gamma}^{*}(x,t), V_{i,\gamma}^{*}(x,t)\right)
\enspace \text{uniformly in} \enspace \bar{\Omega}\times[0,1]
\end{equation*}
for any $(u_{0},v_{0})\in X^{+}$.
It then follows from \eqref{4-23} and the comparison principle that
\begin{equation}
(H_{i,\gamma}(x,t+T_{1};H_{+}), V_{i,\gamma}(x,t+T_{1};V_{+}))
\geq
(H_{i,\gamma}(x,t;H_{i,\gamma}^{*}), V_{i,\gamma}(x,t;V_{i,\gamma}^{*}))
=
\left( H_{i,\gamma}^{*}(x,t), V_{i,\gamma}^{*}(x,t) \right).
\end{equation}
for all $(x,t)\in\bar{\Omega}_{n_{0}}\times[0,1]$.
Moreover, by taking $n_{0}$ larger and $\tau$ smaller if necessary and using Lemma \ref{LEM8.8}, we have
\begin{equation}\label{4-29}
\left( H_{i,\gamma}^{*}(x,t), V_{i,\gamma}^{*}(x,t) \right)
\geq
\left(
H_{i,*}^{\sigma}(x,t) - \frac{\varepsilon}{4}, V_{i,*}^{\sigma}(x,t) -\frac{\varepsilon}{4} \right)
\enspace \text{for all}\enspace (x,t)\in\bar{\Omega}\times[0,1].
\end{equation}
Finally, by combining \eqref{4-22} and \eqref{4-24}-\eqref{4-29}, we get \eqref{4-21} holds.
\end{proof}

\subsection{Stem cell model}
In this subsection, motivated by \cite{Lei1,Lei2,SLLW}, we consider the following
multigenotype stem cell regeneration model with genetic mutations:

\begin{equation}\label{4-30}
\left\{
\begin{aligned}
&\tau \partial_t Q_i(x,t)
= 2\Bigl(1-\sum_{j\ne i} p_{j,i}\Bigr)\beta_i(x,t)
   \int_{\Omega} P_i(x,y,t)\,Q_i(y,t)\,e^{-\mu_i(y,t)}\,dy \\
&\hspace{2.1cm} + 2\sum_{j\ne i} p_{i,j}\,\beta_j(x,t)
   \int_{\Omega} P_j(x,y,t)\,Q_j(y,t)\,e^{-\mu_j(y,t)}\,dy \\
&\hspace{2.1cm} - Q_i(x,t)\Bigl(\beta_i(x,t)+\kappa_i(x,t)\,Q_i^{n}(x,t)\Bigr),
\qquad (x,t)\in \bar{\Omega}\times(0,\infty),\ 1\le i\le l,\\
& Q_i(x,0) = Q_{0i}(x), \hspace{6.3cm} x\in\bar{\Omega},\ 1\le i\le l.
\end{aligned}
\right.
\end{equation}
Here $Q_{i}(x,t)$ denotes the density of stem cell population with genotype $i$ and epigenetic state $x$ at time $t$;
$p_{ij}$ denotes the mutation rate from genetic type $j$ to type $i$;
$\beta_{i}$ denotes the proliferation rate;
$P_{i}$ represents the transition kernel from state $y$ to $x$;
$\mu_{i}$ denotes the apoptosis rate in proliferating
phase and cell cycle;
$\kappa_{i}$ denotes the removal rate, and $n$ characterizes the density dependence of the removal term.

Consider the following generalization of \eqref{4-30}:
\begin{equation}\label{4-31}
\left\{
\begin{aligned}
&\tau\partial_t Q_i =
\sum_{j=1}^{l} c_{ij}\beta_{j}(x,t)\int_{\Omega}k_{j}(x,y,t)Q_{j}(y,t)dy  -Q_i(\beta_i(x,t) + \kappa_i(x,t)Q_{i}^{n}), &&x\in\bar\Omega, t>0,\\
&Q_{i}(x,0) = Q_{0i}(x), &&x\in\bar\Omega, \\
&i=1,2, \cdots, l,
\end{aligned}
\right.
\end{equation}
We make the following assumption:
$c_{ij}\geq 0$, $\beta_{i}(x,t)$ and $\kappa_{i}(x,t)$ are positive continuous functions and $1-$periodic in $t$. $k_{i}$ satisfies (H3).
$D(x,t)$ is irreducible and satisfies (H1), where
$D(x,t)= (d_{ij}(x,t))_{l\times l}$ with $d_{ij}(x,t)=c_{ij}\beta_{j}(x,t)$.

By general semigroup theory in \cite{Pazy}, for any $Q_{0} = (Q_{01},Q_{02},\cdots,Q_{0l}) \in X$,
\eqref{4-31} admits a unique solution, denoted by $Q(x,t;Q_{0})$.
In what follows, we assume $Q_{0}\in X^{+}\backslash\{ 0 \}$.
Consider the time-periodic problem associated with \eqref{4-31}:
\begin{equation}\label{4-32}
\left\{
\begin{aligned}
&\tau\partial_t Q_i =
\sum_{j=1}^{l} c_{ij}\beta_{j}(x,t)\int_{\Omega}k_{j}(x,y,t)Q_{j}(y,t)dy  -Q_i(\beta_i(x,t) + \kappa_i(x,t)Q_{i}^{n}),&&x\in\bar\Omega, t\in\mathbb{R},\\
&Q_{i}(x,t)= Q_{i}(x,t+1)&&x\in\bar\Omega, t\in\mathbb{R},\\
&i=1,2, \cdots, l.
\end{aligned}
\right.
\end{equation}
Define the linear operator associated to the equation \eqref{4-32} linearized at zero by
\begin{equation*}
L_{n}[Q] = -\tau Q_{t}(x,t) + D(x,t)\mathcal{P}[Q](x,t) - (\beta(x,t)+ \kappa(n)(x,t))Q(x,t).
\end{equation*}
Here, $\beta(x,t) = \text{diag}(\beta_{1}(x,t),\cdots\beta_{l}(x,t))$, and
\begin{equation*}
\kappa(n)(x,t) =
\begin{cases}
&\!\!\!\!\!diag(\kappa_1(x,t),\cdots \kappa_l(x,t) ) \text{ if } n=0,
\\
&\qquad 0 \qquad\qquad\quad\,\qquad \text{ if } n>0.
\end{cases}
\end{equation*}
Let $\tilde{Q}(t,0),t>0$ be the evolution operator determined by equation $L_{n}[u]=0$.
Set $$\alpha(x,t) = \beta(x,t)+ \kappa(n)(x,t).$$

\begin{theorem}\label{THM4.8}
Suppose $n=0$. The following conclusions hold:

\begin{itemize}[leftmargin=0.9cm]
\item[(i)] If $s(L_{0}) >0$, then
\begin{equation*}
\lim\limits_{t\to+\infty} \| Q(\cdot,t;Q_{0}) \|_{X} = +\infty.
\end{equation*}

\item[(ii)] If $s(L_{0}) = 0$ and $s(L_{0})$ is the principal eigenvalue of $L_{0}$, then
\begin{equation*}
\lim\limits_{t\to+\infty}  \| Q(\cdot,t;Q_{0}) -  c\varphi(\cdot,t) \|_{X} = 0,
\end{equation*}
where
$c$ is a constant depending on $Q_{0}$ and $\varphi$ is the principal eigenfunction of $s(L_{0})$.

\item[(iii)] If $s(L_{0}) < 0$, then
\begin{equation}\label{4-33}
\lim\limits_{t\to+\infty} \|Q(\cdot,t;Q_{0})\|_{X} = 0.
\end{equation}

\end{itemize}
\end{theorem}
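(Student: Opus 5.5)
When $n=0$ system \eqref{4-31} is linear. Indeed, $Q_i(\beta_i+\kappa_iQ_i^0)=(\beta_i+\kappa_i)Q_i$, so \eqref{4-31} is exactly $\tau\partial_t Q = D\mathcal{P}[Q]-\alpha Q$. Its solution is therefore $Q(\cdot,t;Q_0)=\tilde Q(t,0)Q_0$. The plan is to read off all three cases from the spectral information on the period map $\tilde Q(1,0)$ given by Proposition \ref{PP2.10}, namely $s(L_0)=\tau\ln r(\tilde Q(1,0))$ with $A$ replaced by $-\alpha$. We write $t=[t]+c_t$ with $c_t\in[0,1)$, so that by periodicity $\tilde Q(t,0)=\tilde Q(c_t,0)\tilde Q(1,0)^{[t]}$. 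The family $\{\tilde Q(c,0)\}_{c\in[0,1]}$ is uniformly bounded. Since $D$ is irreducible, $(\tilde H2)$ holds, so by Lemma \ref{LEM2.8} the operator $\tilde Q(t,0)$ is strongly positive for $t>0$.

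\textbf{Case (iii).} Exactly as in the proof of Theorem \ref{th6.11}(ii), Gelfand's formula gives $\|\tilde Q(1,0)^k\|\le C e^{s(L_0)k/(2\tau)}$ for $k\ge k_0$, and this tends to zero. Hence $\|Q(\cdot,t)\|_X\to0$, which is \eqref{4-33}.

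\textbf{Case (i).} First replace $Q_0$ by $Q(\cdot,1;Q_0)\in X^{++}$. By Theorem \ref{THM1.5} and Remark \ref{REMARK1}, there is a smooth $A^k_-\le-\alpha$ such that $s(L_0(A^k_-))>0$ and $s(L_0(A^k_-))$ is the principal eigenvalue, with eigenfunction $\varphi_k\in\mathcal{X}^{++}$ from Theorem \ref{THM1.3}. The function $e^{s(L_0(A^k_-))t/\tau}\varphi_k(x,t)$ solves the equation with $A^k_-$. Because $A^k_-\le-\alpha$, it is a lower solution of the original linear system. Choosing $\delta>0$ with $\delta\varphi_k(\cdot,0)\le Q(\cdot,1)$, the comparison principle yields $Q(\cdot,t+1)\ge\delta e^{s(L_0(A^k_-))t/\tau}\varphi_k(\cdot,t)$, and this tends to $+\infty$ uniformly.

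\textbf{Case (ii).} Here $0=s(L_0)$ is the principal eigenvalue. By Theorem \ref{THM1.3} it is algebraically simple, and its eigenfunction $\varphi\in\mathcal{X}^{++}$ is periodic, with $\tilde Q(1,0)\varphi(\cdot,0)=\varphi(\cdot,0)$ and $r(\tilde Q(1,0))=1$. The key step, and the main obstacle, is to show that $1$ is a dominant isolated simple eigenvalue of $T:=\tilde Q(1,0)$. This should give a spectral decomposition $X=\mathrm{span}\{\varphi(\cdot,0)\}\oplus X_1$ with $\|T^k|_{X_1}\|\le Me^{-\gamma k}$.

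To get isolation, I will use that $T$ is a compact perturbation of the multiplication part. Write $T=U(1,0)+K$, where $U(1,0)$ is the evolution operator of $\tau\varphi_t=-\alpha\varphi$ and $K$ is compact by the variation of constants formula, using the compactness of $\mathcal{M}$ shown earlier. The essential spectral radius then satisfies $r_{ess}(T)\le r(U(1,0))=e^{s(\mathcal{N})/\tau}<1$. The last inequality holds because $s(L_0)=0>s(\mathcal{N})$, which is Theorem \ref{THM1.3}(i) in its converse direction. So $1$ is an isolated eigenvalue of finite multiplicity. Simplicity comes from Theorem \ref{THM1.3}.

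To rule out other peripheral eigenvalues of modulus $1$, I plan to use strong positivity of $T$ together with the Krein--Rutman-type argument for operators with $r_{ess}<r$ (for instance via \cite[Proposition 4.1.1]{MNP}). I will apply the argument to $T$ itself, which is strongly positive. This is the step I expect to require the most care.

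Given the decomposition, we have $T^kQ_0\to c\,\varphi(\cdot,0)$, where $c$ is the spectral projection coefficient of $Q_0$; it can be written $c=\langle\psi^*,Q_0\rangle$ with $\psi^*$ the positive adjoint eigenvector. Then
\[
Q(\cdot,t)=\tilde Q(c_t,0)T^{[t]}Q_0,
\]
and combining this with $\tilde Q(c_t,0)\varphi(\cdot,0)=\varphi(\cdot,c_t)$ and the uniform boundedness of $\tilde Q(c_t,0)$ gives $\|Q(\cdot,t)-c\varphi(\cdot,t)\|_X\to0$.
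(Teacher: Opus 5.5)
Your proposal is correct and follows essentially the paper's route: the approximation-plus-exponential-subsolution argument for (i), and for (ii) the spectral projection of the period map $\tilde Q(1,0)$ onto its principal eigenvector, with Gelfand's formula on the complementary subspace (and Gelfand's formula directly for (iii)). Your additions supply details the paper only asserts: you shift to $Q(\cdot,1;Q_0)\in X^{++}$ before choosing $\delta$, which the paper's choice $\delta\varphi_\varepsilon(\cdot,0)\le Q_0$ tacitly needs, and you use $r_{ess}(\tilde Q(1,0))\le e^{s(\mathcal{N})/\tau}<1$ together with strong positivity to show that $1$ is isolated and the rest of the spectrum lies strictly inside the unit disk.
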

\begin{proof}
(i)
If $s(L_{0}) >0$, by the approximation arguments (Theorem \ref{THM1.5}), there exists $\alpha_{\varepsilon}$ and $(\lambda_{\varepsilon},\varphi_{\varepsilon})$ such that $\alpha_{\varepsilon} \geq \alpha$, $\lambda_{\varepsilon}>0$ and
\begin{equation*}
-\tau (\varphi_{\varepsilon})_{t} + D(x,t)\mathcal{K}[\varphi_{\varepsilon}](x,t) - \alpha_{\varepsilon}(x,t)\varphi_{\varepsilon}(x,t) = \lambda_{\varepsilon}\varphi_{\varepsilon}(x,t)
\enspace \text{in} \enspace
\bar{\Omega} \times [0,1].
\end{equation*}
Choose $\delta>0$ small enough such that
$\delta\varphi_{\varepsilon}(x,0) \leq Q_{0}(x)$, and let $\phi(x,t) = \delta e^{\frac{\lambda_{\varepsilon}}{\tau}t}\varphi_{\varepsilon}(x,t) $.
It is easy to check that $\phi$ is the lower-solution of \eqref{4-31}. As a result, we get the desired conclusion.

(ii) By Proposition \ref{PP2.10}, $r(\tilde{Q}(1,0))=e^{ \frac{s(L_{0})}{\tau}}=1$ is an isolated algebraically simple principal eigenvalue of $\tilde{Q}(1,0)$. Clearly, $\varphi_{0} = \varphi(x,0)$ is the corresponding eigenfunction of $\tilde{Q}(1,0)$.
Let $T$ be the dual operator of $\tilde{Q}(1,0)$ with the principal eigenfunction $\psi$.
Define the spectral projection $P$ on $X$ by
$Pu = \frac{\langle u , \psi \rangle}{\langle \varphi_{0}, \psi \rangle} \varphi_{0}$, where $\langle \cdot , \cdot \rangle$ denotes the duality pair between $X$ and its dual space.
Let $X_{1} = PX$ and $X_{2} = (I - P)X$. It easy to check that
$X = X_{1} \oplus X_{2}$
and $X_{2} = ker P = \{u\in X \,|\, \langle u , \psi \rangle =0 \}$. Using this decomposition, for any $Q_{0} \in X$, there exist
constant $c$ and $v \in X_{2}$ such that
\begin{equation*}
Q_{0} = c\varphi_{0} + v.
\end{equation*}
Hence,
\begin{equation}\label{4-34}
Q(x,t;Q_{0}) = \tilde{Q}(t,0)Q_{0}(x) = c\varphi(x,t) + \tilde{Q}(t,0)v.
\end{equation}
Note that $r(\tilde{Q}(1,0) |_{X_{2}} ) < 1$.
Moreover, by the Gelfand's formula
\begin{equation*}
\lim\limits_{n\to\infty} \| (\tilde{Q}(1,0)|_{X_{2}} )^{n}\|^{\frac{1}{n}} = r(\tilde{Q}(1,0)|_{X_{2}}) < 1.
\end{equation*}
This together with \eqref{4-34} implies that
\begin{equation*}
\lim\limits_{t \to \infty} \| Q(x,t;Q_{0}) - c\varphi(x,t)  \|_{X}
\leq
\lim\limits_{t \to \infty} \|\tilde{Q}(t,0)v \|_{X}
=0.
\end{equation*}
On the other hand, $\langle Q_{0} , \psi \rangle = \langle c\varphi_{0} + v , \psi \rangle = c\langle \varphi_{0} , \psi \rangle$. Hence, $c = \frac{\langle Q_{0} , \psi \rangle  }{ \langle \varphi_{0} , \psi \rangle }$.

(iii) This case can be obtained by the similar arguments in (ii),  thus is omitted here.
\end{proof}

\begin{theorem}\label{THM4.9}
Suppose $n>1$. The following conclusions hold:
\begin{itemize}[leftmargin=0.9cm]
\item[(i)] If $s(L_{n}) >0$, then \eqref{4-32} admits a unique positive solution $\hat{Q}(x,t)$, and there holds
\begin{equation*}
\lim\limits_{t\to+\infty} \| Q(\cdot,t;Q_{0}) - \hat{Q}(\cdot,t)\|_{X} = 0.
\end{equation*}

\item[(ii)] If $s(L_{n}) = 0$ is the principal eigenvalue, then \eqref{4-32} admits no solution in $X^{+}\backslash \{ 0\}$.

\item[(iii)] If $s(L_{n}) < 0$, then \eqref{4-32} admits no solution in $X^{+}\backslash \{ 0\}$, and there holds
\begin{equation}\label{4-35}
\lim\limits_{t\to+\infty} \| Q(\cdot,t;Q_{0}) \|_{X} = 0.
\end{equation}

\end{itemize}
\end{theorem}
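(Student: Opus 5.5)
Throughout, $L_n$ is the linearization of \eqref{4-32} at zero. Since $n>1$, we have $\kappa(n)=0$, so $L_n=-\tau\partial_t+D\mathcal{P}-\beta$. The system \eqref{4-31} is cooperative because $c_{ij}\beta_j\ge 0$ and the reaction term $-Q_i(\beta_i+\kappa_iQ_i^n)$ depends only on $Q_i$. Moreover, $D$ is irreducible, so $(\tilde H2)$ holds. The comparison principle, Lemma \ref{LEM2.8} and Corollary \ref{COR2.9} are therefore available. The plan follows the proof of Theorem \ref{T Zika 1} and Theorem \ref{th6.11}, using the approximation of Theorem \ref{THM1.5} (with $A=-\beta$) to replace the possibly nonexistent principal eigenfunction.

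\textbf{(i) $s(L_n)>0$.} For the lower solution, Theorem \ref{THM1.5} gives smooth $A^k_-\le-\beta$ such that $L(A^k_-)$ has a principal eigenvalue $\lambda_k\to s(L_n)$. For large $k$ we have $\lambda_k>0$, with eigenfunction $\varphi_k\in\mathcal X^{++}$. Writing $L(A^k_-)=L_n+(A^k_-+\beta)$ with $A^k_-+\beta\le0$, we get $L_n\varphi_k\ge\lambda_k\varphi_k$. Then $\delta\varphi_k$ satisfies $-\tau\partial_t(\delta\varphi_k)+D\mathcal P[\delta\varphi_k]-\beta\delta\varphi_k-\kappa(\delta\varphi_k)^{n+1}\ge\delta\varphi_k(\lambda_k-\kappa\delta^n\varphi_k^n)\ge0$ for small $\delta$, so it is a periodic lower solution. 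For the upper solution, take a large constant vector $Me$. The term $\sum_j c_{ij}\beta_j\int k_j M-M\beta_i-\kappa_iM^{n+1}$ is negative for $M\gg1$, since the superlinear term dominates. Monotone iteration on the Poincaré map then yields a minimal and a maximal periodic solution in $[\delta\varphi_k,Me]$.

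For uniqueness, I take two positive solutions $Q^1,Q^2$, which lie in $\mathcal X^{++}$ by Corollary \ref{COR2.9}, and let $\theta^*=\sup\{\theta:\theta Q^1\le Q^2\}$. If $\theta^*<1$, the sublinearity $\theta Q_i(\beta_i+\kappa_i\theta^nQ_i^n)\le\theta Q_i(\beta_i+\kappa_iQ_i^n)$ makes $\theta^*Q^1$ a strict lower solution. The strong positivity in Lemma \ref{LEM2.8} then gives $\theta^*Q^1\ll Q^2$, a contradiction. By symmetry, $Q^1=Q^2$.

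For the global convergence of an arbitrary $Q_0\in X^+\setminus\{0\}$, the solution is strictly positive at $t=1$ by Lemma \ref{LEM2.8}. It can therefore be squeezed at time $1$ between $\delta\varphi_k(\cdot,1)$ and $Me$. The solutions starting from the lower and upper solutions are monotone in time steps and converge to periodic solutions, which both equal $\hat Q$ by uniqueness. Convergence in $X$ follows by the standard argument for monotone periodic semiflows, as in Theorem \ref{th6.11}, using Dini's theorem or compactness of the orbit. The main obstacle is this step: the nonlocal semiflow is not compact. I would first try to obtain uniform convergence from monotone convergence of the iterates plus continuity of the limits, via Dini. Failing that, I would appeal to the equicontinuity argument of Lemma \ref{LEM8.8}, using that $\beta_i+\kappa_iQ_i^n$ is bounded below by a positive constant.

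\textbf{(ii) and (iii).} In (ii), let $\varphi\in\mathcal X^{++}$ be the principal eigenfunction for $s(L_n)=0$, and suppose $Q\in X^+\setminus\{0\}$ is a periodic solution. Then $Q\gg0$, and with $\theta^*=\sup\{\theta:\theta\varphi\le Q\}$ we set $w=Q-\theta^*\varphi\ge0$. It satisfies $L_n w=\kappa Q^{n+1}>0$ componentwise, i.e. $(L_n-0)[-w]\le0$ with nonzero right-hand side. Applying Corollary \ref{COR2.9} to $w$ (scaling $\varphi$ by comparison in the other direction if needed), or equivalently arguing that $\theta^*\varphi$ is a lower solution strictly below $Q$, gives $w\gg0$ and contradicts the maximality of $\theta^*$. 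Alternatively, I would use the adjoint eigenfunction $\psi$ from Theorem \ref{THM2.5}: pairing $L_nQ=\kappa Q^{n+1}$ with $\psi$ gives $0=s(L_n)(Q,\psi)=(\kappa Q^{n+1},\psi)>0$, a contradiction.

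In (iii), comparison gives $Q(\cdot,t;Q_0)\le\tilde Q(t,0)Q_0$, since the nonlinear term is dissipative. By Proposition \ref{PP2.10} and Gelfand's formula, $\|\tilde Q(m,0)\|\le Ce^{s(L_n)m/(2\tau)}\to0$, exactly as in Theorem \ref{th6.11}(ii), which gives \eqref{4-35}. Applying this to a periodic nonnegative solution forces it to vanish, so no solution in $X^+\setminus\{0\}$ exists.
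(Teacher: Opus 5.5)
Your route for (i) and (iii) is the paper's. In (i) you use a lower solution $\delta\varphi_k$ built from the approximants of Theorem \ref{THM1.5}, a large constant upper solution, monotone iteration and uniqueness by scaling. In (iii) you dominate by $\tilde{Q}(t,0)Q_0$ and apply Gelfand's formula exactly as in Theorem \ref{th6.11}(ii). Your plan for continuity of the monotone limits, using damping bounded below by $\min\beta_i>0$ in the equation for spatial differences, fills in what the paper calls ``standard''.

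The first argument you give for (ii), however, runs in the wrong direction.
\begin{itemize}
\item With $\theta^*=\sup\{\theta:\theta\varphi\le Q\}$ and $w=Q-\theta^*\varphi$ you get $L_nw=\kappa Q^{n+1}>0$. So $w$ is a strict \emph{sub}solution of the linear problem, and Corollary \ref{COR2.9}, which needs $(\mathcal{L}-\alpha I)u\le 0$, does not apply.
\item At a zero of $w_i$ the equation merely reads $\sum_j d_{ij}\int_\Omega k_j w_j\,dy=\kappa_iQ_i^{n+1}>0$, which is no contradiction.
\item For the same reason $\theta^*\varphi$ is a strict upper solution of \eqref{4-32}, not a lower one.
\end{itemize}

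The comparison closes in the other direction. Set $\theta_*=\inf\{\theta:\theta\varphi\ge Q\}$ and $w=\theta_*\varphi-Q\ge 0$. Some $w_i$ vanishes at some $(x_0,t_0)$, where $\partial_t w_i=0$. There one gets $0\le\sum_j d_{ij}\int_\Omega k_j(x_0,y,t_0)w_j(y,t_0)\,dy=-\kappa_iQ_i^{n+1}(x_0,t_0)<0$, since $Q\gg 0$. This is the eigenfunction comparison the paper invokes from Step 2 of Theorem \ref{T Zika 1}.

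Your adjoint alternative is valid as written. By Theorem \ref{THM1.3}, a principal eigenvalue with eigenfunction in $\mathcal{X}^{++}$ satisfies $s(L_n)>s(\mathcal{N})$ (with $A=-\beta$). Hence Theorem \ref{THM2.5} supplies a nonzero positive eigenfunctional $\psi$ of $L_n^*$. Then $0=s(L_n)\langle Q,\psi\rangle=\langle\kappa Q^{n+1},\psi\rangle>0$, because $\kappa Q^{n+1}\in\mathcal{X}^{++}$, which is the contradiction.
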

\begin{proof}
The proofs of (i) and (iii) are similar to the proof of Theorem \ref{th6.11}, and the proof of (ii) follows the argument in
Step 2 of Theorem \ref{T Zika 1}. We omit the details.
\end{proof}

To highlight the dependence on $n$,
We write $\alpha$ as $\alpha(n)$.
It is easy to check that
\begin{equation*}
\max_{\bar{\Omega}}\lambda_{\alpha(0)}(x)= \max_{1\le i\le l,x\in\bar{\Omega}} - \int_{0}^{1} \left( \kappa_{i}(x,t) + \beta_{i}(x,t) \right) dt < 0,
\end{equation*}
and for $n>0$,
\begin{equation*}
\max_{\bar{\Omega}}\lambda_{\alpha(n)}(x)= \max_{1 \leq i\leq l,x\in\bar{\Omega}} - \int_{0}^{1} \kappa_{i}(x,t) dt < 0,
\end{equation*}

Define $\bar{\beta} = \max_{1\le i\le l,(x,t)\in\bar{\Omega}} \beta_{i}(x,t)$.
Combining Theorem \ref{THM4.8}, Theorem \ref{THM1.7} and Theorem \ref{THM1.8} (i), we have the following result.

\begin{corollary}
Suppose $n=0$. The following results hold.
\begin{itemize}[leftmargin=0.8cm]

\item[(i)]
There exists $0< \beta_{2}\ll 1 $ such that
\eqref{4-33} holds for all $0< \bar\beta < \beta_{2}$.

\item[(ii)]
Assume that $k_{i}$ satisfies ${ \rm (\tilde{H}3) }$ and has compact support for $1 \leq i\leq l$. Then there exists  $\sigma_{1}\gg 1$
such that  \eqref{4-33} holds for all $\sigma> \sigma_{1}$.

\end{itemize}
\end{corollary}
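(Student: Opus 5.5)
The corollary follows by combining Theorem \ref{THM4.8}(iii) with the asymptotic results Theorem \ref{THM1.7} and Theorem \ref{THM1.8}, both applied to the operator $L_{0}$. By Theorem \ref{THM4.8}(iii), \eqref{4-33} holds as soon as $s(L_{0})<0$. It is therefore enough to show that $s(L_{0})<0$ when $\bar\beta$ is small, and again when $\sigma$ is large.

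First I put $L_{0}$ into the form of $L_{\tau,D,\sigma,m}$. Take $D_{0}(x,t)=\beta(x,t)=\text{diag}(\beta_{1},\dots,\beta_{l})$ with $m=0$, so that $d_{i}=\beta_{i}$, and set $A(x,t)=-\text{diag}(\kappa_{1},\dots,\kappa_{l})$. Since $d_{ij}=c_{ij}\beta_{j}$, we have $D=CD_{0}$ with $C=(c_{ij})$, which is exactly $(\tilde{H}1)$. The matrix $A$ is diagonal, so (H2) holds trivially. Irreducibility of $D$ gives $(\tilde H2)$ if Theorem \ref{THM1.6} is needed, although only $(\tilde H1)$, (H2), (H3) are required for the first limit.

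\textbf{Part (i).} Here $\bar D=\bar\beta$. Theorem \ref{THM1.7} gives
\[
\lim_{\bar\beta\to0^{+}}s(L_{0})=\max_{\bar\Omega}\lambda_{A}(x).
\]
Since $A$ is diagonal, $\lambda_{A}(x)=\max_{i}\bigl(-\int_{0}^{1}\kappa_{i}(x,t)\,dt\bigr)$, and this is negative by positivity and continuity of $\kappa_{i}$ on the compact set $\bar\Omega$. Hence $s(L_{0})<0$ for $0<\bar\beta<\beta_{2}$ with $\beta_{2}$ small.

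The main subtlety is that $\beta_{i}$ enters both $D$ and $D_{0}$. Theorem \ref{THM1.7} handles this because $(\tilde H1)$ ties them through the fixed matrix $C$. One could also argue directly that $s(L_{0})\le s$ of the same operator without the $-D_{0}$ term, which tends to $\max\lambda_{A}$ by the same test-function argument.

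\textbf{Part (ii).} Now $k_{i}$ satisfies $(\tilde H3)$ with compact support and $m=0$. Theorem \ref{THM1.8} gives
\[
\lim_{\sigma\to\infty}s(L_{0})=\max_{\bar\Omega}\lambda_{A_{0}}(x), \qquad A_{0}=-D_{0}+A=-\text{diag}(\beta_{i}+\kappa_{i}).
\]
Its value is $\max_{i,x}-\int_{0}^{1}(\beta_{i}+\kappa_{i})\,dt<0$, as computed just before the corollary. So $s(L_{0})<0$ for $\sigma>\sigma_{1}$, and Theorem \ref{THM4.8}(iii) yields \eqref{4-33}.

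I expect no real obstacle beyond checking these hypotheses. The one point to verify is that $\lambda_{A}$ for a diagonal, time-periodic $A$ equals the maximal time average of the diagonal entries. This follows from Lemma \ref{LEM1.2} by solving the scalar ODEs explicitly.
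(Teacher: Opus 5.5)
Your proposal is correct and is essentially the paper's argument. The paper obtains the corollary by combining Theorem \ref{THM4.8}(iii) with Theorem \ref{THM1.7} and the $\sigma\to+\infty$, $m=0$ case of Theorem \ref{THM1.8}, through the identification $D_0=\mathrm{diag}(\beta_i)$, $D=CD_0$, $A=-\mathrm{diag}(\kappa_i)$ that you make. Your explicit checks fill in details the paper leaves implicit: that the two limits $\max_{i,x}\bigl(-\int_0^1\kappa_i\,dt\bigr)$ and $\max_{i,x}\bigl(-\int_0^1(\beta_i+\kappa_i)\,dt\bigr)$ are negative, and that irreducibility of $D$ supplies $(\tilde{H}2)$ for Theorem \ref{THM1.6}.
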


Combining Theorem \ref{THM4.9} and Theorem \ref{THM1.8} (i), we have the following result.
\begin{corollary}
Assume that $n>1$, and $k_{i}$ satisfies ${ \rm (\tilde{H}3) }$ and has compact support for $1 \leq i\leq l$.
Then there exists  $\sigma_{1}\gg 1$
such that \eqref{4-35} holds for all $\sigma> \sigma_{1}$.
\end{corollary}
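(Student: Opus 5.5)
The plan is to reduce the claim to the sign of the principal spectrum point $s(L_n)$ and then read off that sign from the large-dispersal-range asymptotics in Theorem \ref{THM1.8}. By Theorem \ref{THM4.9}(iii), \eqref{4-35} holds as soon as $s(L_n)<0$. So it suffices to show that there is $\sigma_1\gg1$ with $s(L_n)<0$ for all $\sigma>\sigma_1$, where the kernels are $k_{i}(x,y,t)=\sigma^{-N}k_i((x-y)/\sigma,t)$ as in $(\tilde{H}3)$.

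First I will put $L_n$ into the form $L_{\tau,D,\sigma,m}$ of Section 1. Since $n>1$, we have $\kappa(n)\equiv0$, so
\begin{equation*}
L_n[Q]=-\tau Q_t+D(x,t)\mathcal{P}[Q]-\beta(x,t)Q .
\end{equation*}
Set $D_0(x,t)=\beta(x,t)=\operatorname{diag}(\beta_1,\dots,\beta_l)$ and take $m=0$, so that $\sigma^{-m}=1$ and $d_i=\beta_i$. Then $D=(c_{ij}\beta_j)=CD_0$ with the nonnegative constant matrix $C=(c_{ij})$, so $(\tilde{H}1)$ holds. The coupling matrix is $A\equiv0$, which trivially satisfies (H2), and $(\tilde{H}3)$ together with compact support of $k_i(\cdot,t)$ is assumed. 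Hence $L_n=L_{\tau,D,\sigma,0}$ with $A=0$.

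Next I will apply the first part of Theorem \ref{THM1.8} in the case $m=0$. This gives
\begin{equation*}
\lim_{\sigma\to+\infty}s(L_n)=\max_{x\in\bar\Omega}\lambda_{A_0}(x),\qquad A_0=-D_0+A=-\beta(x,t).
\end{equation*}
Because $A_0$ is diagonal, the periodic ODE problem \eqref{1-1} decouples into scalar equations. The principal eigenvalue of the $i$-th scalar problem is the time average $-\int_0^1\beta_i(x,t)\,dt$, with eigenfunction $\exp\bigl(\tau^{-1}\int_0^t(\bar\beta_i(x)-\beta_i(x,s))\,ds\bigr)e_i$, where $\bar\beta_i(x)=\int_0^1\beta_i(x,s)\,ds$ and $e_i$ is the $i$-th unit vector. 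Therefore $\lambda_{A_0}(x)=\max_i\bigl(-\int_0^1\beta_i(x,t)\,dt\bigr)$.

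Since each $\beta_i$ is positive and continuous on the compact set $\bar\Omega\times[0,1]$, this maximum is a strictly negative number, say $-2c_0$. By the limit above, there is $\sigma_1$ with $s(L_n)<-c_0<0$ for $\sigma>\sigma_1$, and Theorem \ref{THM4.9}(iii) then yields \eqref{4-35}.

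The main point needing care is the identification in the first step, not the limit itself. One must check that the outflow term $-\beta Q$ really plays the role of $-D_0$ with $m=0$, so that it survives as $\sigma\to\infty$. The inflow term $D\mathcal{P}$ is what vanishes, since $\|k_{i,\sigma}\|_\infty=O(\sigma^{-N})$ on the bounded set $\Omega$. With that identification the relevant limit is $\lambda_{A_0}$ rather than $\lambda_A$; note that $\lambda_A=0$ here, which would give no sign information.

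I also note that Theorem \ref{THM1.8} in the large-$\sigma$ regime requires only $(\tilde{H}1)$, (H2) and $(\tilde{H}3)$, and not irreducibility. So no principal-eigenvalue existence is needed, and the approximation of Theorem \ref{THM1.5} enters only through the proofs being invoked.
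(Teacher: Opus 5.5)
Your proposal is correct and follows the paper's route: combine Theorem \ref{THM4.9}(iii) with the large-$\sigma$ limit of Theorem \ref{THM1.8} in the case $m=0$, which works because the limiting value $\max_{\bar\Omega}\lambda_{A_0}$ is strictly negative. Your identification $D_0=\beta$, $A=0$, $m=0$ and your value $\lambda_{A_0}(x)=\max_i\bigl(-\int_0^1\beta_i(x,t)\,dt\bigr)$ are right; the paper's display for $n>0$ writes $\kappa_i$ where $\beta_i$ should appear, which looks like a slip, but both give the same negative sign.
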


\section*{Acknowledgments}
\noindent

Research of W.-T. Li was partially supported by NSF of China (12531008; 12271226). Research of J.-W. Sun was partially supported by NSF of China (12371170).

\end{document}